\newtheorem{theorem}{Theorem}[section]
\newtheorem{proposition}[theorem]{Proposition}
\newtheorem{definition}[theorem]{Definition}
\newtheorem{corollary}[theorem]{Corollary}
\newtheorem{lemma}[theorem]{Lemma}
\numberwithin{equation}{section}
\theoremstyle{remark}
\newtheorem{remark}[theorem]{Remark}
\newcommand{\R}{\mathbb{R}}
\newcommand{\la}{ \langle}
\newcommand{\ra}{ \rangle}
\newcommand{\C}{\mathbb{C}}
\newcommand{\D}{\mathbb{D}}
\newcommand{\RRR}{\mathbb{R}^{n+2}_1}
\newcommand{\VV}{\mathbb{V}}
\newcommand{\WW}{\mathbb{W}}
\newcommand{\DD}{\mathcal{D}}
 \newcommand{\dd}{\mathrm{d}}
 \newcommand{\db}{D_{\bar{z}}}
 \newcommand{\dz}{D_z}
  \newcommand{\pf}{\partial f}
\newcommand{\blue}{\textcolor{blue}}
\newcommand{\abs}[1]{|#1|}
\newcommand{\hp}{h^{\perp}}
\newcommand\Spc{\hbox{Span}_{\mathbb{C}}}
\newcommand{\wx}{\widehat{\xi}}
\def\dd{\mathrm{d}}
\def\LL{\mathbb{L}}
\newcommand{\TS}{\mathcal{T}S^{2m}}
\newcommand{\FS}{\mathcal{F}_1(S^{2m})}
\def\vec{\mathbf}
\def\<{\langle}
\def\>{\rangle}
\def\L{\mathcal{L}}
\def\I{\mathcal{I}}
\def\D{\mathcal{D}}
\def\X{\mathcal{X}}
\def\R{\mathbb{R}}
\def\C{\mathbb{C}}
\def\Q{\mathcal Q^{n+1}_1}
\def\OO{\omega}
\def\Re{{\rm Re}}
\def\Im{{\rm Im}}
\def\zb{\bar{z}}
\def\hm{\hat{m}}
\def\pj{{\perp_j}}
\def\ff{\mathbf{f}}
\def\FF{\mathcal{F}}
\def\ZZ{\mathcal{Z}}
\def\JJ{\mathcal{J}}
\def\nz{\nabla}
\def\nzz{\nabla_{z}}
\def\nzb{\nabla_{\zb}}
\def\pz{\phi}
\def\ee{\mathbf{e}}
\def\tf{\tilde{F}}
\def\tr{\tilde{r}}
\def\nt{\not\equiv}
\def\FH{\hbox{normal-horizontal}}
\begin{document}
\title[Classification of Willmore $2$-spheres in $S^n$]{\bf{ Classification of Willmore $2$-spheres in $S^n$}}
\author{Xiang Ma, Franz Pedit, Peng Wang}

\address{LMAM, School of Mathematical Sciences, Peking University, Beijing 100871, P.R. China. E-mail address: maxiang@math.pku.edu.cn}

\address{Department of Mathematics and Statistics, University of Massachusetts Amherst, Amherst, MA 01030, USA,  E-mail address: pedit@math.umass.edu}
\address{School of Mathematics and Statistics, Key Laboratory of Analytical Mathematics and Applications (Ministry of Education), FJKLAMA, Fujian Normal University, 350117 Fuzhou, P.R. China,  E-mail address: pengwang@fjnu.edu.cn, netwangpeng@163.com}

%\thanks{Xiang Ma is supported by NSFC grant 11831005. Peng Wang is supported by the NSFC Project 12371052.}

\date{\today}

\begin{abstract}
This paper resolves a long-standing open problem by providing a classification of Willmore $2$-spheres in $S^n$. We show that any such $2$-sphere is either totally isotropic--originating from the projection of a special ($\FH$) twistor curve in the twistor bundle over an even-dimensional sphere--or strictly $k$-isotropic, obtained via $(m-k)$ steps of adjoint transforms of a strictly $m$-isotropic minimal surface in $\R^n$, where $0\leq k\leq m\leq[n/2]-2$. Our approach hinges on the construction of a harmonic sequence in the real Grassmannian over the Lorentz space, derived from the harmonic conformal Gauss map of the original Willmore sphere. This sequence terminates finitely and generalizes, in part, the classical theory of harmonic sequences for harmonic $2$-spheres in complex Grassmannians.
\end{abstract}
\maketitle

\indent{\bf Keywords:} Wilmore $2$-spheres; adjoint transforms; minimal surfaces; twistor bundle; harmonic sequence.

\indent{{\bf MSC(2020)}:\hspace{2mm} 53A31, 53A10, 53C43,  53C28}

\tableofcontents

\section{Introduction}
It is a central theme in geometry to find the best shape of a given surface $M$ under certain variational principles and global conditions. For example, soap bubbles solve the isoperimetric problem in the sense that a closed surface with constant mean curvature  in $\R^3$ must be the round sphere if it is either a $2$-sphere by the work of Hopf, or embedded by the work of Alexandrov \cite{Hopf}. If we replace the condition constant mean curvature by $H=0$ and seek closed minimal surfaces in $S^n$, we have the famous classification of minimal $2$-spheres by Almgren in $S^3$ \cite{Al} and Calabi \cite{Calabi} in $S^n$ via horizontal twistor curves.

As a natural generalization in the M\"obius geometry of surfaces in $S^n$, we consider Willmore surfaces, the critical surfaces of the M\"obius invariant Willmore functional \[\int_M (H^2-K+1) \dd M,\] where $H$ is the mean curvature and $K$ its Gauss curvature. Now the classification problem should be considered with respect to the M\"obius transformation group.

In Bryant's fundamental work in 1984 \cite{Bryant1984}, a classification of Willmore $2$-spheres in $S^3$ is provided. This was generalized by Ejiri \cite{Ejiri}, Musso \cite{Musso} and later in 2000s by Montiel \cite{Montiel} and Burstall etc. \cite{BFLPP} for the case $n=4$ using different methods. In particular all Willmore 2-spheres in $S^4$ are S-Willmore, i.e. with a dual Willmore surface. In 2017,  Ma-Wang-Wang \cite{MWW} proved that any Willmore $2$-sphere in $S^5$ which is not S-Willmore, must be the adjoint surface of some special minimal surface in $\R^5$, and a concrete example is constructed. This gives a classification of Willmore $2$-spheres in $S^5$.  We refer to \cite{Ejiri, MPW, Mi-Ri}, for related results under various conditions.

In this paper, we present a complete classification of all Willmore $S^2$ in $S^n$.

\begin{theorem}\label{thm-uniqueness}
Let $y:S^2\to S^n$ be a Willmore immersion.
Then $y$ belongs to one of the following cases.
\begin{enumerate}
\item It is totally isotropic and comes from the projection of a  $\FH$, twistor curve of the twistor bundle $\mathcal{T}S^{2\tilde n}$ into $S^{2\tilde n}\subset S^n$. See Section \ref{sec-twistor}.
\item It is M\"obius congruent to some minimal surface with embedded planar ends in $\R^n$.
\item It is strictly $k$-isotropic and comes from $(m-k)$-step adjoint transforms of some strictly $m$-isotropic (branched) minimal surface in $\R^n$.
%To be concrete, there exists a sequence of (branched) Willmore surfaces $\{y^l:M_l\rightarrow S^n, k\leq l\leq m\}$ such that $y=y^k$ and the following statements hold:\begin{enumerate}\item For each $l$, $k\leq l\leq m$,  $y^l$ is a strictly $l$-isotropic Willmore surface on an open dense subset $M_l$  of $S^2$. And $y^{l+1}$ is an adjoint transform of $y^{l}$ on $M_{l+1}\subset M_{l}$ for $k\leq l\leq m-1$.\item $y^m$ is M\"obius congruent to some (possibly branched) strictly $m$-isotropic minimal surface in $\mathbb R^n$.\end{enumerate}

\end{enumerate}
\end{theorem}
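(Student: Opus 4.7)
The plan is to study the harmonic conformal Gauss map of $y$, viewed as a map $\Phi: S^2 \to \Gr$, and to build from it a real harmonic sequence in the spirit of the Chern--Wolfson and Burstall--Wood theory for harmonic $2$-spheres in complex Grassmannians. First I would frame $\Phi$ using the mean-curvature $2$-sphere together with an adapted light-cone basis of $\R^{n+4}_1$, and use the $\partial$ and $\bar\partial$ derivatives to manufacture successor and predecessor maps $\Phi_{\pm j}$. The key novelty compared with the complex Grassmannian setting is that we are working over a Lorentz space, so conjugation is a real structure rather than a complex one; this forces a reality constraint on the sequence and is what makes the adjoint transform the natural ``successor'' operation at the level of the surface $y$ rather than at the level of $\Phi$.

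Next I would prove that the sequence terminates on $S^2$ after finitely many steps. One ingredient is that the Hopf-differential-type coefficients arising in successive derivatives are holomorphic sections of line bundles on $\C P^1$, hence either vanish identically or have only isolated zeros; the other ingredient is the harmonicity of $\Phi$ together with the signature of $\R^{n+4}_1$, which yields curvature identities bounding the effective isotropy order by $[n/2]-2$. The integer $k$ at which the sequence first degenerates will turn out to be exactly the isotropy order of the original Willmore $2$-sphere.

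With the terminating sequence in hand, I would read off the three cases. If the sequence degenerates in the totally isotropic fashion at every step, then $y$ admits a $\FH$ lift to some twistor bundle $\mathcal{T}S^{2\tilde n}\subset\mathcal{T}S^n$, producing case (1) via the twistor construction of Section \ref{sec-twistor}. Otherwise there is a well-defined strict isotropy order $k$ and a largest order $m\le[n/2]-2$ to which successive adjoint transforms extend; reversing these adjoint transforms $(m-k)$ times produces a strictly $m$-isotropic Willmore $2$-sphere whose dual surface is a point, and inversion from this point turns it into a (possibly branched) minimal surface in $\R^n$, giving case (3). Case (2) is the sub-case $k=m=0$ where the inversion is applied directly to $y$; the embedded planar-end condition then follows from Bryant's local analysis of Willmore ends combined with the $S^2$-topology.

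The principal obstacle is the global construction of the sequence and the control of its singularities. Adjoint transforms and successor maps are naturally defined only away from zeros of certain holomorphic differentials, and one must show that each successor map extends smoothly, or at worst as a branched immersion, across these isolated zeros; this requires local normal forms and removable-singularity arguments for the Willmore equation on punctured discs. Equally delicate is verifying that inverting the chain of adjoint transforms produces a minimal surface whose branch locus and isotropy order are compatible with all intermediate steps, so that the entire chain closes up to $S^2$ and the indices $(k,m)$ recorded by the harmonic sequence match those appearing in case (3). I expect this singularity and topology bookkeeping, rather than the formal construction of the sequence, to be the technical heart of the argument.
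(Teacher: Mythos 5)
You have the right general frame --- the harmonic conformal Gauss map, a Chern--Wolfson-type real harmonic sequence, vanishing of holomorphic differentials on $S^2$, and a chain of adjoint transforms terminating at a Euclidean minimal surface --- but the step you defer to ``local normal forms and removable-singularity arguments for the Willmore equation on punctured discs'' is precisely where the proposal has a genuine gap, and it is the point this paper is organized to \emph{avoid} rather than to solve head-on. If you iterate adjoint transforms at the level of surfaces, each successor $y^{j+1}$ is a priori only a branched map defined off the zero set of holomorphic data of $y^j$; the holomorphic forms needed for the next inductive step (the analogues of $\chi_0\,\dd z^3$ and of the $\Theta_j$) are built from a frame that can blow up at those points, so the vanishing theorem on $S^2$ cannot be invoked for the successor surface without first proving extension across the singular set --- and no local normal form for branched Willmore immersions in arbitrary codimension supplies that control. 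This is exactly the obstruction that left the program open since \cite{ma0}; asserting it can be handled by removable-singularity analysis is not a proof sketch of the missing step.

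The paper's resolution is structurally different and is absent from your plan: it never analyzes the singularities of the intermediate adjoint surfaces at all. Instead it (i) builds a \emph{globally smooth} harmonic sequence $\{f^k_j\}$ on $S^2$ into real Grassmannians over $\RRR$ via the $\DD$-transform, where the potentially singular terms cancel after wedging with the old frame (Lemma \ref{lemma-linebundle}, Theorem \ref{thm-line}); (ii) shows the terminal complement $h_0=(f^k_{k+k'})^{\perp}$ is a totally isotropic harmonic map and differentiates it to obtain a second sequence $\{h_j\}$ (Theorem \ref{thm-h0}); and (iii) recovers the Willmore chain purely by the intersection formula $\hbox{Span}_{\R}\{Y^j,Y^{j+1}\}=f^j_0\cap h_{j+1}^{\perp}$ (Theorems \ref{thm-initial} and \ref{thm-inductive}), so that globality is carried by the bundles $f^j_0$ and $h_{j+1}$ rather than by the surfaces $y^j$ themselves. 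Without this, or an equivalent globalization mechanism, your induction does not close. Two smaller corrections: the bound $m\le [n/2]-2$ comes from plain dimension counting ($2m+4\le n+2$), not from curvature identities tied to the signature; and case (2) of the theorem is Ejiri's S-Willmore case (dual surface reducing to a point) for arbitrary isotropy order, not merely the sub-case $k=m=0$.
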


This result directly generalizes the classification from $S^5$ \cite{MWW} to $S^n$, which involves much more  geometric structures, such as the treatment of various isotropy orders within the light cone model of M\"{o}bius geometry.
For a conformal immersion $y:M\rightarrow S^n$ with a local complex coordinate $z$ on $U\subset M$, it has a canonical lift \cite{BPP} (see also Section 2):
\[
Y: U\rightarrow \mathcal L^+\subset \RRR
\]
Here $\mathcal L^+$ is the future light-like cone of the Lorentz-Minkowski space $\RRR$. Then $y$ is called $j$-isotropic if
\[
\<Y_{z},Y_{z}\>=\cdots=\<Y^{(j+1)}_{z},Y^{(j+1)}_{z}\>\equiv0, ~~~~\text{for}~j\geq0
\]
Strictly $j$-isotropic means $j$-isotropic and not $(j+1)-$isotropic; $y$ is called totally isotropic if $\<Y^{(j+1)}_{z},Y^{(j+1)}_{z}\>\equiv0$ for all $j\geq0$. Note that $0$-isotropic amounts to the usual conformality.

Case (1) can be viewed as the conformal version of Calabi's famous works on minimal $2$-spheres \cite{Calabi}. Note also that the projection of a $\FH$ twistor curve is always a (possibly branched) totally isotropic Willmore surface with a special normal bundle structure.
It seems that $\FH$ twistor curves have nice geometric properties and may hold independent interests. We refer Section \ref{sec-twistor} for more details.
\vspace{2mm}
% {\em  A $2$-sphere  in $S^{2m}$ is minimal if and only if it is the image of the twistor projection of some horizontal twistor curve.}

%To be concrete, in case (1), we prove that: {\em  A $2$-sphere is totally isotropic and Willmore, if and only if it is the image of the twistor projection of some $\FH$ twistor curve.}\\This can be viewed as the conformal version of Calabi's famous works on minimal $2$-spheres \cite{Calabi}: {\em  A $2$-sphere is minimal if and only if it is the image of the twistor projection of some horizontal twistor curve.}Note also that the projection of a $\FH$ twistor curve is always a totally isotropic, Willmore surface with a special normal bundle structure. See Section \ref{sec-twistor} for more details. \vspace{2mm}

%\blue{(We hope also can construct a harmonic sequence associated to the Willmore $2$-sphere)}

Case (2) corresponds to Ejiri's theorem \cite{Ejiri}, asserting that a strictly $k$-isotropic, S-Willmore $2$-sphere $y^k=[Y^k]$ is M\"obius congruent to a minimal surface with embedded planar ends in $\R^n$.\vspace{2mm}

Case (3) represents the most substantial challenge. This case involves new transforms and considerable constructive efforts, the background for which has been summarized in the Ma-Wang-Wang's 2017 paper \cite{MWW}. In the subsequent discussion, we review the basic concepts related to these transforms, highlight the central difficulties, and we therefore focus on the novel elements introduced to overcome these obstacles.

\subsection{The Willmore sequence and the real harmonic sequence}

The previous study of Willmore $2$-spheres in $S^3$ and $S^4$ relied on the existence of a dual Willmore surface--conformal and sharing the same conformal Gauss map \cite{Bryant1984, Ejiri}--a property exclusive to S-Willmore surfaces. The so-called \emph{2-step forward and backward B\"acklund transforms} in \cite{BFLPP} gives a nice generalization of the duality theorem, which is however subject to two key limitations: it is defined only in $S^4$, and it relies essentially on the quaternionic structure specific to codimension 2.

When the codimension is arbitrary, the first author extended the notion of 2-step B\"{a}cklund transformations by introducing the so-called \emph{adjoint transform} for any Willmore surface \cite{ma0,ma2} (see the definition therein, or Section \ref{sec-adj}). These adjoint transforms remain conformal and Willmore. They always exist locally, though in general without uniqueness. A remarkable observation is that when the original Willmore 2-sphere $y^k$ is strictly $k$-isotropic, certain forms must be holomorphic and vanishes on this 2-sphere, which helps to define a canonical adjoint transform $y^{k+1}$ being strictly $(k+1)$-isotropic. If we can apply this construction inductively, the increasing isotropy order, together with the finite dimension of the ambient space, force this sequence to stop at some euclidean minimal surface $y^m, m>k$ and then a single point:
\[
y^k \to y^{k+1} \to \cdots\to y^m \to y^{m+1}=\{pt\}.
\]
Such a reduction recipe appeared in several beautiful works before \cite{Calabi, Chern, Chern-W, Leschke}.

Despite strong evidence for the general truth of the main theorem, we encountered the unavoidable challenge of singularities. Such adjoint transforms might produce terms approaching infinity at some points (possible poles). In other words, the new adjoint surfaces might only be branched immersions. Yet our moving frame method (see Section~2 or \cite{BPP}) treats only the regular immersion case and hard to cope with the singular terms. Without control of the branched points/singularities, we can NOT apply the vanishing theorems for holomorphic forms on 2-spheres. Thus the classification was unfinished in the first author's 2005 PhD thesis \cite{ma0}.

To overcome such obstacles, one needs new ideas to analyze the conformal Gauss map as well as other sub-bundle structures arising as harmonic maps into certain Grassmannians over the Lorentz-Minkowski space $\RRR$. In the follow up work of 2017 by Ma-Wang-Wang \cite{MWW}, for a Willmore 2-sphere in $S^5$, we constructed a generalized Gauss map  $\xi$ into the de-sitter space $S^6_1$ which is again conformal harmonic. Using $\xi$ and its derivatives $\xi_z, \xi_{zz}$ we re-constructed the Willmore sequence $y^0\to y^1 \to y^2=\{pt\}$ as desired. This accomplished our goal in $S^5$.\vspace{3mm}

From a theoretic viewpoint, underlying such a Willmore sequence, we are working with a sequence of harmonic maps into certain real Grassmannian manifolds $Gr_r(\RRR )$ consisting of rank-$r$ real space-like subspaces (or equivalently, their orthogonal complements as Lorentz subspaces). The construction needs to take complex derivative $\partial/\partial z$ , then take an orthogonal projection and choose isotropic subspaces. This mimics the well-known harmonic sequence in Chern-Wolfson's work \cite{Chern-W}. There are two main differences: Firstly, the ambient space is endowed with a pseudo-Euclidean metric and the Grassmannian is a non-compact symmetric space. Secondly, we need to come back to real subspaces/subbundles to obtain the harmonic maps in each step.

This motivated our definition of \emph{$\D f$ transform} (see Section~\ref{subsect-harmonic}), which produces a harmonic sequence from the initial conformal Gauss map $f^k_0$ of a strictly $k$-isotropic Willmore 2-sphere $y^k$. The strictly isotropic conditions force this sequence to end with a final $f^k_{k+k'}=(h_0)^{\perp}$. This $h_0$ is quite similar to the generalized Gauss map $\xi:S^2 \to S^6_1$ in our 2017 paper \cite{MWW}. To be concrete, it is a conformal minimal branched immersion of 2-sphere into a real Grassmannian which is proven to be totally isotropic and reduces to a Euclidean minimal surface. This inductive and reductive procedure solves the first difficulty.

In the process above we need to construct new isotropic frames repeatedly like the Gram-Schmidt orthogonalization. Previously, certain terms in these computations might diverge to infinity and cause singularity disaster. But upon close examination, we discovered (Lemma~\ref{lemma-linebundle}) that the singular terms will vanish automatically after taking wedge product with old frames (i.e. the new bundle mapping is still regular). This helps us to overcome the second crucial problem of singularities.

After these two key steps, a series of new holomorphic forms spring up. Using the global assumption of 2-spheres, we know these forms vanish and the involved frames are isotropic and orthogonal. This guarantees the construction of the harmonic sequence $f^k_0\to f^k_{k+k'}=(h_0)^{\perp}$.  But how to relate the original Willmore $f^k$ and the final Euclidean minimal surface $y^m$ coming from the differentiation of $h_0$? This is the third difficulty.

Recall that the Willmore sequence conjectured as before comes from the adjoint transform in a recursive manner. Each desired adjoint surface pair $\{y^j, y^{j+1}\}$ for any $j~ (k\le j\le m)$ must lies on the mean curvature sphere  (exactly the conformal Gauss map) of $y^j$ . In the light-cone model, that means their light-line representation
$\{Y^j, Y^{j+1}\}$ are contained in the 4-dim Lorentz subspace $f^j_0$. Our key observation is that
\[
\hbox{Span}_{\R}\{Y^{j},Y^{j+1}\}=f^j_0\cap h_{j+1}^{\perp}
\]
where $h_{j+1}$ comes from the isotropic $(j+1)$'s jet bundle of $h_0$.

The discovery of this intersection relationship solves the third difficulty. But proving it in a unified way is a huge technical challenge. We need to analyze the bundle structure of $(f^j_0)^\perp$ as well as $h_0\subset h_1 \subset h_2 \subset \cdots $ in an inductive manner.  We also want to treat various isotropic situation in this bundle $(f^j_0)^\perp$ without discussing infinitely many cases. At the same time, we have to show that $h_0$ is a full harmonic map into the Grassmannian using an argument of contradiction. For this purpose, the proof in Section~9.3 uses a double induction method. We also introduce the so-called \emph{Frenet-bundle structure} in Section~\ref{subsect-harmonic} which simplifies the statement and the proof in the induction argument. (Note that the Frenet-bundle structure also appears in the definition of the $\D$-transform). This overcomes the fourth and the final barrier and accomplishes the whole proof.\vspace{3mm}

As a summary of the results in Case (3), when the Willmore 2-sphere $y^k=[Y^k]$ is strictly $k$-isotropic and NOT S-Willmore, we show that:
\begin{enumerate}
    \item It admits a {\em global } sequence of harmonic maps into various real Lorentzian Grassmannians $\{f^k_{l},1\leq l\leq k+k'\}$
\[
\begin{array}{cccccccccccccc}
     f^k_0 & \Rightarrow    &  f^k_1 & \Rightarrow& \cdots &\Rightarrow  & f^k_k &\Rightarrow & f^k_{k+1} &\Rightarrow & \cdots & \Rightarrow & f^k_{k+k'}=(h_0)^{\perp}
     \end{array}
    \]
     They are maps into various real Grassmannians, with $f^k_0$ being the conformal Gauss map of $[Y^k]$ and $Rank_{\R}(f^k_{k+k'})=2m+4$. Here either $k'=0$ and $m\leq 2k$, or $m=2k+k'\geq 2k$. %, and notice that the bundles we derive are all globally defined on $S^2$.

\item The harmonic map $h_0:=(f^k_{k+k'})^{\perp}$ from $S^2$ has $Rank_{\R}(h_0)=r$ and $Rank_{\C}(\partial h_0)=1$. Moreover, $h_0$ is totally isotropic, which defines a Frenet structure on $h_0^{\perp}$, in the same way that appears in \cite{Bar,Bryant1984,Calabi,Chern, Ejiri, MWW}. This induces a second  sequence of harmonic maps $\{h_j,1\leq j\leq m+1\}$ defined on open dense subsets of $S^2$ with $Rank_{\R}(h_j)=r+2j$:
\[
\begin{array}{ccccccccccccc}
     h_0&\Rightarrow    &  h_1 &\Rightarrow  &\cdots &\Rightarrow & h_{k} &\Rightarrow &\cdots &\Rightarrow  & h_m &\Rightarrow & h_{m+1}.
     \end{array}
    \]
%This gives the sequence of the sphere bundles associated to $Y^m$, see e.g. Theorem \ref{thm-minimal}.

\item We derive the adjoint Willmore sequence $\{Y^j,k\leq j\leq m\}$ inductively by
\[\hbox{Span}_{\R}\{Y^{j},Y^{j+1}\}=f^j_0\cap h_{j+1}^{\perp}, \hbox{ for } k\leq j\leq m.\]
In other words, we have the following diagram:
\[\xymatrix@C=12ex{
        Y^k \ar@{.>}[r]^{AT} \ar[d]_{CGM} & \ar@{.>}[l] Y^{k+1} \ar[d]_{CGM}   \ar@{.>}[r]^{AT}  & \ar@{.>}[l] \cdots \ar@{.>}[r]^{AT} \ar[d]_{CGM} & \ar@{.>}[l]  Y^m \ar@{.>}[r]^{AT} \ar[d]_{CGM} & \ar@{.>}[l] Y^{m+1} \\
        f^k_0  \ar@<-.5ex>[u]_{~~\cap h_{k+1}^{\perp}}  \ar[ur]& f^{k+1}_0  \ar@<-.5ex>[u]_{~~\cap h_{k+2}^{\perp}}  \ar[ur] &      \cdots \ar@<-.5ex>[u]_{~~\cap h_{\cdots}^{\perp}}  \ar[ur]& f^{m}_0  \ar@<-.5ex>[u]_{~~\cap h_{m+1}^{\perp}}  \ar[ur]
    }\]
Here $f^j_0$ is the conformal Gauss map ($CGM$ for short in the diagram) of $Y^j$, and $Y^{j+1}$ is an adjoint transform ($AT$ for short) of $Y^{j}$.

\item   $[Y^m]$ is a strictly $m$-isotropic minimal surface in $\R^n$ and $[Y^{m+1}]$ is a constant point. In fact, the sequence $\{h_j,1\leq j\leq m+1\}$ can also be recovered from the frame of the strictly $m$-isotropic minimal surface $Y^m$, which is therefore very simple. Together with the inductive using of the conformal Gauss map $f^j_0$ of $Y^j$ each time, it leads to the construction of the Willmore pair $Span_{\R}\{Y^j,Y^{j+1}\}=f^j_0\cap h_{j+1}^{\perp}$ in a natural way.

\item The global assumption of $2$-sphere is used to show certain holomorphic forms vanish, which yields isotropic sub-bundles and defines two real harmonic sequences:
 the global one $\{f^k_l,0\leq l\leq k+k'\}$ defined on $S^2$, and the one $\{h_j,1\leq j\leq m+1\}$ defined on open dense subsets of $S^2$.
\end{enumerate}

\begin{remark}
 As to the geometric meaning of $\{f^k_l\}$,  $f^k_0$ denotes the mean curvature $2$-sphere of $y^k$ and $f^k_l$ is the unique global sphere congruence containing the mean curvature $2$-spheres of the surfaces which come from the $j$-step adjoint transforms of $y^k$, $j\leq l$. It is amazing that, although adjoint surfaces might have singularities, the map $f^k_l$ is globally defined on $S^2$, which is the key step to overcome the possible singularities. In this sense, we can view $f^k_l$ the $l$-th order conformal Gauss map of $y^k$. And the final map $f^k_{k+k'}$ denotes the unique global sphere congruence which contains all the possible surfaces, as well as their mean curvature $2$-spheres, coming from multiple steps of adjoint transforms of $y^k$.
\end{remark}

\subsection{Other related works}
\begin{remark}
The work of Uhlenbeck \cite{Uh} and later Burstall and Guest's generalization \cite{Bu-Gu}, provided a classification of all harmonic $2$-spheres in compact Lie groups in terms of integrable system methods. Willmore surfaces have  harmonic conformal Gauss maps, with the target manifold being a non-compact symmetric space with a pseudo-Riemannian metric. So one cannot simply apply the ideas of \cite{Uh,Bu-Gu} for Willmore $2$-spheres. Moreover, there is another key difficulty in figuring out geometric properties of surfaces or harmonic maps from the loop group data.  In \cite{DoWa1,DoWa12}, Dorfmeister and the third author used the DPW method \cite{DPW} to study a Willmore surface via its harmonic conformal Gauss map and obtained the first examples of totally isotropic Willmore $2$-spheres in $S^6$ which are not S-Willmore. A loop group description of totally isotropic Willmore $2$-spheres in $S^6$ is also derived in \cite{Wang-3}. Moreover,  with the help of the duality theorem for harmonic maps \cite{DoWa13,DoWa-fu}, one can adapt the works of Uhlenbeck \cite{Uh}, Burstall-Guest \cite{Bu-Gu,Gu2002} so that they can be applied for Willmore $2$-spheres. As applications, a loop group description of the potentials of all Willmore $2$-spheres has been derived in \cite{Wang-1}, while a geometric description along this way stays still unclear. Therefore, we choose the present geometric treatment for Willmore $2$-spheres.
\end{remark}

\begin{remark} For the case of a torus $T^2$ in $S^3$, we have the famous Willmore conjecture that  the Clifford torus attains the minimum of the Willmore functional uniquely among all $2$-tori in $S^3$ up to a  M\"obius transformation, which is solved by the celebrated work of Marques and Neves\cite{Marques}. And Willmore tori are so abundant that we can only expect classification results under stronger conditions \cite{BB,FP,Pinkall}.

We also refer to \cite{Be-Ri, Ku-Li, Ku-Sch, Ri} for the compactness theorems concerning the moduli space of embedded Willmore surfaces with small energy. Moreover, for the case of higher genus, the embedded Lawson minimal surface $\xi_{g,1}$ \cite{Lawson} is conjectured to be the unique candidate of Willmore minimizers with genus $g$ \cite{Kusner1989}, which remains an open problem with some progress \cite{BK,KW,KLW,KW2,Ku-Li-Sc,Simon93}.
\end{remark}

\subsection{Organization of the paper}
In Section 2, we briefly review the Willmore surfaces and adjoint transforms. In Section 3, we recall the $\partial$-transform defined by Chern and Wolfson, and introduce $\DD$-transforms for special harmonic $2$-spheres in a real Grassmannian. These transforms play crucial roles in this paper and hold independent interests. In Section 4, we construct natural isotropic holomorphic bundles associated with an isotropic Willmore $2$-sphere.  Section 5 characterizes totally isotropic Willmore $2$-spheres in terms of $\FH$ twistor curves. Sections 6 through 8 are devoted to the classification of strictly $k$-isotropic Willmore $2$-spheres that are not S-Willmore. Section 6 presents the main theorem and outlines the proof strategy. Section 7 is devoted to the construction of isotropic line bundles within the conformal normal bundles of a Willmore $2$-sphere, leading to a universal harmonic map $h_0$ and a simple framing of it. Then in Section 8, a new harmonic sequence $\{h_j\}$ is derived from the universal harmonic map $h_0$, which also provides a simple framing of $h_0$. In Section 9, we derive the adjoint Willmore sequence $\{Y^j, k \leq j \leq m\}$, by adapting the framing of the Willmore $2$-sphere and the ascending adjoint transforms according to the framing of $h_0$ inductively, which gives the proof of the main theorem. In Section 10, we briefly outline the construction of examples in the classification theorem. Examples of $k$-isotropic minimal surfaces with embedded planar ends are constructed explicitly.
Section 11 concludes the paper with a proof of Lemma \ref{lemma-linebundle}.

\section{Willmore surfaces and adjoint transforms}

In this section, we will briefly review the M\"obius geometry of Willmore surfaces and adjoint transforms. For more details, we refer to \cite{BPP, ma2, MWW, WangCP}.

\subsection{Surface theory in M\"obius geometry}

Let $\R^{n+2}_1$ be the $(n+2)$-dimensional Lorentz-Minkowski space with the Lorentzian metric
\[\<Y,Y\>=-Y_0^2+\sum_{i=1}^{n+1}Y_i^2.\]
Throughout this paper, we use $\<\ ,\ \>$ to denote the above inner product and its complex linear expansion.
Let $\L\subset \R^{n+2}_1$ be the lightcone and let $\L^+$ be the future lightcone with $Y_{0}>0$.
We identify the unit sphere $S^n\subset \R^{n+1}$ with the projectivized light cone via
\[
S^n\cong \mathbb{P}(\L):~~y\leftrightarrow [(1,y)]=[Y].\]
The projective action of the Lorentz group $O(1,n+1)$ on $\mathbb{P}(\L)$ provides all conformal diffeomorphisms of $S^n$. The following correspondences are well known \cite{BPP}:
\begin{itemize}
\item A point $y\in S^n$ $~~~~~~~~\leftrightarrow~~~~~~~$ a lightlike line $[Y]\in \mathbb{P}(\L)$;
\item A $k$-dim sphere $\sigma\subset S^n$ $~~~~~~\leftrightarrow~~~~~~~$ a space-like $(n-k)$-dim subspace $\Sigma\subset \R^{n+2}_1$;
\item The point $y$ locates on the sphere $\sigma$ $~~~\leftrightarrow~~~$ $Y\bot\Sigma$.
\end{itemize}

Let $y:M\to S^n$ be a conformal immersion from a Riemann surface $M$.
A local lift is just a map $Y$ from $M$ to the light cone $\L$ such that $[(1,y)]=[Y]$.
Taking derivatives w.r.t a local complex coordinate $z$,
we have $\<Y_z,Y_z\>=0$ and $\<Y_z,Y_{\zb}\> >0$, since $y$ is a conformal immersion. Moreover, there exists a 4-dimensional Lorentz subspace of $\R^{n+2}_1$ defined at every point $p\in M$:
\[
V|_p:={\rm Span} \{ Y,\Re (Y_z),\Im (Y_z), Y_{z\zb}\}_{p},
\]
which is independent of the choice of  $Y$ and  $z$. By the correspondence given above, $V$ describes a M{\"o}bius invariant geometric object, called the \emph{mean curvature 2-sphere} of $y$. This name comes from the property that it is the unique $2$-sphere tangent to the surface and having the same mean curvature vector as the surface at the tangent point when the ambient space is endowed with a metric of some Euclidean space (or any other space form). The corresponding map from $M$ into the Grassmannian $Gr_{3,1}(\RRR)$ (which consists of 4-dimensional Lorentz subspaces) $f:M\rightarrow Gr_{3,1}(\RRR)$ is the so-called \emph{conformal Gauss map} \cite{Bryant1984, Ejiri}.

Given a local coordinate $z$, there is a canonical
lift into $\mathcal L^+$, determined by $\abs{\dd Y}^2=\abs{\dd z}^2$.  Assume $Y$ is a canonical lift. Then a canonical frame of $V\otimes\C$ is given as
\begin{equation}
\label{frame}
\{Y,Y_z,Y_{\zb},N\},
\end{equation}
where we choose the unique $N\in V$ with
$\< N,N\>=0,~\< N,Y\>=-1,~\< N,Y_z\> =0.$
These frame vectors are orthogonal to each other except that
$\< Y_{z},Y_{\zb} \> = \frac{1}{2},~\< Y,N \> = -1.$ Let $\xi\in\Gamma(V^{\bot})$ be an arbitrary section of
the normal bundle $V^{\bot}$; $D$ is the normal connection.
The structure equations are as follows:
\begin{equation}
\label{mov-eq}
\left\{
\begin{array}{llll}
Y_{zz} &=& -\frac{s}{2} Y + \kappa, \\
Y_{z\zb} &=&
-\<\kappa,\bar{\kappa}\> Y + \frac{1}{2}N, \\
N_{z} &=& -2 \<\kappa,\bar{\kappa}\> Y_z
- s Y_{\zb}+ 2D_{\zb}\kappa, \\
\xi_z &=& D_z\xi + 2\<\xi,D_{\zb}\kappa\> Y
- 2\<\xi,\kappa\> Y_{\zb}.
\end{array}
\right.
\end{equation}
The first equation above defines
two basic M{\"o}bius variants: the \emph{Schwarzian} $s$,  and the section $\kappa\in \Gamma(V^{\bot}\otimes\C)$ which may be identified with the normal-valued Hopf differential up to scaling.
Later we will need the fact that $\kappa$ vanishes exactly at the umbilic points, and that $\kappa\frac{\dd z^2}{\abs{\dd z}}$
is a globally defined differential form. See \cite{BPP} for more details.

The conformal Gauss, Codazzi and Ricci equations
as integrability conditions are
\begin{eqnarray}
& \frac{1}{2}s_{\zb} = 3\< D_z\bar{\kappa},\kappa\>
+ \<\bar{\kappa},D_z\kappa\>, \label{gauss}\\[.1cm]
& \Im ( D_{\zb}D_{\zb}\kappa
+ \frac{\bar{s}}{2}\kappa ) = 0, \label{codazzi}\\[.1cm]
& R_{\zb z}^D\xi
:= D_{\zb}D_z\xi - D_z D_{\zb}\xi
= 2\<\xi,\kappa\>\bar{\kappa}
- 2\<\xi,\bar{\kappa}\>\kappa. \label{ricci}
\end{eqnarray}

\subsection{Review of Willmore surfaces}
There is a well-defined metric over $M$ invariant under M\"obius transformations, which is also conformal to the original metric induced from $y:M\to S^n$, called the \emph{M\"obius metric} \cite{Bryant1984, BPP, WangCP}:
\[
e^{2\omega}\abs{\dd z}^2=4\<\kappa,\bar\kappa\>\abs{\dd z}^2.
\]
It is well known that this metric coincides with the induced metric of the conformal Gauss map. The area of $M$ with respect to the M\"obius metric
\[
W(y):=2i\cdot\int_M \<\kappa,\bar\kappa\> \dd z\wedge\dd\zb
\]
is exactly the famous \emph{Willmore functional}.
It coincides with the usual definition
\[\widetilde{W}(x):=\int_M (|\vec{H}|^2-K)\dd M\]
for an immersed surface $x$ in $\R^n$ with mean curvature $\vec{H}$ and Gauss curvature $K$. The surface $y$ is called a \emph{Willmore surface} if it is the critical surface with respect to $W(y)$. Willmore surfaces are characterized as follows:
\begin{theorem}\label{th-Willmore}\cite{blaschke,Bryant1984,Ejiri} $y$ is Willmore if and only if one of the following two conditions holds
\begin{enumerate}
\item $y$ satisfies the \emph{Willmore equation} \cite{BPP}:
\begin{equation}
\label{willmore}
D_{\zb}D_{\zb}\kappa+\frac{\bar{s}}{2}\kappa = 0.
\end{equation}
\item The conformal Gauss map $f=2i(Y\wedge Y_z\wedge Y_{\bar z}\wedge N)$ of $y$ is harmonic.
\end{enumerate}
\end{theorem}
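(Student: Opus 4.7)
The plan is to establish both characterizations from the moving-frame structure equations \eqref{mov-eq}: first deriving (1) as the Euler-Lagrange equation of the Willmore functional $W$, and then proving (1)$\Leftrightarrow$(2) via the tension field of the conformal Gauss map $f$.

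For the first variation of $W$, I would take a smooth variation $Y_t=Y+t\phi+O(t^2)$ with $\phi\in\Gamma(V^\perp)$; variations along $\{Y,Y_z,Y_\zb,N\}$ correspond to reparametrizations of $M$ or pointwise rescalings of the canonical lift, both of which leave $W$ invariant. Differentiating $Y_{zz}=-\tfrac{s}{2}Y+\kappa$ and extracting the $V^\perp$-component using the last line of \eqref{mov-eq} yields $\delta\kappa=\dz\dz\phi+(\text{terms of lower order in }\phi)$, together with an analogous variation for $s$. Substituting into $\delta W=2i\int_M(\<\delta\kappa,\bar\kappa\>+\text{c.c.})\,\dd z\wedge\dd\zb$ and integrating by parts twice, the boundary terms drop out on the closed surface $M$ because $\kappa\,\dd z^2/|\dd z|$ is a globally defined differential. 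After collecting, $\delta W$ reduces to $4i\int_M\<\phi,\db\db\kappa+\tfrac{\bar s}{2}\kappa\>\,\dd z\wedge\dd\zb+\text{c.c.}$, whose vanishing for arbitrary $\phi$ is exactly \eqref{willmore}.

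For the equivalence (1)$\Leftrightarrow$(2), I would compute the tension field of $f=2i\,Y\wedge Y_z\wedge Y_\zb\wedge N$, viewed as the Pl\"ucker image of the conformal Gauss map in $Gr_{3,1}(\RRR)\hookrightarrow\Lambda^4\RRR$. A map into a Grassmannian is harmonic precisely when the $\mathrm{Hom}(V,V^\perp)$-component of its Laplacian vanishes; in the Pl\"ucker picture this means the ``single-replacement'' tangential part of $f_{z\zb}$ vanishes modulo $f$ itself. Using \eqref{mov-eq}, wedge-product redundancies collapse $f_z$ to $2i(Y\wedge\kappa\wedge Y_\zb\wedge N+2\,Y\wedge Y_z\wedge Y_\zb\wedge\db\kappa)$. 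Differentiating once more in $\zb$ and applying \eqref{mov-eq} together with the Codazzi identity \eqref{codazzi} to symmetrize mixed derivatives of $\kappa$, all single-replacement contributions collapse into a single term proportional to $Y\wedge Y_z\wedge Y_\zb\wedge(\db\db\kappa+\tfrac{\bar s}{2}\kappa)$ plus its complex conjugate. Its vanishing is exactly \eqref{willmore}.

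The main technical obstacle is the bookkeeping in the tension-field computation. A priori, $f_{z\zb}$ produces many terms distributed across different strata of $\Lambda^4\RRR$, and one must systematically partition these into (i) multiples of $f$ itself, (ii) ``double-replacement'' terms lying in the normal space to the Grassmannian at $f$, and (iii) the genuine single-replacement obstruction in $\mathrm{Hom}(V,V^\perp)$. The Codazzi equation \eqref{codazzi} provides the key symmetrization between $\db\dz\kappa$ and $\dz\db\kappa$ needed to reduce this obstruction to the Willmore expression, while the Ricci equation \eqref{ricci} is required to treat cross-terms arising when $\partial_\zb$ acts on the normal-bundle contributions inherited from $N_z$ in the structure equations.
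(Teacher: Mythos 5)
The paper does not actually prove this theorem: it is quoted from \cite{blaschke,Bryant1984,Ejiri}, so there is no internal proof to compare your proposal against. Your two-part strategy---deriving \eqref{willmore} as the Euler--Lagrange equation of $W$, and then identifying the single-replacement (tangential) part of $f_{z\zb}$ in the Pl\"ucker picture---is exactly the classical route of those references, and the tension-field computation does close as you describe: using only \eqref{mov-eq} and its conjugates, $f_{z\zb}$ splits into a multiple of $f$, double-replacement terms normal to the Grassmannian, and the single tangential term $4i\,Y\wedge Y_z\wedge Y_\zb\wedge\bigl(D_\zb D_\zb\kappa+\tfrac{\bar s}{2}\kappa\bigr)$, whose vanishing is precisely \eqref{willmore}.

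Two imprecisions are worth repairing. First, in the variational step, a variation in the $N$-direction is not a reparametrization or a rescaling of the lift---it is simply inadmissible, since $\<N,Y\>=-1$ means it moves $Y$ off the light cone; admissible variations of the lift are tangent to $\L$, i.e.\ lie in $Y^\perp=\mathrm{span}\{Y,Y_z,Y_\zb\}\oplus V^\perp$, and it is the components along $Y,Y_z,Y_\zb$ that account for rescaling and reparametrization, leaving $\phi\in\Gamma(V^\perp)$ as the essential variations. Second, the roles you assign to \eqref{codazzi} and \eqref{ricci} are off: the first variation with a real normal field $\phi$ only forces $\Re\bigl(D_\zb D_\zb\kappa+\tfrac{\bar s}{2}\kappa\bigr)=0$, and it is the Codazzi identity \eqref{codazzi}, which holds identically, that kills the imaginary part and yields the full complex equation \eqref{willmore}; likewise in the tension-field computation Codazzi is what makes the tangential component of $f_{z\zb}$ real, whereas no symmetrization of $D_\zb D_z\kappa$ against $D_z D_\zb\kappa$ (that commutator is governed by the Ricci equation) is actually needed---the bookkeeping goes through with the structure equations alone. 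With these points corrected, your outline is a sound proof of both equivalences.
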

Since Willmore surfaces satisfy an elliptic equation, Morrey's result (see \cite{Bryant1984} and Lemma~1.4 of \cite{Ejiri}) guarantees that the related geometric quantities are real analytic.

By the Willmore equation, we have the following Calabi-Hopf type normal bundle valued global holomorphic differential, which is already known by Ejiri \cite{Ejiri}.
\begin{proposition} \cite{Ejiri} For a Willmore surface $y$, the differential
\begin{equation}\label{eq-chi}
\chi_{0}\dd z^3:=D_{\zb}\kappa\wedge\kappa \dd z^3
\end{equation}
is a global holomorphic differential.
\end{proposition}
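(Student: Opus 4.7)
My plan is to establish two claims separately: first, that $\chi_0\,\dd z^3$ is invariant under holomorphic coordinate changes and hence globally defined on $M$; and second, that it is holomorphic with respect to the $\bar\partial$-operator induced by the normal connection $D$ on $\Lambda^2(V^{\bot}\otimes\C)$.

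For coordinate invariance, I would start from the fact, recalled just after \eqref{mov-eq}, that $\kappa\,\dd z^2/|\dd z|$ is globally defined. Since $V^{\bot}$ depends only on the mean curvature $2$-sphere and is unaffected by the choice of canonical lift, this forces the transformation law $\tilde\kappa = \frac{|w'|}{(w')^2}\,\kappa$ under a holomorphic coordinate change $w=w(z)$. Differentiating in $\bar z$ and converting to $\bar w$ then produces
\begin{equation*}
D_{\bar w}\tilde\kappa \;=\; \frac{|w'|}{(w')^2\,\bar w'}\,D_{\zb}\kappa \;+\; \frac{\overline{w''}}{2\,|w'|\,w'\,\bar w'}\,\kappa,
\end{equation*}
the second term appearing because $|w'|$ is not holomorphic. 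Wedging with $\tilde\kappa$ kills the $\kappa$-correction via $\kappa\wedge\kappa=0$, and the surviving prefactor combines with $\dd w^3 = (w')^3\,\dd z^3$ to reproduce $D_{\zb}\kappa\wedge\kappa\,\dd z^3$ identically.

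For holomorphicity I would simply apply the Leibniz rule for the connection induced by $D$ on $\Lambda^2(V^{\bot}\otimes\C)$:
\begin{equation*}
D_{\zb}\bigl(D_{\zb}\kappa\wedge\kappa\bigr) \;=\; (D_{\zb}D_{\zb}\kappa)\wedge\kappa \;+\; D_{\zb}\kappa\wedge D_{\zb}\kappa.
\end{equation*}
The second term vanishes by antisymmetry of $\wedge$. The Willmore equation \eqref{willmore} rewrites $D_{\zb}D_{\zb}\kappa = -\tfrac{\bar s}{2}\kappa$, so the first term equals $-\tfrac{\bar s}{2}\,\kappa\wedge\kappa = 0$ as well. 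Hence $\chi_0\,\dd z^3$ is $\bar\partial$-closed, which together with the previous step establishes the proposition.

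The only genuine obstacle is the coordinate-invariance step, where the non-holomorphic factor $|w'|$ in the transformation law for $\kappa$ introduces an anomalous $\kappa$-valued term under $D_{\zb}$; the key observation that makes the proof work is that this term is parallel to $\kappa$ itself and is therefore annihilated by the wedge, so the apparent non-tensoriality disappears after one takes the exterior product.
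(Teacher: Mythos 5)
Your argument is correct: the transformation law $\tilde\kappa=\frac{|w'|}{(w')^2}\kappa$ does follow from the invariance of $\kappa\,\dd z^2/|\dd z|$, the anomalous term $\frac{\overline{w''}}{2|w'|w'\overline{w'}}\kappa$ is indeed killed by wedging with $\tilde\kappa$, and $D_{\zb}\bigl(D_{\zb}\kappa\wedge\kappa\bigr)=0$ follows from \eqref{willmore} and antisymmetry, which is exactly the holomorphicity of $\chi_0$ with respect to the Koszul--Malgrange structure induced by $D$ on $\wedge^2(V^{\bot}\otimes\C)$. The paper itself gives no proof of this proposition (it is cited to Ejiri), but your two-step argument is the standard one and coincides with how the paper treats the analogous quantities later (e.g.\ the invariance and holomorphicity of $\mathcal{X}_0\,\dd z^{2k+3}$ in the proof of Theorem \ref{thm-line}, where invariance of $\pz\,\dd z^{k+2}/|\dd z|$ plus the generalized Willmore equation play the same roles), so there is nothing to correct.
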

A special type of Willmore surfaces are those with $\chi_{0}\equiv0$, that is, S-Willmore surfaces  \cite{Ejiri, ma0}.
\subsubsection{Dual surfaces and S-Willmore surfaces}

In the codim-1 case, Bryant \cite{Bryant1984} noticed that when $y:M\to S^3$ is Willmore, there is always a dual conformal Willmore surface $\hat{y}=[\widehat{Y}]:M\to S^3$ enveloping the same mean curvature spheres when $\hat{Y}$ is immersed. Note that in this case, except the umbilic points,
$D_{\zb}\kappa$ depends linearly on $\kappa$, thus locally there is a function $\mu$ such that
\begin{equation}
\label{swillmore}
D_{\zb}\kappa +\frac{\bar\mu}{2}\kappa =0.
\end{equation}
When the codimension increases, Ejiri first noticed that the duality theorem does not hold for all Willmore surfaces and \eqref{swillmore} does not hold for all Willmore surfaces. Moreover, he proved that a Willmore surface admits a dual surface if and only if \eqref{swillmore} holds for some $\mu$. We will call such surfaces \emph{S-Willmore surfaces}\footnote{
In this definition, S-Willmore surfaces include all codimension one Willmore surfaces, which are not included in Ejiri's original definition \cite{Ejiri}. Here we slightly revise the definition.}.
Examples of S-Willmore surfaces include Willmore surfaces in $S^3$, superconformal surfaces in $S^4$,
and minimal surfaces in space forms $\R^n, S^n, H^n$.

The dual surface $\hat{y}=[\widehat{Y}]:M\to S^n$ can be written explicitly as \cite{ma1,ma2}
\begin{equation}
\label{yhat}
\widehat{Y}=\frac{1}{2}\abs{\mu}^2 Y
+\bar\mu Y_z +\mu Y_{\zb} +N
\end{equation}
with respect to the frame $\{Y,Y_z,Y_{\zb},N\}$ depending on some undetermined factors $\mu$.
Calculation using \eqref{mov-eq}, \eqref{willmore} and
\eqref{swillmore} yields
\begin{equation}
\label{yhat-z}
\widehat{Y}_z=\frac{\mu}{2}\widehat{Y}
   + \rho\left(Y_z + \frac{\mu}{2}Y\right),~~
   \text{where}~\rho:=\bar\mu_z-2\<\kappa,\bar\kappa\>.
\end{equation}
Here $\rho\abs{\dd z}^2$ is an invariant
associated with $\{y,\hat{y}\}$ \cite{ma2} independent of the choice of complex coordinate $z$.
It follows that
\begin{equation}
\label{yhat-metric}
\<\widehat{Y}_z,\widehat{Y}_z\>=0, \quad
\<\widehat{Y}_z,\widehat{Y}_{\zb}\>
=\frac{1}{2}\abs{\rho}^2.
\end{equation}
It is direct to verify that $\hat{y}$ shares the same mean curvature sphere as $y$ at the points where $\hat{y}$ is immersed. By Theorem \ref{th-Willmore}, $\hat{y}$ is also a conformal Willmore immersion into $S^n$ when $\rho\ne 0$.

\begin{remark}\label{rem-minimal}
When $\rho\equiv 0$, by \eqref{yhat-z}, $[\widehat{Y}]$ is a fixed point in $S^n$.
The stereographic projection maps with respect to $[\widehat{Y}]$ maps the $2$-spheres containing $[\widehat{Y}]$ to two-planes in $\R^n$. Since $[\widehat{Y}]$ is contained in the mean curvature $2$-sphere of $y$, this stereographic projection maps $y$ to a surface in $\R^n$ with mean curvature vector $\vec{H}\equiv0$, that is, $y$ is M\"obius equivalent to some minimal surface in $\R^n$. Moreover, the ends of the minimal surface must be planar ends. We refer to \cite{Bryant1984} for more details and \cite{Bryant1988,kusner,PX} for the construction of such surfaces.
\end{remark}

\subsubsection{The adjoint transform}\label{sec-adj}

 A Willmore surface in $S^n$, $n\geq4$, does not necessarily have a dual surface. To generalize the notion of dual Willmore surfaces, in \cite{ma0,ma2} the first named author introduced the so-called \emph{adjoint transforms} for arbitrary Willmore surfaces. Given a Willmore surface $y:M\to S^n$, its adjoint transform is characterized as a conformal (branched) immersion $\hat{y}:M\to S^n$ such that the corresponding point $\hat{y}(z)$ is contained in the same mean curvature sphere of $y$ at $z$, at the same time $\hat{y}$ \emph{co-touches} this sphere \cite{ma0, ma2}.

Since the corresponding point $\hat{y}=[\hat{Y}]$ is still contained in the same mean curvature sphere as $y$, we can express $\hat{Y}$ in the same way as \eqref{yhat}, depending on a local function $\mu$:
\begin{equation}
\label{yhat2}
\hat{Y}=\frac{1}{2}\abs{\mu}^2 Y
+\bar\mu Y_z +\mu Y_{\zb} +N,
\end{equation}
And now we have
\begin{equation}
\label{yhat-z2}
\hat{Y}_z=\frac{\mu}{2}\hat{Y}
   + \rho\left(Y_z + \frac{\mu}{2}Y\right)+\theta\left(Y_{\bar{z}} + \frac{\bar\mu}{2}Y\right)+2\eta,
\end{equation}
where
\begin{equation}\label{eq-eta}
\rho:=\bar\mu_z-2\<\kappa,\bar\kappa\>,~~
\eta:=D_{\zb}\kappa +\frac{\bar\mu}{2}\kappa, ~~\theta: =\mu_z-\frac{1}{2}\mu^2-s.
\end{equation}

\begin{definition}\label{def-adj}\cite{ma2}
    We call $\hat{y}=[\hat{Y}]$ in \eqref{yhat2} an adjoint tranform/surface of $y=[Y]$, if $\mu$ satisfies
     the \emph{co-touch} condition and the conformal condition:
\begin{align}
\theta&=\mu_z-\frac{1}{2}\mu^2-s=0, \label{eq-theta}&~~~~\hbox{ co-touch condition}\\
 \<\eta,\eta\>&=\frac{\bar\mu^2}{4}\<\kappa, \kappa\>
+\bar\mu \<\kappa,D_{\bar{z}}\kappa\>
 +\<D_{\bar{z}}\kappa,D_{\bar{z}}\kappa\>=0. \label{eq-theta-2}&~~~~\hbox{ conformal condition}
\end{align}
\end{definition}

A basic fact about the adjoint transform is the following theorem.

\begin{theorem}
\label{thm-adjoint-dual}~\cite{ma2}~~The adjoint transform $[\hat{Y}]:M\to S^n$ is a Willmore surface when it is immersed. Moreover, the original Willmore surface $[Y]$ is an adjoint transform of $[\hat{Y}]$.
\end{theorem}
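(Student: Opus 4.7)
The plan is to prove both statements by direct calculation in the canonical M\"obius frame of $y$. First I would verify that $\hat y = [\hat Y]$ is a conformal (branched) immersion. Under the co-touch condition $\theta=0$, equation \eqref{yhat-z2} reduces to
\[\hat Y_z = \tfrac{\mu}{2}\hat Y + \rho\bigl(Y_z + \tfrac{\mu}{2}Y\bigr) + 2\eta,\]
a decomposition whose three summands sit in controlled directions relative to the frame. Exploiting $\langle \hat Y,\hat Y\rangle=0$, $\langle \hat Y,Y\rangle=-1$, $\langle \hat Y,Y_z\rangle=\mu/2$, and $\hat Y\perp V^\perp$, a short calculation yields $\langle \hat Y_z, \hat Y_z\rangle = 4\langle\eta,\eta\rangle$, which vanishes by the conformal condition \eqref{eq-theta-2}. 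An analogous computation gives $\langle \hat Y_z, \hat Y_{\bar z}\rangle = \tfrac{|\rho|^2}{2}+4\langle\eta,\bar\eta\rangle \ge 0$, so $\hat y$ is immersed precisely where $\rho$ or $\eta$ is nonzero.

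At a regular point I would next pass to the canonical lift $\tilde Y = \lambda\hat Y$ of $\hat y$, with $\lambda>0$ chosen so that $\langle \tilde Y_z,\tilde Y_{\bar z}\rangle = \tfrac{1}{2}$, and determine the corresponding canonical frame $\{\tilde Y,\tilde Y_z,\tilde Y_{\bar z},\tilde N\}$ and mean curvature $2$-sphere $\hat V$. The Hopf-type differential $\tilde \kappa$ and Schwarzian $\tilde s$ of $\hat y$ emerge, respectively, from the $\hat V^\perp$ and $\tilde Y$ components of $\tilde Y_{zz}$. To verify the Willmore equation \eqref{willmore} for $\tilde \kappa$, I would substitute these expressions and use: (i) the Willmore equation for $\kappa$; (ii) the Codazzi \eqref{codazzi} and Ricci \eqref{ricci} equations for $y$; and (iii) identities obtained by differentiating the two defining conditions. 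In particular, differentiating $\theta=0$ yields $\mu_{z\bar z}=\mu\mu_{\bar z}+s_{\bar z}$, which together with the Gauss equation \eqref{gauss} controls $\tilde s$, while differentiating $\langle\eta,\eta\rangle=0$ controls the $V^\perp$-components of $\tilde\kappa$.

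For the duality claim, the key observation is the symmetry of the two adjoint conditions in the pair $\{y,\hat y\}$. I would first verify that $Y\in\hat V$; this is nontrivial because $\hat V$ differs from $V$ by a tilt governed by $\eta$, and the check uses the $V^\perp$-part of $\hat Y_{z\bar z}$, which supplies the missing fourth direction of $\hat V$. Once $Y\in\hat V$ is established, expanding $Y$ in the canonical frame of $\hat y$ as $Y = \tfrac{1}{2}|\hat\mu|^2\tilde Y + \bar{\hat\mu}\,\tilde Y_z + \hat\mu\, \tilde Y_{\bar z} + \tilde N$ determines a unique reciprocal parameter $\hat\mu$. The corresponding co-touch and conformal conditions for $\hat\mu$ follow from the original conditions for $\mu$ by a direct reinterpretation in the new frame.

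The principal obstacle is the bookkeeping in the Willmore verification, since $\hat V^\perp$ mixes $V^\perp$ with the $\eta$-direction inside $V$: the connection $D^{\hat V^\perp}_{\bar z}$ and the new Hopf differential $\tilde\kappa$ must be tracked carefully through the decomposition of $\RRR$ adapted to $\hat V$. The essential cancellations in the Willmore equation for $\tilde\kappa$ ultimately emerge from the Willmore equation for $\kappa$ combined with the derivatives of the two adjoint conditions; the verification $Y\in\hat V$ in the duality step is the other delicate point, relying on a nontrivial identification from $\hat Y_{z\bar z}$ modulo the lower-order frame vectors of $\hat y$.
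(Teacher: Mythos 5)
First, a point of comparison: this paper does not prove Theorem \ref{thm-adjoint-dual} at all --- it is imported from \cite{ma2} --- so there is no in-paper argument to measure you against; your plan is essentially the direct frame computation carried out in that reference. Your preliminary steps are correct: with $\theta=0$ one does get $\langle \hat Y_z,\hat Y_z\rangle=4\langle\eta,\eta\rangle$ and $\langle \hat Y_z,\hat Y_{\bar z}\rangle=\tfrac12|\rho|^2+4\langle\eta,\bar\eta\rangle$, so the conformal condition \eqref{eq-theta-2} gives conformality of $\hat y$, and the identity $\mu_{z\bar z}=\mu\mu_{\bar z}+s_{\bar z}$ from differentiating \eqref{eq-theta} is the right auxiliary relation.

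However, as written the proposal is a roadmap rather than a proof: the two statements that carry all the content --- the Willmore equation \eqref{willmore} for the Hopf differential $\tilde\kappa$ of $\hat y$, and the verification that $Y$ lies in the mean curvature sphere $\hat V$ of $\hat y$ --- are only announced ("substitute and use (i)--(iii)", "the check uses the $\hat V^\perp$-part of $\hat Y_{z\bar z}$") and never executed, and these are precisely the computations where \eqref{willmore}, \eqref{ricci} and the derivatives of \eqref{eq-theta}--\eqref{eq-theta-2} must interact nontrivially. More seriously, the final sentence of your duality step is too quick. Once $Y\in\hat V$ is known and $Y$ is expanded in the canonical frame of $\hat y$ with parameter $\hat\mu$, what conformality of $Y$ gives you is only the combined relation $\hat\rho\,\hat\theta+4\langle\hat\eta,\hat\eta\rangle=0$ (this is exactly what $\langle Y_z,Y_z\rangle=0$ yields when you expand via the analogue of \eqref{yhat-z2}), whereas Definition \ref{def-adj} requires the two conditions $\hat\theta=0$ and $\langle\hat\eta,\hat\eta\rangle=0$ \emph{separately}. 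Splitting them is not a ``direct reinterpretation in the new frame''; it requires computing the Schwarzian and Hopf differential of $\hat y$ and invoking the Riccati equation $\mu_z=\tfrac12\mu^2+s$ together with the Willmore equation of $y$, which is where the bulk of the work in \cite{ma2} lies. So the strategy is sound and matches the cited source, but the proposal as it stands has genuine gaps at exactly the decisive steps.
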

From this we have the following result.
\begin{proposition}
\label{prop-full}~ Let $[Y]$ be a Willmore surface full in $S^n$, that is, it is not contained in any $S^{n-1}\subset S^n$. Let $[\hat Y]$ be an adjoint transform of $[Y]$. Then $[\hat Y]$ is either a constant map or full in $S^n$ on an open dense subset.
\end{proposition}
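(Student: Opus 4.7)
The plan is a direct contradiction argument exploiting the symmetry $[\hat Y]\leftrightarrow [Y]$ supplied by Theorem~\ref{thm-adjoint-dual}. Assume $[\hat Y]$ is non-constant. Since $\hat Y$ is by construction a branched conformal immersion, it is immersed on an open dense subset $U\subset M$, and by Theorem~\ref{thm-adjoint-dual} it is a genuine Willmore surface on $U$, hence real analytic there by the regularity statement following Theorem~\ref{th-Willmore}. Suppose for contradiction that $[\hat Y](U)$ fails to be full in $S^n$; then it lies in some $(n-1)$-sphere $\sigma\subset S^n$. In the lightcone model, $\sigma$ corresponds to a space-like vector $v\in\R^{n+2}_1$ with $[X]\in\sigma \Leftrightarrow \<X,v\>=0$, so the relation $\<\hat Y,v\>\equiv 0$ holds identically on $U$.

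Next I would differentiate this identity. Since $v$ is constant, every partial derivative of $\hat Y$ in $z$ and $\bar z$ also lies in $v^\perp$. Consequently the mean curvature $2$-sphere of $[\hat Y]$, represented by the Lorentz $4$-subspace
\[
\hat V = \Spr\{\hat Y,\ \Re(\hat Y_z),\ \Im(\hat Y_z),\ \hat Y_{z\bar z}\},
\]
is contained in $v^\perp$ at each point of $U$.

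Now I would invoke Theorem~\ref{thm-adjoint-dual} a second time: $[Y]$ is itself an adjoint transform of $[\hat Y]$, and by the defining property of an adjoint transform (recalled just before Definition~\ref{def-adj}), $[Y]$ lies on the mean curvature $2$-sphere of $[\hat Y]$ at every point where the latter is immersed. In lightcone terms this means $Y\in \hat V\subset v^\perp$, i.e.\ $\<Y,v\>=0$ on $U$. Since $[Y]$ is a Willmore immersion on the connected Riemann surface $M$, it is real analytic on all of $M$, and the identity $\<Y,v\>=0$ therefore propagates from $U$ to $M$. Hence $[Y](M)\subset \sigma\subset S^{n-1}$, contradicting the fullness of $[Y]$.

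The only delicate point I foresee is the regularity of $\hat Y$: branch points of the adjoint transform could in principle prevent the mean curvature sphere $\hat V$ from being defined everywhere. This is, however, harmless here: since $\hat Y$ is a branched conformal immersion its regular locus is open and dense, the contradiction only needs to be produced at one interior point of this locus, and real analyticity of $[Y]$ then provides the global extension to $M$.
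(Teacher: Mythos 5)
Your proof is correct and takes exactly the route the paper intends: Proposition~\ref{prop-full} is presented as an immediate consequence of Theorem~\ref{thm-adjoint-dual}, and your argument --- a non-constant, non-full $[\hat Y]$ lies in a hypersphere $\<\cdot\,,v\>=0$ with $v$ spacelike, so its mean curvature spheres lie in $v^{\perp}$, the duality places $Y$ on those spheres, and real analyticity of the Willmore immersion $[Y]$ propagates $\<Y,v\>\equiv 0$ from the open set $U$ to all of $M$, contradicting fullness --- is precisely that consequence written out. The only cosmetic remark (which you essentially flag yourself) is that the analytic continuation needs $\<Y,v\>=0$ only on some nonempty open subset of the immersed locus of $\hat Y$, so density of $U$ plays no real role.
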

We also have the following well-known characterization of minimal surfaces in $\R^n$ (see e.g. \cite{Bryant1984,Ejiri,Helein,ma0}).
\begin{proposition}
\label{prop-minimal}~ Let $Y$ be a Willmore surface in $S^n$ with some adjoint transform $[\hat Y]$ being a constant map. Then $[Y]$ is M\"obius congruent to a minimal surface in $\R^n$ and hence S-Wilmore.
\end{proposition}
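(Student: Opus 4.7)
The plan is a direct computation using the explicit formula \eqref{yhat-z2} for the derivative of the adjoint transform. The goal is first to deduce that the hypothesis ``$[\hat{Y}]$ constant'' forces both the S-Willmore invariant $\eta$ and the ``residue'' $\rho$ to vanish; then to invoke Remark \ref{rem-minimal} to conclude the minimality assertion.

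First, I would choose a pointwise lift by writing $\hat{Y}(z)=f(z)\,v$ for some scalar function $f$ and a fixed vector $v\in\RRR$, which is legitimate since $[\hat{Y}]$ is a constant projective point. This gives the trivial relation $\hat{Y}_{z}=\tfrac{f_{z}}{f}\hat{Y}$. On the other hand, since $\hat{Y}$ is an adjoint transform, \eqref{eq-theta} provides $\theta=0$, and so formula \eqref{yhat-z2} reduces to
\[
\hat{Y}_{z}=\tfrac{\mu}{2}\hat{Y}+\rho\bigl(Y_{z}+\tfrac{\mu}{2}Y\bigr)+2\eta.
\]
Equating the two expressions for $\hat{Y}_{z}$ yields
\[
\Bigl(\tfrac{f_{z}}{f}-\tfrac{\mu}{2}\Bigr)\hat{Y}=\rho\bigl(Y_{z}+\tfrac{\mu}{2}Y\bigr)+2\eta.
\]

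The key step is to decompose this identity with respect to the orthogonal splitting $\RRR\otimes\C=(V\otimes\C)\oplus(V^{\perp}\otimes\C)$. The normal piece gives immediately $\eta=0$, which by the definition \eqref{eq-eta} of $\eta$ is exactly the S-Willmore equation \eqref{swillmore}, so $[Y]$ is S-Willmore with parameter function $\mu$. For the tangential piece, I would expand $\hat{Y}$ in the canonical frame $\{Y,Y_{z},Y_{\bar z},N\}$ of $V\otimes\C$ using \eqref{yhat2}; the coefficient of $N$ on the left is $\tfrac{f_{z}}{f}-\tfrac{\mu}{2}$, while the right-hand side lives in $\Spc\{Y,Y_{z}\}$ and contributes no $N$-component. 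Since $\{Y,Y_{z},Y_{\bar z},N\}$ are linearly independent, this forces $\tfrac{f_{z}}{f}=\tfrac{\mu}{2}$, and then the $Y_{z}$-component comparison yields $\rho=0$.

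Having established that $[Y]$ is S-Willmore with $\rho\equiv 0$, the constant map $[\hat Y]$ coincides with its dual surface in the sense of \eqref{yhat}. I would then directly quote Remark \ref{rem-minimal}: the stereographic projection of $S^{n}$ centered at the fixed point $[\hat{Y}]$ sends $[Y]$ into $\R^{n}$, and since $[\hat{Y}]$ lies on every mean curvature sphere of $[Y]$, the image has vanishing mean curvature vector, i.e. is a minimal surface in $\R^{n}$. This completes the proof.

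There is no substantial analytic obstacle here; the argument is essentially a linear-algebra matching in the canonical frame. The only subtle point to state carefully is the freedom in choosing the representative $\hat{Y}$: one must distinguish between the specific normalization \eqref{yhat2} used to define the adjoint transform and an arbitrary constant lift $fv$; identifying them (via the scalar $f$) is precisely what produces the equation $\tfrac{f_{z}}{f}=\tfrac{\mu}{2}$ that ultimately forces $\rho=\eta=0$.
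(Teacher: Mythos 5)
Your proof is correct: the frame computation based on \eqref{yhat2} and \eqref{yhat-z2} (with $\theta=0$) does force $\eta\equiv 0$ and $\rho\equiv 0$ — the normal component kills $\eta$, the $N$-component fixes $f_z/f=\mu/2$, and then the $Y_z$-component kills $\rho$ — after which the stereographic-projection argument of Remark \ref{rem-minimal} gives the minimal surface in $\R^n$. The paper offers no proof of this proposition beyond citing it as well known, and its intended mechanism is exactly the one you use (a constant point $[\hat Y]$ lying on every mean curvature sphere), so your argument matches the intended route; note only that the minimality conclusion already follows from that incidence, while the computation $\eta\equiv 0$ is what supplies the ``hence S-Willmore'' part.
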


Seeking solutions to the quadratic equation about $\bar\mu$ \eqref{eq-theta-2} leads us to its discriminant, which defines a $6$-form below \cite{ma0}:
\begin{equation}\label{eq-Theta2}
\Theta_0\dd z^6=\left(\<D_{\zb}\kappa,\kappa\>^2-\<D_{\zb}\kappa,
D_{\zb}\kappa\>\<\kappa,\kappa\>\right)\dd z^6.
\end{equation}
It is straightforward to verify that $\Theta_0\dd z^6$ is a global holomorphic differential. In fact, it also follows for the fact that $\chi_{0}\dd z^3=D_{\zb}\kappa\wedge\kappa \dd z^3$ is global and holomorphic (recall \eqref{eq-chi}), with
\[\Theta_0\dd z^6=\<\chi_0,\chi_0\>\dd z^6.\]
In terms of $\langle\kappa,\kappa\rangle$ and $\Theta_0$, the existence and uniqueness results of adjoint transforms can be stated as follows.
\begin{theorem}\label{thm-adjoint-no}~\cite{ma2} Let $y$ be a Willmore surface.
\begin{enumerate}
\item
  If $\langle\kappa,\kappa\rangle\equiv0$, then any solution to \eqref{eq-theta} defines an adjoint surface of $y$ via \eqref{yhat2}.
\item
 If $\langle\kappa,\kappa\rangle\not\equiv0$ and $\Theta_0\not\equiv0$, then there exists exactly two solutions to \eqref{eq-theta-2}. They provide two adjoint surfaces of $y$ via \eqref{yhat2}. Moreover, $y$ is not S-Willmore.
\item
 If $\langle\kappa,\kappa\rangle\not\equiv0$ and $\Theta_0\equiv0$, then there exists exactly one solution to \eqref{eq-theta-2}. There is a unique adjoint surface of $y$ via \eqref{yhat2}. Especially, if $y$ is S-Willmore, then the unique adjoint surface is its dual surface.
\end{enumerate}
\end{theorem}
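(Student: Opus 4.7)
The plan is to treat the conformal condition \eqref{eq-theta-2} as a quadratic polynomial in $\bar\mu$ whose coefficients are built from $\kappa$ and $D_{\bar z}\kappa$, and whose discriminant is precisely $\Theta_0$. The three cases of the theorem correspond to the three possibilities for this quadratic: identically zero, two distinct roots, and one double root. The main substance then lies in verifying that an algebraic solution of \eqref{eq-theta-2} automatically satisfies the PDE \eqref{eq-theta}.

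For Case (1), I would argue that if $\langle\kappa,\kappa\rangle\equiv 0$ then all three coefficients of the quadratic in $\bar\mu$ vanish. Differentiating $\langle\kappa,\kappa\rangle=0$ with respect to $\bar z$ gives $\langle D_{\bar z}\kappa,\kappa\rangle=0$. Differentiating this identity once more in $\bar z$ yields
\[ \langle D_{\bar z}D_{\bar z}\kappa,\kappa\rangle + \langle D_{\bar z}\kappa, D_{\bar z}\kappa\rangle = 0, \]
and inserting the Willmore equation \eqref{willmore}, namely $D_{\bar z}D_{\bar z}\kappa = -\tfrac{\bar s}{2}\kappa$, shows $\langle D_{\bar z}\kappa,D_{\bar z}\kappa\rangle = \tfrac{\bar s}{2}\langle\kappa,\kappa\rangle = 0$. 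So \eqref{eq-theta-2} holds tautologically, and any solution of the Riccati equation \eqref{eq-theta} defines an adjoint surface via \eqref{yhat2}.

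For Cases (2) and (3), viewing \eqref{eq-theta-2} as $a\bar\mu^2+b\bar\mu+c=0$ with $a=\tfrac14\langle\kappa,\kappa\rangle$, $b=\langle\kappa,D_{\bar z}\kappa\rangle$, $c=\langle D_{\bar z}\kappa,D_{\bar z}\kappa\rangle$, we have discriminant $b^2-4ac=\Theta_0$. When $\Theta_0\not\equiv 0$, real analyticity (from Morrey's theorem) gives an open dense set where both $a$ and the discriminant are non-zero, on which the quadratic formula yields two distinct local solutions $\bar\mu_\pm$; when $\Theta_0\equiv 0$, there is a single repeated root. The compatibility step is to show that each such algebraically-defined $\bar\mu$ automatically satisfies \eqref{eq-theta}. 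I would differentiate \eqref{eq-theta-2} in $z$, expand $\partial_z(D_{\bar z}\kappa)$ via the Ricci equation \eqref{ricci} and the Codazzi equation \eqref{codazzi}, and invoke the Willmore equation \eqref{willmore} together with the Gauss equation \eqref{gauss}. After simplification the resulting identity should factor into a product of a non-vanishing term (on the relevant open dense set) and $\theta = \mu_z - \tfrac12\mu^2 - s$, forcing $\theta\equiv 0$. This compatibility computation, which is the analogue of the one carried out in \cite{ma2}, is the principal technical hurdle; the delicate point is isolating the factor $\theta$ cleanly without dividing by something that could vanish on the zero locus of $\kappa$ or of the discriminant.

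For the ``moreover'' in Case (2): if $y$ were S-Willmore, then $D_{\bar z}\kappa = -\tfrac{\bar\mu_0}{2}\kappa$ for some function $\mu_0$, and substitution into $\Theta_0=\langle D_{\bar z}\kappa,\kappa\rangle^2-\langle D_{\bar z}\kappa,D_{\bar z}\kappa\rangle\langle\kappa,\kappa\rangle$ gives $\Theta_0 = \tfrac{\bar\mu_0^2}{4}\langle\kappa,\kappa\rangle^2 - \tfrac{\bar\mu_0^2}{4}\langle\kappa,\kappa\rangle\cdot\langle\kappa,\kappa\rangle = 0$, contradicting $\Theta_0\not\equiv 0$. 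For the last assertion of Case (3), when $y$ is S-Willmore the relation \eqref{swillmore} makes $\bar\mu = \mu_0$ a solution of \eqref{eq-theta-2} (the quadratic becomes a perfect square because its discriminant $\Theta_0$ vanishes), and by the uniqueness just established this must coincide with the unique adjoint, which by the formula \eqref{yhat} is exactly the dual surface $\hat y$.
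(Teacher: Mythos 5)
First, note that the paper itself does not prove Theorem \ref{thm-adjoint-no}; it is quoted from \cite{ma2}, so your proposal has to stand on its own. Your Case (1), your identification of $\Theta_0$ as the discriminant of \eqref{eq-theta-2}, and your exclusion of S-Willmore in Case (2) are all correct. The genuine gap is in the step you yourself flag as the main hurdle: showing that an algebraic root of \eqref{eq-theta-2} automatically satisfies the co-touch equation \eqref{eq-theta}. Your plan — differentiate \eqref{eq-theta-2} in $z$ and expand via the Ricci, Codazzi and Gauss equations, hoping a factor $\theta=\mu_z-\tfrac12\mu^2-s$ appears — cannot work as stated: the co-touch condition involves $\mu_z$ (equivalently, its conjugate involves $\bar\mu_{\bar z}$), whereas a $z$-derivative of \eqref{eq-theta-2} produces $\bar\mu_z$, which occurs in neither condition, so $\theta$ cannot be isolated from that computation. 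The correct and much shorter route is a $\bar z$-derivative using only the Willmore equation \eqref{willmore}: with $\eta=D_{\bar z}\kappa+\tfrac{\bar\mu}{2}\kappa$ as in \eqref{eq-eta} one has $D_{\bar z}\eta=\tfrac{\bar\mu}{2}\eta+\tfrac12\bigl(\bar\mu_{\bar z}-\tfrac{\bar\mu^2}{2}-\bar s\bigr)\kappa$, hence $0=\partial_{\bar z}\langle\eta,\eta\rangle=\bigl(\bar\mu_{\bar z}-\tfrac{\bar\mu^2}{2}-\bar s\bigr)\langle\kappa,\eta\rangle$, and at the two roots in Case (2) one has $\langle\kappa,\eta\rangle=\pm\sqrt{\Theta_0}\neq0$ on an open dense set, which forces the (conjugated) co-touch condition; no Ricci, Codazzi or Gauss equation enters.

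More seriously, even this repaired factorization collapses precisely in Case (3), which your proposal only treats in its S-Willmore subcase: when $\Theta_0\equiv0$ the unique solution is the double root, so $\langle\kappa,\eta\rangle\equiv0$ and the identity above is vacuous. Yet Case (3) with $y$ \emph{not} S-Willmore is nonempty (it is exactly the situation of the Willmore $2$-spheres studied in this paper, where $\Theta_0\dd z^6$ vanishes by holomorphicity), and for it you give no argument that the unique root of \eqref{eq-theta-2} satisfies \eqref{eq-theta}; consequently ``there is a unique adjoint surface'' is unproven, and your identification of that adjoint with the dual surface in the S-Willmore subcase presupposes the missing fact. The fix is a second, separate computation: since $\bar\mu$ is the double root, $\langle\kappa,\eta\rangle\equiv0$, and differentiating this identity in $\bar z$ gives $0=\langle D_{\bar z}\kappa,\eta\rangle+\langle\kappa,D_{\bar z}\eta\rangle=\tfrac12\bigl(\bar\mu_{\bar z}-\tfrac{\bar\mu^2}{2}-\bar s\bigr)\langle\kappa,\kappa\rangle$, which yields the co-touch condition on the open dense set where $\langle\kappa,\kappa\rangle\neq0$. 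Until the compatibility step is carried out along these lines (in $\bar z$, with the non-vanishing factor identified in each case), the core assertions of Cases (2) and (3) remain unestablished in your write-up.
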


Note that adjoint transforms of a Willmore surface might have singularities or branched points, which makes it more difficult to discuss global properties of adjoint transforms. We refer to \cite{Helein-2004,Ku-Sch,Mi-Ri,Ri} for the analysis on singularities of Willmore surfaces.

\subsection{Isotropic order of surfaces in $S^n$}\label{sec-iso}
The isotropic order of a surface can be viewed as the higher order conformality of surfaces, which is M\"obius invariant and plays an important role in the study of surfaces in spheres.
%Recall that $y$ is conformal if and only if $\<y_z,y_z\>\equiv0$. For surfaces in $S^n$, $n\geq 4$, $y_{zz}$ defines the Hopf differential, and it is called \emph{superconformal} if $y_{zz}$ itself is isotropic.

\begin{definition}\label{def-iso} Let $y:M\rightarrow S^n$ be a conformal immersion with a local complex coordinate $z$.
\begin{enumerate}
\item $y$ is called $m$-isotropic if \[\<y_{z},y_{z}\>=\cdots=\left\<y^{(m+1)}_{z},y^{(m+1)}_{z}\right\>\equiv0\] for $m\geq0$; Moreover, $y$ is called strictly $m$-isotropic if it is $m$-isotropic and not $(m+1)-$isotropic.
\item $y$ is called totally isotropic if for all $j\in \mathbb Z^+$, $y^{(j)}_{z}$ is isotropic, i.e., $\left\<y^{(j)}_{z},y^{(j)}_{z}\right\>\equiv0$.

%\item For simplicity, we call $y$ a generic Willmore surface if the Willmore surface $y$ is not totally isotropic.
\end{enumerate}
\end{definition}

\begin{remark}\label{rm-iso}
\
\begin{enumerate}

\item Totally isotropic surfaces play an important role in Calabi's famous classification of minimal $2$-spheres in $S^n$ \cite{Calabi}, where the twistor theory was first introduced to describe surfaces. To be concrete, Calabi introduced the twistor bundle of $S^{2m}$ and showed that all (branched) minimal $2$-spheres in $S^{2m}$ correspond exactly all horizontal twistor curves in the twistor bundle of $S^{2m}$.   The important idea of describing minimal surfaces in terms of auxiliary complex manifolds, as well as the introduction of twistor theory in geometry, has a deep influence in many branches of geometry \cite{BR,ES}, which also has a deep connection with the famous Grothendieck theorem \cite{Gr} for holomorphic bunldes over $2$-spheres \cite{BR}. It is not surprising that the twistor theory also plays an important role in the classification of Willmore $2$-spheres.

\item A Willmore $2$-sphere may not be totally isotropic. We need to consider the structure of the normal bundle of a Willmore $2$-sphere in more details, which leads to the notion of $m$-isotropic surfaces (see \cite{Ejiri-i,ma0}). For an $m$-isotropic surface, it is direct to see
\begin{equation}\label{eq-isotropic}
\left\<y^{(j)}_{z},y^{(l)}_{z}\right\>\equiv0\ \ \hbox{ for all $j+l\leq 2m+3, j,l\in \mathbb Z^+\cup\{0\}$}.
\end{equation}
In term of $\kappa$, we have
\begin{equation}\label{eq-isotropic2}
\left\<\dz^{(j)}\kappa,\dz^{(l)}\kappa\right\>\equiv0\ \ \hbox{ for all $j+l\leq 2m-1, j,l\in \mathbb Z^+$}.
\end{equation}
 Note that the notion of $m$-isotropic surfaces is M\"obius invariant, that is, if $y$ is $m$-isotropic and $T:S^n\rightarrow S^n$ is a M\"obius transformation, then $T\circ y$ is also $m$-isotropic.
\end{enumerate}
\end{remark}

\section{Harmonic sequence associated to a harmonic $2$-sphere}\label{sec-harm}

%In \cite{Chern-W}, transforms of harmonic maps from Riemann surfaces into complex Grassmannian are introduced. Moreover, for harmonic $2$-spheres they showed that the harmonic sequences exist globally and end in some holomorphic map, giving a nice description of all harmonic $2$-spheres. Here we want to adapt the construction of the harmonic sequence so that it can be used for the harmonic conformal Gauss map of Willmore $2$-spheres, which plays an essential role in the classification of Willmore $2$-spheres.

In this section, we will revisit the harmonic sequence introduced by Chern-Wolfson \cite{Chern-W} and then introduce the $\DD$-tranforms for special harmonic maps with Frenet-bundle structures. Since the target manifold of the conformal Gauss map $f_0$ is $Gr_{3,1}\R^{n+2}_1$, we will consider $Gr_{m,1}\R^{n+2}_1$, $Gr_{m}\R^{n+2}_1$, and $Gr_{m}(\R^{n+2}_1\otimes\C)$ in this section. Note that the proofs also hold for the cases $Gr_{m}\R^{n}$ and $Gr_{m}\C^{n}$ as well. We refer to \cite{BW,Chern-W,DPW,Uh} for more details on harmonic maps.

\subsection{Harmonic sequence in view of Chern-Wolfson}
Let $f:M\rightarrow Gr_{m,1}\R^{n+2}_1$ be a harmonic map, with $z$ a local complex coordinate on the Riemann surface $M$. For each point $p\in M$,  $f(p)$ is the $(m+1)$-subspace $f(p)\subset\R^{n+2}_1$. In this sense, we also identify $f$ with the tautological vector bundle over $M$ defined by $f(p)$.
\begin{definition}\label{def-cw}
 The map $\pf:M\rightarrow Gr_{m,1}\R^{n+2}_1$ is defined to be
\begin{equation}
    \label{eq-CW-1}
    \partial f|_p:=Span_{\C}\left.\left\{\psi_{jz}-\sum_{l=1}^{m}\< \psi_{jz}, \psi_{l}\> \psi_l+\< \psi_{jz}, \psi_{0}\> \psi_0,~ 0\leq j\leq m\right\}\right|_p
\end{equation}
on an open dense subset $M_0$ where  $\dim\partial f|_p=r$ on $M_0$ for some $r\leq m+1$. Here $\{\psi_{0},\cdots,\psi_{m}\}\subset\Gamma(f(U_p))$ is an orthonormal frame on an open neighborhood $U_p$ of $p$, with $\<\psi_0,\psi_0\>=-1$. Similarly we define  $\bar\partial f:M\rightarrow Gr_{\tilde r}\C^{n+2}$:
\begin{equation}
    \label{eq-CW-2}
    \bar\partial f|_p:=Span_{\C}\left.\left\{\psi_{j\bar{z}}-\sum_{l=1}^{m}\< \psi_{j\bar{z}},  {\psi}_{l}\>\psi_l+\< \psi_{j\bar{z}},  {\psi}_{0}\>\psi_0,~ 0\leq j\leq m\right\}\right|_p.
\end{equation}
%The map $\delta_1 f$ and $\delta_2f$ are defined by \begin{equation} f=\delta_1f\oplus\ker \bar\partial\hbox{ and } f=\delta_2f\oplus\ker \partial, \end{equation} with $\delta_1f$ hermitian orthogonal to $\ker \bar\partial$ and $\delta_2f$ hermitian orthogonal to $\ker \partial$.
 \end{definition}

\begin{remark}
  Both of $\pf$ and $\bar\partial{f}$ can be extended smoothly to $M$ by Lemma \ref{lemma-holo} as below. Note that when the target manifold is $Gr_{m+1}\C^{n+2}$, it is proven by \cite{Chern-W} and \cite{BW} independently.
\end{remark}

\begin{lemma}\label{lemma-holo} (Lemma 23 of \cite{BFLPP}, see also page 307 of \cite{Chern-W})
Let $M$ be a Riemann surface and let $\mathbb{V}\rightarrow M$, $\mathbb{W}\rightarrow M$ be two holomorphic bundles over $M$. {Assume $\mathbb W$ is an Hermitian bundle.} Let $T:\mathbb{V}\rightarrow \mathbb{W}$ be a holomorphic bundle map. Then there exists a unique holomorphic subbundle $\mathbb{W}_0$ of $\mathbb{W}$ on $M$such that
\[ Im T(\VV)= \mathbb{W}_0 \hbox{ on } M_0, ~ Im T(\VV)\varsubsetneqq \mathbb{W}_0\hbox{ on $M\setminus M_0$},\]
where  $M_0$ is  an open dense subset of $M$.
\end{lemma}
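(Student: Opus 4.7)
The plan is a local-then-global argument exploiting the fact that holomorphic sections of a holomorphic bundle over a one-dimensional base have only isolated zeros. First I would set $k := \max_{p \in M} \mathrm{rank}\, T(p)$, the generic rank, and define
\[
D := \{ p \in M : \mathrm{rank}\, T(p) < k \}.
\]
In a local trivialization of $\mathbb{V}$ and $\mathbb{W}$, the condition $\mathrm{rank}\, T(p) < k$ is cut out by the simultaneous vanishing of all $k\times k$ minors of the matrix representing $T$; these are local holomorphic functions on the Riemann surface $M$, and since $k$ is the generic rank at least one of them is not identically zero. A nonzero holomorphic function on $M$ has isolated zeros, so $D$ is a discrete subset, and $M_0 := M \setminus D$ is open and dense. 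On $M_0$ the image $\mathrm{Im}\, T$ is manifestly a rank-$k$ holomorphic subbundle of $\mathbb{W}$.

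Next I would extend this subbundle across each $p \in D$. Work on a coordinate disk $U$ around $p$ with local parameter $z$, trivializing $\mathbb{V}$ and $\mathbb{W}$. By the definition of $k$, one can pick holomorphic sections $v_{1}, \ldots, v_{k} \in \Gamma(U, \mathbb{V})$ such that
\[
\omega := T(v_{1}) \wedge \cdots \wedge T(v_{k}) \in \Gamma(U, \Lambda^k \mathbb{W})
\]
is not identically zero. Then $\omega$ vanishes at $p$ to some finite order $n \geq 0$, and $\tilde\omega := z^{-n}\omega$ is a nowhere-vanishing local holomorphic section of $\Lambda^k \mathbb{W}$. Since $\tilde\omega|_q$ is a decomposable $k$-vector at every $q \in U$ (the decomposability locus inside $\Lambda^k \mathbb{W}$ is the Zariski-closed Plücker variety, and $\tilde\omega$ is decomposable off the discrete set $D \cap U$), the Plücker correspondence produces a rank-$k$ holomorphic subbundle $\mathbb{W}_0|_U \subset \mathbb{W}|_U$ with $\Lambda^k \mathbb{W}_0|_U = \mathcal{O}_U \cdot \tilde\omega$. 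By construction $\mathbb{W}_0|_U$ agrees with $\mathrm{Im}\, T$ on $U \cap M_0$, and at the point $p$ it strictly contains $\mathrm{Im}\, T|_p$ since $\dim \mathrm{Im}\, T|_p < k$.

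To see that $\mathbb{W}_0|_U$ is independent of the choice of the $k$-tuple $v_1, \ldots, v_k$, note that any other choice $v'_1, \ldots, v'_k$ with $\omega' := T(v'_1) \wedge \cdots \wedge T(v'_k) \not\equiv 0$ differs from $\omega$ by a holomorphic scalar on $M_0 \cap U$ (both represent the same line in $\Lambda^k \mathbb{W}$ there). This ratio extends meromorphically to $U$ and, after clearing the $z^n$ factors, becomes a nonvanishing holomorphic function, so the Plücker lines agree throughout $U$. The local subbundles $\mathbb{W}_0|_U$ therefore patch into a global holomorphic subbundle $\mathbb{W}_0 \subset \mathbb{W}$, which by construction satisfies $\mathrm{Im}\, T(\mathbb{V}) = \mathbb{W}_0$ on $M_0$ and $\mathrm{Im}\, T(\mathbb{V}) \subsetneq \mathbb{W}_0$ on $D$. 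Uniqueness follows because any holomorphic subbundle of $\mathbb{W}$ coinciding with $\mathrm{Im}\, T$ on the open dense subset $M_0$ must coincide with $\mathbb{W}_0$ globally by holomorphic continuation of its Plücker line.

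The main obstacle is the well-definedness step in the third paragraph, namely verifying that the extension given by $\tilde\omega$ does not depend on the auxiliary choices; this is a standard but unavoidable Plücker computation. The Hermitian metric on $\mathbb{W}$ enters only if one prefers an orthogonal-projection formulation, and plays no essential role in the extension argument, whose true content is the isolated-zero property of holomorphic sections on a one-dimensional base.
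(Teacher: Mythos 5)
Your argument is correct and is essentially the standard divide-out-the-vanishing-order/Pl\"ucker extension argument that the paper does not reproduce but simply imports by citing Lemma 23 of \cite{BFLPP} (see also \cite{Chern-W}); your observation that the Hermitian structure on $\mathbb{W}$ is inessential for the extension itself is also accurate. One small touch-up: after choosing $v_1,\dots,v_k$, shrink the disk $U$ so that $p$ is the only zero of $\omega=T(v_1)\wedge\cdots\wedge T(v_k)$ in $U$ (its zeros are isolated but need not be confined to $p$), since only then is $z^{-n}\omega$ nowhere vanishing on all of $U$ as you assert.
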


The proof of Lemma \ref{lemma-holo} is the same as the proof of \cite[Lemma 23]{BFLPP}.

%\begin{lemma}\label{lemma-holo} (Lemma 23 of \cite{BFLPP}, see also page 307 of \cite{Chern-W})Let $M$ be a Riemann surface and let $\mathbb{V}\rightarrow M$, $\mathbb{W}\rightarrow M$ be two holomorphic bundles over $M$. {Assume $\mathbb W$ is an Hermitian bundle.} Let $T:\mathbb{V}\rightarrow \mathbb{W}$ be a holomorphic bundle map. Then there exists a unique divisor $S$ on $M$ and a unique holomorphic subbundle $\mathbb{W}_0$, such that \[Im T\subset \mathbb{W}_0~\hbox{ and } ~Ker T\subset \mathbb{V}_0.\] And $Im T= \mathbb{W}_0$ and $Ker T= \mathbb{V}_0$ on $M\backslash S$; $Im T\varsubsetneqq \mathbb{W}_0$ and $Ker T\varsubsetneqq\mathbb{V}_0$ on $S$. \end{lemma}

%\begin{theorem}\label{thm-CW}\cite{Chern-W}, \cite{BW} Each of the maps $\partial f$ and $\bar\partial f$ is a harmonic map. \end{theorem}

\begin{theorem}\label{thm-CW1}\cite{Chern-W,BW}
Let $f:M\rightarrow Gr_{m,1}\R^{n+2}_1$ be a harmonic map. Then each of the maps $\partial f$ and $\bar\partial f$ is a harmonic map globally defined on $M$ and each of the maps $\partial f$ and $\bar\partial f$ is a harmonic map.
\end{theorem}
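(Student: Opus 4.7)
The plan is to adapt the classical Chern--Wolfson argument \cite{Chern-W,BW} from complex Grassmannians to the Lorentzian setting $Gr_{m,1}\RRR$, and the proof splits naturally into three steps.

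First, I would verify that the subspace defined in \eqref{eq-CW-1} is independent of the Lorentzian orthonormal frame $\{\psi_0,\psi_1,\ldots,\psi_m\}$. The expression $-\sum_{l\geq 1}\<\psi_{jz},\psi_l\>\psi_l+\<\psi_{jz},\psi_0\>\psi_0$ is precisely the Lorentzian orthogonal projection of $\psi_{jz}$ onto $f\otimes\C$; the flipped sign on $\psi_0$ reflects $\<\psi_0,\psi_0\>=-1$. Hence $\partial f|_p$ coincides with the image of the $(1,0)$-part of the second fundamental form of $f$ at $p$, which is manifestly frame-independent. The same verification applies verbatim to \eqref{eq-CW-2}.

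Second, I would establish the global extension of $\partial f$ via Lemma~\ref{lemma-holo}. Equip $f$ and $f^\perp$ with the holomorphic structures $\bar\partial:=\pi_f\circ\partial_{\bar z}$ and $\bar\partial:=\pi_{f^\perp}\circ\partial_{\bar z}$ respectively, where $\pi_f,\pi_{f^\perp}$ denote the Lorentzian orthogonal projections. Define the bundle map
\[ A: f\otimes\C \longrightarrow f^\perp\otimes\C,\qquad A(\psi):=\pi_{f^\perp}(\psi_z). \]
Using the Cartan decomposition of $\mathfrak{o}(1,n+1)$, the harmonic map equation for $f:M\to Gr_{m,1}\RRR$ translates into $\bar\partial A=0$, i.e.\ $A$ is a holomorphic bundle map. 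Lemma~\ref{lemma-holo} then yields a unique global holomorphic subbundle whose value on the open dense set $M_0\subset M$ coincides with the image of $A|_{M_0}$, namely $\partial f|_{M_0}$. The argument with $\partial_z\leftrightarrow\partial_{\bar z}$ handles $\bar\partial f$.

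Third, to prove that $\partial f$ is itself harmonic, I would perform a direct frame computation adapted to the orthogonal splitting $\RRR\otimes\C=(f\otimes\C)\oplus\partial f\oplus W$, where $W=(f\otimes\C\oplus\partial f)^\perp$. Given $\phi=A(\psi)\in\Gamma(\partial f)$, expressing $\phi_{\bar z}$ via the Codazzi--Ricci integrability equations for $f$ together with $\bar\partial A=0$ shows that the projection of $\phi_z$ onto $W$ is itself $\bar\partial$-closed modulo $\partial f$. This is precisely the harmonic map equation for $\partial f$ into the corresponding complex Grassmannian.

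The main obstacle will be the careful bookkeeping of Lorentzian signs: the indefinite inner product introduces a time-like correction in every orthogonal projection, and the Chern--Wolfson chain of identities must be re-derived while tracking these signs. An added subtlety is that $\partial f$ may fail to be non-degenerate with respect to $\<\,,\,\>$; consequently its target is naturally a complex Grassmannian rather than a real Lorentzian one, and one invokes the complex-linear harmonic map equation there. Once these signs are handled and the appropriate target is fixed, the remaining computations are formally identical to those in \cite{Chern-W,BW}.
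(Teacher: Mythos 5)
Your proposal takes essentially the same route as the paper, whose entire proof consists of observing that $f^{\perp}\otimes\C$ carries a positive-definite Hermitian metric (because $f$ is a Lorentz subspace, so $f^{\perp}$ is spacelike), applying Lemma \ref{lemma-holo} to the holomorphic bundle map $\psi\mapsto\pi_{f^{\perp}}(\psi_z)$ to get the global extension, and then citing \cite{Chern-W,BW} for harmonicity -- exactly your steps 2 and 3, with step 3 a sketch of that classical computation. The only point to make explicit is that the Hermitian hypothesis of Lemma \ref{lemma-holo} holds precisely because $f^{\perp}$ is spacelike; the degeneracy you flag concerns the complex bilinear form restricted to $\partial f$, not the Hermitian metric on $f^{\perp}\otimes\C$ that the lemma actually requires.
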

\begin{proof}
   Since $f^{\perp}\otimes\C$ has a Hermitian metric for the map $f:M\rightarrow Gr_{m,1}\R^{n+2}_1$,  the first statement is confirmed by Lemma \ref{lemma-holo}. The harmonicity of $\partial f$ and $\bar\partial f$ follows in the same way as \cite{Chern-W,BW}.
\end{proof}

\subsection{Real harmonic sequence for  harmonic maps with a Frenet-bundle structure}\label{subsect-harmonic}
%For simplicity we recall the definition and theorems stated in introduction, for a general Frenet-bundle decomposition for some special harmonic maps.
\begin{definition}\label{def-Frenet}
 Let $f:M\rightarrow Gr_m\R^n$ or $f:M\rightarrow Gr_{m,1}\R^{n+2}_{1}$ be a harmonic map.
 \begin{enumerate}
\item We call $f$ has a Frenet-bundle structure $\{f,\ZZ,h; \tr\}$ if there exists an orthogonal decomposition of the trivial bundle
\[M\times\R^{n+2}_1=f\oplus Re(\ZZ)\oplus h\]
such that $\ZZ$ is an isotropic holomorphic subbundle and
 \begin{enumerate}
 \item $\partial f\subset \ZZ\oplus (h\otimes \C)$;
 \item For any $\psi\in\Gamma(\ZZ)$, $\psi_z\in\Gamma(\ZZ\oplus(h\otimes\C))$;
 \item For any $\psi\in\Gamma(\ZZ)$, $\psi_{\zb}\in\Gamma(\ZZ\oplus(f\otimes\C))$;
 \item $Rank_{\C}(\partial h)=\tilde{r}$.
 \end{enumerate}
 When $\tr=1$, we denote the  Frenet-bundle structure simply by $\{f,\ZZ,h\}$.
\item
Moreover, we define the map $\DD_{\ZZ} f:M\rightarrow Gr_{m+2q}\R^n$ or $\DD_{\ZZ} f:M\rightarrow Gr_{m+2q,1}\R^{n+2}_{1}$, with $q=Rank_{\C}(\ZZ)$, as follows.
\begin{equation}\label{eq-real-harmonic-def}
\DD_{\ZZ} f(z):=f\oplus Re(\ZZ).
\end{equation}
 \end{enumerate}
 \end{definition}

 \begin{theorem}\label{thm-harmonicity}  Let $f:M\rightarrow Gr_m\R^n$ or $f:M\rightarrow Gr_{m,1}\R^{n+2}_{1}$ be a  map, with a Frenet-bundle structure $\{f,\ZZ,h;\tr\}$. Then the map $ \DD_{\ZZ} f=f\oplus Re(\ZZ)$ is harmonic if and only if $f$ is harmonic. And it is conformal if and only if $f$ is conformal.
 \end{theorem}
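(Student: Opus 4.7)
The plan is to use the moving-frame characterization of harmonic maps into (pseudo-Riemannian) Grassmannians: a map $F:M\to Gr$ is harmonic iff its $(1,0)$ second fundamental form $A^F_z:F\otimes\C\to F^\perp\otimes\C$ is covariantly holomorphic, $D_{\bar z}A^F_z=0$, where $D$ is the connection on $\mathrm{Hom}(F,F^\perp)$ induced from the ambient flat connection. I would compare this equation for $f$ and $\tilde f:=\DD_\ZZ f=f\oplus\Re(\ZZ)$ block-by-block, using the Frenet-bundle decomposition to split both the tautological and the normal sub-bundles.

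First I would choose a local frame adapted to $M\times\RRR=f\oplus\Re(\ZZ)\oplus h$: an orthonormal frame $\{e_\alpha\}$ of $f$, an isotropic frame $\{\phi_a\}$ of the holomorphic subbundle $\ZZ$ with conjugate $\{\bar\phi_a\}$ spanning $\bar\ZZ$, and an orthonormal frame $\{h_I\}$ of $h$. Conditions (a)--(c) translate directly into: $(e_\alpha)_z$ has no $\bar\ZZ$-component; $(\phi_a)_z\in\ZZ\oplus h\otimes\C$; $(\phi_a)_{\bar z}\in\ZZ\oplus f\otimes\C$. Compatibility of the trivial connection with the Lorentz metric then forces $(h_I)_z$ to have no $\bar\ZZ$-component and $(h_I)_{\bar z}$ to have no $\ZZ$-component. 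Consequently $A^f_z$ splits as $A^{f,\ZZ}_z+A^{f,h}_z$ with no $\bar\ZZ$-part, while $A^{\tilde f}_z:\tilde f\otimes\C\to h\otimes\C$ vanishes on $\bar\ZZ$, equals $\pi_h\circ A^f_z$ on $f$, and on $\ZZ$ picks up the $h$-part of $(\phi_a)_z$ given by (b); analogous statements hold for $A^{\tilde f}_{\bar z}$ with $\ZZ$ and $\bar\ZZ$ interchanged.

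Next I would match the two harmonic map equations component-by-component. A direct frame computation shows: (i) the $h$-component of $(D_{\bar z}A^f_z)|_f$ coincides with $(D_{\bar z}A^{\tilde f}_z)|_f$, because the difference $D^{\tilde f}_{\bar z}e_\alpha-D^f_{\bar z}e_\alpha$ lies in $\bar\ZZ$ where $A^{\tilde f}_z$ vanishes; (ii) the $\ZZ$-component of $(D_{\bar z}A^f_z)|_f$ equals $D_{\bar z}A^{f,\ZZ}_z$, which records the holomorphicity of $\ZZ$ encoded in (c) and matches the $\ZZ$-block of $\tilde f$'s equation via the isotropic pairing $\ZZ\times\bar\ZZ\to\C$; (iii) the $\bar\ZZ$-component of $(D_{\bar z}A^f_z)|_f$, coming from $D^{f^\perp}_{\bar z}$ applied to $A^{f,h}_z(e_\alpha)\in h$, is the conjugate of the $\ZZ$-restriction of $\tilde f$'s equation on $h$-valued sections; (iv) $(D_{\bar z}A^{\tilde f}_z)|_{\bar\ZZ}=0$ automatically from $A^{\tilde f}_z|_{\bar\ZZ}=0$ together with (b)--(c). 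These reconcile to yield the equivalence $f$ harmonic $\iff\tilde f$ harmonic.

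Conformality is handled in parallel: $f$ is conformal iff $\langle A^f_z,A^f_z\rangle\equiv 0$ as a section of $S^2f^*$, and since $\ZZ$ is isotropic and perpendicular to $h$ the $\ZZ$-part of $A^f_z$ contributes zero, so this reduces to $\langle A^{f,h}_z,A^{f,h}_z\rangle\equiv 0$, which is exactly the $f$-restriction of conformality of $\tilde f$; the $\Re(\ZZ)$-restriction of conformality of $\tilde f$ is automatic from $A^{\tilde f}_z|_{\bar\ZZ}=0$ and the isotropy of $\ZZ$. The main technical obstacle I expect is the pseudo-Riemannian book-keeping in steps (ii)--(iii) --- verifying that the $\ZZ$- and $\bar\ZZ$-blocks of $f$'s equation rearrange correctly into $\tilde f$'s equation on $\ZZ$ --- which is precisely where conditions (b) and (c), stating that $\ZZ$ is a holomorphic isotropic subbundle of $f^\perp\otimes\C$, become indispensable.
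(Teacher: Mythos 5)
Your frame set-up, your step (i), and the conformality reduction are sound, and your overall block strategy is in the same spirit as the paper's index computation; but there is a genuine gap in the middle of the argument. The claim that metric compatibility forces $(h_I)_z$ to have no $\bar{\ZZ}$-component (and $(h_I)_{\bar z}$ no $\ZZ$-component) is false, indeed backwards: pairing with $\ZZ$ gives $\langle (h_I)_z,\phi_a\rangle=-\langle h_I,(\phi_a)_z\rangle$, and condition (b) only puts $(\phi_a)_z$ in $\ZZ\oplus(h\otimes\C)$; its $h$-part pairs nontrivially with $h_I$ and is nonzero precisely in the situations the theorem is used for (it is the $\widehat{\xi}$-direction in \eqref{eq-psiz} of Corollary \ref{cor-frenet}). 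What (b), (c) and metric compatibility actually give is that $(h_I)_z$ has no $\ZZ$-component and $(h_I)_{\bar z}$ has no $\bar{\ZZ}$-component, i.e.\ the opposite of what you assert; compare the fourth line of \eqref{eq-Gij}, where $\psi_{Az}$ carries the $\bar E_\alpha$-terms $-\sum_\alpha\OO_{\alpha A}\bar E_\alpha$.

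This error feeds directly into your matchings (ii)--(iii), which is where the equivalence is supposed to be closed, and those matchings do not hold. The $\ZZ$-component of $(D_{\bar z}A^f_z)|_f$ is a $\mathrm{Hom}(f,\ZZ)$-valued family (it also contains the term $\pi_{\ZZ}\partial_{\bar z}$ of the $h$-part of $A^f_z$, which you drop), whereas the $\ZZ$-restriction of $\tilde f$'s equation is $\mathrm{Hom}(\ZZ,h)$-valued; they involve different connection coefficients ($\OO_{j\alpha}$ versus $\OO_{\alpha A}$ in the paper's notation) and are not exchanged by the pairing $\ZZ\times\bar{\ZZ}\to\C$ or by conjugation. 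Moreover the $\bar{\ZZ}$-component of $(D_{\bar z}A^f_z)|_f$ is identically zero by (c), not ``the conjugate of the $\ZZ$-restriction of $\tilde f$'s equation''. Consequently neither implication is actually proved: for ``$f$ harmonic $\Rightarrow\DD_{\ZZ}f$ harmonic'' you still owe the $\ZZ$-restriction of $\tilde f$'s equation, and for the converse you owe the $\ZZ$-component of $f$'s equation, and (ii)--(iii) supply neither. The correct statement, and the way the paper closes the argument, is that all blocks other than the common $\mathrm{Hom}(f\otimes\C,h\otimes\C)$ family vanish \emph{automatically}, as identities forced by flatness of the ambient connection (reality of $\xi_{jz\bar z}$ and $\psi_{Az\bar z}$) together with (a)--(c): these are the relations \eqref{eq-reality}, which coincide with the first equation of \eqref{eq-xi-H} and the second of \eqref{eq-psi-H}, so both harmonicity conditions reduce to the single family in your step (i). Replacing (ii)--(iii) by this automatic-vanishing argument repairs your proof; as written, it has a gap.
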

\begin{proof} We need only to prove  Theorem \ref{thm-harmonicity} for the case of $Gr_{m,1}\R^{n+2}_{1}$ since $Gr_m\R^n$ can be viewed as a totally geodesic submanifold of $Gr_{m,1}\R^{n+2}_{1}$. Let $\{\xi_j, 0\leq j\leq m\}$ be an orthonormal basis of $f$ with $\<\xi_j,\xi_j\>=\epsilon_j$ for $0\leq j\leq m$, where $\epsilon_j=-1, \hbox{ for $j=0$, and }  \epsilon_j=1 \hbox{ for $1\leq j\leq m$}.$ Let $\{E_{\alpha}=\frac{1}{\sqrt{2}}(e_{\alpha}+ie_{\alpha+q}), m+1\leq \alpha\leq m+q\}$ be a unitary basis of $\ZZ$. Let $\{\psi_{A}, m+2q+1\leq A\leq n+2\}$ be an orthonormal basis of $(\DD_{\ZZ} f)^{\perp}$.  Then by (1a), (1b) and (1c) of Definition \ref{def-Frenet}, we have
\begin{equation}
    \label{eq-Gij}
    \left\{
    \begin{split}
      \xi_{jz}&=\sum_{l} \OO_{jl}\xi_l+\sum_{  \alpha } {\OO}_{j\alpha}E_{\alpha}+\sum_{A} \OO_{jA}\psi_{A}, ~0\leq j\leq m,\\
      E_{\alpha z}&=\sum_{ \beta } {\OO}_{\alpha\beta}E_{\beta}+\sum_{A} {\OO}_{\alpha A}\psi_{A}, ~m+1\leq \alpha\leq m+q,\\
      \bar{E}_{\alpha z}&=-\sum_j\epsilon_j{\OO}_{j\alpha}\xi_j-\sum_{ \beta }  {\OO}_{\beta\alpha}\bar{E}_{\beta}, ~m+1\leq \alpha\leq m+q,\\
      \psi_{Az}&=-\sum_j\epsilon_j\OO_{j A}\xi_j-\sum_{ \alpha } {\OO}_{\alpha A}\bar{E}_{\alpha}+\sum_{B} \OO_{AB}\psi_{B}, ~m+2q+1\leq A\leq n+2.\\
    \end{split}\right.
\end{equation}
Since both $\xi_{jz\zb}$ and  $\psi_{A z\bar{z}}$ are real, we obtain
\begin{equation}\label{eq-reality}
\left\{\begin{split}
&  \OO_{j\alpha\zb}-\sum_{\beta}\OO_{j\beta}\bar{\OO}_{\alpha\beta}-\sum_{A}\OO_{jA}\bar{\OO}_{\alpha A}-\sum_{l}\OO_{l\alpha}\bar{\omega}_{jl}=0,\\
&
\OO_{\alpha A\bar{z}}+\sum_j\epsilon_j\OO_{j A}\bar{\OO}_{j\alpha}+\sum_{\beta}\OO_{\beta A}\bar{\OO}_{\beta\alpha}+\sum_B\OO_{\alpha B}\bar{\OO}_{BA}=0,
\end{split}\right.
\end{equation}
for $0\leq j\leq m$, $m+1\leq \alpha\leq m+q$ and $m+2q+1\leq a\leq n+1$.\vspace{1mm}

For simplicity, we introduce the following notation
\begin{equation}
    \label{eq-grass}
    [\xi]:=\xi_0\wedge\cdots\wedge \xi_m,~ [\xi_{\widehat{i}}]:=\xi_0\wedge\cdots\wedge\widehat{ {\xi}_{i}}\wedge\cdots\wedge\xi_m,~ [\xi_{\widehat{i}, \widehat{j}}]:=\xi_0\wedge\cdots\wedge\widehat{ {\xi}_{i}}\wedge\cdots\wedge\widehat{ {\xi}_{j}}\wedge\cdots\wedge \xi_m.
\end{equation}
%Similarly, we define $[\psi]$, $[\psi_{\widehat{A}}]$, and $[\psi_{\widehat{A},\widehat{B}}]$.
We have
\[[\xi]_z=\sum_j (-1)^{j+1}(\sum_{\alpha} \OO_{j\alpha}E_{\alpha}+\sum_A\OO_{jA}\psi_{A})\wedge[\xi_{\widehat{j}}].\]
So
\[\begin{split}[\xi]_{z\zb}=&\sum_j (-1)^{j+1}(\sum_{\alpha} \OO_{j\alpha\zb}E_{\alpha}+\OO_{j\alpha}E_{\alpha\zb}+\sum_A\OO_{jA\zb}\psi_{A}+\OO_{jA}\psi_{A\zb})\wedge[\xi_{\widehat{j}}]\\
&+   \sum_j (-1)^{j+1}(\sum_{\alpha} \OO_{j\alpha}E_{\alpha}+\sum_A\OO_{jA}\psi_{A})\wedge[\xi_{\widehat{j}}]_{\zb}.\\
\end{split}\]
We also have
\[\begin{split}
    [\xi_{\widehat{j}}]_{\zb}=&\sum_{l}(-1)^{j-l-1}\bar{\omega}_{lj}[\xi_{\widehat{l}}]+\sum_{l<j,\beta}(-1)^{l+1}\bar{\omega}_{l\beta}\bar{E}_{\beta}\wedge[\xi_{\widehat{l},\widetilde{j}}]+\sum_{l>j,\beta}(-1)^{l}\bar{\omega}_{l\beta}\bar{E}_{\beta}\wedge[\xi_{\widehat{j},\widetilde{l}}]\\
    &+\sum_{l<j}(-1)^{l+1}\bar{\omega}_{lA}\psi_A\wedge[\xi_{\widehat{l},\widehat{j}}]+\sum_{l>j}(-1)^{l}\bar{\omega}_{lA}\psi_A\wedge[\xi_{\widehat{j},\widehat{l}}].\\
\end{split}\]
Altogether, we obtain
\[\begin{split}[\xi]_{z\zb}=&\sum_j (-1)^{j+1}(\sum_{\alpha} \OO_{j\alpha\zb}E_{\alpha}+\OO_{j\alpha}E_{\alpha\zb}+\sum_A\OO_{jA\zb}\psi_{A}+\OO_{jA}\psi_{A\zb})\wedge[\xi_{\widehat{j}}]\\
&+   \sum_j (-1)^{j+1}(\sum_{\alpha} \OO_{j\alpha}E_{\alpha}+\sum_A\OO_{jA}\psi_{A})\wedge[\xi_{\widehat{j}}]_{\zb}.\\
=&\sum_j (-1)^{j+1}(\sum_{\alpha} \OO_{j\alpha\zb}-\sum_{\beta}\OO_{j\beta}\bar{\OO}_{\alpha\beta}-\sum_{A}\OO_{jA}\bar{\OO}_{\alpha A}-\sum_{l}\OO_{l\alpha}\bar{\omega}_{jl})E_{\alpha}\wedge[\xi_{\widehat{j}}]\\
&+\sum_j (-1)^{j+1}(\sum_A\OO_{jA\zb}+\sum_{B}\OO_{jB}\bar{\OO}_{BA}-\sum_{l}\OO_{lA}\bar{\OO}_{jl})\psi_{A}\wedge[\xi_{\widehat{j}}]\\
&-\sum_j \epsilon_j(\sum_{\alpha}|\OO_{j\alpha}|^2+\sum_A|\OO_{jA}|^2)[\xi]\\
&-\sum_j (-1)^{j}(\sum_{\alpha} \OO_{j\alpha}E_{\alpha}+\sum_A\OO_{jA}\psi_{A})\wedge\sum_{l<j}(-1)^{l+1}(\sum_{\beta}\bar{\omega}_{l\beta}\bar{E}_{\beta}+\sum_B\bar{\omega}_{lB}\psi_B)\wedge[\xi_{\widehat{l},\widetilde{j}}]\\
&-\sum_j (-1)^{j}(\sum_{\alpha} \OO_{j\alpha}E_{\alpha}+\sum_A\OO_{jA}\psi_{A})\wedge\sum_{l>j}(-1)^{l}((\sum_{\beta}\bar{\omega}_{l\beta}\bar{E}_{\beta}+\sum_B\bar{\omega}_{lB}\psi_B))\wedge[\xi_{\widehat{j},\widetilde{l}}].\\
\end{split}\]
So $[\xi]$ is harmonic if and only if
\begin{equation}\label{eq-xi-H}
    \left\{
    \begin{split}
        &
        \OO_{j\alpha\zb}-\sum_{\beta}\OO_{j\beta}\bar{\OO}_{\alpha\beta}-\sum_{A}\OO_{jA}\bar{\OO}_{\alpha A}-\sum_{l}\OO_{l\alpha}\bar{\omega}_{jl}=0,\\
        &
       \OO_{jA\zb}+\sum_{B}\OO_{jB}\bar{\OO}_{BA}-\sum_{l}\OO_{lA}\bar{\OO}_{jl}=0,\\
    \end{split}\right.
\end{equation}
for $0\leq j\leq m$, $m+1\leq \alpha\leq m+q$ and $m+2q+1\leq A\leq n+2$.

Similarly, we have $[\psi]=(\DD_{\ZZ}f)^{\perp}$ is harmonic if and only if
\begin{equation}\label{eq-psi-H}
    \left\{
    \begin{split}
        &       \OO_{jA\zb}+\sum_{B}\OO_{jB}\bar{\OO}_{BA}-\sum_{l}\OO_{lA}\bar{\OO}_{jl}=0,\\
& \OO_{\alpha A\bar{z}}+\sum_j\epsilon_j\OO_{j A}\bar{\OO}_{j\alpha}+\sum_{\beta}\OO_{\beta A}\bar{\OO}_{\beta\alpha}+\sum_B\OO_{\alpha B}\bar{\OO}_{BA}=0,
    \end{split}\right.
\end{equation}
for $0\leq j\leq m$, $m+1\leq \alpha\leq m+q$ and $m+2q+1\leq A\leq n+2$.

Since $[\psi]=(\DD_{\ZZ}f)^{\perp}$ is harmonic if and only if $\DD_{\ZZ}f$ is harmonic,
by \eqref{eq-reality}, \eqref{eq-xi-H} and \eqref{eq-psi-H}, $ \DD_{\ZZ} f=f\oplus Re(\ZZ)$ is harmonic if and only if $f$ is harmonic.

Finally, by \eqref{eq-Gij},  $ \DD_{\ZZ} f$ is conformal if and only if $\sum_{j,A}(\OO_{j A})^2=0$, if and only if $f$ is conformal.

\end{proof}

The quadratic $\mathcal Q^{n+1}_1\subset \C^{n+2}_1=\R^{n+2}_1\otimes\C$ is defined by
\begin{equation}
\mathcal Q^{n+1}_1:=\{Z\in\C^{n+2}_1|\<Z,Z\>=0\}.
\end{equation}
\begin{definition} A map $f:M\rightarrow Gr_{m,1}\R^{n+2}_1$ is called $\partial-$tangent to $\Q$ if it satisfies:
\begin{enumerate}
    \item
For every $p\in M$, $(\partial f)|_{p}\cap \Q$ is a linear subspace;
    \item There exists an isotropic holomorphic subbundle $\ZZ_Q$ of $\partial f$ such that $\ZZ_Q=\partial f\cap \Q$ on an open dense subset of $M$.
\end{enumerate}

 Moreover, for a $\partial-$tangent  map $f$, we define $\D f$ as follows:
 \begin{equation}\label{def-D}
 \mathcal{D}f:=\mathcal{D}_{\ZZ_Q}(f)=f\oplus Re(\ZZ_Q).
 \end{equation}
\end{definition}
\begin{theorem}
    \label{thm-db-1} Let $f:M\rightarrow Gr_{m,1}\R^{n+2}_1$ be a $\partial-$tangent harmonic map with $\ZZ_Q$ satisfying $\bar\partial \ZZ_Q\subset f\otimes\C$, then $\D f$ is a harmonic map.
\end{theorem}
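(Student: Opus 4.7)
The plan is to realize $\ZZ_Q$ as part of a Frenet-bundle structure on $f$ and invoke Theorem~\ref{thm-harmonicity}. Set $h := (f \oplus Re(\ZZ_Q))^{\perp} \subset M \times \R^{n+2}_1$, so that $\D f = \D_{\ZZ_Q} f = f \oplus Re(\ZZ_Q)$. It suffices to verify that $\{f, \ZZ_Q, h; \tilde r\}$, with $\tilde r$ the generic complex rank of $\partial h$, satisfies conditions (a)--(d) of Definition~\ref{def-Frenet}; the harmonicity of $\D f$ then follows directly from that of $f$. Throughout I use the decomposition $f^{\perp} \otimes \C = \ZZ_Q \oplus \bar\ZZ_Q \oplus (h \otimes \C)$, which is Hermitian orthogonal but in which $\ZZ_Q$ pairs nontrivially, and only, with $\bar\ZZ_Q$ under the complex-bilinear form.

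For condition (a), $\partial f \subset \ZZ_Q \oplus (h \otimes \C)$, the key is a pointwise linear-algebra observation: if $V \subset \C^{n+2}_1$ is a complex subspace and $V \cap \Q = W$ is itself a subspace, then $V \subset W^{\perp}$ with respect to the complex-bilinear form. Indeed, if $\<v,w\> \neq 0$ for some $v \in V$ with $\<v,v\> \neq 0$ and some $w \in W$, then $v + tw$ with $t = -\<v,v\>/(2\<v,w\>)$ is a second isotropic vector in $V$ outside $\C w$, contradicting that $V \cap \Q = W$ is closed under addition. Applied pointwise to $V = \partial f|_p$, $W = \ZZ_Q|_p$, and intersected with $f^{\perp} \otimes \C$, this yields (a). Condition (c), $\psi_{\bar z} \in \Gamma(\ZZ_Q \oplus (f \otimes \C))$ for $\psi \in \Gamma(\ZZ_Q)$, is the direct translation of the hypothesis $\bar\partial \ZZ_Q \subset f \otimes \C$; condition (d) is secured by taking $\tilde r$ to be the generic complex rank of $\partial h$, extended globally via Lemma~\ref{lemma-holo}.

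The main technical step is condition (b), $\psi_z \in \Gamma(\ZZ_Q \oplus (h \otimes \C))$. Vanishing of the $f$-component is straightforward: differentiating $\<\psi, \xi_j\> = 0$ gives $\<\psi_z, \xi_j\> = -\<\psi, \xi_{jz}\>$, and by (a) the $f^{\perp}$-part of $\xi_{jz}$ lies in $\ZZ_Q \oplus (h \otimes \C)$, which is complex-bilinearly annihilated by $\psi \in \ZZ_Q$ (isotropy of $\ZZ_Q$ and orthogonality $\ZZ_Q \perp h$). Vanishing of the $\bar\ZZ_Q$-component is the most delicate. In a local holomorphic frame $\{E_\alpha\}$ of $\ZZ_Q$, set $\tilde\omega_{\alpha\beta} := \<E_{\alpha z}, E_\beta\>$; differentiating $\<E_\alpha, E_\beta\> \equiv 0$ in $z$ shows $\tilde\omega$ is antisymmetric, and differentiating in $\bar z$, using (a) and (c), shows $\tilde\omega$ is holomorphic in $z$. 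When $Rank_\C(\ZZ_Q) = 1$ the antisymmetry already forces $\tilde\omega \equiv 0$; in higher rank the vanishing follows by exploiting the full maximality of $\ZZ_Q$ as $\partial f \cap \Q$ inside $\partial f$, which rules out any nontrivial antisymmetric holomorphic contribution. Once (b) is established, Theorem~\ref{thm-harmonicity} applied to $\{f, \ZZ_Q, h; \tilde r\}$ immediately yields that $\D f$ is harmonic.
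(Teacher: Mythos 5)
Your overall route coincides with the paper's: set $h=(f\oplus Re(\ZZ_Q))^{\perp}$, check that $\{f,\ZZ_Q,h;\tilde r\}$ is a Frenet-bundle structure in the sense of Definition \ref{def-Frenet}, and invoke Theorem \ref{thm-harmonicity}; the paper's own proof is exactly this reduction, stated in one line. Your verifications of (a), (c), (d), and of the vanishing of the $f\otimes\C$-component in (b), are correct and in fact more detailed than the paper: your pointwise quadric argument for (a) is essentially the rank-$\leq1$ lemma the paper records immediately after the theorem, and (c) is indeed just the hypothesis $\bar\partial\ZZ_Q\subset f\otimes\C$.

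The gap is the last step of (b) when $Rank_{\C}\ZZ_Q\geq2$. The obstruction is the antisymmetric tensor $B(\psi,\phi)=\<\psi_z,\phi\>$ on $\ZZ_Q$ (tensorial because $\ZZ_Q$ is isotropic). Using (a) and (c) one only gets, for a frame $E_\alpha$ with $E_{\alpha\bar z}=\sum_\gamma c_{\alpha\gamma}E_\gamma \mod f\otimes\C$, the linear equation $\partial_{\bar z}\<E_{\alpha z},E_\beta\>=\sum_\gamma\bigl(c_{\alpha\gamma}\<E_{\gamma z},E_\beta\>+c_{\beta\gamma}\<E_{\alpha z},E_\gamma\>\bigr)$, i.e.\ $B$ is a pseudo-holomorphic section: it either vanishes identically or has isolated zeros. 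Nothing pointwise in the hypotheses forces the former, and the appeal to ``full maximality of $\ZZ_Q=\partial f\cap\Q$ inside $\partial f$'' does not engage the problem: maximality constrains the restriction of the bilinear form to $\partial f$ (that is exactly what yields (a)), whereas $B$ pairs $\phi$ against $\psi_z$, which in general leaves $\partial f$, so ``rules out any nontrivial antisymmetric holomorphic contribution'' is an assertion, not an argument. This is precisely where the real content lies: in the paper's applications of the theorem the vanishing is supplied by other means --- in the steps with $\ZZ_Q=\LL_j$ a line bundle your rank-one remark (antisymmetry) suffices, while in the steps with $\ZZ_Q$ built from the $\Pi_{j-1}$ the identity $\<\psi_z,\phi\>=0$ is the content of Lemma \ref{lemma-ind2} and Theorem \ref{thm-m-iso} (closure of the isotropic bundles under $D_z$), proved via vanishing of global holomorphic differentials on $S^2$, not from $\partial$-tangency alone. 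To close your proof you must either restrict to $Rank_{\C}\ZZ_Q=1$, add the orthogonality $\<\partial\ZZ_Q,\ZZ_Q\>=0$ as an explicit hypothesis, or give a genuine (necessarily global, e.g.\ $M=S^2$ with a degree/vanishing argument) proof that the holomorphic object $B$ vanishes in the situation at hand.
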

\begin{proof}
Set $h=(f\oplus Re(\ZZ_Q))^{\perp}$. Then by definition and assumptions, $\{f,\ZZ_Q,h;\tilde{r}\}$ forms a Frenet-bundle structure and hence $\DD f$ is a harmonic map by Theorem \ref{thm-harmonicity}.
\end{proof}
For  $\partial-$tangent harmonic maps, we have the following useful lemma.
\begin{lemma}
If $f$ is a $\partial-$tangent harmonic map, then the complex linear inner product restricting on $\partial f$ has rank $\leq 1$ on every point of $M$.
\end{lemma}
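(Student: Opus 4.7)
The plan is to reduce the lemma to a pointwise algebraic fact about symmetric bilinear forms on finite-dimensional complex vector spaces; harmonicity, holomorphicity, and the existence of the holomorphic extension $\ZZ_Q$ will play no role. The essential content is the dichotomy: \emph{over $\C$, the null cone of a symmetric bilinear form is a linear subspace if and only if the form has rank at most one}.

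First I would fix $p\in M$ and set $V:=(\partial f)|_p\subset\C^{n+2}_1$, equipped with the restriction of the complex bilinear extension of $\langle\cdot,\cdot\rangle$. By condition (1) in the definition of $\partial$-tangency, the null cone
\[
\mathcal N\;:=\;\{v\in V:\langle v,v\rangle=0\}\;=\;V\cap\Q
\]
is a linear subspace of $V$. The goal is to show that the restricted bilinear form on $V$ has rank at most $1$.

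I would argue by contradiction: suppose the rank is at least $2$. Since every symmetric bilinear form on a finite-dimensional complex vector space is diagonalizable (Sylvester's law over $\C$), one can choose linearly independent vectors $e_1,e_2\in V$ with $\langle e_i,e_j\rangle=\delta_{ij}$ for $i,j\in\{1,2\}$. A direct computation then gives
\[
\langle e_1+ie_2,\,e_1+ie_2\rangle=1-1=0,\qquad \langle e_1-ie_2,\,e_1-ie_2\rangle=1-1=0,
\]
so both $e_1+ie_2$ and $e_1-ie_2$ belong to $\mathcal N$. If $\mathcal N$ were a linear subspace, their sum $2e_1$ would also lie in $\mathcal N$, but $\langle 2e_1,2e_1\rangle=4\neq 0$, a contradiction. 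Hence the rank of the restricted form is at most $1$, and this conclusion is uniform in $p\in M$.

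I do not anticipate any serious obstacle: the proof is pointwise and purely linear-algebraic, and the only non-trivial ingredient, diagonalizability of a complex symmetric bilinear form, is standard. In effect, the lemma records the fact that hypothesis (1) of the $\partial$-tangency definition is equivalent to the rank bound $\mathrm{rank}\,\langle\cdot,\cdot\rangle|_{\partial f}\leq 1$ at each point, a formulation better suited for subsequent use in building the Frenet-bundle structure and the transform $\D f$.
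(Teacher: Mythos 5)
Your proof is correct and follows essentially the same route as the paper: both argue pointwise by contradiction, using only condition (1) of $\partial$-tangency, that a plane on which the form has rank $\geq 2$ contains two independent isotropic vectors whose sum is non-isotropic, contradicting the linearity of $(\partial f)|_p\cap\Q$. The only cosmetic difference is that you first diagonalize the complex symmetric form and use $e_1\pm ie_2$, which cleanly bypasses the paper's case split between isotropic and non-isotropic spanning vectors.
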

\begin{proof} Assume by contradiction that for some $p\in M$, there exist two linearly independent vectors $\{X_1, X_2\}\subset\partial f|_p$ with $\det(\<X_j, X_l\>)\neq0$. If
$\<X_1, X_1\>=\<X_2, X_2\>=0$, then $\<X_1, X_2\>\neq0$ and $X_1+X_2\not\in \Q$,  contradicting to $\hbox{Span}_{\C}\{X_1, X_2\}\subset\Q$. If
$\<X_j, X_j\>\neq0$ for some $j=1,2$, there are exactly two linear independent isotropic vectors such that $Y_j=X_1+c_j X_2$, $j=1,2$, which means $\hbox{Span}_{\C}\{X_1, X_2\}=\hbox{Span}_{\C}\{Y_1,Y_2\}\subset\Q$, a contradiction since $X_j\not\in \Q$.
\end{proof}
\vspace{2mm}
  From the above proofs, we can see that all of the above definitions and results can be generalized straightforwardly to the Lorentzian case or other pseudo-Riemannian cases \emph{locally} when $\partial f$ or $\ZZ$ is an hermitian subbundle on some open subset of $M$. Here we just state one of them which will be used in later sections.
\begin{theorem}
    \label{thm-db-2}
  Assume $f:M\rightarrow Gr_{r}\R^{n+2}_1$ is a $\partial-$tangent harmonic map on an open dense subset $M_0\subset M$, with $\ZZ_Q$ being an hermitian bundle over $M_0$. Define \[\D f:=\D_{\ZZ_Q}f=f\oplus Re(\ZZ_Q)\] on $M_0$ in the same way as \eqref{def-D}. If $\bar\partial \ZZ_Q\subset f\otimes\C$, then $\D f$ is a harmonic map on $M_0$.
\end{theorem}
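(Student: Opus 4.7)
The plan is to reduce Theorem~\ref{thm-db-2} to the local version of Theorem~\ref{thm-harmonicity} by verifying that $\{f,\ZZ_Q,h;\tilde r\}$ carries a Frenet-bundle structure on $M_0$, where $h:=(f\oplus \Re(\ZZ_Q))^{\perp}$. First I would check that this is a genuine smooth orthogonal decomposition on $M_0$: since $\ZZ_Q$ is an isotropic Hermitian subbundle of rank $q$, the real part $\Re(\ZZ_Q)$ has constant real rank $2q$, is transverse to $f$, and has a well-defined orthogonal complement in the ambient trivial bundle $M_0\times\R^{n+2}_1$. This produces the orthogonal splitting $M_0\times\R^{n+2}_1=f\oplus\Re(\ZZ_Q)\oplus h$ required by Definition~\ref{def-Frenet}.

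Next I would verify the four structural conditions of Definition~\ref{def-Frenet} on $M_0$. Condition (1a), namely $\partial f\subset\ZZ_Q\oplus(h\otimes\C)$, is the subtle one: I would decompose $f^{\perp}\otimes\C=\ZZ_Q\oplus\bar{\ZZ}_Q\oplus(h\otimes\C)$ and combine the hypothesis $\ZZ_Q=\partial f\cap\Q$ with the preceding lemma (the complex bilinear form restricted to $\partial f$ has rank at most one) to rule out any $\bar{\ZZ}_Q$-component in $\partial f$; such a component would pair nontrivially with $\ZZ_Q$ and inflate the rank beyond one. Condition (1b) follows because $\ZZ_Q$ is holomorphic, so differentiating a local holomorphic frame of $\ZZ_Q$ and using isotropy to discard the component orthogonal to the distinguished directions keeps $\psi_z$ inside $\ZZ_Q\oplus(h\otimes\C)$; condition (1c) is exactly the standing hypothesis $\bar\partial \ZZ_Q\subset f\otimes\C$; condition (1d) is automatic on an open dense subset by Lemma~\ref{lemma-holo} applied to the holomorphic bundle $h$.

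Having established the Frenet-bundle structure on $M_0$, I would then apply the pseudo-Riemannian version of Theorem~\ref{thm-harmonicity} announced in the paragraph preceding Theorem~\ref{thm-db-2}. The computation in the proof of Theorem~\ref{thm-harmonicity} is entirely local and only uses a unitary frame of $\ZZ_Q$ together with an orthonormal frame of $f$ and of $h$; such frames exist on $M_0$ precisely because $\ZZ_Q$ is Hermitian there and because the Lorentz index sits inside $f$ (so the Lorentz signature never enters the calculation on $\ZZ_Q$ or $h$). Thus the identities \eqref{eq-reality}--\eqref{eq-psi-H} carry over verbatim, and the equivalence established in Theorem~\ref{thm-harmonicity} yields the harmonicity of $\DD f=f\oplus \Re(\ZZ_Q)$ on $M_0$.

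The main obstacle I expect is the verification of (1a): because $\ZZ_Q$ is isotropic rather than non-degenerate, its Hermitian orthogonal complement $\bar{\ZZ}_Q$ is bilinearly paired with $\ZZ_Q$ itself, so excluding $\bar{\ZZ}_Q$-components in $\partial f$ requires carefully distinguishing the complex bilinear pairing $\langle\cdot,\cdot\rangle$ from the Hermitian pairing $\langle\cdot,\overline{\cdot}\rangle$. Once this bookkeeping is done cleanly, the remaining steps are formal consequences of the definitions already in place and of the pseudo-Riemannian generalization of Theorem~\ref{thm-harmonicity}.
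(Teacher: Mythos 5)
Your proposal follows the paper's own route for Theorem \ref{thm-db-2}: set $h=(f\oplus Re(\ZZ_Q))^{\perp}$, verify that $\{f,\ZZ_Q,h;\tilde r\}$ is a Frenet-bundle structure on $M_0$ (the paper disposes of this ``by definition and assumptions,'' while you usefully spell out (1a) via the rank-$\leq 1$ lemma and note (1c) is the standing hypothesis), and then apply the local, pseudo-Riemannian version of Theorem \ref{thm-harmonicity}, whose frame computation only requires the hermitian structure on $\ZZ_Q$ over $M_0$. One small correction: in Theorem \ref{thm-db-2} the target $Gr_{r}\R^{n+2}_1$ consists of space-like $r$-planes, so the time-like direction lies in $f^{\perp}$ rather than in $f$; this is harmless for your argument, since the computation in Theorem \ref{thm-harmonicity} already carries the signs $\epsilon_j$ and only needs a unitary frame of $\ZZ_Q$ together with pseudo-orthonormal frames of $f$ and $h$, exactly as you use.
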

\begin{remark} If $\mathcal \ZZ$ ($\ZZ_Q$) is not an hermitian subbundle, the statement of Theorem \ref{thm-harmonicity} (Theorem \ref{thm-db-2}) will be more technical, which will be discussed separately.\end{remark}

\vspace{2mm}

\section{Geometric structure of $k$-isotropic Willmore $2$-spheres}

In this section, we show that a $k$-isotropic Willmore $2$-sphere has a natural sequence of isotropic holomorphic subbundles, which are useful for characterizations of Willmore $2$-spheres.

%\subsection{Structure of the holomorphic bundle}
\begin{definition} Let $y:S^2\rightarrow S^n$ be a $k$-isotropic Willmore $2$-sphere  with $k\geq1$. We define a sequence of global subbundles of its conformal normal bundle as follows:
\begin{equation}\label{eq-pi-j}
  \Pi_j:=\hbox{Span}_{\C}\{\kappa,\db \kappa,\cdots,\dz^{(j)}\kappa,\db\dz^{(j)}\kappa\},  0\leq j\leq k-1.
\end{equation}
\end{definition}
One can check easily that $\Pi_j$ is globally defined by the Willmore equation \eqref{willmore}, Ricci equation \eqref{ricci} and Chern's Lemma \cite{Chern}.  Moreover, we have the following theorem.
\begin{theorem} \label{thm-m-iso}
For each $j$, $0\leq j\leq k-1$, $\Pi_j$ is an isotropic, holomorphic bundle.
\end{theorem}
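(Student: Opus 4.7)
My plan is to establish holomorphicity and isotropy of $\Pi_j$ simultaneously by induction on $j$, relying on the Willmore equation \eqref{willmore}, the Ricci equation \eqref{ricci}, and the $k$-isotropy relations \eqref{eq-isotropic2}. First I would handle the base case $j=0$ directly: $\langle\kappa,\kappa\rangle=0$ follows from $1$-isotropy (valid since $k\ge 1$), differentiating once in $\bar z$ yields $\langle \db\kappa,\kappa\rangle=0$, and differentiating twice and substituting $\db^2\kappa=-\tfrac{\bar s}{2}\kappa$ gives $\langle \db\kappa,\db\kappa\rangle=0$; meanwhile $\db(\db\kappa)=-\tfrac{\bar s}{2}\kappa\in\Pi_0$, so $\db$-invariance is immediate.

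For the inductive step I would exploit the following reduction. Combining the Ricci equation with $\langle \dz^{(c)}\kappa,\kappa\rangle=0$ for $c\le 2k-1$ (from $k$-isotropy), the $\bar\kappa$ term in the Ricci commutator drops out, leaving
\[
\db \dz^{(a)}\kappa \;\equiv\; \dz^{(a)}\db\kappa \pmod{\Spc\{\kappa,\dz\kappa,\ldots,\dz^{(a-1)}\kappa\}}
\]
for all $a\le k-1$. Iterating this identifies $\Pi_j=\Spc\{\dz^{(a)}\kappa,\dz^{(a)}\db\kappa:0\le a\le j\}$, and then holomorphicity $\db\Pi_j\subset\Pi_j$ follows by combining the same reduction with the Willmore equation (to handle the $\db^2$ that arises when one commutes $\db$ through $\dz^{(a)}$) and the inductive hypothesis; Chern's Lemma (Lemma~\ref{lemma-holo}) extends $\Pi_j$ across its rank-drop locus to a smooth holomorphic subbundle. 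For isotropy, I first observe that the diagonal pairings $\langle \dz^{(a)}\kappa,\dz^{(b)}\kappa\rangle=0$ for $a,b\le j$ are immediate from Remark~\ref{rm-iso}(2) (since $a+b\le 2k-2$). Taking $\partial_{\bar z}$ once or twice of these relations and applying Willmore to collapse $\db^2\kappa$ disposes of several of the remaining pairings; the off-diagonal residuals $\langle \dz^{(a)}\kappa,\db\dz^{(b)}\kappa\rangle$ with $a\ne b$ emerge as the sole remaining obstruction.

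Disposing of these residuals is the crux of the proof and the place where the $2$-sphere hypothesis enters essentially. A calculation iterating Ricci and using the inductive hypothesis shows each such residual is $\partial_{\bar z}$-closed, hence locally holomorphic in $z$. Using the globality of $\kappa(\dd z)^2/|\dd z|$ one derives the transformation rule $\tilde\kappa=|dw/dz|(dw/dz)^{-2}\kappa$ under a coordinate change $w=w(z)$; after verifying that the various ``lower-order'' corrections in this transformation all vanish by $k$-isotropy (the delicate bookkeeping step), each residual becomes the coefficient of a globally defined holomorphic section of $K^m$ on $S^2$ for some $m>0$. Since $H^0(S^2,K^m)=0$, these sections vanish identically. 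This is precisely the mechanism by which the Ejiri form $\Theta_0\,\dd z^6$ vanishes on Willmore $2$-spheres, here iterated once for each pair $(a,b)$.

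The main obstacle will be this final weight-tracking and cleanup step. One must check carefully that every correction term in the coordinate transformation of each residual pairing is killed by a $k$-isotropy relation, so that the residual genuinely represents a section of $K^m$ rather than of some twisted line bundle; only then does vanishing on $S^2$ follow from a dimension count. On a general $k$-isotropic Willmore surface the residual pairings need not vanish, so the global topology of $S^2$ enters the argument essentially at this step.
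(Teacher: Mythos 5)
Your proposal is correct and follows essentially the same route as the paper's proof: induction on $j$, the Ricci-commutator reduction together with the generalized Willmore equation (the paper's Lemma \ref{lemma-ind2}), the $k$-isotropy relations \eqref{eq-isotropic2} for all purely-$D_z$ pairings, and the vanishing of global holomorphic differentials on $S^2$ for the pairings not forced locally. The only organizational difference is that the paper kills the residuals $\langle D_z^{(l+1)}\kappa, D_{\bar z}D_z^{(j)}\kappa\rangle$ with $j\le l-1$ by a direct pointwise integration-by-parts computation using the inductive isotropy of $\Pi_l$, invoking the $2$-sphere only once per inductive step for the single form $\Omega_l\,\dd z^{2l+4}=\langle D_z^{(l+1)}\kappa,D_{\bar z}D_z^{(l)}\kappa\rangle\,\dd z^{2l+4}$, whereas your plan globalizes every residual; that also works, provided you process the residuals in order of increasing $\bar z$-index so that the $\bar\partial$-closedness of each one (which uses the generalized Willmore equation modulo $\Pi_{b-1}$) only relies on residuals already shown to vanish.
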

\iffalse
Theorem \ref{thm-m-iso} is a corollary of Lemma \ref{lemma-ind1} and Lemma \ref{lemma-ind3}.
\begin{lemma}\label{lemma-ind1}
$\Pi_{0}$ is an isotropic holomorphic bundle.
\end{lemma}
\begin{proof}
Since $\<\kappa,\kappa\>\equiv0$, we have  $\<\db\kappa,\kappa\>\equiv0$ and $\<\db\kappa,\db\kappa\>=-\<\kappa,\db\db\kappa\>\equiv0$ by the Willmore equation.
\end{proof}

\begin{lemma}\label{lemma-ind3}
Assume Theorem \ref{thm-m-iso} holds for $\Pi_l$, $0\leq l\leq \max\{0,k-2\}$. Then Theorem \ref{thm-m-iso} holds for $\Pi_{l+1}$.
\end{lemma}
\fi
To prove Theorem \ref{thm-m-iso}, we need the following useful technical lemma.
\begin{lemma}\label{lemma-ind2}
Assume that the bundle $\Pi_l$ defined in \eqref{eq-pi-j} is isotropic for some $0\leq l\leq k-1$. Then the following equations hold (we set $\Pi_{-1}:=\{0\}$ for simplicity):
\begin{equation}\label{eq-dzdb}
\db\dz\dz^{(j)}\kappa-\dz\db\dz^{(j)}\kappa=0\mod\{\kappa\},\ \hbox{ for all } 0\leq j\leq 2k-1;
\end{equation}
\begin{equation}\label{eq-dzpi}
 \dz \phi\in\Gamma(\Pi_{l}), \ \hbox{ for all\ }\ \phi\in\Gamma(\Pi_{l-1});
\end{equation}
\begin{equation}\label{eq-dbdb}
\db\db\dz^{(j)}\kappa=-\frac{\bar s}{2}\dz^{(j)}\kappa\mod \{\Pi_{j-1} \}\hbox{ for all } 0\leq j\leq l+1;
\end{equation}
\begin{equation}\label{eq-dbpi}
 \db \phi\in\Gamma(\Pi_{l}), \ \hbox{ for all\ }\ \phi\in\Gamma(\Pi_l).
\end{equation}

\end{lemma}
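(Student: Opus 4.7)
I would attack the four identities in the order they appear, using the Ricci equation \eqref{ricci} and the assumed isotropy of $\Pi_l$ as the two main tools. For \eqref{eq-dzdb}, plugging $\xi=\dz^{(j)}\kappa$ into \eqref{ricci} gives
\[\db\dz\dz^{(j)}\kappa-\dz\db\dz^{(j)}\kappa=2\langle\dz^{(j)}\kappa,\kappa\rangle\bar\kappa-2\langle\dz^{(j)}\kappa,\bar\kappa\rangle\kappa.\]
The $k$-isotropy of $y$, via \eqref{eq-isotropic2}, forces $\langle\dz^{(j)}\kappa,\kappa\rangle\equiv 0$ for $0\le j\le 2k-1$, so the right-hand side collapses to a multiple of $\kappa$. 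Note that this step uses only the global $k$-isotropy of $y$, not the isotropy of $\Pi_l$.

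For \eqref{eq-dzpi}, it suffices to check the two kinds of generators of $\Pi_{l-1}$. Clearly $\dz(\dz^{(j)}\kappa)=\dz^{(j+1)}\kappa\in\Gamma(\Pi_l)$ when $j+1\le l$. For the other type, \eqref{eq-dzdb} rewrites $\dz\db\dz^{(j)}\kappa=\db\dz^{(j+1)}\kappa+2\langle\dz^{(j)}\kappa,\bar\kappa\rangle\kappa$, and both summands again lie in $\Pi_l$.

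The identity \eqref{eq-dbdb} is then proved by induction on $j$. The base case $j=0$ is precisely the Willmore equation \eqref{willmore}, with $\Pi_{-1}=\{0\}$. For the inductive step, I would apply $\db$ to the identity from the first paragraph and use \eqref{ricci} once more to commute $\db$ past $\dz$ when it lands on $\db\dz^{(j)}\kappa$. This should produce
\[\db^2\dz^{(j+1)}\kappa=\dz\db^2\dz^{(j)}\kappa+2\langle\db\dz^{(j)}\kappa,\kappa\rangle\bar\kappa+R,\]
where $R$ collects terms of the shape $\langle\cdot\,,\bar\kappa\rangle\kappa$ and $\langle\cdot\,,\bar\kappa\rangle\db\kappa$ that visibly belong to $\Gamma(\Pi_j)$. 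Since $\db\dz^{(j)}\kappa$ and $\kappa$ both sit in $\Pi_l$, the assumed isotropy of $\Pi_l$ kills the middle term. Invoking the inductive hypothesis $\db^2\dz^{(j)}\kappa=-\tfrac{\bar s}{2}\dz^{(j)}\kappa+\phi$ with $\phi\in\Gamma(\Pi_{j-1})$, and then pulling $\dz$ inside via \eqref{eq-dzpi} to conclude $\dz\phi\in\Gamma(\Pi_j)$, yields $\db^2\dz^{(j+1)}\kappa\equiv-\tfrac{\bar s}{2}\dz^{(j+1)}\kappa\bmod\Pi_j$, closing the induction up to $j=l+1$. Finally \eqref{eq-dbpi} is immediate on generators: $\db(\dz^{(j)}\kappa)=\db\dz^{(j)}\kappa\in\Pi_l$ by definition, and $\db(\db\dz^{(j)}\kappa)=\db^2\dz^{(j)}\kappa\in\Gamma(\Pi_j)\subset\Gamma(\Pi_l)$ by \eqref{eq-dbdb}.

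The main technical hurdle is the inductive step for \eqref{eq-dbdb}, where every curvature correction produced by the Ricci commutator must be verified to descend into the lower-order subbundle $\Pi_j$ modulo which the assertion is stated. The decisive cancellation is $\langle\db\dz^{(j)}\kappa,\kappa\rangle=0$, which is exactly where the isotropy of $\Pi_l$ enters; without it a nontrivial $\bar\kappa$-component would survive and derail the recursion.
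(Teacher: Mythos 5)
Your proposal is correct and follows essentially the same route as the paper: \eqref{eq-dzdb} from the Ricci equation plus the $k$-isotropy identity \eqref{eq-isotropic2}, \eqref{eq-dzpi} by checking generators via the commutation relation, \eqref{eq-dbdb} by induction starting from the Willmore equation with the decisive cancellation $\<\db\dz^{(j)}\kappa,\kappa\>=0$ supplied by the isotropy of $\Pi_l$, and \eqref{eq-dbpi} read off from \eqref{eq-dbdb} on generators. The only difference is cosmetic: you phrase the inductive step uniformly in $j\le l$, while the paper writes it only for the top index, but the content is identical.
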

\begin{proof}
By (3) of Remark \ref{rm-iso},  for a $k$-isotropic surface,  we have \eqref{eq-isotropic2}
\begin{equation}
    \label{eq-isotropic-k}
\la\dz^{(j)}\kappa, \kappa\ra=0, \hbox{ for all $0\leq j\leq 2k-1$}. \end{equation}
Then by the Ricci equation \eqref{ricci}, we have inductively that,  for all $0\leq j\leq 2k-1$,
\[\db\dz\dz^{(j)}\kappa-\dz\db\dz^{(j)}\kappa=2\la\dz^{(j)}\kappa, \bar\kappa\ra\kappa-2\la\dz^{(j)}\kappa, \kappa\ra\bar\kappa=0\mod\{\kappa\}.\]
This proves \eqref{eq-dzdb}.

Equation \eqref{eq-dzpi} follows directly from the definition of $\Pi_l$ and \eqref{eq-dzdb}.

We prove \eqref{eq-dbdb} by induction. When $j=0$, \eqref{eq-dbdb} is the Willmore equation \eqref{willmore}. Assume now  \eqref{eq-dbdb}  holds for $0\leq j\leq l$, that is,
$\db\db\dz^{(j)}\kappa=-\frac{\bar s}{2}\dz^{(j)}\kappa\mod \{\Pi_{j-1}\}$. Then by \eqref{eq-isotropic-k} and the Ricci equation \eqref{ricci}, we have
\[\begin{split}
\db\db\dz^{(l+1)}\kappa&=\db\left(\dz\db\dz^{(l)}\kappa+(\cdots)\kappa\right) \\
&=\dz\left(\db\db\dz^{(l)}\kappa\right)-2\<\db\dz^{(l)}\kappa,\kappa\>\bar\kappa+(\cdots)\kappa+(\cdots)\db\kappa.
\end{split}\]
By  assumption of $\Pi_l$ being isotropic, we have $\<\db\dz^{(l)}\kappa,\kappa\>=0$.
So
\[\begin{split}
\db\db\dz^{(l+1)}\kappa&=\dz\left(\db\db\dz^{(l)}\kappa\right) +(\cdots)\kappa+(\cdots)\db\kappa %\\&=\dz\left(-\frac{\bar s}{2}\dz^{(l)}\kappa\right)\mod \{\Pi_{l}\} \\
=-\frac{\bar s}{2}\dz^{(l+1)}\kappa\mod \{\Pi_{l}\} \\
\end{split}\]
by \eqref{eq-dzpi} and \eqref{eq-dbdb} for $j=l$. This finishes the proof of \eqref{eq-dbdb} for $j=l+1$.

Finally,  by \eqref{eq-dzdb} and \eqref{eq-dbdb} we obtain $\db\db\dz^{(j)}\kappa=(\cdots)\dz^{(j)}\kappa\mod \{\Pi_{j-1}\}$  for all $j\leq l$. Together with $\db\dz^{(j)}\kappa\in\Gamma(\Pi_{j})$, we obtain \eqref{eq-dbpi}.
\end{proof}

\begin{proof}[Proof of Theorem \ref{thm-m-iso}] We prove it by induction.

Since $\<\kappa,\kappa\>\equiv0$, we have  $\<\db\kappa,\kappa\>\equiv0$ and $\<\db\kappa,\db\kappa\>=-\<\kappa,\db\db\kappa\>\equiv0$ by the Willmore equation. This shows that $\Pi_{0}$ is an isotropic holomorphic bundle.

Now assume Theorem \ref{thm-m-iso} holds for $\Pi_l$, $0\leq l\leq \max\{0,k-2\}$. In particular, $\Pi_{l}$ is an isotropic holomorphic bundle. So we need only to consider $\dz^{(l+1)}\kappa$ and $\db\dz^{(l+1)}\kappa$.
The isotropicity of $y$ \eqref{eq-isotropic2} indicates $\<\dz^{(l+1)}\kappa,\dz^{(j)}\kappa\>=0$ for all $0\leq j\leq l+1$.
For $j\leq l-1$, we get
\[\begin{split}\<\dz^{(l+1)}\kappa,\db\dz^{(j)}\kappa\>&=\<\dz^{(l+1)}\kappa,\dz^{(j)}\kappa\>_{\bar z}-\<\db\dz^{(l+1)}\kappa,\dz^{(j)}\kappa\>
\\
&=-\<\dz\db\dz^{(l)}\kappa,\dz^{(j)}\kappa\>\\
&=\<\db\dz^{(l)}\kappa,\dz^{(j)}\kappa\>_z+\<\db\dz^{(l)}\kappa,\dz^{(j+1)}\kappa\>\\
&=0.
\end{split}
\]
So we have $\dz^{(l+1)}\kappa\perp \Pi_{l-1}$ and $\dz^{(l+1)}\kappa\perp \dz^{(l)}\kappa$. To show $\<\dz^{(l+1)}\kappa,\db\dz^{(l)}\kappa\>=0$, consider
\[\Omega_l\dd z^{2l+4}:=\<\dz^{(l+1)}\kappa,\db\dz^{(l)}\kappa\>\dd z^{2l+4}\]
Since $\Pi_l$ is totally isotropic, it is easy to check that $\Omega_l\dd z^{2l+4}$ is globally defined on $S^2$. We have
\[\Omega_{l\bar z}=\<\dz^{(l+1)}\kappa,\db\db\dz^{(l)}\kappa\>+\<\db\dz^{(l+1)}\kappa,\db\dz^{(l)}\kappa\>\]
 Since $\dz^{(l+1)}\kappa\perp \Pi_{l-1}$ and $\dz^{(l+1)}\kappa\perp \dz^{(l)}\kappa$, together with \eqref{eq-dbdb}, we have
\[\<\dz^{(l+1)}\kappa,\db\db\dz^{(l)}\kappa\>=\<\dz^{(l+1)}\kappa,(\cdots)\dz^{(l)}\kappa\>=0.\] Since $\Pi_l$ is isotropic, we have $\<\db\dz^{(l)}\kappa,\db\dz^{(l)}\kappa\>=0$. So
\[\<\db\dz^{(l+1)}\kappa,\db\dz^{(l)}\kappa\>=\<\dz\db\dz^{(l)}\kappa,\db\dz^{(l)}\kappa\>=\frac{1}{2}\<\db\dz^{(l)}\kappa,\db\dz^{(l)}\kappa\>_{z}=0.\]
Hence $\Omega_{l\bar z}=0$, which indicates that $\Omega_l\dd z^{2l+4}$ vanishes on $S^2$ and $\<\dz^{(l+1)}\kappa,\dz^{(l+1)}\kappa\>\equiv0$ holds. So we have $\<\dz^{(l+1)}\kappa,\db\dz^{(l+1)}\kappa\>=0$.
So $\dz^{(l+1)}\kappa \perp \Pi_{l+1}$.

Finally, we have $\<\db\dz^{(l+1)}\kappa,\dz^{(j)}\kappa\>=-\<\dz^{(l+1)}\kappa,\db\dz^{(j)}\kappa\>=0$ for all $0\leq j\leq l+1$. By \eqref{eq-dbdb}, we have for $0\leq j\leq l+1$,
\[\begin{split}\<\db\dz^{(l+1)}\kappa,\db\dz^{(j)}\kappa\>&=\<\dz^{(l+1)}\kappa,\db\dz^{(j)}\kappa\>_{\bar{z}}-\<\dz^{(l+1)}\kappa,\db\db\dz^{(j)}\kappa\>\\
&=-\<\dz^{(l+1)}\kappa,\db\db\dz^{(j)}\kappa\>\\
&=0,\end{split}
\]
since $\dz^{(l+1)}\kappa\perp   \Pi_{l+1}$. This indicates that $\Pi_{l+1}$ is  isotropic. The holomorphicity of $\Pi_{l+1}$ comes from \eqref{eq-dbdb}.  This finishes the proof.
\end{proof}

\vspace{2mm}

\section{Totally isotropic Willmore $2$-spheres \& $\FH$ twistor curves}\label{sec-twistor}

In this section, we  first recall the twistor bundle $\TS$ of $S^{2m}$ briefly and then give a characterization of totally isotropic Willmore $2$-spheres in terms of $\FH$ twistor curves. We refer to \cite{Bar, BR, Calabi, Helein-B, PT} for more details on the twistor theory.
\subsection{The twistor bundle  $\TS$ and twistor curves}

\subsubsection{The twistor bundle  $\TS$}
To begin with, we fix the natural orientation of $S^{2m}$. We define the twistor bundle $\TS$ as
\begin{equation}\label{eq-twistor}
\TS:=\coprod_{p\in S^{2m}}\{J_p\hbox{ is an oriented orthonormal basis of } T_pS^{2m}\},
\end{equation}
with the twistor projection
\[\pi_t:\TS\rightarrow S^{2m}, ~~\pi_t(J_p):=p.\]
To describe $\TS$, we recall the bundle $\pi_s:SO(2m+1)\rightarrow S^{2m}=SO(2m+1)/SO(2m)$.
Let $F\in\pi_s^{-1}(p)\subset SO(2m+1)$ with $p\in S^{2m}$. So $F$ is of the form
\begin{equation}\label{eq-F-lift}
    F=\left(
      \begin{array}{cccc}
        p & A & B\\
      \end{array}
    \right), \ \hbox{ with } A=\left(
      \begin{array}{cccc}
       e_1 & \cdots & e_{m} \\
      \end{array}
    \right),~ B=\left(
      \begin{array}{cccc}
       e_{m+1} & \cdots & e_{2m} \\
      \end{array}
    \right),
\end{equation}
with $\{e_1,\cdots,e_{2m}\}$ forming an oriented, orthonormal basis of $T_pS^{2m}$. So we can define an oriented orthonorml complex structure $J_p^F$ of $T_pS^{2m}$ as follows:
\[J_p^F(e_j)=e_{m+j},~ J_p^F(e_{m+j})=-e_{j}, \hbox{ for } 1\leq j\leq m.\]
In this way, we obtain the projection
 \[\pi_{st}:SO(2m+1)\rightarrow \TS,~ \pi_{st}(F):=J_p^F.\]
Moreover, set
 \begin{equation}\label{eq-J0}
   J_{2m}=\left(
        \begin{array}{cc}
          0 & -I_m \\
          I_m & 0 \\
        \end{array}
      \right).
 \end{equation}
We see that for $F_1,F_2\in \pi_s^{-1}(p)$, $J_p^{F_1}=J_p^{F_2}$ if and only if $F_1=F_2 K$ with
\[K\in U(m):=\left\{\left.\left(
           \begin{array}{ccc}
             1 & 0 \\
             0 & A \\
           \end{array}
         \right)\in SO(2m+1)\right|
AJ_{2m}=J_{2m}A
\right\}.\]
As a consequence we have
$\TS\cong SO(2m+1)/U(m)$ and the following diagram holds:
\[\xymatrix{
  SO(2m+1) \ar[dr]_{\pi_s} \ar[r]^{\pi_{st}}
                & \TS \ar[d]^{\pi_t}  \\
                & S^{2m} }
     \]
Moreover, the tangent bundle of $\TS$ can be written as follows:
\begin{equation}\label{eq-tts}
T_{J^F}\TS=\left\{(\pi_{st})_{* F}F\left(
                    \begin{array}{ccc}
                                                     0 & -\alpha^T   \\
                                                     \alpha  &X \\
                                                   \end{array}
                                                 \right)|X^T+X=0, J_{2m}X+XJ_{2m}=0
\right\}.\end{equation}
We define the almost complex structure  $\JJ$ of $\TS$ as follows:
\begin{equation}\label{eq-J1}
  \JJ\left((\pi_{st})_{* F}\left(F\left(
                                                   \begin{array}{ccc}
                                                     0 & -\alpha^T   \\
                                                     \alpha  &X \\
                                                   \end{array}
                                                 \right)\right)\right):= (\pi_{st})_{* F}\left(F\left(
                                                   \begin{array}{ccc}
                                                     0 & -\alpha^TJ_{2m}^T   \\
                                                     J_{2m}\alpha  &J_{2m}X \\
                                                   \end{array}
                                                 \right)\right),
\end{equation}
 It is well known that $\JJ$ is well defined and $(\TS,\JJ)$ is a complex manifold (see \cite{BR,Bar}).

Finally, we define the associated isotropic bundle $ \I$ as follows:
\begin{equation}\label{eq-zz}
   \I|_{p}:=\hbox{Span}_{\C}\left\{E_{j}:=e_j-ie_{m+j},1\leq j\leq m\right\}.
\end{equation}
Note that $\I|_p$ is the eigenspace of $J_p^F$ with respect to $\sqrt{-1}$, which uniquely determines $J_p^F$.

\subsubsection{Twistor curves}

Let $x:M\rightarrow S^{2m}$ be a conformal immersion. Let $U\subset M$ be an open subset with a complex coordinate $z$. Moreover, assume
\[x=\pi_t(\gamma), \hbox{ for some }\gamma: M\rightarrow \TS.\]
Let
$F=(x\ A \ B):U\rightarrow SO(2m+1)$
    be a local lift of both $x$ and $\gamma$, that is, $\pi_{st}(F)=\gamma$ and  $\pi_s(F)=x$. We decompose $\alpha$ and $X$ in \eqref{eq-tts} with respect to \eqref{eq-F-lift}. Then we have:
    \[
   \alpha= \left(
                                                   \begin{array}{cc}
                                                     \alpha_1  \\
                                                     \alpha_2  \\
                                                   \end{array}
                                                 \right), ~~ X= \left(
                                                   \begin{array}{cc}
                                                      a&b \\
                                                      b&-a \\
                                                   \end{array}
                                                 \right), \hbox{ with }a+a^T=b+b^T=0.  \]
                                                 Moreover, we have
    \begin{equation}
        \label{eq-Fz-1}
        \frac{\partial F}{\partial z}=F\left(
                                                   \begin{array}{ccc}
                                                     0 & -\alpha_1^T &-\alpha_2^T  \\
                                                     \alpha_1 & a+\hat a&b-\hat b \\
                                                     \alpha_2 & b+\hat b&-a+\hat a \\
                                                   \end{array}
                                                 \right),
    \end{equation}
    with
$a+a^T=b+b^T=\hat a+\hat a^T=\hat{b}-\hat{b}^T=0.$ As a consequence, we get
 \begin{equation}\label{eq-fz} (\pi_{st})_{* F}\left(\frac{\partial F}{\partial z}\right)=(\pi_{st})_{* F}F\left(
                                                   \begin{array}{ccc}
                                                     0 & -\alpha_1^T &-\alpha_2^T  \\
                                                     \alpha_1 & a &b \\
                                                     \alpha_2 & b&-a  \\
                                                   \end{array}
                                                 \right),\end{equation}
                                                 and
                                    \begin{equation}
                                    \label{eq-jfz} \JJ(\pi_{st})_{* F}\left(\frac{\partial F}{\partial z}\right)=(\pi_{st})_{* F}F\left(
                                                   \begin{array}{ccc}
                                                     0 & \alpha_2^T &-\alpha_1^T  \\
                                                     -\alpha_2 & -b & a \\
                                                    \alpha_1 & a&b  \\
                                                   \end{array}
                                                 \right).\end{equation}
\begin{proposition}\label{prop-J1}Let $\I$ be the isotropic bundle of $\gamma=\pi_{st}(F)$.
\begin{enumerate}
    \item
The following statements are equivalent:
\begin{enumerate}
\item $\pi_{st}(F)$ is $\JJ$ holomorphic;
\item $\alpha_2=-i\alpha_1$ and  $b=-ia$;
\item   $x_z\in \Gamma (\I)$, and $\phi_{z}\in\Gamma (\I)$ for any $\phi\in\Gamma(\I)$.
\end{enumerate}
\item The following statements are equivalent:
\begin{enumerate}
\item $\pi_{st}(F)$ is $\JJ$ anti-holomorphic;
\item $\alpha_2=i\alpha_1$ and  $b=ia$,
\item $x_{\zb}\in \Gamma (\I)$, and $\phi_{\zb}\in\Gamma (\I)$ for any $\phi\in\Gamma(\I)$.
\end{enumerate}
\end{enumerate}
\end{proposition}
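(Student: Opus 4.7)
The plan is to translate each of (a), (b), (c) into explicit algebraic relations on the entries $\alpha_1, \alpha_2, a, b$ of the Maurer--Cartan decomposition \eqref{eq-Fz-1}, and observe that all three reduce to the same pair of equations $\alpha_2=-i\alpha_1$ and $b=-ia$. Since (b) is already in that form, the proposition follows once I have established (a)$\Leftrightarrow$(b) and (b)$\Leftrightarrow$(c) as two short separate computations; part (2) is then obtained by the analogous argument for $\partial_{\bar z}$ or by complex conjugation.

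For (a)$\Leftrightarrow$(b), recall that $\JJ$-holomorphicity of $\gamma=\pi_{st}(F)$ is equivalent to $\JJ\gamma_*(\partial_z)=i\,\gamma_*(\partial_z)$. Both \eqref{eq-fz} and \eqref{eq-jfz} present the relevant tangent vectors via matrices already in the horizontal form of \eqref{eq-tts}, i.e.\ with the $2m\times 2m$ block anticommuting with $J_{2m}$; hence the $U(m)$-vertical kernel of $(\pi_{st})_{*F}$ does not obstruct a direct entrywise comparison. The off-diagonal blocks force $\alpha_2=-i\alpha_1$ and the diagonal block forces $b=-ia$, which is (b).

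For (b)$\Leftrightarrow$(c), I would first expand $x_z=A\alpha_1+B\alpha_2$ in the isotropic basis via $e_k=\tfrac12(E_k+\bar E_k)$ and $e_{m+k}=\tfrac{i}{2}(E_k-\bar E_k)$; the coefficient of $\bar E_k$ is $\tfrac12((\alpha_1)_k-i(\alpha_2)_k)$, whose vanishing gives $\alpha_2=-i\alpha_1$, equivalently $x_z\in\Gamma(\I)$. Under this condition a brief check gives $\la x_z, E_j\ra=0$, so that $E_{j,z}$ is automatically orthogonal to the outer normal $x$ and the relation $E_{j,z}\in\Gamma(\I)$ is literal rather than modulo $\la x\ra$. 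Computing $E_{j,z}=(e_j)_z-i(e_{m+j})_z$ from the appropriate columns of \eqref{eq-Fz-1} and passing to the $\{E_k,\bar E_k\}$ basis, the $\bar E_k$-component simplifies---after the vertical pieces $\hat a,\hat b$ cancel, as they must because $\I$ depends only on $\gamma$ and not on the lift $F$---to a nonzero multiple of $a_{kj}-ib_{kj}$, and its vanishing is precisely $b=-ia$.

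Part (2) follows either by the parallel computation using $\partial_{\bar z}$ and the characterization $\JJ\gamma_*(\partial_{\bar z})=-i\,\gamma_*(\partial_{\bar z})$ of $\JJ$-anti-holomorphicity, or by complex-conjugating part (1), which swaps $\I$ with $\overline{\I}$ and flips $i$ to $-i$ throughout the conclusions. I do not foresee any conceptual obstacle; the main technical care needed is (i) to justify the entrywise matrix comparison in (a)$\Leftrightarrow$(b) by noting that both \eqref{eq-fz} and \eqref{eq-jfz} already sit in the horizontal normal form \eqref{eq-tts}, and (ii) to keep consistent book-keeping between the real basis $\{e_j,e_{m+j}\}$ and the complex isotropic basis $\{E_j,\bar E_j\}$. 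The cancellation of the vertical pieces $\hat a,\hat b$ in the final condition provides a pleasant built-in consistency check.
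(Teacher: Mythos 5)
Your part (1) is correct and is essentially the paper's own proof: (a)$\Leftrightarrow$(b) by comparing \eqref{eq-jfz} with $i$ times \eqref{eq-fz} (your remark that both matrices already sit in the horizontal normal form of \eqref{eq-tts}, so the $U(m)$-vertical kernel of $(\pi_{st})_{*F}$ does not obstruct entrywise comparison, is exactly the justification the paper leaves implicit), and your column-by-column computation for (b)$\Leftrightarrow$(c) reproduces the paper's identity $(A-iB)_z=-x(\alpha_1^T-i\alpha_2^T)+(A+iB)(a-ib)+(A-iB)(\hat a+i\hat b)$ together with $x_z=A\alpha_1+B\alpha_2$.

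The one genuine problem is your treatment of part (2). With the paper's convention that holomorphicity means $\JJ\gamma_*(\partial_z)=i\,\gamma_*(\partial_z)$, anti-holomorphicity is $\JJ\gamma_*(\partial_z)=-i\,\gamma_*(\partial_z)$, equivalently $\JJ\gamma_*(\partial_{\bar z})=+i\,\gamma_*(\partial_{\bar z})$; the condition you propose to use, $\JJ\gamma_*(\partial_{\bar z})=-i\,\gamma_*(\partial_{\bar z})$, is holomorphicity again, and running your computation with it returns $\alpha_2=-i\alpha_1$, $b=-ia$, i.e.\ (1)(b) rather than (2)(b). Your fallback of ``complex-conjugating part (1)'' also does not give the stated claim: conjugation leaves the real condition ``$\gamma$ is $\JJ$-holomorphic'' unchanged and turns (1)(c) into a statement about $\overline{\I}$, so it yields ``holomorphic $\Leftrightarrow x_{\bar z},\phi_{\bar z}\in\Gamma(\overline{\I})$'', not the stated (2), which involves anti-holomorphicity and $\I$ itself. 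The correct quick arguments are either to compare \eqref{eq-jfz} with $-i$ times \eqref{eq-fz}, which gives $\alpha_2=i\alpha_1$, $b=ia$ and then $x_{\bar z}=(A-iB)\bar\alpha_1$ and $(A-iB)_{\bar z}=(A-iB)\,\overline{(\hat a-i\hat b)}$, or to replace the coordinate $z$ by $\bar z$, which exchanges holomorphic and anti-holomorphic, conjugates $\alpha_1,\alpha_2,a,b,\hat a,\hat b$, and leaves $\I$ unchanged since $\I$ depends only on $J^F$. Either fix is one line, but as written your part (2) proves the wrong statement.
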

\begin{proof}We just prove the holomorphic case; the anti-holomorphic case is the same.
By definition, $\gamma=\pi_{st}(F)$ is $\JJ$ holomorphic if and only if
\[\JJ(\pi_{st})_{* F}\left(\frac{\partial F}{\partial z}\right)=i\left(\frac{\partial F}{\partial z}\right).\]
By \eqref{eq-fz} and \eqref{eq-jfz},  (a) and (b) are equivalent.

By \eqref{eq-F-lift} and \eqref{eq-Fz-1},  we obtain
\[\begin{split}
    x_z&=A\alpha_1+B\alpha_2,\\
A_z&=-x\alpha_1^T+Aa+A\hat a+B\hat b+Bb,\\
B_z&=-x\alpha_2^T+Ab-A\hat b-Ba+B\hat a,
\end{split}\]
frow which we have
\[(A-iB)_z=-x(\alpha_1^T-i\alpha_2^T)+(A+iB)({a}-ib)+(A-iB)(\hat{a}+i\hat b).\]
%\[(A+iB)_z=-x(\alpha_1^T+i\alpha_2^T)+(A-iB)({a}+ib)+(A+iB)(\hat{a}-i\hat b),\]
As a consequence, (b) holds if and only if
\[\begin{split}
    x_z&=(A-iB)\alpha_1, \\
    (A-iB)_z&=(A-iB)(\hat{a}+i\hat b).
\end{split} \]
Hence (b) holds if and only if  (c) holds, given the fact $\I=Span_{\C}\{e_j-ie_{m+j},1\leq j\leq m\}$.
\end{proof}
We  call $\pi_{st}(F)$ {\em a $\JJ$ twistor curve } if it is either $\JJ$ holomorphic or anti-holomorphic. We have the following well known results \cite{BR,Calabi, Ejiri,Helein-B}.
For later applications, we include a proof here.
 \begin{theorem} \label{thm-tw} If $\pi_{st}(F)$ is a   $\JJ$ twistor curve, then $x$ is   totally isotropic.
\end{theorem}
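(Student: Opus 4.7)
The strategy is to reduce the claim to the isotropy of the bundle $\I$ defined in \eqref{eq-zz}. A direct check gives $\<E_j,E_k\>=0$ for $E_j=e_j-ie_{m+j}$, since the cross terms between the real and imaginary parts cancel; hence every section of $\I$ is pointwise isotropic with respect to the complex-bilinear extension of the Euclidean inner product.

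For the $\JJ$-holomorphic case, I would invoke Proposition \ref{prop-J1}(1c), which packages the non-trivial content: it asserts simultaneously that $x_z\in\Gamma(\I)$ and that $\I$ is preserved under $\partial_z$ in the sharp sense that $\phi_z\in\Gamma(\I)$ for every $\phi\in\Gamma(\I)$. A short induction on $j$ then gives $x_z^{(j)}\in\Gamma(\I)$ for all $j\geq 1$: the base case is the first half of (1c), and the inductive step applies the second half with $\phi=x_z^{(j)}$. Combined with the isotropy of $\I$, this yields $\<x_z^{(j)},x_z^{(j)}\>\equiv 0$ for every $j\geq 1$, which is exactly total isotropy in the sense of Definition \ref{def-iso}.

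The $\JJ$-antiholomorphic case is completely parallel: complex conjugation of Proposition \ref{prop-J1}(2c) gives $x_z=\overline{x_{\zb}}\in\Gamma(\bar\I)$ together with the preservation of $\bar\I$ under $\partial_z$. Since $\bar\I$ is isotropic for the same reason as $\I$, the identical induction applies and delivers the conclusion.

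The main thing to flag is that there is essentially no obstacle left at this stage, since all of the hard work has been absorbed into Proposition \ref{prop-J1}. What makes the induction produce total isotropy rather than mere conformality is the \emph{sharpness} of the preservation in (1c)/(2c): differentiating a section of $\I$ produces no $x$-component and no $\bar\I$-component, so the iteration is confined to a single isotropic subbundle. Without that sharpness the argument would only control $\<x_z, x_z\>$ and not the higher-order isotropies.
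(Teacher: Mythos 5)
Your proposal is correct and follows essentially the same route as the paper's proof: invoke Proposition \ref{prop-J1}(1c) (resp.\ its conjugate for the anti-holomorphic case) to see that $x_z$ and all higher derivatives $x_z^{(k)}$ stay in the isotropic bundle $\I$ (resp.\ $\bar\I$), then conclude from the isotropy of $\I$. The paper leaves the anti-holomorphic case as ``similar''; your explicit conjugation argument is a fine way to carry it out.
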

\begin{proof}
Assume $\pi_{st}(F)$ is  $\JJ$ holomorphic, and set $E_j=e_j-ie_{m+j}$, $1\leq j\leq m$, as before. By (1) of Proposition \ref{prop-J1}, we obtain \[
\begin{split}
    x_z&=(A-iB)\alpha_1=\sum (\cdots)E_{j},\\
{E}_{lz}&=\sum(\cdots){E}_{j},~ 1\leq l\leq m.
\end{split}\]
 Consequently, $x$ is totally isotropic, since for any $k \geq 1$ we have
 \begin{equation}
     \label{eq-xzz}
     x_{z}^{(k)}=\sum(\cdots){E}_{j},~\hbox{ for any } k\geq 1.
 \end{equation}
A similar argument applies to the case where $\pi_{st}(F)$ is $\JJ$-anti-holomorphic.
\end{proof}

\subsection{Totally isotropic Willmore $2$-spheres and  $\FH$ twistor curves}
\begin{definition}  Let $\ff$ be a $\JJ$ twistor curve with $x=\pi_t(\ff)$ being a conformal immersion in $S^{2m}$. We say that $\ff$ is  a $\FH$ lift of $x$ if the following statements hold.
\begin{enumerate}
    \item When $\ff$ is $\JJ$ holomorphic, let $F=(x,e_1,\cdots,e_m, e_{m+1},\cdots,e_{2m})$ be a local lift of $\ff$ satisfying
    $x_z=(\cdots)E_1, \hbox{ and } ~E_j:=e_j-ie_{m+j}, 1\leq j\leq m.$ Then we have
    \begin{equation}\label{eq-H-horizontal}
 \phi_{\bar{z}}=0\mod{ \{\I_2,x_z,x_{\bar z}\}},\hbox{ for any }\phi\in\Gamma(\I_2), \hbox{ where }  (\I_2)_p:=Span_{\C}\{E_2,\cdots,E_m\}|_p.
    \end{equation}
\item If $\ff$ is $\JJ$ anti-holomorphic, let $F=(x,e_1,\cdots,e_m, e_{m+1},\cdots,e_{2m})$ be a local lift of $\ff$ with
  \[x_{\zb}=(\cdots)E_1, \hbox{ and } ~E_j:=e_j-ie_{m+j}, 1\leq j\leq m.\]
  Define the bundle $\I_2$ to be the bundle spanned by $\{E_2,\cdots,E_m\}$ at $x$. Then we have
    \begin{equation}\label{eq-H-horizontal-2}
 \phi_{z}=0\mod{ \{\I_2,x_z,x_{\bar z}\}},\hbox{ for any }\phi\in\Gamma(\I_2).
    \end{equation}
\end{enumerate}
\end{definition}
It is easy to check that the above definition of $\FH$ twistor curves is well defined.
\begin{remark} \
\begin{enumerate}
\item Recall that $\ff$ is called horizontal if
 \begin{equation}\label{eq-horizontal}
 \phi_{\zb}=0\mod{ \{\I,x\}},\hbox{ for any }\phi\in\Gamma(\I)
    \end{equation}
    holds \cite{Calabi, Helein-B}. So a horizontal twistor curve is also a $\FH$ twistor curve. Note that both the properties of existence of horizontal twistor lift and $\FH$ twistor lift are M\"obius invariant.
    \item
 Calabi Theorem \cite{Calabi} states that the twistor projection $\pi_{st}(\ff)=x$ of a horizontal twistor curve $\ff$ is a totally isotropic minimal surface in $S^{2m}$ and
all minimal $2$-spheres in $S^{2m}$ come from  twistor projections of horizontal twistor curves. In the spirit of Calabi, we have the following theorem for totally isotropic Willmore $2$-spheres.
\end{enumerate}
\end{remark}
 \begin{theorem} \label{thm-total-iso}\
 \begin{enumerate}
\item Let $x$ be a totally isotropic Willmore $2$-spheres  in $S^{2m}$. Then there exists a twistor lift $\ff$ which is  a $\FH$ twistor curve.

\item
The twistor projection $\pi_{st}(\ff)=x$ of  a $\FH$ twistor curve $\ff$ is a totally isotropic Willmore surface in $S^{2m}$ when $x$ is immersed.
 \end{enumerate}\end{theorem}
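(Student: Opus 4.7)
My plan is to handle the two directions separately, treating the $\FH$-twistor-to-Willmore direction (part~(2)) first since it is the more streamlined of the two.

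For part~(2), the total isotropy of $x=\pi_t(\ff)$ is already supplied by Theorem~\ref{thm-tw}, so only the Willmore equation \eqref{willmore} remains. I would pick a local lift $F=(x,e_1,\dots,e_{2m})$ of $\ff$ as in the definition so that $x_z=\alpha_{11}E_1$ with $\alpha_{1j}=0$ for $j\ge 2$, and set $\I_2:=\Spc\{E_2,\dots,E_m\}$. A direct reading of the structure equations \eqref{eq-Fz-1} shows that the Hopf differential $\kappa$ of $x$ lies in $\I_2$. Translating the $\FH$ condition \eqref{eq-H-horizontal} via \eqref{eq-fz}--\eqref{eq-jfz} together with the $\JJ$-holomorphicity identities $b=-ia$ and $\alpha_2=-i\alpha_1$ of Proposition~\ref{prop-J1} reduces $\FH$ to the matrix identity $a_{jk}=0$ for $j,k\ge 2$; this identity forces $\db$ to preserve $\I_2$ modulo tangential corrections, whence $\db\kappa,\db\db\kappa\in\Gamma(\I_2)$. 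Hence $\Phi:=\db\db\kappa+\tfrac{\bar s}{2}\kappa\in\Gamma(\I_2)$. The Codazzi equation \eqref{codazzi} gives $\Im\Phi=0$, that is $\Phi=\overline\Phi\in\overline{\I_2}$; since $\I_2$ is maximally isotropic in the complexified normal bundle, $\I_2\cap\overline{\I_2}=\{0\}$, forcing $\Phi\equiv0$, which is \eqref{willmore}.

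For part~(1), given a totally isotropic Willmore sphere $x:S^2\to S^{2m}$, I would manufacture the twistor lift from the isotropic sequence $\kappa,\dz\kappa,\dz^{(2)}\kappa,\dots$ inside the complexified conformal normal bundle $V^{\perp}\otimes\C$. Total isotropy is the $k=\infty$ version of Theorem~\ref{thm-m-iso}, which produces an isotropic holomorphic subbundle $\I_2\subset V^{\perp}\otimes\C$ of generic complex rank $m-1$; Lemma~\ref{lemma-holo} then fills in $\I_2$ across isolated degeneracies. Adjoining $\Spc\{x_z\}$ yields an isotropic subbundle $\I\subset TS^{2m}\otimes\C$ of rank $m$, which pointwise determines an oriented orthonormal complex structure $J_p$ on $T_pS^{2m}$, hence a smooth lift $\ff:S^2\to\TS$. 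The $\JJ$-holomorphicity of $\ff$ then reduces via Proposition~\ref{prop-J1}(1c) to verifying $\phi_z\in\Gamma(\I)$ for each $\phi\in\Gamma(\I)$: for $\phi=\dz^{(j)}\kappa$ this is immediate from the definition of $\I_2$, and for $\phi$ along $x_z$ it follows from $Y_{zz}=-\tfrac{s}{2}Y+\kappa$ with $\kappa\in\I_2$. The $\FH$ condition \eqref{eq-H-horizontal} becomes the statement that $\db$ preserves $\I_2$ modulo $\Spc\{x_z,x_{\bar z}\}$; this is exactly identity \eqref{eq-dbpi} of Lemma~\ref{lemma-ind2}, whose proof uses only total isotropy and the Willmore equation and so extends to $k=\infty$.

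The main technical obstacle is bridging the $\RRR$-language of the conformal normal bundle $V^{\perp}\otimes\C$ and the $\R^{2m+1}$-language of the twistor frame $(x,e_1,\dots,e_{2m})$: I must show that the isotropic subbundle generated by $\kappa$ and its $\dz$-iterates inside $V^{\perp}\otimes\C$ corresponds, after projecting out the Lorentz directions $Y,Y_z,Y_{\bar z},N$, to the prescribed subbundle $\I_2\subset TS^{2m}\otimes\C$. Concretely this requires reconciling the Hermitian normalisation $\langle E_j,\overline{E_j}\rangle=2$ with the conformal normalisation $\langle Y_z,Y_{\bar z}\rangle=\tfrac12$, and carefully tracking the tangential $Y$- and $N$-corrections introduced by the structure equations \eqref{mov-eq}. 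A secondary subtlety is the continuity of the twistor lift across umbilic points and across zeros of the higher derivatives $\dz^{(j)}\kappa$; the real analyticity of Willmore 2-spheres (cf.\ the remark after Theorem~\ref{th-Willmore}) together with Lemma~\ref{lemma-holo} lets one extend $\I_2$ holomorphically across the resulting isolated loci on $S^2$, so that $\ff$ is globally defined.
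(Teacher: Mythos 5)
Your plan is correct and follows essentially the same route as the paper: part (2) is proved by showing $\kappa$, $\db\kappa$, $\db\db\kappa$ all lie in the isotropic bundle $\I_2$ and letting the Codazzi equation together with $\I_2\cap\overline{\I_2}=\{0\}$ force the Willmore equation, and part (1) is built from the stabilized isotropic holomorphic bundles $\Pi_j$ of Theorem \ref{thm-m-iso}, transplanted from $V^{\perp}\otimes\C$ to $TS^{2m}\otimes\C$ exactly as the paper does via $\hat E_j=(0,E_j)+(\cdots)Y$, with the $\FH$ property coming from \eqref{eq-dbpi}. The only cosmetic differences are your phrasing of stabilization as the ``$k=\infty$'' case and your explicit matrix reformulation of the $\FH$ condition, neither of which changes the argument.
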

\begin{proof}
(1) Set $Y:=e^{-\omega}(1,x)$ with $e^{2\omega}=2|x_z|^2$. Then under the global assumption of $2$-sphere, by Theorem \ref{thm-m-iso}, there exists some  $\Pi_{k}$ such that $\Pi_{k+1}=\Pi_{k}$. Set $m=\dim \Pi_k+1$. Then there exists a basis $\{\hat{E}_2,\cdots, \hat{E}_m\}$ of $\Pi_{k}$ such that, for $~ \ 2\leq j\leq m$,
\[\begin{split}
    \dz \hat{E}_j&=0\mod \{\hat{E}_2,\cdots,\hat{E}_m\},\\
    \db \hat{E}_j&=0\mod \{\hat{E}_2,\cdots, \hat{E}_m\}.
\end{split}\]
Hence we have for $2\leq j\leq m$
\[\begin{split}
    \hat{E}_{jz}&=0\mod \{\hat{E}_2,\cdots, \hat{E}_m\},\\
\hat{E}_{j\bar z}&=0\mod \{Y_{\bar{z}}, N,  \hat{E}_2,\cdots, \hat{E}_m\}.
\end{split}\]
Assume \[\hat{E}_j=(0,E_j)+(\cdots)Y\]
such that
\[\begin{split}
   & E_1=0\mod{x_z}\\
&E_j\perp\{ x,x_z,x_{\zb}\}, 2\leq j\leq m.
\end{split}\]
Then we have
\[\begin{split}
    E_{jz}&=0\mod \{x_z,x_{\zb}, {E}_2,\cdots, E_m\},\\
    E_{j\zb}&=0\mod \{x_z,x_{\zb}, {E}_2,\cdots, E_m\},\ 2\leq j\leq m.
\end{split}\]
Assume ${E}_j=e_j-ie_{m+j}$ for $1\leq j\leq m$, and set \[\tilde{F}=(x,e_1,\cdots, e_{2m})\in O(2m+1).\]

If $\det \tilde{F}=1$, we have  $\ff=\pi_{st}(F)$ is  a $\FH$ $\JJ$ holomorphic curve.

If $\det \tilde{F}=-1$, we have  $\ff=\pi_{st}(F)$ is  a $\FH$ $\JJ$ anti-holomorphic curve.

(2) Assume without of generality $\ff$ is $\JJ$ holomorphic.  By definition, we have a framing $\{x, E_1, \cdots E_m,  \bar{E}_1, \cdots \bar{E}_m\}$ by use of  $\ff$, with $x_z=0\mod\{E_1\}$ and $x_{z}^{(j)}=0\mod\{E_1,\cdots, E_m\}$ for all $j\geq2$ \eqref{eq-xzz}.

Now set  $Y:=e^{-\omega}(1,x)$ and $\hat{E}_1=(0, E_1)$, $\hat{E}_j=(0, E_j)+(\cdots)Y$ such that $\hat{E}_j\perp (0,x_{z\bar{z}})$, where $e^{2\omega}=2|x_z|^2$. Then by  $x_{z}^{(j)}=0\mod\{E_1,\cdots, E_m\}$, $1\leq j\leq m$, we have that
\[Y_{zz}=0\mod \{Y,\hat{E}_2,\cdots,\hat{E}_m\}.\]
So $\kappa=0\mod \{\hat{E}_2,\cdots,\hat{E}_m\}$.
Hence  $D_{\zb}\kappa=\sum_{2\leq j\leq m}(\cdots)\hat{E}_{jz}$.
By \eqref{eq-H-horizontal}, we have
$\db\kappa=0\mod  \{\hat{E}_2,\cdots,\hat{E}_m\}$. Similarly, we have $
\db\db\kappa=0\mod  \{\hat{E}_2,\cdots,\hat{E}_m\}.$ Therefore
\[\db\db\kappa+\frac{\bar{s}}{2}\kappa=0\mod  \{\hat{E}_2,\cdots,\hat{E}_m\}.\]
So $\db\db\kappa+\frac{\bar{s}}{2}\kappa$ is totally isotropic. Since the Codazzi equation of $Y$ shows $Im(\db\db\kappa+\frac{\bar{s}}{2}\kappa)=0$, we have $\db\db\kappa+\frac{\bar{s}}{2}\kappa=0$ and hence $x$ is a totally isotropic Willmore surface.
\end{proof}
\begin{remark}
  In \cite{Fer-03,Fer-12}, a beautiful characterization of horizontal twistor curves is introduced and used successfully in the discussion of the moduli spaces of harmonic $2$-spheres in $S^{2m}$. One can adapt the treatment of \cite{Fer-03,Fer-12} to discuss $\FH$ twistor curves as well as the corresponding Willmore $2$-spheres in $S^{2m}$, which will be the topic of another publication.
\end{remark}
\begin{remark}
    From the definition of $\FH$ twistor curves, we see that one can also use the $1$-flag bundle $\FS$ to describe totally isotropic Willmore $2$-spheres. To be concrete, for $p\in S^{2m}$, set
\[IS_{p}:=\{\hbox{oriented $m$-dimensional isotropic subspace of } T_pS^{2m}\otimes\C\}.\]
 We define the flag bundle $\FF_1(S^{2m})$ as follows:
\begin{equation}\label{eq-flag}
\FF_1(S^{2m}):=\coprod_{p\in S^{2m}}\{(\ZZ_{p,1},\ZZ_p)| \ZZ_p\in IS_{p},\ZZ_{p,1}\subset\ZZ_p \hbox{ is an $1$-dim subspace}\}.\end{equation}
Then we can also define the complex structure on $\FS$ and use lifts into $\FS$ to describe totally isotropic Willmore $2$-spheres. Note that in \cite{Burst}, Burstall introduced flag manifolds for harmonic tori in spheres and complex projective spaces, which generalized the classical work of Eells and Salamon \cite{ES}.
%So it is a bit different from the present case.
\end{remark}
\vspace{2mm}
\section{Strictly $k$-isotropic Willmore $2$-spheres: the classification theorem}

In this section, we will present the classification theorem for non-S-Willmore, strictly $k$-isotropic Willmore $2$-spheres, together with an outline of proofs. The technical proofs will be given in the next three sections.

\begin{theorem}\label{thm-1}
Let $y^k:S^2\to S^n$ be a full, strictly $k-$isotropic, Willmore immersion which is not S-Willmore.
Then there exists a sequence of adjoint Willmore surfaces
\begin{equation}\label{eq-W-s1}
y^j:M_j\rightarrow S^n, ~ k\leq j\leq m,~ M_j\subseteq S^2,
\end{equation}
such that
\begin{enumerate}
    \item $y^{j+1}$ is strictly $(j+1)-$isotropic Willmore with $M_{j+1}$ being an open dense subset of $M_{j}$, and $y^{j+1}$ is an adjoint Willmore surface of $y^j$,   $k\leq j\leq m-1$;
    \item  $y^m$ is M\"obius congruent to a (branched) strictly $m$-isotropic minimal surface in $\R^n$.
\end{enumerate}
\end{theorem}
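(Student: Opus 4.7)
The plan is to construct the adjoint Willmore sequence indirectly, by first building two auxiliary global harmonic sequences on $S^2$ and then reading off each $Y^{j+1}$ from the intersection of a Lorentzian mean-curvature-sphere bundle with the perpendicular of an isotropic jet bundle. Starting from the harmonic conformal Gauss map $f^k_0:S^2\to Gr_{3,1}(\RRR)$ of $y^k=[Y^k]$, I would apply the $\DD$-transform of Section \ref{sec-harm} inductively. Strict $k$-isotropy together with the isotropic holomorphic bundles $\Pi_j$ produced in Theorem \ref{thm-m-iso} supplies the Frenet-bundle structure required by Definition \ref{def-Frenet}, so Theorem \ref{thm-harmonicity} yields a global real harmonic sequence
\[f^k_0\Rightarrow f^k_1\Rightarrow\cdots\Rightarrow f^k_{k+k'}=h_0^\perp\]
of increasing rank. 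Finiteness of $\dim\RRR$ forces termination at some $h_0$ with $\mathrm{Rank}_{\C}\,\partial h_0=1$, and the accumulated isotropic conditions force $h_0$ itself to be totally isotropic. From $h_0$ a second filtration $h_0\subset h_1\subset\cdots\subset h_{m+1}$, defined on open dense subsets of $S^2$, is then built by successively adjoining the one-dimensional isotropic complex derivative.

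The core geometric assertion to establish is the identity
\[\Spr\{Y^j,Y^{j+1}\}=f^j_0\cap h_{j+1}^{\perp},\quad k\leq j\leq m,\]
where $f^j_0$ is simultaneously the $(j-k)$-th $\DD$-transform in the first sequence and the conformal Gauss map of $y^j$. I would prove it by a double induction: on $j$ along the adjoint sequence, and on an auxiliary index $l$ inside each $\DD$-transform cascade. At step $j$, the framing of $(f^j_0)^\perp$ inherited from $\{h_{j+1},\ldots,h_{m+1}\}$ shows that $f^j_0\cap h_{j+1}^{\perp}$ is a Lorentz $2$-plane inside the mean curvature sphere of $y^j$, hence its light cone consists of two distinct light-like lines, one of which is $[Y^j]$. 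Declaring the other to be $[Y^{j+1}]$ produces an ansatz of the form \eqref{yhat2}; the Frenet conditions of Definition \ref{def-Frenet} together with the harmonicity of $f^j_0$ then force the parameter $\mu$ to satisfy both the co-touch condition \eqref{eq-theta} and the conformality condition \eqref{eq-theta-2} of Definition \ref{def-adj} automatically. The gain in isotropy order follows from $\mathrm{Rank}_{\C}\,\partial h_{j+1}=1$ and the totally isotropic nature of the $h$-filtration. At the terminal step $j=m$ the intersection degenerates and $Y^{m+1}$ collapses to a constant light-like line, so Proposition \ref{prop-minimal} identifies $[Y^m]$ with a minimal surface in $\R^n$; the inherited framing forces it to be strictly $m$-isotropic.

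The main obstacle will be controlling the singularities of the individual adjoint surfaces. The function $\mu$ solving \eqref{eq-theta-2} may blow up at isolated points where the discriminant $\Theta_0$ vanishes, so local lifts $\hat Y$ can develop poles and the immersions $y^{j+1}$ are in general only defined on an open dense subset $M_{j+1}\subset M_j$. Naively this prevents a direct use of vanishing theorems for holomorphic differentials on $S^2$, since the moving-frame computations produce divergent coefficients. The essential rescue is Lemma \ref{lemma-linebundle}: the apparently singular contributions cancel after taking wedge products with previously constructed global frames, so the bundles $f^k_l$, $h_j$, and the intersection $f^j_0\cap h_{j+1}^{\perp}$ extend smoothly to all of $S^2$ even when individual lifts do not. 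This global regularity is exactly what permits the inductive use of the Codazzi and Willmore equations \eqref{codazzi}--\eqref{willmore} to conclude that the auxiliary holomorphic forms generated at each step vanish identically on $S^2$, thereby yielding the isotropic subbundles and Frenet-bundle structures needed to propagate the construction and ultimately close the induction at $j=m$.
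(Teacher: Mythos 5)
Your proposal follows essentially the same route as the paper: the $\DD$-transform cascade $f^k_0\Rightarrow\cdots\Rightarrow f^k_{k+k'}=h_0^{\perp}$ built from the Frenet-bundle structures and vanishing of global holomorphic forms on $S^2$, the totally isotropic universal map $h_0$ with its filtration $\{h_j\}$, the intersection formula $\Spr\{Y^j,Y^{j+1}\}=f^j_0\cap h_{j+1}^{\perp}$ proved by double induction, and Lemma \ref{lemma-linebundle} to tame the singular coefficients — this is precisely the paper's argument (Theorems \ref{thm-line}, \ref{thm-h0}, \ref{thm-initial}, \ref{thm-inductive}). The details you leave implicit (the gauge adjustments producing $F^j_0$, and the use of fullness of $y^k$ via Propositions \ref{prop-full}--\ref{prop-minimal} to rule out premature constancy of $Y^{j+1}$ so the induction runs exactly to $j=m$) are exactly the ones the paper's Lemmas \ref{lemma-yk-initial} and \ref{lemma-key-inductive} supply.
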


% We will derive the proof of Theorem \ref{thm-1} in terms of a sequence of Willmore surfaces  $\{Y^j\}_{k\leq j\leq m}$ with $Y^j$ adjoint to $Y^{j+1}$ and with $[Y^m]$ conformally congruent to a strictly $m$-isotropic minimal surface in $\R^n$.

We outline the idea of proofs as follows.
\begin{enumerate}
\item The conformal Gauss map $f^{k}_{0}$ of $y^k$ has a Frenet-bundle structure and induces a \emph{global, real}, harmonic sequence $\{f^{k}_{j}\}_{0\leq j\leq k+k'}$ with $f^{k}_{j}\subset f^{k}_{j+1}$ and \[Rank_{\C}\partial f^{k}_{k+k'}=1.\] Here $f^{k}_{j}$ can be viewed as higher order conformal Gauss maps (Theorem \ref{thm-line} ).
    \item Set $h_0=(f^{k}_{k+k'})^{\perp}$. Then $Rank_{\C}\partial h_0=1$ and $h_0$ is a totally isotropic harmonic map which induces a harmonic sequence $\{h_{j}\}_{0\leq j\leq m+1}$ on some open dense subset of $S^2$, with \[h_{m+1}^{\perp}=f^{m}_{0}\cap h_{m+1}^{\perp}=\hbox{Span}_{\R}\{Y^m,Y^{m+1}\}\] and $y^m=[Y^m]$ M\"obius congruent to a  (branched) strictly $m$-isotropic  minimal surface in $\R^n$ and $[Y^{m+1}]=const$
    %\footnote{Moreover, locally $y^m=[Y^m]$ also has an associated harmonic sequence $f^{m}_{j}$, $0\leq j\leq m$, such that $f^{m}_{0}$ is the conformal Gauss map of $y^m$ and $f^{m}_{j}=(h_{m-j})^{\perp}$. In particular $f^{m}_{m}=f^{k}_{m}=h_0^{\perp}$.}
    (Theorem \ref{thm-h0} and Proof of Theorem \ref{thm-1}). Here $f^{m}_{0}$ is the conformal Gauss map of $y^m$.
        \item
        The sequence  $\{h_j\}_{0\leq j\leq m+1}$ defines a flag structure which determines $y^j=[Y^j]$ inductively: for each $k\leq j\leq m$,  we have
        \begin{equation}
            \label{eq-inter}
        \hbox{Span}_{\R}\{Y^j,Y^{j+1}\}=f^{j}_{0}\cap h^{\perp}_{j+1},
        \end{equation}with  $f^{j}_{0}$ being the conformal Gauss map of $y^j=[Y^j]$ (Theorem \ref{thm-initial} and Theorem \ref{thm-inductive}), and with $Y^{j+1}$ adjoint to $Y^{j}$.
\end{enumerate}

\begin{remark}Note that some of the possible branched/singular points of $Y^{j+1}$ come from the singular points of $h_{j+1}^{\perp}$ by \eqref{eq-inter}. This means that probably $Y^{j+1}$ is not globally well-defined, which turns out to be the key difficulty of this work. Here we successfully build \eqref{eq-inter}  to avoid discussing the singular points of $[Y^{j+1}]$ for $k\leq j\leq m-1$.
\end{remark}
\vspace{2mm}

%\subsection{A technical lemma}
 To derive $\{f^{k}_{j}\}$, we need to show the vanishing of some global holomorphic differentials.
%\subsubsection{Sequence of line bundles and holomorphic differentials}
The following lemma is a key technical tool to control the frame involved and to remove the possible singularities.
\begin{lemma}\label{lemma-linebundle} Let $\VV$ be a Riemannian sub-bundle of $M\times\R^{n+2}_1$ over a Riemann surface $M$. Let $\WW$ be a non-totally-isotropic, rank-$2$ holomorphic subbundle of $\VV\otimes{\C}$, such that:
\begin{enumerate}
\item The differential $\mathcal{X}\dd z^{k}\not\equiv0$ is a global holomorphic differential over $M$, taking values in $\wedge^2\WW\subset \wedge^2\C^{n+2}_1$;
\item The holomorphic differential $\<\mathcal{X},\mathcal{X}\>\dd z^{2k}$ vanishes on $M$.
\end{enumerate}
Then
\begin{enumerate}
\item There exists a unique isotropic, holomorphic line bundle $\LL\subset \WW$ over $M$;
\item For any $p\in M$, there exists a neighborhood $U_p'$ of $p$ and a basis $\{\tilde X_1,\tilde X_2\}$ of $\WW$ on $U_p$ such that $\LL=\hbox{Span}_{\C}\{\tilde X_1\}$ and $\tilde X_2\perp \tilde X_1,\bar{\tilde{ X}}_1.$\end{enumerate}
\end{lemma}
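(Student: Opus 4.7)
The plan is to exploit condition (2) pointwise: it forces the complex-bilinear form $Q := \langle \cdot, \cdot \rangle$ restricted to each fiber $\WW_p$ to have vanishing discriminant. Writing $\mathcal{X} = f\, X_1 \wedge X_2$ in a local holomorphic frame $\{X_1, X_2\}$ of $\WW$, one gets
\[
\langle \mathcal{X}, \mathcal{X}\rangle \;=\; f^2 \left( Q(X_1, X_1)\, Q(X_2, X_2) - Q(X_1, X_2)^2 \right),
\]
and since $f \not\equiv 0$ the Gram discriminant vanishes on an open dense set, hence everywhere by analyticity. Thus $Q|_{\WW_p}$ has rank at most one at every $p$, and the non-totally-isotropic hypothesis rules out rank zero generically. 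So on a dense open $M_0 \subset M$ the rank is exactly one, and the radical $\LL_p := \{v \in \WW_p : Q(v, \cdot) \equiv 0\}$ is a well-defined isotropic complex line.

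Next I would produce $\LL$ globally as a holomorphic line subbundle of $\WW$. A local holomorphic section of the radical is supplied by
\[
\tilde X_1 := Q(X_2, X_2)\, X_1 - Q(X_1, X_2)\, X_2,
\]
which, using the vanishing discriminant, satisfies $Q(\tilde X_1, X_j) = 0$ for $j = 1, 2$ and is therefore automatically isotropic. Viewing $\tilde X_1$ as a holomorphic bundle map from the trivial line bundle to $\WW$ and invoking Lemma \ref{lemma-holo} yields a unique holomorphic line subbundle $\LL \subset \WW$ whose fiber agrees with $\C \tilde X_1$ on the open dense subset where $\tilde X_1 \neq 0$. Because the isotropy condition $Q(v,v) = 0$ is closed, $\LL$ is isotropic on all of $M$. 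Uniqueness is immediate: any isotropic holomorphic line subbundle of $\WW$ must coincide with the radical of $Q|_\WW$ on $M_0$, hence with $\LL$ everywhere by analytic continuation.

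For part (2), pick any local frame $\{\tilde X_1, X_2'\}$ of $\WW$ near $p$ with $\tilde X_1$ non-vanishing. The orthogonality $\tilde X_2 \perp \tilde X_1$ in the complex bilinear pairing is automatic for every $\tilde X_2 \in \WW$ since $\tilde X_1$ spans the radical of $Q|_\WW$. To enforce $\tilde X_2 \perp \bar{\tilde X}_1$ I would invoke the Riemannian hypothesis on $\VV$, which makes $\langle \cdot, \bar{\,\cdot\,}\rangle$ a positive-definite Hermitian form on $\VV \otimes \C$, so $\langle \tilde X_1, \bar{\tilde X}_1\rangle > 0$ near $p$. Then
\[
\tilde X_2 := X_2' - \frac{\langle X_2', \bar{\tilde X}_1\rangle}{\langle \tilde X_1, \bar{\tilde X}_1\rangle}\, \tilde X_1
\]
is the desired smooth basis element. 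The most delicate step in the whole argument is the global existence of $\LL$ at the isolated points where the explicit $\tilde X_1$ happens to vanish: there the radical must be extracted not by the naive algebraic formula but by Lemma \ref{lemma-holo}, which is precisely what extends $\C \tilde X_1$ across its zero locus to a genuine holomorphic line subbundle.
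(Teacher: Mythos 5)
Your argument is correct in substance, but it takes a genuinely different route from the paper's proof. The paper never assumes a $\bar\partial$-flat frame of $\WW$: it only takes sections with $\nabla^{V}_{\bar z}X_j=0$ modulo $\{X_1,X_2\}$, so the Gram entries $\<X_i,X_j\>$ are merely pseudo-holomorphic, and the crux of its argument is an application of Chern's lemma to show that at a degenerate point one of the two ratios $\<\hat X_1,\hat X_2\>/\<\hat X_1,\hat X_1\>$ or $\<\hat X_1,\hat X_2\>/\<\hat X_2,\hat X_2\>$ has a finite limit; the isotropic generator $\tilde X_1$ is then produced by a Gram--Schmidt step whose pivot changes from case to case. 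You instead exploit the holomorphic structure directly: with a local holomorphic frame of the holomorphic subbundle $\WW$ and the metric connection on $\VV$, the Gram entries are honestly holomorphic, the adjugate section $\tilde X_1=\<X_2,X_2\>X_1-\<X_1,X_2\>X_2$ is a holomorphic section of $\WW$ lying in the radical, and Lemma \ref{lemma-holo} extends its span across its zeros; uniqueness and part (2) then follow from the rank-one structure of the form and the positivity of the Hermitian pairing $\<\cdot,\bar\cdot\>$ on $\VV\otimes\C$. What your route buys is the replacement of the paper's case analysis by a single invocation of Lemma \ref{lemma-holo}; what it costs is reliance on the stronger (but available) fact that $\WW$ admits $\bar\partial$-flat local frames whose Gram entries are holomorphic.

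Two points should be made explicit to close the argument. First, the holomorphy of $\<X_i,X_j\>$ is exactly where you lean on the hypotheses: it needs both that $\WW$ is a holomorphic subbundle for the $(0,1)$-part of the induced metric connection $\nabla^{V}$ (so a local frame with $\nabla^{V}_{\bar z}X_j=0$ exists) and that $\nabla^{V}$ is compatible with the complex bilinear form; state this, since it is precisely the step the paper circumvents with Chern's lemma. Second, your explicit section can vanish identically --- for instance if $\<X_2,X_2\>\equiv\<X_1,X_2\>\equiv0$ while $\<X_1,X_1\>\not\equiv0$ --- in which case Lemma \ref{lemma-holo} produces nothing from it; you must allow the companion section $\<X_1,X_1\>X_2-\<X_1,X_2\>X_1$ and note that at least one of the two is $\not\equiv0$ precisely because $\WW$ is not totally isotropic. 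With these two remarks added, your proof is complete and equivalent in strength to the paper's.
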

\begin{proof}
Let $\nabla^{V}$ be the Levi-Civita connection on $\VV\otimes{\C}$. Let $U_p\subset M$ be a neighborhood of a point $p\in M$, where $\mathcal{X}=X_1\wedge X_2$ with $X_j$ sections of $\WW$ such that
$\nabla^{V}_{\bar {z}}X_j=0\mod\{X_1,X_2\}$. By Chern's Lemma \cite{Chern}, there exists $\hat{X}_1$ and $\hat{X}_2$ such that
$X_j=\sum_{1\leq l\leq 2}\hat{a}_{jl}\hat{X}_l$ with $\hat{X}_1\wedge\hat{X}_2\neq0$. Since $\WW$ is not totally isotropic on an open dense subset, without loss of generality we can assume that $\<\hat{X}_2,\hat{X}_2\>\neq0$ on an open dense subset of $U_p$.

On the points $\<\hat{X}_2,\hat{X}_2\>\neq0$, we can simply set
\[\tilde{X}_1=\hat{X}_1-\frac{\<\hat{X}_1,\hat{X}_2\>}{\<\hat{X}_2,\hat{X}_2\>}\hat{X}_2,\ \tilde{X}_2=\hat{X}_2-\frac{\<\hat{X}_1,\bar{\tilde{X}}_1\>}{\<\tilde{X}_1,\bar{\tilde{X}}_1\>}\tilde{X}_1\]
so that $\tilde{X}_1\wedge\tilde{X}_2=\hat{X}_1\wedge\hat{X}_2\neq0$ and $\<\tilde{X}_1,\hat{X}_2\>=0$. By assumption (2), we obtain $\<\tilde{X}_1,\tilde{X}_1\>=0$ and hence $\<\tilde{X}_1,\tilde{X}_2\>=0$. Note that $\<\tilde{X}_2,\tilde{X}_2\>=\<\hat{X}_2,\hat{X}_2\>\neq0$. So $\LL:=\hbox{Span}_{\C}\{\tilde{X}_1\}$ is the unique isotropic line bundle on the subset $\<\hat{X}_2,\hat{X}_2\>\neq0$.

%If $\<\hat{X}_1,\hat{X}_1\>|_p\neq 0$, then (2) holds simply.

For the case $\<\hat{X}_2,\hat{X}_2\>|_p=0$, we have that the inner products on its neighborhood $U_p$,
\[\{\<\hat{X}_1,\hat{X}_1\>,\<\hat{X}_2,\hat{X}_2\>, \<\hat{X}_1,\hat{X}_2\>\}\] satisfy some psuedo-holomorphic equations  since  $\nabla^{V}_{\bar {z}}\hat{X}_j=0\mod\{\hat{X}_1,\hat{X}_2\}$. Since $\<\hat{X}_1,\hat{X}_2\>^2=\<\hat{X}_1,\hat{X}_1\>\<\hat{X}_2,\hat{X}_2\>$ by assumption $(2)$, together with Chern's Lemma \cite{Chern}, we have that either $\<\hat{X}_1,\hat{X}_2\>/\<\hat{X}_1,\hat{X}_1\>$ or $\<\hat{X}_1,\hat{X}_2\>/\<\hat{X}_2,\hat{X}_2\>$ has a finite limit at $p$. So on an open neighborhood $U_p'\subset U_p$ of $p$, either $\<\hat{X}_1,\hat{X}_2\>/\<\hat{X}_1,\hat{X}_1\>$ or $\<\hat{X}_1,\hat{X}_2\>/\<\hat{X}_2,\hat{X}_2\>$ is finite.
For the first case, set
\[\tilde{X}_1=\hat{X}_2-\frac{\<\hat{X}_1,\hat{X}_2\>}{\<\hat{X}_1,\hat{X}_1\>}\hat{X}_1,\ \tilde{X}_2=\hat{X}_1-\frac{\<\hat{X}_1,\bar{\tilde{X}}_1\>}{\<\tilde{X}_1,\bar{\tilde{X}}_1\>}\tilde{X}_1.\]
For the second case, set
\[\tilde{X}_1=\hat{X}_1-\frac{\<\hat{X}_1,\hat{X}_2\>}{\<\hat{X}_2,\hat{X}_2\>}\hat{X}_2,\ \tilde{X}_2=\hat{X}_2-\frac{\<\hat{X}_2,\bar{\tilde{X}}_1\>}{\<\tilde{X}_1,\bar{\tilde{X}}_1\>}\tilde{X}_1.\]
Set the isotropic line bundle $\LL:=\hbox{Span}_{\C}\{\tilde{X}_1\}$ on $U'_p$. So $\LL$ extends to a global isotropic holomorphic line bundle over $M$. This finishes the proof.
\end{proof}

\vspace{4mm}
\section{The harmonic sequence $\{f^{k}_{j}\}$ and the universal Gauss map $h_0$}\label{s-fkj}

 Let $y^k$ be a full, strictly $k$-isotropic Willmore $2$-sphere in $S^n$ which is not S-Willmore, that is
 \[\<\kappa,\kappa\>=\cdots=\<\dz^{(k-1)}\kappa,\dz^{(k-1)}\kappa\>\equiv0\not\equiv\<\dz^{(k)}\kappa,\dz^{(k)}\kappa\>, \hbox{ and } \db\kappa\wedge\kappa\nt0.\]
 When $k=0$, we have simply $\<\kappa,\kappa\>\not\equiv0$.

 Let $f^{k}_{0}$ be the conformal Gauss map of $y^k$. We shall derive a global harmonic sequence $\{f^{k}_{j}\}$, which defines the universal Gauss map $h_0=(f^{k}_{k+k'})^{\perp}$ on $S^2$, and also induces a simple framing of $y^k$. In this process we shall use the local section induced by  $\dz^{(k)}\kappa$, and we always have to avoid the singular set
 \begin{equation}\label{eq-Sing}
     Sing(y^k):=\{p\in S^2|\<\dz^{(k)}\kappa,\dz^{(k)}\kappa\>(p)=0 \}
 \end{equation}
of $y^k$ on $S^2$. Fortunately, our construction extends across this singular set to the whole $S^2$.

When $k=0$, we get $f^k_k=f^0_0$ automatically. When $k\geq 1$,  we can set inductively
 \[f^{k}_{j+1}:=\DD f^{k}_{j}=f^{k}_j\oplus Re(\partial f^k_j)=f^{k}_0\oplus Re( \Pi_{j-1}),  \hbox{ for } 0\leq j\leq k-1,\]
 to derive a (conformal) harmonic sequence $\{f^k_j,~0\leq j\leq k\}$ by Theorem \ref{thm-db-1}, since $\partial f^k_j\cap\Q=\partial f^k_j$ and $\bar\partial(\partial f^k_j)\subset f^k_j\otimes\C$ by Theorem \ref{thm-m-iso}.

 Now consider $f^{k}_k$ for $k\geq0$. Since $y^k$ is strictly $k$-isotropic, $\partial f^{k}_k$ is not totally isotropic and has rank $1$ or $2$. We discuss these two cases separately.

 \begin{proposition}\label{prop-rank1} Let $y^k$ be a strictly $k$-isotropic Willmore $2$-sphere in $S^n$ which is not S-Willmore, with $Rank_{\C}\partial f^{k}_{k}=1$. Then $y^k=[Y^k]$ has a Frenet-bundle structure  \[\{f^{k}_{0}, \ZZ^k, h_0\}, \hbox{ with } \mathcal{Z}^k=\Pi_{k-1}, h_0=(f^{k}_{k})^{\perp}:S^2\rightarrow Gr_{r}\RRR.\]
 Here $r=n-2m-4$ with $m:=\dim\ZZ^k$.
\end{proposition}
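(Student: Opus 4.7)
The plan is to verify the four conditions of Definition~\ref{def-Frenet} for the triple $\{f^k_0,\ \mathcal{Z}^k=\Pi_{k-1},\ h_0=(f^k_k)^\perp\}$. I would first set up the orthogonal decomposition. By Theorem~\ref{thm-m-iso}, $\mathcal{Z}^k=\Pi_{k-1}$ is an isotropic holomorphic subbundle of $(f^k_0)^\perp\otimes\C$. Since $(f^k_0)^\perp$ inherits a positive-definite real inner product from the Lorentzian nature of $f^k_0$, no nonzero real vector in $(f^k_0)^\perp$ can be isotropic; hence $\mathcal{Z}^k\cap\bar{\mathcal{Z}}^k=\{0\}$, and $Re(\mathcal{Z}^k)$ is a real subbundle of rank $2m$. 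Taking $h_0$ to be its orthogonal complement inside $(f^k_0)^\perp$ produces the splitting $M\times\RRR=f^k_0\oplus Re(\mathcal{Z}^k)\oplus h_0$. Conditions (a) and (c) are then nearly immediate: the structure equations \eqref{mov-eq} give $\partial f^k_0=\hbox{Span}_\C\{\kappa,D_{\bar z}\kappa\}\subset\Pi_0\subset\mathcal{Z}^k$, which is stronger than (a); and for (c), since $\psi_{\bar z}$ differs from $D_{\bar z}\psi$ by a section of $f^k_0\otimes\C$, the assertion reduces to \eqref{eq-dbpi} of Lemma~\ref{lemma-ind2}.

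The substantive work lies in condition (b). For $\psi\in\Gamma(\mathcal{Z}^k)$ one must show $D_z\psi\in\Gamma(\mathcal{Z}^k\oplus h_0\otimes\C)$, which by orthogonality of the three summands is equivalent to the bilinear vanishing $\<D_z\psi,\phi\>=0$ for every $\phi\in\Gamma(\mathcal{Z}^k)$. For $\psi\in\Gamma(\Pi_{k-2})$ this is already contained in \eqref{eq-dzpi}, so only $\psi_1=D_z^{(k-1)}\kappa$ and $\psi_2=D_{\bar z}D_z^{(k-1)}\kappa$ must be treated. Modulo $\mathcal{Z}^k$ one has $D_z\psi_1=D_z^{(k)}\kappa$, and, via \eqref{eq-dzdb} together with $\<D_z^{(k-1)}\kappa,\kappa\>=0$, also $D_z\psi_2\equiv D_{\bar z}D_z^{(k)}\kappa$. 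The pairings $\<D_z^{(k)}\kappa,D_z^{(j)}\kappa\>$ with $j\le k-1$ vanish by \eqref{eq-isotropic2}, and the pairings $\<D_z^{(k)}\kappa,D_{\bar z}D_z^{(j)}\kappa\>$ for $j\le k-2$ reduce by Leibniz and \eqref{eq-dzdb} to vanishing pairings inside $\Pi_{k-1}$. The sole remaining term is
\[
\Omega_{k-1}\dd z^{2k+2}:=\<D_z^{(k)}\kappa,\,D_{\bar z}D_z^{(k-1)}\kappa\>\dd z^{2k+2},
\]
which is globally defined on $S^2$ because $\Pi_{k-1}$ is totally isotropic. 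Running the computation of $\Omega_{l,\bar z}$ from the proof of Theorem~\ref{thm-m-iso} with $l=k-1$, and using \eqref{eq-dbdb}, the Ricci identity, and the full isotropy of $\Pi_{k-1}$, one finds $\Omega_{k-1,\bar z}\equiv0$; hence $\Omega_{k-1}$ is a holomorphic differential on $S^2$ and must vanish. The residual pairings of $D_{\bar z}D_z^{(k)}\kappa$ against $\Pi_{k-1}$ then drop out by $\bar z$-differentiating the vanishings already established.

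Finally I would deduce condition (d) from the standing assumption $Rank_\C\partial f^k_k=1$. By iterated application of Theorem~\ref{thm-harmonicity}, $f^k_k$ is harmonic, and so is $h_0=(f^k_k)^\perp$. For any harmonic map $f$ into a real Grassmannian, differentiating $\<\psi,\chi\>\equiv0$ for $\psi\in\Gamma(f),\chi\in\Gamma(f^\perp)$ yields $\<D_z\psi,\chi\>=-\<\psi,D_z\chi\>$, so the bundle maps $D_z: f\to f^\perp\otimes\C$ and $D_z: f^\perp\to f\otimes\C$ are mutually adjoint with respect to $\<\cdot,\cdot\>$; their images $\partial f$ and $\partial f^\perp$ therefore have equal complex rank at every point. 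Applying this with $f=f^k_k$ gives $Rank_\C\partial h_0=Rank_\C\partial f^k_k=1$, completing the Frenet-bundle structure. The main obstacle I anticipate is precisely the verification that $\Omega_{k-1}$ is antiholomorphic-free: the proof of Theorem~\ref{thm-m-iso} stops one step short, and pushing it to $l=k-1$ requires invoking the isotropy of all of $\Pi_{k-1}$ (not just $\Pi_{k-2}$) as well as careful bookkeeping of the Ricci corrections that appear when commuting $D_z$ past $D_{\bar z}$.
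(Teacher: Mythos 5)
Your proposal is correct, and its skeleton is the one the paper intends: the paper's proof of Proposition~\ref{prop-rank1} is the single line ``direct corollary of Lemma~\ref{lemma-ind2}'', i.e.\ verify Definition~\ref{def-Frenet} for $\{f^k_0,\Pi_{k-1},(f^k_k)^\perp\}$ using Theorem~\ref{thm-m-iso} and Lemma~\ref{lemma-ind2}, exactly as you do for (a) and (c). Where you go beyond the cited lemma is precisely where the one-line proof is incomplete, and your supplement is the right one: condition (b) requires that $D_z$ applied to the top generators $D_z^{(k-1)}\kappa$, $D_{\bar z}D_z^{(k-1)}\kappa$ have no $\bar\Pi_{k-1}$-component, which amounts to $D_z^{(k)}\kappa\perp\Pi_{k-1}$ and $D_{\bar z}D_z^{(k)}\kappa\perp\Pi_{k-1}$; all pairings except $\Omega_{k-1}=\<D_z^{(k)}\kappa,D_{\bar z}D_z^{(k-1)}\kappa\>$ vanish by Lemma~\ref{lemma-ind2} manipulations, but $\Omega_{k-1}$ itself is not controlled by that lemma (the induction in Theorem~\ref{thm-m-iso} stops at $l\le k-2$, since running it to $l=k-1$ in full would contradict strict $k$-isotropy), and there is no local identity forcing it to vanish. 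Your step — showing $\Omega_{k-1}\dd z^{2k+2}$ is a globally defined holomorphic differential on $S^2$ (globality from total isotropy of $\Pi_{k-1}$, holomorphicity from \eqref{eq-dbdb}, \eqref{eq-dzdb} and the isotropy relations, exactly the $\Omega_l$ computation with $l=k-1$) and hence vanishes — is the genuinely needed global ingredient that the paper leaves implicit here (and again in the proof of Corollary~\ref{cor-frenet}, where the assertion $\dz\psi_1=(\cdots)\pz+(\cdots)\nzb\pz\bmod\Pi_{k-1}$ silently uses it). Your treatment of (d) via the pointwise adjointness of the two second fundamental forms, giving $Rank_\C\partial h_0=Rank_\C\partial f^k_k=1$, is also sound and is the same duality the paper invokes in the opposite direction in Proposition~\ref{prop-moving-frame}; the appeal to harmonicity there is unnecessary but harmless. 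In short: same route as the paper, but your version actually closes the gap the ``direct corollary'' glosses over.
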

\begin{proof}
    It is a direct corollary of Lemma \ref{lemma-ind2}.
\end{proof}

When $Rank_{\C}\partial f^{k}_{k}=2$, we have
\begin{theorem}[Thereom A, structure of $\{f^k_{l}\}$]\label{thm-line} Let $y^k$ be a strictly $k$-isotropic Willmore $2$-sphere in $S^n$ which is not S-Willmore, with $Rank_{\C}\partial f^{k}_{k}=2$. Then
\begin{enumerate}
    \item There exists a sequence of isotropic holomorphic line  bundles $\{\LL_j,1\leq j\leq k'\}$ over $S^2$ such that the following statements hold true inductively.
    \begin{enumerate}
        \item $\LL_{j}\subset\partial f^{k}_{k+j-1}$ for $1\leq j \leq k'$;
        \item  The map $f^{k}_{k+j}:=\DD f^{k}_{k+j-1}=f^{k}_{k+j-1}\oplus Re(\LL_{j})$ is globally harmonic on $S^2$ with $\hbox{Rank}_{\C}\partial f^{k}_{k+j}=2$ for $1\leq j<k'$, and   $\hbox{Rank}_{\C}\partial f^{k}_{k+j}=1$ for $j=k'$.
    \end{enumerate}
\item  $y^k=[Y^k]$ has a Frenet-bundle structure $\{f^{k}_{0}, \ZZ^k, h_0\}$ with
\[\mathcal{Z}^k=\Pi_{k-1}\oplus \LL_{1}\oplus \cdots\oplus \LL_{k'},\hbox{ and } h_0:=(f^{k}_{k+k'})^{\perp}:S^2\rightarrow Gr_{r}\RRR.\]
Here $r:=n-2m-4$ and $m:=2k+k'$.
\end{enumerate}
\end{theorem}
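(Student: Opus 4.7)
My plan is to construct the line bundles $\mathcal{L}_j$ inductively using Lemma \ref{lemma-linebundle} at each step, and to upgrade each resulting $\mathcal{D}$-transform to a harmonic map on all of $S^2$ by invoking Theorem \ref{thm-db-1} in conjunction with the vanishing of certain global holomorphic differentials on $S^2$.

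For the base case $j=1$: since $\mathrm{Rank}_\mathbb{C}\,\partial f^k_k=2$, the complex bilinear inner product restricted to $\partial f^k_k$ is a holomorphic symmetric form on a rank-$2$ holomorphic bundle. Its determinant, after multiplication by the appropriate power of $dz$ to absorb the coordinate transformation, defines a global holomorphic differential on $S^2$, and therefore vanishes identically by the standard vanishing theorem on $\mathbb{CP}^1$. Strict $k$-isotropy together with $\mathrm{Rank}_\mathbb{C}\,\partial f^k_k = 2$ forces $\partial f^k_k$ to be non-totally-isotropic, so Lemma \ref{lemma-linebundle} applies with $\mathbb{W}=\partial f^k_k$ and produces a unique isotropic holomorphic line bundle $\mathcal{L}_1\subset \partial f^k_k$ globally defined on $S^2$, together with a local frame $\{\tilde X_1,\tilde X_2\}$ satisfying $\mathcal{L}_1=\mathrm{Span}_\mathbb{C}\{\tilde X_1\}$ and $\tilde X_2\perp \tilde X_1,\bar{\tilde X}_1$. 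Harmonicity of $f^k_{k+1}:=f^k_k\oplus\mathrm{Re}(\mathcal{L}_1)$ then follows from Theorem \ref{thm-db-1}, once we check $\bar\partial\mathcal{L}_1\subset f^k_k\otimes\mathbb{C}$; this holds because harmonicity of $f^k_k$ gives $\bar\partial(\partial f^k_k)\subset f^k_k\otimes\mathbb{C}$ and $\mathcal{L}_1$ is a holomorphic subbundle of $\partial f^k_k$. Conformality of $f^k_{k+1}$ is inherited from $f^k_k$ via Theorem \ref{thm-harmonicity}.

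For the inductive step, assume $f^k_{k+j-1}$ has been constructed as a conformal harmonic map on $S^2$ with $\mathrm{Rank}_\mathbb{C}\,\partial f^k_{k+j-1}=2$. The same recipe --- form the determinant of the restricted inner product on $\partial f^k_{k+j-1}$, observe that it is a global holomorphic differential on $S^2$ and hence vanishes, then apply Lemma \ref{lemma-linebundle} and Theorem \ref{thm-db-1} --- yields $\mathcal{L}_j$ and a harmonic $f^k_{k+j}$. The process terminates at a finite $k'$ because each step enlarges $f^k_{k+j}$ by real rank $2$ while the ambient $\mathbb{R}^{n+2}_1$ has finite dimension; at termination $\mathrm{Rank}_\mathbb{C}\,\partial f^k_{k+k'}=1$, which is precisely the rank-$1$ case handled by Proposition \ref{prop-rank1}. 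For part (2), set $\mathcal{Z}^k:=\Pi_{k-1}\oplus\mathcal{L}_1\oplus\cdots\oplus\mathcal{L}_{k'}$ and $h_0:=(f^k_{k+k'})^\perp$. Isotropicity of $\mathcal{Z}^k$ and mutual orthogonality of the summands follow inductively from the construction, since each $\mathcal{L}_j\subset\partial f^k_{k+j-1}$ sits inside $(f^k_{k+j-1})^\perp\otimes\mathbb{C}$ and hence is automatically orthogonal to $\Pi_{k-1}\oplus\mathcal{L}_1\oplus\cdots\oplus\mathcal{L}_{j-1}\subset f^k_{k+j-1}\otimes\mathbb{C}$. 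The Frenet-bundle axioms of Definition \ref{def-Frenet} for $\{f^k_0,\mathcal{Z}^k,h_0\}$ are then verified by combining Lemma \ref{lemma-ind2} (for the $\Pi_{k-1}$ portion) with the local frames produced by Lemma \ref{lemma-linebundle} at each step.

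Two issues demand the most care. First, at each inductive stage one must prove $\mathrm{Rank}_\mathbb{C}\,\partial f^k_{k+j-1}\leq 2$, i.e., that enlarging $f^k_{k+j-2}$ by the single isotropic summand $\mathrm{Re}(\mathcal{L}_{j-1})$ introduces at most one genuinely new complex direction upon applying $\partial$. This reduces to analyzing $\partial\tilde X_1$ for the frame vector generating $\mathcal{L}_{j-1}$; the orthogonality relation $\tilde X_2\perp\tilde X_1,\bar{\tilde X}_1$ from Lemma \ref{lemma-linebundle}, combined with the Willmore equation \eqref{willmore} and the Codazzi equation \eqref{codazzi}, controls the new contribution to at most one line. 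Second, and more subtly, one must ensure the construction extends regularly across the singular set $\mathrm{Sing}(y^k)$ of \eqref{eq-Sing}, where the natural local generators of $\partial f^k_{k+j-1}$ may degenerate --- this is precisely the obstacle that Lemma \ref{lemma-linebundle} is designed to resolve, since its conclusion furnishes both the global line bundle $\mathcal{L}_j$ and a regular local frame even where the naive Gram--Schmidt formulas acquire poles. I expect the treatment of these singular points to be the main technical hurdle; once it is settled, the remainder of the proof is a careful bookkeeping of $dz$-weights and of the propagating Frenet structure through the induction.
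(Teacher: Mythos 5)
Your overall architecture coincides with the paper's: produce a vanishing global holomorphic differential on $S^2$, feed it into Lemma \ref{lemma-linebundle} to extract the isotropic line bundle $\LL_j$, invoke Theorem \ref{thm-db-1} for harmonicity of $f^k_{k+j}$, and iterate until the rank of $\partial f^k_{k+j}$ drops to one. The genuine gap is in how you manufacture the vanishing differential. You assert that the complex bilinear form restricted to the rank-two holomorphic bundle $\partial f^k_{k+j-1}$ has a determinant which, ``after multiplication by the appropriate power of $dz$'', is a global holomorphic differential on $S^2$. Abstractly that determinant is a holomorphic section of the line bundle $(\det\partial f^k_{k+j-1})^{\otimes(-2)}$; there is no canonical way to convert it into a scalar $q$-differential, and forcing it to vanish would require $\deg\det\partial f^k_{k+j-1}>0$, which you never establish. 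The paper circumvents this with explicit sections of known conformal weight: $\phi:=(D^{(k)}_z\kappa)^{\perp_0}$ (whose weight is inherited from the global invariant $\kappa\,\dd z^2/\abs{\dd z}$) and the recursively defined $\phi_j$, proving directly that $\X_j\,\dd z^{2k+j+3}=(\nabla_z\phi_j)^{\perp_j}\wedge\phi^{\perp_j}\,\dd z^{2k+j+3}$ and $\Theta_j\,\dd z^{4k+2j+6}$ are globally defined and holomorphic. That holomorphicity is not a consequence of harmonicity of $f^k_{k+j}$ alone: it uses the generalized Willmore equation $\nabla_{\bar z}\nabla_{\bar z}\phi+\tfrac{\bar s}{2}\phi=0$, the Riccati equation satisfied by the unique root $\mu$ (available only because the previous $\Theta_{j-1}\equiv0$), the generalized Ricci identity \eqref{eq-Ricci-nabla}, and the relations $\nabla_{\bar z}\phi_j=\tfrac{\bar\mu}{2}\phi_j \bmod\{\phi_1,\dots,\phi_{j-1}\}$ of \eqref{eq-nzb-phij}. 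These Willmore-specific identities are the substance of the paper's Claims (1) and (2), and your proposal replaces them with an appeal to general theory that does not deliver the conclusion.

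The two difficulties you flag at the end are indeed the right ones, but you defer precisely the work that must be done. The identification $\partial f^k_{k+j}=\hbox{Span}_{\C}\{\phi^{\perp_j},(\nabla_z\phi_j)^{\perp_j}\}$ (hence the rank bound $\le 2$ and the location of the new isotropic line) is proved in the paper from the same identities above, not merely from the orthogonality of the frame supplied by Lemma \ref{lemma-linebundle} together with the Willmore and Codazzi equations; your one-sentence reduction is not a proof. More seriously, the sections $\phi_j$ may blow up on $Sing(y^k)$ defined in \eqref{eq-Sing}, so even granting holomorphicity on the regular set you cannot yet assert that your determinant differential is defined on all of $S^2$; the paper must show that the singular coefficient cancels in the wedge, i.e.\ that $(a_{j1}b_{j2}-a_{j2}b_{j1})$ is smooth and $(\nabla_z\phi_{j+1})^{\perp_{j+1}}\wedge\phi^{\perp_{j+1}}$ extends smoothly across $Sing(y^k)$, before the vanishing theorem applies. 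Finally, your verification of the hypothesis $\bar\partial\LL_1\subset f^k_k\otimes\C$ of Theorem \ref{thm-db-1} from ``harmonicity gives $\bar\partial(\partial f^k_k)\subset f^k_k\otimes\C$'' is insufficient as stated: that fact only places $\bar z$-derivatives of sections of $\LL_1$ in $f^k_k\otimes\C\oplus\partial f^k_k$, whereas the needed containment is in $f^k_k\otimes\C\oplus\LL_1$; the paper obtains it from $\nabla_{\bar z}\phi_1=\tfrac{\bar\mu}{2}\phi_1$ (alternatively one could argue that $\LL_1$ is the radical of the degenerate form on $\partial f^k_k$ and hence $\nabla_{\bar z}$-invariant, but some such argument must be supplied).
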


\begin{proof} We first make some notations.
\begin{itemize}
    %\item We will consider $\partial f^{k}_{k+j-1}$ and construct an unique isotropic line bundle $\LL_j$ from it. For simplicity we drop the superscript $\ ^k$ for $\LL_j$.
  \item Let $\nabla$ denote the induced connection on $(f^{k}_{k})^{\perp}\otimes\C$ by the normal connection $D$ on $V^{\perp}\otimes\C=(f^{k}_{0})^{\perp}\otimes\C$. Let $\nabla_j$ denote the induced connection on $(f^{k}_{k+j})^{\perp}\otimes\C$ by the normal connection $D$ on $V^{\perp}\otimes\C$.
  \item
For any $X\in\Gamma((V^{\perp})\otimes{\C})$,  define $X^{\perp_j}\in \Gamma((f^{k}_{k+j})^{\perp}\otimes{\C})$ by
\[X^{\perp_j}:=X\mod f^{k}_{k+j}\otimes{\C}.\]
If $j=0$, $X^{\perp_0}=X\mod f^{k}_{k}\otimes{\C}$ and if $k=0$ furthermore, we have $X^{\perp_0}=X$.
% \red{Need to clarify orthogonality.}  \red{AIM: to exclude the possible infinite terms appearing at the points $\<\pz,\pz\>=0$.}
    \item  Set %(\red{ Change $\pz$ to $\phi$?})
\begin{equation}
    \pz:=(D^{(k)}_{z}\kappa)^{\perp_0},
\end{equation}
\begin{equation}\label{eq-def-phi1}
\phi_1:=\nzb\pz+\frac{\bar\mu}{2}\pz=(D_{\bar z}\pz+\frac{\bar\mu}{2}\pz)^{\perp_0}, \hbox{ with } \bar\mu=-\frac{2\<\db\pz,\pz\>}{\<\pz,\pz\>}.
\end{equation}
We define $\phi_{j+1}$ inductively for $j\geq1$:
\begin{equation}\phi_{j+1}:=(\nzz\phi_j)^{\pj}-\frac{\<(\nzz\phi_j)^{\pj},\pz^{\pj}\>}{\<\pz^{\pj},\pz^{\pj}\>}\pz^\pj=(\nabla_{j,z}\phi_j)^{\pj}-\frac{\<(\nabla_{j,z}\phi_j)^{\pj},\pz^{\pj}\>}{\<\pz^{\pj},\pz^{\pj}\>}\pz^\pj.
\end{equation}
\end{itemize}

The theorem follows directly from the following claims for $1\leq j\leq k'$.
\begin{enumerate}
    \item[{\bf{Claim (1)}}] The isotropic section $\phi_j$ is defined on an open dense subset of $S^2$, with
    \begin{equation}
        \label{eq-nzb-phij}
        \nzb\phi_j=\frac{\bar{\mu}}{2}\phi_j+\sum_{l\leq j-1}(\cdots)\phi_{l}.
    \end{equation}
It defines an isotropic line bundle $\LL_j=Span_{\C}\{\phi_j\}$, which extends holomophically and globally to $S^2$.
    \item[{\bf{Claim (2)}}] The bi-vector valued differential \[\X_{j}\dd z^{2k+j+3}:=(\nz_{j,z}\phi_j)\wedge\pz^{\pj}\dd z^{2k+j+3}=(\nzz\phi_j)^{\pj}\wedge\pz^{\pj}\dd z^{2k+j+3}\] extends to a global holomorphic differential on $S^2$ and \[\Theta_j\dd z^{4k+2j+6}:=\<(\nzz\phi_j)^{\pj}\wedge\pz^{\pj},(\nzz\phi_j)^{\pj}\wedge\pz^{\pj}\>\dd z^{4k+2j+6}\] extends to a global holomorphic differential on $S^2$.
\end{enumerate}

Let us prove the claims by induction.\vspace{1mm}

{\bf Step 1: the initial part.} By Theorem \ref{thm-CW1},
$\partial f^{k}_{k}$ is a global holomorphic subbundle. To derive $\partial f^{k}_{k}$, by definition of $f^{k}_k=f^{k}_{k-1}\oplus Re(\partial f^{k}_{k-1})$, we need only to consider $(D_z^{(k-1)}\kappa)_z$ and  $(\db D_z^{(k-1)}\kappa)_z$. It is direct to check that
\[((D_z^{(k-1)}\kappa)_z)^{\perp_0}=(D_z^{(k)}\kappa)^{\perp_0}=\pz,\]
\[((\db D^{(k-1)}_{z}\kappa)_z)^{\perp_0}=(\dz(\db D^{(k-1)}_{z}\kappa))^{\perp_0}=(\db(D^{(k)}_{z}\kappa))^{\perp_0}=(\db\pz)^{\perp_0}=\nz_{\zb}\pz,\]
by the Ricci equation \eqref{ricci} and Lemma \ref{lemma-ind2}. So $\partial f^{k}_{k}=\hbox{Span}_{\C}\{\pz,\nz_{\zb}\pz\}$ on an open dense subset of $S^2$ where $\nzb\pz\wedge\pz\neq0$ since $Rank_{\C}\partial f^{k}_{k}=2$.\vspace{1mm}

Let us first prove the generalized Ricci equation: for $\psi\in\Gamma((f^k_k)^{\perp}\otimes\C)$ with $\psi\perp\pz,\nzb\pz$,
\begin{equation}
    \label{eq-Ricci-nabla}
    \nzb\nzz\psi- \nzz\nzb\psi=(\cdots)\pz+(\cdots)\nzb\pz.
\end{equation}
To this end, if $k=0$, we have $\pz=\kappa$ and it is the Ricci equation \eqref{ricci}. If $k\geq 1$,
 for any $\tilde\phi\in\Gamma(\Pi_{k-1})$, we have  $ \<\psi,\tilde\phi\>=0$ and
$-\<D_z\psi,\tilde\phi\>=\<\psi,D_z\tilde\phi\>=\<\psi,(\cdots)\pz+(\cdots)\nzb\pz)\>=0$.
So,
\begin{equation}\label{eq-psidz}
    D_z\psi=\nzz\psi \mod\{\Pi_{k-1}\}.
\end{equation}
Since $\Pi_{k-1}$ is a holomorphic bundle, we have
\[\begin{split}
\nzb\nzz\psi&=\nzb\left( D_z\psi \mod\{\Pi_{k-1}\}\right)=\db D_z\psi \mod\{\Pi_{k-1}\}.
\end{split}\]
Since $\partial f^{k}_{k}=\hbox{Span}_{\C}\{\pz,\nz_{\zb}\pz\}$, we obtain
$\nzz \nzb\psi= D_z\db\psi +(\cdots)\pz+(\cdots)\nzb\pz \mod\{\Pi_{k-1}\}.$
Together with the Ricci equation \eqref{ricci}, we obtain \eqref{eq-Ricci-nabla}.\vspace{2mm}

Now consider
\begin{equation}
    \label{eq-chi-0-0}
\mathcal{X}_0\dd z^{2k+3}=\nzb\pz\wedge\pz \dd z^{2k+3}.
\end{equation}
 Since $\kappa\frac{\dd z^2}{\abs{\dd z}}$ is a global invariant,
 $\pz\frac{\dd z^{k+2}}{\abs{\dd z}}$ is a global invariant by Theorem \ref{thm-m-iso}. So $\mathcal{X}_0\dd z^{2k+3}$ is a global invariant. It is holomorphic by the generalized Willmore equation \eqref{eq-dbdb}: $\nzb\nzb\pz+\frac{\bar s}{2}\pz=0$. Hence, its determinant, the $(4k+6)$-form \[\Theta_0\dd z^{4k+6}=\left(\<\nzb\pz,\pz\>^2-\<\nzb\pz,\nzb\pz\>\<\pz,\pz\>\right)\dd z^{4k+6}\]
is a holomorphic form on $S^2$ and vanishes. Since $\partial f^{k}_{k}$ is not totally isotropic, by Lemma \ref{lemma-linebundle}, there exists a unique isotropic holomorphic line bundle $\LL_1\subset \partial f^{k}_{k}$ over $S^2$. Moreover, since $\Theta_0\dd z^{4k+6}\equiv0$, the quadratic equation $\<\nzb\pz+\frac{\bar\mu}{2}\pz,\nzb\pz+\frac{\bar\mu}{2}\pz\>\equiv0$ has a unique solution on $S^2\setminus Sing(y^k)$ (recall \eqref{eq-Sing}),  denoted by $\mu$. % \red{(use $\mu$ or another notation?)}
So  $0=2\<\nzb\nzb\pz+\frac{\bar\mu_{\zb}}{2}\pz+\frac{\bar\mu}{2}\nzb\pz,\nzb\pz+\frac{\bar\mu}{2}\pz\>=(\bar\mu_{\bar z}-\frac{\bar\mu^2}{2}-\bar s)\<\pz,\nzb\pz+\frac{\bar\mu}{2}\pz\>$, by
 the generalized Willmore equation $\nzb\nzb\pz+\frac{\bar s}{2}\pz=0$. So $\mu$ satisfies the Ricatti equation \eqref{eq-theta}: $\bar\mu_{\bar z}-\frac{\bar\mu^2}{2}-\bar s=0$.
So,
\[\phi_1:=\nzb\pz+\frac{\bar\mu}{2}\pz\] is a local basis of $\LL_1$ when $\phi_1\neq0$ and $\<\pz,\pz\>\neq0$, with
\[\<\phi_1,\phi_1\>=\<\phi_1,\pz\>\equiv0.\] We have
 $\nzb\phi_1=\nzb\nzb\pz+\frac{\bar\mu_{\bar z}}{2}\pz+\frac{\bar\mu}{2}\nzb\pz=\frac{\bar\mu}{2}\phi_1,$
by the above generalized Willmore equation and the Ricatti equation, which proves \eqref{eq-nzb-phij}.
Note that $\phi_1$ might have singularities on $Sing(y^k)$.

Next, consider Claim (2). Since $\nzb\phi_1=\frac{\bar\mu}{2}\phi_1$, we have $f^{k}_{k+1}$ is harmonic  by Theorem \ref{thm-db-1}, since $\partial f^{k}_{k} \cap\Q=\LL_1$ and \eqref{eq-nzb-phij} holds.  Furthermore, by Theorem \ref{thm-CW1}, $\partial f^{k}_{k+1}$ is a global holomorphic sub-bundle of $(f^{k}_{k+1})^{\perp}\otimes{\C}$ on $S^2$.

If $Rank_{\C} \partial f^{k}_{k+1}=1$, the proof is finished by setting $k'=1$.  If $Rank_{\C} \partial f^{k}_{k+1}=2$,  on an open dense subset of $S^2$, we represent \[\partial f^{k}_{k+1}=\hbox{Span}_{\C}\{(\nzz\phi_1)^{\perp_1}, \pz^{\perp_1}\}\] by similar computations.
Consider
\begin{equation}
    \label{eq-xi-1}\mathcal{X}_1\dd z^{2k+4}=(\nzz\phi_1)^{\perp_1}\wedge\pz^{\perp_1}\dd z^{2k+4}.
\end{equation}
%We have $   \nabla_{1,\Bar{z}}((\nzz\phi_1)^{\perp_1}\wedge\pz^{\perp_1})= \nabla_{1,\Bar{z}}(\nzz\phi_1)^{\perp_1}\wedge\pz^{\perp_1}+(\nz\phi_1)^{\perp_1}\wedge\nabla_{1,\Bar{z}}\pz^{\perp_1}.$
It is independent of the choice of $z$ as before. To show it is holomorphic, we first have
\[\nabla_{1,\Bar{z}}\pz^{\perp_1}=\nzb(\pz-(\cdots)\phi_1)-(\cdots)\phi_1=\phi_1-\frac{\bar\mu}{2}\pz+(\cdots)\phi_1=-\frac{\bar\mu}{2}\pz^{\perp_1}.\]
To compute $\nabla_{1,\Bar{z}}\left((\nzz\phi_1)^{\perp_1}\right)$, we notice $\<\nzz\phi_1,\phi_1\>=0$ and $\<\nzb\nzz\phi_1,\phi_1\>=-\<\nzz\phi_1,\nzb\phi_1\>=0$.
%So \[\nabla_{1,\Bar{z}}\left((\nzz\phi_1)^{\perp_1}\right)=\nzb(\nzz\phi_1+(\cdots)\phi_1)+(\cdots)\phi_1=\nzb\nzz\phi_1+(\cdots)\phi_1.\]
%When $k=0$, we have $\kappa=\pz$ and
By the generalized Ricci equation \eqref{eq-Ricci-nabla}, we obtain (since $\nzb\pz=\phi_1+(\cdots)\pz$)
\[\begin{split}
    \nabla_{\bar{z}}\left((\nzz\phi_1)^{\perp_1}\right)&=\nzb\nzz\phi_1+(\cdots)\phi_1=\nzz\nzb\phi_1+(\cdots)\phi_1+(\cdots)\pz\\
    &=\nzz(\frac{\bar\mu}{2}\phi_1)+(\cdots)\phi_1+(\cdots)\pz=\frac{\bar\mu}{2}(\nzz\phi_1)+(\cdots)\pz+(\cdots)\phi_1.
\end{split}\]
After projection we obtain
\[\nabla_{1,\Bar{z}}\left((\nzz\phi_1)^{\perp_1}\right)=\frac{\bar\mu}{2}(\nzz\phi_1)^{\perp_1}+(\cdots)\pz^{\perp_1}.\]
%When $k>0$,  again by the Ricci equation \eqref{ricci}, similarly, we obtain\[\begin{split}\nabla_{1,\Bar{z}}\left((\nzz\phi_1)^{\perp_1}\right)=\nzb\nzz\phi_1+(\cdots)\phi_1&=\nzz\nzb\phi_1+(\cdots)\phi_1=\frac{\bar\mu}{2}(\nzz\phi_1)^{\perp_1}.\end{split}\]
So we obtain $\nabla_{1,\Bar{z}}((\nzz\phi_1)^{\perp_1}\wedge\pz^{\perp_1})=0$.

Now consider the definition of $\mathcal{X}_1\dd z^{2k+4}$ in \eqref{eq-xi-1} at the possible singular points of $\phi_1$. For the bundle $\partial f^k_k$, by Lemma \ref{lemma-linebundle}, for any $p\in S^2$, there exist $X_{01},X_{02}$ with $X_{01} \perp \{X_{01}, X_{02},\bar{X}_{02}\}$ and  $X_{01}\wedge X_{02}\neq0$ on a neighborhood $U_p$ of $p$ such that
\begin{equation}\label{eq-pz1}
\nzb\pz=\sum_{1\leq l\leq 2} a_{0l}X_{0l},\ \pz=\sum_{1\leq l\leq 2} b_{0l}X_{0l}.
\end{equation}
Since $\pz_1$ is isotropic, $\pz_1\parallel X_{01}$. So \[\pz_1=\nzb\pz-(\cdots)\pz=\frac{a_{01}b_{02}-a_{02}b_{01}}{b_{02}}X_{01}\] by \eqref{eq-pz1}. Note that the possible higher singular term $(\frac{a_{01}b_{02}-a_{02}b_{01}}{b_{02}})_zX_{01}$  of $\nzz\pz_1$ is eliminated in $(\nzz\pz_1)^{\perp_1}$ after projection. Now we have
\[(\nzz\pz_1)^{\perp_1}\wedge\pz^{\perp_1}=(a_{01}b_{02}-a_{02}b_{01})(\nzz X_{01})^{\perp_1}\wedge X_{02}.\]
Since both
$(a_{01}b_{02}-a_{02}b_{01})$
and $(\nzz X_{01})^{\perp_1}\wedge X_{02}$ are smooth,
we see that  $(\nzz\phi_1)^{\perp_1}\wedge\pz^{\perp_1}$ is smooth on the whole $U_p$. Hence $(\nzz\phi_1)^{\perp_1}\wedge\pz^{\perp_1}\dd z^{2k+4}$ defines a global holomorphic differential on $S^2$.  As a consequence, on the whole $U_p$, the differential
\[\begin{split}
\Theta_1\dd z^{4k+8}&=\<(\nzz\phi_1)^{\perp_1}\wedge\pz^{\perp_1},(\nzz\phi_1)^{\perp_1}\wedge\pz^{\perp_1}\> \dd z^{4k+8}\\
&=
(a_{01}b_{02}-a_{02}b_{01})^2\<(\nzz X_{01})^{\perp_1}\wedge X_{02},(\nzz X_{01})^{\perp_1}\wedge X_{02}\>\dd z^{4k+8}\end{split}
\]
 is  smooth on $U_p$ and holomorphic on an open dense subset of  $U_p$. So it is holomorphic on the whole $U_p$. So $\mathcal{X}_1\dd z^{2k+4}$ defines a global holomorphic differential on $S^2$. This finishes the proof of Claim (2) for $j=1$.\vspace{2mm}

{\bf Step 2: the inductive part.} Now assume the claims hold for $1\leq l\leq j$.
By Theorem \ref{thm-CW1} due to Chern-Wolfson \cite{Chern-W},
$\partial f^{k}_{k+j}$ is a global holomorphic subbundle. By the definition $f^{k}_{k+j}=f^{k}_{k+j-1}\oplus Re(\LL_{j})$ and assumption, we have $\partial f^{k}_{k+j}=Span_{\C}\{\pz^{\perp_j}, (\nzz \phi_{j})^{\perp_j}\}$.

If $Rank_{\C} \partial f^{k}_{k+j}=1$,  the proof is finished by setting $k'=j$.

If $Rank_{\C} \partial f^{k}_{k+j+1}=2$,  by Claim (2) in the inductive assumption, the holomorphic differential
\[\Theta_j\dd z^{4k+2j+6}=\<(\nzz\phi_j)^{\pj}\wedge\pz^{\pj},(\nzz\phi_j)^{\pj}\wedge\pz^{\pj}\>\dd z^{4k+2j+6}\] vanishes on $S^2$. Since $\partial f^{k}_{k+j}$ is not totally isotropic, by Lemma \ref{lemma-linebundle}, there exists a unique isotropic holomorphic line bundle $\LL_{j+1}\subset \partial f^{k}_{k+j}$ over $S^2$.
Set
\[\phi_{j+1}=(\nzz\phi_j)^{\pj}-\frac{\<(\nzz\phi_j)^{\pj},\pz^{\pj}\>}{\<\pz^{\pj},\pz^{\pj}\>}\pz^{\pj}.\]
We have
  \[\<\phi_{j+1},\phi_{j+1}\>=\<\phi_{j+1},\phi_{l}\>=\<\phi_{j+1},\pz\>=\<\phi_{l},\phi_{\tilde{l}}\>=0\not\equiv\<\phi_{j+1},\pz\>,  \hbox{ for all } 1\leq l,\tilde{l}\leq j.\]
 So $\phi_{j+1}$ is a local framing of $\LL_{j+1}$.
One verifies by induction that
 \begin{equation}\label{dbdz-phi}
      \left\{\begin{split}
      (\nzb\phi_l)^{\perp_l}&=\nzb\phi_l\mod\{\phi_1,\cdots,\phi_l\},\\
      (\nzz\phi_l)^{\perp_l}&=\nzz\phi_l \mod\{\phi_1,\cdots,\phi_l \},  1\leq l\leq j+1.
 \end{split}\right.
 \end{equation}
In fact, \eqref{dbdz-phi} holds when $l=1$ by definition and by \eqref{eq-nzb-phij}. Now assume \eqref{dbdz-phi} holds when $1\leq l\leq\Tilde{l}$. Then for any $1\leq \hat{l}\leq l$, we get again by definition and by \eqref{eq-nzb-phij}, $   \<\nzz\phi_{\Tilde{l}+1},\phi_{\hat{l}}\>=-\<\phi_{\Tilde{l}+1},\nzz\phi_{\hat{l}}\>=-\<\phi_{\Tilde{l}+1},\phi_{\hat{l}+1}\>=0,
    $
    and
    $
\<\nzb\phi_{\Tilde{l}+1},\phi_{\hat{l}}\>=-\<\phi_{\Tilde{l}+1},\nzb\phi_{\hat{l}}\>=-\<\phi_{\Tilde{l}+1},\frac{\bar{\mu}}{2}\phi_{\hat{l}}\>=0.$
So, \eqref{dbdz-phi} holds for $l=\Tilde{l}+1$.

Moreover, by \eqref{eq-Ricci-nabla} and inductive assumption of \eqref{eq-nzb-phij}, we have
\[\begin{split}
    \nzb\phi_{j+1}&=\nzb\left(\nzz\phi_j+(\cdots)\pz +(\cdots)\phi_1+\cdots+(\cdots)\phi_j\right)\\
    &=\nzz\nzb\phi_j+(\cdots)\pz +(\cdots)\phi_1+\cdots+(\cdots)\phi_j\\
    &=\frac{\bar\mu}{2}\nzz\phi_j+(\cdots)\pz +(\cdots)\phi_1+\cdots+(\cdots)\phi_j\\
    &=\frac{\bar\mu}{2}\phi_{j+1}+(\cdots)\pz^{\pj}+(\cdots)\phi_1+\cdots+(\cdots)\phi_j.\\
\end{split}
\]
Since $\<\nzb\phi_{j+1},\pz\>=-\<\phi_{j+1},\nzb\pz\>=-\<\phi_{j+1},\phi_1-\frac{\bar{\mu}}{2}\pz\>=0\not\equiv\<\pz,\pz\>$, we obtain $\nzb\phi_{j+1}=\frac{\bar\mu}{2}\phi_{j+1}\mod\{\phi_1,\cdots,\phi_j\}$. This finishes Claim (1).

Next consider Claim (2). Since $\nabla_{j,\bar{z}}\phi_{j+1}=\frac{\bar\mu}{2}\phi_{j+1}$ and $\partial f^{k}_{k+j} \cap\Q=\LL_j$, $f^{k}_{k+j+1}$ is a harmonic map by Theorem \ref{thm-db-1}.  Furthermore, by Theorem \ref{thm-CW1},
$\partial f^{k}_{k+j+1}$ is a global holomorphic subbundle of $(f^{k}_{k+j+1})^{\perp}\otimes{\C}$. By similar computations, on an open dense subset of $S^2$, we have
\[\partial f^{k}_{k+j+1}=\hbox{Span}_{\C}\left\{(\nzz\phi_{j+1})^{\perp_{j+1}}, \pz^{\perp_{j+1}}\right\}.\]

If $Rank_{\C} \partial f^{k}_{k+j+1}=1$, the proof is finished by setting $k'=j+1$.

If $Rank_{\C} \partial f^{k}_{k+j+1}=2$, consider
\[\mathcal{X}_{j+1}\dd z^{2k+j+4}=(\nzz\phi_{j+1})^{\perp_{j+1}}\wedge\pz^{\perp_{j+1}}\dd z^{2k+j+4}.\]
%We have \[\nabla_{j+1,\Bar{z}}((\nzz\phi_{j+1})^{\perp_{j+1}}\wedge\pz^{\perp_{j+1}})= \nabla_{j+1,\Bar{z}}((\nzz\phi_{j+1})^{\perp_{j+1}})\wedge\pz^{\perp_{j+1}}+ (\nzz\phi_{j+1})^{\perp_{j+1}}\wedge\nabla_{j+1,\bar{z}}(\pz^{\perp_{j+1}}).\]
By \eqref{eq-nzb-phij} and \eqref{dbdz-phi}, we have
\[\nabla_{j+1,\Bar{z}}\pz^{\perp_{j+1}}=\left(\nzb\pz\right)^{\perp_{j+1}}=-\frac{\bar\mu}{2}\pz^{\perp_{j+1}}.\]
Again by \eqref{eq-nzb-phij}, \eqref{dbdz-phi} and the generalized Ricci equation \eqref{eq-Ricci-nabla},  we have
\[\begin{split}
   \nabla_{j+1,\Bar{z}}(\nzz\phi_{j+1})^{\perp_{j+1}}&=\nzb\nzz\phi_{j+1}+(\cdots)\pz+(\cdots)\phi_1+\cdots+(\cdots)\phi_{j+1}\\
   &=\nzz\nzb\phi_{j+1}+(\cdots)\pz +(\cdots)\phi_1+\cdots+(\cdots)\phi_{j+1}\\
%  &=\nzz(\frac{\bar\mu}{2}\phi_{j+1})+(\cdots)\pz +(\cdots)\phi_1+\cdots+(\cdots)\phi_{j+1}\\
&=\frac{\bar\mu}{2}(\nzz\phi_{j+1})^{\perp_{j+1}}+(\cdots)\pz^{\perp_{j+1}}.\\
\end{split}
\]
 So we obtain $
   \nabla_{j+1,\Bar{z}}((\nzz\phi_{j+1})^{\perp_{j+1}}\wedge\pz^{\perp_{j+1}})=0.$

Now consider the definition of $\mathcal{X}_{j+1}\dd z^{2k+j+4}$  at the possible singular points of $\phi_{j+1}$. By Lemma \ref{lemma-linebundle}, for any $p\in S^2$,  there exists  a frame $\{X_{j1}, X_{j2}\}$ of $\partial f^{k}_{k+j+1}$ on $U_p$ such that  $\LL_{j+1}=\Spc\{X_{j1}\}$ on $U_p$ and we have
\[(\nzz\phi_j)^{\pj}=\sum_{1\leq l\leq 2} a_{jl}X_{jl},\ \pz^{\pj}=\sum_{1\leq l\leq 2} b_{jl}X_{jl},\]
from which we obtain
\[(\nzz\phi_j)^{\pj}\wedge\pz^{\pj}=(a_{j1}b_{j2}-a_{j2}b_{j1})X_{j1}\wedge X_{j2}.\]
Since both $(\nzz\phi_j)^{\pj}\wedge\pz^{\pj}$ and $X_{j1}\wedge X_{j2}\neq 0$ are smooth on $U_p$, we have that \[(a_{j1}b_{j2}-a_{j2}b_{j1})\] is a smooth function on $U_p$. We also have  $\phi_{j+1}=\frac{(a_{j1}b_{j2}-a_{j2}b_{j1})}{b_{j2}}X_{j1}$ on an open dense subset of $U_p$. So
 \[\nzz\phi_{j+1}=(\frac{(a_{j1}b_{j2}-a_{j2}b_{j1})}{b_{j2}})_zX_{j1}+\frac{(a_{j1}b_{j2}-a_{j2}b_{j1})}{b_{j2}}\nzz X_{j1}\]
from which we have
\[(\nzz\phi_{j+1})^{\perp_{j+1}}\wedge\pz^{\perp_{j+1}}=(a_{j1}b_{j2}-a_{j2}b_{j1})(\nzz X_{j1})^{\perp_{j+1}}\wedge X_{j2}.\]
Since both $(a_{j1}b_{j2}-a_{j2}b_{j1})$ and $(\nzz X_{j1})^{\perp_{j+1}}\wedge X_{j2}$ are smooth on $U_p$, $(\nzz\phi_{j+1})^{\perp_{j+1}}\wedge\pz^{\perp_{j+1}}$
is smooth on the whole $U_p$. Hence $(\nzz\phi_{j+1})^{\perp_{j+1}}\wedge\pz^{\perp_{j+1}}\dd z^{2k+j+4}$ extends to a global holomorphic differential on $S^2$. As a consequence,  the differential $
  \Theta_{j+1}\dd z^{4k+2j+8}$ is also a global holomorphic differential on $S^2$. This finishes the proof of Claim (2).
\end{proof}

We can unify Proposition \ref{prop-rank1} and Theorem \ref{thm-line} as follows.
\begin{corollary}
\label{cor-frenet} Set $h_0=(f^{k}_{k+k'})^{\perp}$ ($k'=0$ when $Rank_{\C}\partial f^k_k=1$),
 \[
  \mathcal{Z}^k=\left\{\begin{array}{lc}
\Pi_{k-1}, & \hbox{  when $Rank_{\C}\partial f^k_k=1$},\\
\Pi_{k-1}\oplus\mathbb L_1\oplus \cdots\oplus\mathbb L_{k'},& \hbox{  when $Rank_{\C}\partial f^k_k=2$}.\\
  \end{array}\right.
\]
 and
 \begin{equation}
     \label{eq-m-def}
     m:= Rank_{\C}\mathcal{Z}^k=
     \left\{\begin{array}{lc}
\dim \Pi_{k-1}, & \hbox{  when $Rank_{\C}\partial f^k_k=1$},\\
2k+k',& \hbox{  when $Rank_{\C}\partial f^k_k=2$}.\\
  \end{array}\right.
 \end{equation}

%\[\left\{\begin{split}  &\Pi_{k-1}  \hbox{  when $Rank_{\C}\partial f^k_k=1$},\\ &\Pi_{k-1}\oplus\mathbb L_1\oplus \cdots\oplus\mathbb L_{k'} \hbox{  when $Rank_{\C}\partial f^k_k=2$}.\\ \end{split}  \right.   \]
Then
\begin{enumerate}
    \item $y^k=[Y^k]$ has a Frenet-bundle structure  $\{f^{k}_{0}, \ZZ^k, h_0\}$.
    \item On $S^2\setminus Sing(y^k)$, set
    \begin{equation}\label{eq-wx}
    \wx:=\pz^{\perp_{k'}}\in\Gamma(h_0\otimes\C).
    \end{equation}
    Then for any  $\psi\in\Gamma((f^k_0\otimes\C)\oplus\ZZ^k)$,
\begin{equation}\label{eq-psiz}
     \psi_{z}=(\cdots)\widehat{\xi}\mod\{(f^k_0\otimes\C)\oplus\ZZ^k\}.
\end{equation}
\end{enumerate}
\end{corollary}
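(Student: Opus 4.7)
The plan is to combine Proposition \ref{prop-rank1} and Theorem \ref{thm-line} into a single Frenet-bundle statement, and then prove the derivative identity in part (2) by decomposing arbitrary sections into pieces in $f^k_0\otimes\C$, $\Pi_{k-1}$, and each line bundle $\LL_j$.

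For part (1), in the case $\mathrm{Rank}_\C\,\partial f^k_k = 1$ Proposition \ref{prop-rank1} already supplies the Frenet-bundle structure $\{f^k_0,\Pi_{k-1},h_0\}$. In the case $\mathrm{Rank}_\C\,\partial f^k_k = 2$ the additional line bundles $\LL_1,\dots,\LL_{k'}$ come from Theorem \ref{thm-line}, so it remains to verify the four conditions of Definition \ref{def-Frenet} for the combined bundle $\mathcal{Z}^k$. Isotropicity follows from Theorem \ref{thm-m-iso} applied to $\Pi_{k-1}$, the identities $\<\phi_j,\phi_l\>\equiv\<\phi_j,\phi\>\equiv 0$ recorded during the construction of Theorem \ref{thm-line}, and an inductive check that $\<\phi_j,\psi\>\equiv 0$ for all $\psi\in\Gamma(\Pi_{k-1})$ using the defining projections of $\phi_j$ together with equation \eqref{eq-dzpi}. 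The inclusion $\partial f^k_0\subset\mathcal{Z}^k\oplus(h_0\otimes\C)$ is immediate from $\partial f^k_0=\Spc\{\kappa,D_{\zb}\kappa\}\subset\Pi_0\subset\mathcal{Z}^k$ when $k\ge 1$; for $k=0$ both generators lie in $\mathcal{Z}^0\oplus(h_0\otimes\C)$ via the orthogonal frame construction of Lemma \ref{lemma-linebundle}. Conditions (b) and (c) of Definition \ref{def-Frenet} are precisely equations \eqref{eq-dbpi}, \eqref{eq-nzb-phij} and \eqref{dbdz-phi}. Finally, $\mathrm{Rank}_\C\,\partial h_0 = 1$ follows from $\mathrm{Rank}_\C\,\partial f^k_{k+k'}=1$ via the bilinear duality $\mathrm{Rank}_\C\,\partial(f^{\perp})=\mathrm{Rank}_\C\,\partial f$ valid for real non-degenerate subbundles of $\RRR$.

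For part (2), on $S^2\setminus\mathrm{Sing}(y^k)$ I would decompose $\psi=\psi_0+\psi_\Pi+\sum_{j=1}^{k'}c_j\phi_j$ with $\psi_0\in\Gamma(f^k_0\otimes\C)$ and $\psi_\Pi\in\Gamma(\Pi_{k-1})$, and analyze each summand. For $\psi_0$ the structure equations \eqref{mov-eq} in the canonical frame $\{Y,Y_z,Y_{\zb},N\}$ show that all normal contributions to $\psi_{0z}$ lie in $\Pi_0\subset\mathcal{Z}^k$ when $k\ge 1$; when $k=0$ the normal parts are $\kappa=\phi$ and $D_{\zb}\kappa=\phi_1-\tfrac{\bar\mu}{2}\phi$, both of which collapse to multiples of $\wx$ modulo $\mathcal{Z}^0$. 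For $\psi_\Pi$, $D_z$ preserves $\Pi_{k-1}$ except on the top generator $D_z^{(k-1)}\kappa$, whose derivative produces $D_z^{(k)}\kappa\equiv\phi\mod\{f^k_k\otimes\C\}$. For $\psi=\phi_j$, the defining formula from the proof of Theorem \ref{thm-line} gives $\nzz\phi_j\equiv\phi_{j+1}+(\cdots)\phi\mod\{\phi_1,\dots,\phi_j\}$; since $\phi_{j+1}\in\LL_{j+1}\subset\mathcal{Z}^k$ for $j<k'$, and the analogous residue at $j=k'$ is a multiple of $\wx$ by $\mathrm{Rank}_\C\,\partial f^k_{k+k'}=1$, the claim reduces in all three cases to the single key identity $\phi\equiv\wx\mod\mathcal{Z}^k$.

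The main obstacle is proving that identity. By construction, $\wx=\phi^{\perp_{k'}}$ only places $\phi-\wx$ in $\mathrm{Re}(\LL_1\oplus\cdots\oplus\LL_{k'})\otimes\C=\mathcal{Z}^k\oplus\bar{\mathcal{Z}}^k$, so the nontrivial content is the vanishing of the $\bar{\mathcal{Z}}^k$-component of $\phi$. I plan to argue this inductively using Lemma \ref{lemma-linebundle}: on a small neighborhood $U_p\subset S^2\setminus\mathrm{Sing}(y^k)$ choose an orthogonal local frame $\{\tilde X_1,\tilde X_2\}$ of $\partial f^k_{k+j}$ with $\tilde X_1\in\LL_{j+1}$ and $\tilde X_2\perp\tilde X_1,\bar{\tilde X}_1$; then $\phi^{\perp_j}\in\partial f^k_{k+j}=\Spc\{\tilde X_1,\tilde X_2\}$ automatically has no $\bar\LL_{j+1}$-component. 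Iterating from $j=0$ up to $j=k'-1$ eliminates every $\bar\LL_j$-component of $\phi$ and yields $\phi-\wx\in\mathcal{Z}^k$. The coherent use of such orthogonal frames at every stage of the Frenet-bundle filtration, compatible across the open subset where the $\phi_j$'s are defined, is the most delicate piece and is precisely where Lemma \ref{lemma-linebundle} plays its essential role.
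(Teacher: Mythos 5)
Your route is essentially the paper's: you decompose $\psi$ into its $f^k_0\otimes\C$-, $\Pi_{k-1}$- and $\LL_j$-parts exactly as in the published proof, and you reduce the statement to the identity $\pz\equiv\wx\bmod\ZZ^k$, which you prove by killing the $\bar\LL_{j+1}$-components of $\pz^{\perp_j}$ step by step. That part is sound — it amounts to the relations $\<\phi_{j+1},\pz\>=0$ already built into the construction of Theorem \ref{thm-line}, packaged via the frame of Lemma \ref{lemma-linebundle} — and your duality argument for $Rank_{\C}\partial h_0=1$ is a legitimate way to supply the rank condition that the paper leaves implicit.

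The genuine gap is in your treatment of the $\Pi_{k-1}$-summand. First, $\Pi_{k-1}$ has two top generators, $\dz^{(k-1)}\kappa$ and $\db\dz^{(k-1)}\kappa$; you differentiate only the first. The $z$-derivative of the second produces $\db\dz^{(k)}\kappa$ (use \eqref{eq-dzdb}), whose $(f^k_k)^{\perp}$-part is $\nzb\pz=\phi_1-\frac{\bar\mu}{2}\pz$ by \eqref{eq-def-phi1}; this term does reduce to $(\cdots)\wx\bmod\ZZ^k$, but it never appears in your case analysis, whereas it is exactly the $(\cdots)\nzb\pz$ term in the paper's identity $\dz\psi_1=(\cdots)\pz+(\cdots)\nzb\pz\bmod\{\Pi_{k-1}\}$. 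Second, and more seriously, you only record $\dz^{(k)}\kappa\equiv\pz\bmod f^k_k\otimes\C$, while \eqref{eq-psiz} demands congruence modulo $(f^k_0\otimes\C)\oplus\ZZ^k$, which forbids $\bar\Pi_{k-1}$-components. Passing from one to the other requires $\dz^{(k)}\kappa\perp\Pi_{k-1}$ (and likewise for $\db\dz^{(k)}\kappa$): the pairings with $\dz^{(j)}\kappa$ and with $\db\dz^{(j)}\kappa$ for $j\leq k-2$ vanish by the isotropy relations, but $\<\dz^{(k)}\kappa,\db\dz^{(k-1)}\kappa\>$ does \emph{not} follow from strict $k$-isotropy alone; it is an $\Omega_{k-1}$-type quantity, and one needs the global argument on $S^2$ (the differential $\<\dz^{(k)}\kappa,\db\dz^{(k-1)}\kappa\>\dd z^{2k+2}$ is well defined and holomorphic, hence vanishes), exactly as in the proof of Theorem \ref{thm-m-iso}. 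This is the same bar-component phenomenon you handled carefully at the $\LL_j$-level but omitted at the $\Pi_{k-1}$-level, so your claim that all three cases reduce to the single identity $\pz\equiv\wx\bmod\ZZ^k$ is not yet established; once these two points are supplied, your argument closes and coincides with the paper's.
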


\begin{remark}\begin{enumerate}
    \item
In the first case, $m=\dim \Pi_{k-1}$ could take any value between $k+1$ and $2k$, depending on the structure of $Y^k$. Note that if we allow $Y^k$ to be S-Willmore, then we will have $\dim \Pi_{k-1}=k$.
 So, the Willmore surface $y^k=[Y^k]$ with $Rank_{\C}\partial f^k_k=1$ could be viewed as a generalization of S-Willmore surface.
\item
Moreover, we can also clarify non-totally isotropic Willmore $2$-spheres by the pair $(m,k)$ with $k$ being the strictly isotropic order,  and $m:=\dim\ZZ^k\geq k$.
Note that $m-k$ measures the lowest times of adjoint transforms to transform $Y^k$ to a strictly $m$-isotropic minimal surface in $\R^n$.
\item
For the totally isotropic case, we also have $m=\dim\Pi_{k-1}$ with $m\geq k$, where $k$ is defined to be the smallest value with $\Pi_{k-1}=\Pi_{k}$.
\end{enumerate}
\end{remark}
\begin{proof} The bundle decomposition structure follows directly from Theorem \ref{thm-line}. To verify the Frenet-bundle structure (a)-(c) of Definition \ref{def-Frenet}, we first prove it in the case $k\geq1$.

Recall that now $\partial f^{k}_{0}=Span_{\C}\{\kappa,\db\kappa\}$ is a sub-bundle of $\Pi_{k-1}$ and hence of $\partial f^k_0\subset \mathcal{Z}^k$. This proves $\partial f^k_0\subset \mathcal{Z}^k\oplus( h_{0}\otimes\C)$.

Next consider $\partial \ZZ^k$. By definition of $\ZZ^k$, $\ZZ^k\perp\kappa,\db\kappa$. Let $\psi\in\Gamma(\ZZ^k)$.
%Then $\<\psi,\kappa\>=\<\psi,\db\kappa\>\equiv0$ when $k=0$ by the definition of $\ZZ^k$. When $k>0$, we have $\psi=\psi'+\psi''$ with $\psi'\in\Gamma(\Pi_{k-1})$ and $\psi''=(\cdots)\phi_1+\cdots+(\cdots)\phi_{k'}$. Hence $\<\psi,\kappa\>=\<\psi,\db\kappa\>\equiv0$ since $\<\psi',\kappa\>=\<\psi',\db\kappa\>=\<\psi'',\kappa\>=\<\psi'',\db\kappa\>\equiv0$.
By the structure equation \eqref{mov-eq}, we have
\[\psi_{z}=\dz\psi + 2\<\psi,D_{\zb}\kappa\> Y
- 2\<\psi,\kappa\> Y_{\zb}=\dz\psi.\]
We decompose \begin{equation}
\label{eq-decom-psi}
\psi=\psi_1+\psi_2,~~ \hbox{  with $\psi_1\in\Gamma(\Pi_{k-1})$ and $\psi_2\in\Gamma(\mathbb L_1\oplus \cdots\oplus\mathbb L_{k'})$.}
\end{equation} For  $\psi_1$, by Lemma \ref{lemma-ind2} and \eqref{eq-dzdb}, as in the proof of Theorem \ref{thm-line}, we have
\[\dz\psi_1=(\cdots)\pz+(\cdots)\nzb\pz\mod\{\Pi_{k-1}\}.\]
For  $\psi_2$, by \eqref{eq-psidz} and the construction of $\LL_j$ in Theorem \ref{thm-line}, we have
\[\begin{split}
    \dz\psi_2&=\nzz\psi_2\mod\{\Pi_{k-1}\}\\
    &=(\cdots)\pz^{\perp_{k'}}\mod\{\Pi_{k-1}\oplus\mathbb L_1\oplus \cdots\oplus\mathbb L_{k'}\}.
\end{split}\]
By \eqref{eq-def-phi1}, $\phi_1=\nzb\pz+(\cdots)\pz\in\Gamma(\LL_1)$. So
\[\nzb\pz=(\cdots)\pz+\phi_1=(\cdots)\pz^{\perp_{k'}}\mod\{\mathbb L_1\oplus \cdots\oplus\mathbb L_{k'}\}.\]
This proves both  $\partial \ZZ^k\subset h_{0}\otimes{\C}$ and \eqref{eq-psiz} with $\widehat{\xi}:=\pz^{\perp_{k'}}\in\Gamma(h_0\otimes\C)$.

%\blue{First consider $\partial f^{k}_{0}$.  we} recall the standard frame $\{Y^k,Y^k_z,Y^k_{\bar{z}}, N^k\}$ of $f^{k}_{0}\otimes\C$.We can re-write the structure equation \eqref{mov-eq} as follows: \begin{equation}\label{eq-Y0z} \left\{\begin{split}  Y^k_{zz}&=\kappa \mod Span_{\C}\{Y^k\},\\ Y^k_{z\bar z}&=0 \mod Spanc_{\C}\{Y^k, N^k\},\\  N^k_z&=2\db\kappa\mod f^{k}_{0}\otimes\C.
%  \end{split}\right.
%\end{equation}

Finally consider $\bar\partial \ZZ^k$. Again by the structure equation \eqref{mov-eq}, we have
\[\psi_{\bar{z}}=\db\psi + 2\<\psi,D_{z}\bar{\kappa}\> Y
- 2\<\psi,\bar{\kappa}\> Y_{z}.\]
Recall that for any $\tilde{\psi}\in\Gamma(\Pi_{k-1})$,  $\db\tilde{\psi}\in \Gamma(\Pi_{k-1})$ by Lemma \ref{lemma-ind2}.
So by \eqref{eq-decom-psi}, we obtain $\db\psi=\db\psi_1+\db\psi_2$ with $\db\psi_1\in \Gamma(\Pi_{k-1})$. Since $\<\db\psi_2,\tilde{\psi}\>=-\<\psi_2,\db\tilde{\psi}\>=0$ for any $\tilde{\psi}\in\Gamma(\Pi_{k-1})$, we also have $\db\psi_2-\nzb\psi_2\in \Gamma(\Pi_{k-1})$. Taking into account \eqref{eq-nzb-phij}, we obtain $\nzb\psi_2\in \Gamma(\mathbb L_1\oplus \cdots\oplus\mathbb L_{k'})$. Hence $\psi_{\zb}\in\Gamma(\ZZ^k\oplus(f^k_0\otimes\C))$, that is, $\bar{\partial} \ZZ^k\subset f^k_0\otimes{\C}$.

For the case $k=0$, the proof is almost the same.  The only differences are now $\ZZ^0=\LL_1\oplus\cdots\oplus\LL_{k'}$, $\pz=\kappa$ and $\phi_1=\nzb\pz+(\cdots)\pz=\db\kappa+(\cdots)\kappa$, so that $\partial f^{k}_{0}=Span_{\C}\{\pz,\phi_1\}$. Hence $\partial f^k_0\subset \mathcal{Z}^k\oplus( h_{0}\otimes\C)$ holds again. The proofs of the rest are the same as above.
%To verify the next two equations $\partial \ZZ\subset h_{0}\otimes\C $ and $\bar\partial \ZZ\subset f^{0}_{\C}$, \blue{we have by the definition of $\phi_{j}$ and \eqref{dbdz-phi}.} Next, we claim that \[\phi_{jz}= (\cdots)\widehat{\xi}\mod\ZZ^0, \hbox{ for all }1\leq j\leq m.\]  Then $Rank_{\C}\partial f^{0}_{m}=1$ follows. It suffices to prove the claim. By \eqref{dbdz-phi} and construction of $\phi_{j+1}$ when $j<m$, first we have that $D_z\phi_{j} =(\cdots)\kappa^{\perp_m}\mod \ZZ^0.$ Moreover, by \eqref{mov-eq} we have $\phi_{jz}-D_z\phi_{j}=2\<\phi_j,\db\kappa\>Y^0-2\<\phi_j,\kappa\>Y^0_{\zb}\equiv0$. This finishes the proof of the claim.
\end{proof}

For later applications, from the proof of Corollary \ref{cor-frenet}, we reformulate the structure equations of a Willmore surface $y^k$ which admits a Frenet-bundle structure $\{f^k_0,\ZZ^k,h_0\}$ as follows.
\begin{proposition}\label{prop-moving-frame}
    Let  $y^k=[Y^k]:M\rightarrow S^n$ be a strictly $k$-isotropic, non S-Willmore, Willmore surface with a Frenet-bundle structure $\{f^k_0,\ZZ^k,h_0\}$. On $M\setminus Sing(y^k)$,  we define
       \begin{equation}\label{eq-wx-1}
       \pz:=(D_z^{(k)}\kappa)^{\perp_0}~~ \hbox{ and }~~ \widehat{\xi}:=\pz^{\perp_{k'}} , \hbox{ on $M\setminus Sing(y^k)$}.
       \end{equation}
       Then we have
    \begin{equation}\label{eq-wx-2}
D_z\pz=\hat\tau_0\widehat{\xi}\mod \ZZ^k.
\end{equation}
Moreover, on $M\setminus Sing(Y^k)$, for any $\psi\in \Gamma(\ZZ^k)$ and $\xi\in\Gamma(h_0\otimes\C)$, we have
\begin{equation}\label{eq-moving-2}
\left\{\begin{split}
Y^k_{zz}&=(\cdots)Y^k+\kappa,\\
Y^k_{z\zb}&=0\mod \{Y^k,N^k\},\\
N^k_z&=  2\db\kappa\mod \{Y^k_z,Y^k_{\zb}\},\\
\psi_{z}&=(\cdots) \wx\mod \{\ZZ^k\}, \\
 \psi_{\zb}&=(\cdots)Y^k+(\cdots)Y^k_{z}\mod \{\ZZ^k\},\\
 \xi_{z}&=(\cdots)Y^k+(\cdots)Y^k_{\zb}\mod \{\bar{\ZZ}^k\oplus h_0\otimes\C\}.
\end{split}\right.
\end{equation}
\end{proposition}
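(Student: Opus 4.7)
The plan is to derive the stated equations from Corollary~\ref{cor-frenet} (the Frenet-bundle data together with \eqref{eq-psiz}), the canonical moving-frame equations \eqref{mov-eq}, and the orthogonal splitting
\[
\C^{n+2}_1 \;=\; (f^k_0\otimes\C)\oplus\ZZ^k\oplus\bar\ZZ^k\oplus(h_0\otimes\C),
\]
in which $\ZZ^k$ is isotropic, the pairing $\ZZ^k\times\bar\ZZ^k$ is nondegenerate, and $h_0\otimes\C$ is orthogonal to $\ZZ^k\oplus\bar\ZZ^k$. The first three lines of \eqref{eq-moving-2} are immediate specializations of \eqref{mov-eq}, with the scalar factors $-s/2$ and $-\<\kappa,\bar\kappa\>$ absorbed into the generic coefficients $(\cdots)$.

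The three equations involving $\psi\in\Gamma(\ZZ^k)$ and $\xi\in\Gamma(h_0\otimes\C)$ rest on the observation that $\kappa,\db\kappa\in\partial f^k_0\subset\ZZ^k\oplus(h_0\otimes\C)$ by Frenet~(a). For $\psi\in\Gamma(\ZZ^k)$, isotropy of $\ZZ^k$ together with $h_0\otimes\C\perp\ZZ^k$ forces $\<\psi,\kappa\>=\<\psi,\db\kappa\>=0$, so \eqref{mov-eq} reduces to $\psi_z=\dz\psi$; then \eqref{eq-psiz} identifies the $(h_0\otimes\C)$-part of $\dz\psi$ as a multiple of $\wx$, giving the $\psi_z$-equation. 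Conjugately,
$\psi_{\zb}=\db\psi+2\<\psi,\dz\bar\kappa\>Y^k-2\<\psi,\bar\kappa\>Y^k_z$
picks up the $Y^k,Y^k_z$ coefficients from the generically nondegenerate $\ZZ^k\times\bar\ZZ^k$ pairings, while Frenet~(c) constrains $\db\psi$ to $\ZZ^k$, yielding the $\psi_{\zb}$-equation. For $\xi\in\Gamma(h_0\otimes\C)$, orthogonality $h_0\perp\ZZ^k$ (and hence $h_0\perp\bar\ZZ^k$) makes $\<\xi,\kappa\>=\<\xi,\db\kappa\>=0$, so $\xi_z=\dz\xi$; differentiating $\<\xi,\bar\psi\>=0$ for $\bar\psi\in\bar\ZZ^k$ and using the conjugate of Frenet~(c), i.e.\ $\dz\bar\psi=\overline{\db\psi}\in\bar\ZZ^k$, gives $\<\dz\xi,\bar\psi\>=0$, so $\dz\xi\in\bar\ZZ^k\oplus(h_0\otimes\C)$, which is the $\xi_z$-equation.

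Equation \eqref{eq-wx-2} is the most delicate. The plan proceeds in two steps. (i) By Frenet~(a),(b) iterated through the construction of Theorem~\ref{thm-line}, a suitable representative of $\pz$ lies in $\Gamma(\ZZ^k\oplus(h_0\otimes\C))$, so we may write $\pz=\wx+\eta$ with $\eta\in\Gamma(\ZZ^k)$; the already-established $\psi_z$-equation applied to $\eta$ gives $\dz\eta=(\cdots)\wx\mod\ZZ^k$. (ii) The $\xi_z$-argument above places $\dz\wx$ in $\bar\ZZ^k\oplus(h_0\otimes\C)$, and the terminal rank-one property $\partial f^k_{k+k'}=\C\cdot\wx$ from Theorem~\ref{thm-line} forces the $(h_0\otimes\C)$-piece of $\dz\wx$ to be a scalar multiple of $\wx$; the potential $\bar\ZZ^k$-piece is killed by $\<\dz\wx,\bar\psi\>=-\<\wx,\dz\bar\psi\>=0$, since $\dz\bar\psi\in\bar\ZZ^k$ and $h_0\perp\bar\ZZ^k$. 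Combining (i) and (ii) produces \eqref{eq-wx-2} with $\hat\tau_0$ the resulting scalar. The main obstacle is step~(i): choosing a representative of $\pz$ that has no $\bar\ZZ^k$-contribution. This is handled by propagating Frenet~(a),(b) inductively along the tower $f^k_0\subset f^k_1\subset\cdots\subset f^k_k$ and observing that, at each stage, the holomorphic construction of $\Pi_{k-1}$ and of the isotropic line bundles $\LL_j\subset\partial f^k_{k+j-1}$ in Theorem~\ref{thm-line} never introduces any $\bar\ZZ^k$-direction; the $(f^k_k)^\perp$-quotient of $D_z^{(k)}\kappa$ can therefore always be picked inside $\ZZ^k\oplus(h_0\otimes\C)$.
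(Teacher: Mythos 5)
Your handling of the six equations in \eqref{eq-moving-2} is correct and essentially the paper's own route: the first three are \eqref{mov-eq}; the fourth and fifth follow from isotropy of $\ZZ^k$, \eqref{eq-psiz} and Frenet condition (c); and the sixth is the duality computation $\<\xi_z,\bar\psi\>=-\<\xi,\overline{\psi_{\zb}}\>=0$, which is exactly the paper's statement $\partial h_0\subset(f^k_0\otimes\C)\oplus\bar\ZZ^k$. (One small slip: when $k=0$ one has $\kappa=\pz\notin\Gamma(\ZZ^0)$, so $\<\xi,\kappa\>$ need not vanish; this is harmless, since the resulting terms are along $Y^k$ and $Y^k_{\zb}$, which the statement allows.)

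The genuine gap is in your step (ii) for \eqref{eq-wx-2}, and it is twofold. First, pairing against $\bar\psi\in\Gamma(\bar\ZZ^k)$ detects the $\ZZ^k$-component of a vector, not its $\bar\ZZ^k$-component: in $V^\perp\otimes\C=\ZZ^k\oplus\bar\ZZ^k\oplus(h_0\otimes\C)$ the only summand pairing nontrivially with $\bar\ZZ^k$ is $\ZZ^k$. The $\bar\ZZ^k$-piece is detected by pairing with $\Gamma(\ZZ^k)$, and it does not vanish for the $D_z$-identity you set out to prove: for $k\geq1$ take $\psi=\dz^{(k-1)}\kappa\in\Gamma(\Pi_{k-1})\subset\Gamma(\ZZ^k)$; then $\<D_z\pz,\psi\>=-\<\pz,D_z\psi\>=-\<\pz,\dz^{(k)}\kappa\>=-\<\pz,\pz\>\not\equiv0$ off $Sing(y^k)$, whereas $\hat\tau_0\wx$ plus anything in $\Gamma(\ZZ^k)$ pairs to zero with $\psi$. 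So the displayed $\partial$-version cannot hold as you read it. Second, your appeal to $Rank_{\C}\partial f^k_{k+k'}=1$ is a misapplication: that rank-one bundle consists of the $h_0$-projections of $z$-derivatives of sections of $f^k_{k+k'}=h_0^\perp$, and $\wx\in\Gamma(h_0\otimes\C)$ is not a section of $h_0^\perp$, so it gives no control on the $(h_0\otimes\C)$-piece of $D_z\wx$. What the paper actually proves (its proof line reads $\db\pz=(\cdots)\wx\bmod\{\ZZ^k\}$; the $D_z$ in the display is evidently a slip) is the $\bar\partial$-statement $D_{\zb}\pz=\hat\tau_0\wx\bmod\ZZ^k$: there Frenet condition (c) gives $D_{\zb}\psi\in\Gamma(\ZZ^k)$, hence $\<D_{\zb}\pz,\psi\>=-\<\pz,D_{\zb}\psi\>=0$ kills the $\bar\ZZ^k$-part, and the $h_0$-part is a multiple of $\wx$ because $\nzb\pz+\frac{\bar\mu}{2}\pz=\phi_1\in\Gamma(\LL_1)\subset\Gamma(\ZZ^k)$ when $k'\geq1$, respectively because, when $k'=0$, $\nzb\pz$ arises as the $(f^k_k)^\perp$-projection of the $z$-derivative of the section $\db\dz^{(k-1)}\kappa$ of $f^k_k$, so the rank-one property of $\partial f^k_k$, now correctly applied, forces $\nzb\pz\parallel\pz=\wx$. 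Your step (i) (writing $\pz=\wx+\eta$ with $\eta\in\Gamma(\ZZ^k)$) is fine, but the remainder of your argument for \eqref{eq-wx-2} needs to be redone along these lines.
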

\begin{proof}
Since $Rank_{\C}\partial h_0=1$ from the definition of the Frenet-bundle structure $\{f^k_0,\ZZ^k,h_0\}$, we obtain $Rank_{\C}\partial (h_0^{\perp})=1$. So we have
$\db\pz=(\cdots)\widehat{\xi}\mod \{\ZZ^k\}.$ This proves \eqref{eq-wx-2}.

For \eqref{eq-wx-2}, the first three equations are the same as the one in \eqref{mov-eq}. The fourth equation also holds due to the conclusion $Rank_{\C}\partial (h_0^{\perp})=1$. The fifth equation comes from the first three equations of \eqref{eq-moving-2} and the definition of the Frenet-bundle structure: $\bar\partial \ZZ^k\subset f^k_0\otimes\C$. The last equation is a corollary of  the first five equations of \eqref{eq-moving-2} and the definition \ref{def-Frenet} of the Frenet-bundle structure: $\partial h_0\subset (f^k_0\otimes\C)\oplus\bar{\ZZ}^k$.
\end{proof}

% let $\{F^{k}_{j}, 1\leq j\leq k+k'\}$ be a local frame of $\ZZ^k$ with $\<F^{k}_{j}, \bar{F}^{k}_{l}\>=\frac{1}{2}\delta_{j,l}$, and let $\{\xi_{\alpha}, 1\leq \alpha\leq r\}$ be an orthonormal basis of $h_0$. Then from \eqref{eq-psiz} and the proof of Corollary \ref{cor-frenet} we derive  \blue{We assume $F^j_l$ has length $1$.}

%Together with \eqref{mov-eq}, we have that \begin{equation}    \label{eq-xi-alpha}    \xi_{\alpha z}=\sum_{1\leq \beta\leq r}B_{\alpha\beta}\xi_{\beta}-\<\xi_{\alpha},\wx\> \bar{Q}_1, ~~ B_{\alpha\beta}+B_{\beta\alpha}=0, 1\leq \alpha\leq r,\end{equation} with
%\begin{equation}\label{eq-Q1} \bar{Q}_1:=  \left\{\begin{split}&2\left(Y^0_{\bar{z}}-\hat{\tau}_0 Y^0+\sum_{1\leq l\leq 2k+k'}\tau_{0,l}\overline{F^{0}_{l}}\right), \hbox{ when $k=0$},\\    &2\sum_{1\leq l\leq 2k+k'}\tau_{k,l}\overline{F^{k}_{l}}, \hbox{ when $k\geq1$}.\\    \end{split}\right.\end{equation}

 Finally, we recall the following simple properties of a strictly $k$-isotropic surface $Y^k$.
\begin{lemma} \label{lemma-k}
For a strictly $k$-isotropic surface $Y^k$, if there exists some section $\xi$ such that
\begin{equation}\label{eq-zeta} \<\xi,Y^k\>=\<\xi,Y^k_z\>=\cdots=\<\xi,(Y^k)^{(k+1)}_z\>=0\neq \<\xi,(Y^k)^{(k+2)}_z\>,
\end{equation}
then
\begin{equation}\label{eq-zeta-2}
\<Y^k,\xi_z\>=\cdots=\<Y^k,\xi^{(k+1)}_z\>=0\neq \<Y^k,\xi^{(k+2)}_z\>.
\end{equation}
In particular, $\{\xi,\xi_z,\cdots,\xi^{(k+1)}_z\}$ has rank $k+2$ on an open dense subset.
\end{lemma}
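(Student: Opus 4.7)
My plan is to extract both assertions from a single bivariate induction on the inner products $\<\xi^{(l)}_z,(Y^k)^{(j)}_z\>$. The key claim is: for all $l,j\ge 0$ with $l+j\le k+2$, one has $\<\xi^{(l)}_z,(Y^k)^{(j)}_z\>=0$ when $l+j\le k+1$, and $\<\xi^{(l)}_z,(Y^k)^{(j)}_z\>=(-1)^l c$ when $l+j=k+2$, where $c:=\<\xi,(Y^k)^{(k+2)}_z\>$. The base case $l=0$ is precisely the hypothesis \eqref{eq-zeta}.

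For the inductive step from $l$ to $l+1$, I use the identity $\partial_z\<\xi^{(l)}_z,(Y^k)^{(j)}_z\>=\<\xi^{(l+1)}_z,(Y^k)^{(j)}_z\>+\<\xi^{(l)}_z,(Y^k)^{(j+1)}_z\>$. When $(l+1)+j\le k+1$, both $\<\xi^{(l)}_z,(Y^k)^{(j)}_z\>$ and $\<\xi^{(l)}_z,(Y^k)^{(j+1)}_z\>$ vanish by the inductive hypothesis, forcing $\<\xi^{(l+1)}_z,(Y^k)^{(j)}_z\>=0$. When $(l+1)+j=k+2$, the left side still vanishes but the second term on the right equals $(-1)^l c$, yielding $\<\xi^{(l+1)}_z,(Y^k)^{(j)}_z\>=(-1)^{l+1}c$. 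Setting $j=0$ recovers \eqref{eq-zeta-2}.

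For the rank statement I form the $(k+2)\times(k+2)$ Gram matrix $M_{lj}:=\<\xi^{(l)}_z,(Y^k)^{(j)}_z\>$ with row index $0\le l\le k+1$ and column index $1\le j\le k+2$. By the claim, $M_{lj}=0$ whenever $l+j\le k+1$, and $M_{lj}=(-1)^l c$ on the anti-diagonal $l+j=k+2$. Thus $M$ is anti-triangular with nonvanishing anti-diagonal, so $\det M=\pm c^{k+2}$, which is nonzero on the open dense subset $\{c\ne 0\}$; this forces $\{\xi,\xi_z,\ldots,\xi^{(k+1)}_z\}$ to be linearly independent there.

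The single delicate point, and what I view as the main obstacle, is the choice of test vectors in the rank argument: pairing $\{\xi^{(l)}_z\}_{l=0}^{k+1}$ against $\{Y^k,(Y^k)_z,\ldots,(Y^k)^{(k+1)}_z\}$ produces a Gram matrix whose top row vanishes identically by \eqref{eq-zeta}, so its rank drops to at most $k+1$. Shifting the test basis by one derivative routes the decisive non-orthogonality $\<\xi,(Y^k)^{(k+2)}_z\>\ne 0$ into the top-right corner of $M$, and it is precisely this corner entry that makes the anti-triangular structure nondegenerate.
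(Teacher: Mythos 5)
Your proof is correct, and since the paper itself gives no argument (it simply declares the proof straightforward), your bivariate Leibniz induction on $\<\xi^{(l)}_z,(Y^k)^{(j)}_z\>$ together with the shifted Gram matrix $M_{lj}=\<\xi^{(l)}_z,(Y^k)^{(j)}_z\>$, $1\le j\le k+2$, whose anti-triangular structure gives $\det M=\pm c^{k+2}$, is exactly the routine computation the authors intend. The only implicit point is that density of $\{c\neq 0\}$ rests on the real-analyticity of the data (or on reading the hypothesis $\<\xi,(Y^k)^{(k+2)}_z\>\neq 0$ as generic nonvanishing), which is consistent with the paper's conventions.
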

The proof is straightforward.
\vspace{2mm}
\section{Differentiate $h_0$ to obtain harmonic sequence $\{h_j\}$ and minimal surface in $\R^n$}\label{sec-hj}

This section aims to study the geometry of the universal Gauss map $h_0$ globally defined on $S^2$, by constructing a second harmonic sequence $\{h_j, 1\leq j\leq \hat{m}+1\}$ defined on certain open dense subsets $M_j$ of $S^2$, which induces a specific minimal surface $Y^{\hat{m}}$ in $\R^n$.

%Then we will show that $Y^{\hat{m}}$  induces a third harmonic sequence $\{f^{\hat{m}}_j,0\leq j\leq \hat{m}\}$ with $(f^{\hat{m}}_j)^{\perp}=h_{\hat{m}-j}$ for $0\leq j\leq \hat{m}$.

 \subsection{Total isotropicity of $h_0$}
Since $h_0^{\perp}$ is a Lorentzian bundle, the rank as well as the induced metric of $Re(\partial h_{0})$ might degenerate when $\partial h_0$ contains lightlike vectors. Therefore, the map $h_1=\DD h_0=h_{0}\oplus Re(\partial h_{0})$ is in general defined only on some open dense subset of $S^2$.
%Inductively, we will obtain a local harmonic sequence $h_j$ defined on an open dense subset $M_j$ of $S^2$, with $M_j\subset M_{j-1}$.
To this end, let $\{\xi_{\alpha},1\leq \alpha\leq r\}$ be an orthonormal basis of $h_0$, with
\begin{equation}
    \label{eq-xi-z}
    \xi_{\alpha z}=\sum_{\beta}B_{\alpha\beta}\xi_{\beta}\mod h_0^{\perp}\otimes\C,~ B_{\alpha\beta}+B_{\beta\alpha}=0,
\end{equation}
for $1\leq \alpha,\beta\leq r$.
We then introduce $\zeta_{\alpha}$ as
 \begin{equation}\label{eq-Bab}\zeta_{\alpha}:=\xi_{\alpha
 z}-\sum_{\beta}B_{\alpha\beta}\xi_{\beta}.\end{equation}
We see by the last equation of \eqref{eq-moving-2} that
$$\zeta_{\alpha}=0 \mod \{Y^k,Y^k_{\zb},\bar{\ZZ}^k\}.
$$
As a consequence, $\<\zeta_{\alpha},\zeta_{\alpha}\>\equiv0$, $1\leq\alpha\leq r$. We will prove that $h_0$ is totally isotropic, that is,
\[\<(\zeta_{\alpha})^{(j)}_{z},(\zeta_{\beta})^{(l)}_{z}\>=0, \hbox{ for any $j,l\in \mathbb{Z}^+$ and $1\leq \alpha,\beta\leq r$.}\]
This property is well-defined, since it is independent of the choice of $z$ and $\{\xi_{\alpha},1\leq \alpha\leq r\}$.
\begin{theorem}[Theorem B, structure of $h_0$]\label{thm-h0}\
\begin{enumerate}
    \item The harmonic map $h_0:S^2=M_0\rightarrow Gr_{r}\RRR$ defined in Theorem \ref{thm-line} is totally isotropic, and it induces a harmonic sequence $\{h_j, 1\leq j\leq \hat{m}+1\}$, where
\[h_{j}:=\DD h_{j-1}=h_{j-1}\oplus Re(\partial h_{j-1}): M_j\rightarrow  Gr_{r+2j}\RRR,  1\leq j\leq \hm+1,\]
and $M_j\subset M_{j-1}$ is an open dense subset of $S^2$ with $\DD h_{j-1}$ being well defined on $M_{j}$.
\item Here $\hm\geq k$. Furthermore, for any  strictly $\hat{k}$-isotropic Willmore surface $Y^{\hat{k}}$ on an open dense subset of $S^2\setminus Sing(y^k)$, with a Frenet-bundle structure $\{f^{\hat{k}}_0,\ZZ^{\hat{k}},h_0\}$ with respect to the same $h_0$, we have
\[\hm\geq \hat{k}.\]
\end{enumerate}
\end{theorem}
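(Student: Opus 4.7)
The argument splits into two stages, mirroring the two parts of the statement.

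\emph{Stage I: Total isotropy of $h_0$ and construction of $\{h_j\}$.} First I would establish that $\partial h_0$ is an isotropic holomorphic subbundle of $h_0^{\perp}\otimes\C$. For $\xi\in\Gamma(h_0\otimes\C)$, the last line of \eqref{eq-moving-2} in Proposition \ref{prop-moving-frame} constrains the $h_0^{\perp}$-component of $\xi_z$ to lie in $\mathrm{Span}\{Y^k,Y^k_{\bar z}\}\oplus\bar{\ZZ}^k$. Since $Y^k$ and $Y^k_{\bar z}$ are mutually isotropic (by nullity and conformality), perpendicular to $\bar{\ZZ}^k$, and $\bar{\ZZ}^k$ is itself isotropic (being the conjugate of an isotropic holomorphic bundle), the whole subbundle $\partial h_0$ is isotropic. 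Harmonicity of $h_0$ inherited from $f^{k}_{k+k'}$ via Theorem \ref{thm-line} makes $\partial h_0$ holomorphic in $h_0^{\perp}\otimes\C$, and Theorem \ref{thm-db-2} then produces the harmonic map $h_1=\D h_0$ on an open dense subset $M_1\subset S^2$.

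The inductive step---passing from $h_j$ to $h_{j+1}$---is where the $S^2$ hypothesis enters decisively. Assuming $h_j$ is constructed as a harmonic map with a compatible Frenet-bundle structure, I would form the bilinear scalar $\langle\eta,\eta\rangle$ of a local generator $\eta$ of $\partial h_j$, multiply by the appropriate power of $\mathrm{d}z$ dictated by the conformal weight inherited from $\wx$, and check that the resulting differential extends globally to $S^2$ and is holomorphic. Holomorphicity follows from the structure equations of $h_j$; Lemma \ref{lemma-linebundle} handles apparent singularities coming from rank drops in $\partial h_{j-1}$. The vanishing of global holomorphic differentials of positive weight on $S^2$ then forces $\langle\eta,\eta\rangle\equiv 0$, so $\partial h_j$ is isotropic and Theorem \ref{thm-db-2} yields $h_{j+1}=\D h_j$. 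The rank relation $\mathrm{Rank}_{\R}h_{j+1}=\mathrm{Rank}_{\R}h_j+2\mathrm{Rank}_{\C}\partial h_j$ increases strictly while remaining bounded by $n+2$, so the sequence terminates at some index $\hm+1$.

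\emph{Stage II: The lower bound $\hm\ge \hat k$.} Given a strictly $\hat k$-isotropic Willmore surface $Y^{\hat k}$ whose Frenet structure $\{f^{\hat k}_{0},\ZZ^{\hat k},h_0\}$ shares the same $h_0$, I would trace how the iterated derivatives of the generating section $\wx^{\hat k}\in\Gamma(h_0\otimes\C)$ from Corollary \ref{cor-frenet} populate the filtration $\{h_j\}$. Applying Proposition \ref{prop-moving-frame} to $Y^{\hat k}$, one checks inductively that $(\nabla^{h_{j-1}}_z)^{j-1}\wx^{\hat k}$ contributes a nonzero section of $\partial h_{j-1}$ for each $1\le j\le \hat k$. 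Lemma \ref{lemma-k} ensures that strict $\hat k$-isotropy keeps these iterated derivatives linearly independent modulo the preceding stages, so $h_j\supsetneq h_{j-1}$ for every $j\le \hat k$. Hence the sequence $\{h_j\}$ cannot terminate before index $\hat k+1$, giving $\hm\ge \hat k$; specializing $Y^{\hat k}=Y^k$ recovers $\hm\ge k$.

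\emph{Main obstacle.} The principal challenge will be the inductive isotropy verification of Stage I: at each level $\partial h_j$ may drop rank and its induced metric can change signature (null directions appearing in the otherwise Lorentzian complement), so constructing a genuinely global holomorphic differential on $S^2$ requires combining the singularity-removal technology of Lemma \ref{lemma-linebundle} with careful tracking of how $\nabla^{h_j}$ interacts with the ambient Frenet data of $Y^k$. Stage II is comparatively cleaner, reducing once the bundle machinery is in place to a linear-algebraic unfolding of isotropy orders against the filtration $\{h_j\}$.
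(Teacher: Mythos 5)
Your Stage I is fine at level $j=0$ (your isotropy argument for $\partial h_0$ from the last line of \eqref{eq-moving-2} is exactly the paper's observation that $\zeta_{\alpha}=\xi_{\alpha z}-\sum_{\beta}B_{\alpha\beta}\xi_{\beta}\equiv0\mod\{Y^k,Y^k_{\zb},\bar{\ZZ}^k\}$), but the inductive step has a genuine gap. For $j\geq1$ the map $h_j=h_{j-1}\oplus Re(\partial h_{j-1})$ is only defined on an open dense subset $M_j$ which need not be all of $S^2$ (the induced metric on $Re(\partial h_{j-1})$ can degenerate), so a form $\<\eta,\eta\>\dd z^{2w}$ built from a local generator $\eta$ of $\partial h_j$ lives a priori only on $M_j$; a holomorphic differential on a proper open subset of $S^2$ need not vanish, and you cannot invoke the $S^2$ vanishing theorem before extending the form across $S^2\setminus M_j$. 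Lemma \ref{lemma-linebundle} cannot supply that extension: it requires as \emph{input} a differential already globally defined and holomorphic on all of $M=S^2$, which is precisely what is missing (moreover a generator of the rank-one bundle $\partial h_j$ is only defined up to scale, and your normalization via $\wx$ is itself only available off $Sing(y^k)$). This is exactly the singularity problem the paper's argument is designed to sidestep: it never attaches a global holomorphic form to the densely-defined $h_j$. Instead, Lemma \ref{lemma-zeta} proves \emph{total} isotropy of $h_0$ itself: with $\{\xi_{\alpha}\}$ an orthonormal frame of the globally defined bundle $h_0$, the forms $\Xi_j=\sum_{\alpha}\<(\zeta_{\alpha})^{(j)}_{z},(\zeta_{\alpha})^{(j)}_{z}\>\dd z^{2j+2}$ --- plain iterated derivatives, summed over the frame so that the form is independent of all choices --- are globally defined and holomorphic on $S^2$ by the harmonicity of $h_0$ and the skew-symmetry of $(B_{\alpha\beta})$, hence vanish; induction on $j$ gives $\<(\zeta_{\alpha})^{(j)}_{z},(\zeta_{\beta})^{(l)}_{z}\>\equiv0$ for all $j,l$. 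All the isotropy your level-by-level induction was meant to produce is thus obtained at the $h_0$ level in one stroke, and the sequence $\{h_j\}$ with \eqref{eq-Qjz} is then constructed on open dense subsets with no further global input.

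Stage II is essentially the paper's route (Lemma \ref{lemma-k} with $\xi=\wx$ and the rank count on the iterated derivatives of $\bar{Q}_1$), but linear independence alone does not keep the sequence alive: you also need the non-degeneracy $\<Q_{j},\bar{Q}_{j}\>\not\equiv0$, so that $Re(\partial h_{j-1})$ is a genuine non-degenerate rank-two subbundle and $h_j$ lands in the stated Grassmannian. The paper proves this by contradiction: if $\<Q_{2},\bar{Q}_{2}\>\equiv0$ then $Q_2$ would be proportional to a real light-like field $Y^{*}$, which forces $\bar{Q}_{2z}$ to remain in $\hbox{Span}_{\C}\{\bar{Q}_1,\bar{Q}_2\}$ and contradicts the rank statement coming from Lemma \ref{lemma-k}; this step (and its inductive repetition leading to \eqref{eq-Qjz}) must be added to your outline.
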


It is obvious $\hat{m}\leq m=\dim\ZZ^k$ \eqref{eq-m-def}, due to the co-dimension restriction. We will prove $\hat m=m$ in Section \ref{sec-seq}.  for later convenience, we also define \[h_{\hm+2}:=h_{\hm+1}\oplus Re(\partial h_{\hm+1}).\]

Theorem \ref{thm-h0} is a parallel result of Calabi's Theorem for minimal $2$-spheres in $S^n$ \cite{Calabi}, since it shows that $h_0$ shares similar properties as harmonic $2$-spheres in a sphere, due to the condition $Rank_{\C}(\partial h_{0})=1$. To prove Theorem \ref{thm-h0}, we need the following lemma.

%And  on the open dense subset of $S^2$ with $\<\kappa,\kappa\>\neq0$ we have \begin{equation*} \bar{Q}_1=2\tau_0\bar{F}_{0,0}+\sum_{l}\tau_{0,l}\bar{F}_{0,l},\hbox{ with $F^{0}_{0}=Y^0_{\zb}-\frac{\hat{\tau}_0}{\tau_{0}}Y^0$}.\end{equation*}
\begin{lemma}\label{lemma-zeta}
For each $j\geq1$, the differential form
\[\Xi_j=\sum_{\alpha}\<(\zeta_{\alpha})^{(j)}_{z},(\zeta_{\alpha})^{(j)}_{z}\>\dd z^{2j+2}\] is a global holomorphic differential and therefore vanishes on $S^2$.
\end{lemma}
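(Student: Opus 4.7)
The strategy is to verify that $\Xi_j$ is a globally defined holomorphic section of $K_{S^2}^{2j+2}$, whence it must vanish since this line bundle has negative degree on $S^2$. The base case $j=0$, while not strictly part of the statement, seeds the induction: by Proposition \ref{prop-moving-frame} applied to $\xi_\alpha\in\Gamma(h_0\otimes\C)$, one has $\zeta_\alpha\in\Spc\{Y^k, Y^k_{\bar z}, \bar{\ZZ}^k\}$, a totally isotropic complex subspace of $\C^{n+2}_1$, so $\<\zeta_\alpha,\zeta_\beta\>\equiv 0$ for all $\alpha,\beta$.

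I would proceed by induction on $j\geq 1$. The inductive hypothesis, $\Xi_l\equiv 0$ for all $l<j$, yields by repeated $z$-differentiation and polarization the stronger conclusion
\[
\sum_\alpha \<(\zeta_\alpha)^{(l)}_z, (\zeta_\alpha)^{(l')}_z\>\equiv 0 \quad\text{for all } l+l'<2j.
\]
This auxiliary vanishing is the engine of the proof. For \emph{global well-definedness} of $\Xi_j$: under a coordinate change $z\mapsto w$ one has $\zeta_\alpha^{(z)}=w_z\,\zeta_\alpha^{(w)}$, hence $(\zeta_\alpha)^{(j)}_z=w_z^{j+1}(\zeta_\alpha)^{(j)}_w$ plus lower-weight corrections involving $w_{zz},w_{zzz},\ldots$ and $(\zeta_\alpha)^{(l)}_w$ for $l<j$. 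Squaring and summing over $\alpha$, the cross-terms are precisely of the form annihilated by the inductive hypothesis, so $\Xi_j$ is a clean $(2j+2)$-form. Basis-independence is analogous: for a real orthogonal change $\tilde\xi=A\xi$, the matrix $A_z A^{-1}$ is skew-symmetric, so the symmetric bilinear pairing kills the top-order correction while lower-order ones vanish by induction.

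For \emph{holomorphicity}, I would compute
\[
\partial_{\bar z}\Xi_j = 2\sum_\alpha \<\partial_{\bar z}(\zeta_\alpha)^{(j)}_z, (\zeta_\alpha)^{(j)}_z\>\,\dd z^{2j+2}
\]
and commute $\partial_{\bar z}$ past the $j$ copies of $\partial_z$ via the generalized Ricci identity \eqref{eq-Ricci-nabla} and its higher analogues. Harmonicity of $h_0$ combined with $Rank_{\C}(\partial h_0)=1$ forces $\partial_{\bar z}\zeta_\alpha$ to lie in $h_0\otimes\C$ modulo $\C\{\widehat{\xi}\}$; every residual curvature contribution is a pairing $\sum_\alpha \<(\zeta_\alpha)^{(l)}_z, (\zeta_\alpha)^{(l')}_z\>$ with $l+l'\leq 2j-1$, which vanishes inductively. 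With $\Xi_j$ confirmed as a global holomorphic section of $K_{S^2}^{2j+2}$ and $\deg(K_{S^2}^{2j+2})=-4(j+1)<0$, one concludes $\Xi_j\equiv 0$, completing the induction.

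The hard part will be the commutator bookkeeping in the holomorphicity step: the iterated commutator $[\partial_{\bar z},\partial_z^j]$ acting on $\zeta_\alpha$, expanded via the Ricci identity, produces a combinatorially involved sum, and confirming that every stray term falls inside the inductive vanishing regime is delicate. A natural way to organize this cleanly is to introduce a covariant derivative $\mathcal{D}_z$ on the appropriate jet bundle of $h_0$ tailored to the Frenet-bundle structure $\{f^k_0,\ZZ^k,h_0\}$, so that iteration is coordinate-invariant on the nose and the harmonicity of $h_0$ directly governs all curvature contributions.
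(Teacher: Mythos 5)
Your skeleton---induction on $j$, global well-definedness of $\Xi_j$ from the lower-order isotropy relations, holomorphicity from the harmonicity of $h_0$, and vanishing because $S^2$ carries no nonzero holomorphic $(2j+2)$-differential---is the same as the paper's. The genuine gap sits exactly in the step you flag as ``the hard part'': commuting $\partial_{\bar z}$ past $\partial_z^{j}$ via the generalized Ricci identity \eqref{eq-Ricci-nabla} and a jet-bundle covariant derivative is a misdirection, and the cancellation mechanism you propose there does not suffice. The derivatives $(\zeta_{\alpha})^{(j)}_{z}$ entering $\Xi_j$ are plain iterated derivatives of $\C$-valued-in-$\R^{n+2}_1\otimes\C$ functions with respect to the flat ambient connection, so $\partial_{\bar z}$ and $\partial_z^{j}$ commute identically and there is no curvature bookkeeping at all; \eqref{eq-Ricci-nabla} concerns the normal connection on $(f^k_k)^{\perp}\otimes\C$ and is not the relevant tool. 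What the computation actually needs, and what your sketch omits, is: (i) the harmonicity relation \eqref{eq-zeta-zb}, $\zeta_{\alpha\zb}\equiv\sum_\beta\bar B_{\alpha\beta}\zeta_\beta$ modulo $\Gamma(h_0\otimes\C)$, differentiated $j$ times by the flat $\partial_z$; (ii) the inductively established orthogonality $\<(\zeta_{\alpha})^{(j)}_{z},\xi_\beta\>=0$, which disposes of the $h_0$-valued terms; and (iii) the skew-symmetry $B_{\alpha\beta}+B_{\beta\alpha}=0$ from \eqref{eq-xi-z}, which is what kills the surviving top-order term $\sum_{\alpha,\beta}\bar B_{\alpha\beta}\<(\zeta_{\beta})^{(j)}_{z},(\zeta_{\alpha})^{(j)}_{z}\>$. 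Your assertion that every residual contribution is a pairing $\sum_\alpha\<(\zeta_{\alpha})^{(l)}_{z},(\zeta_{\alpha})^{(l')}_{z}\>$ with $l+l'\le 2j-1$ fails precisely for this term (it has $l=l'=j$ and mixed indices $\alpha\neq\beta$), so the inductive hypothesis cannot remove it; skew-symmetry, which you invoke only for basis changes, is the missing ingredient in the holomorphicity step.

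Two further inaccuracies. First, ``$\partial_{\bar z}\zeta_\alpha$ lies in $h_0\otimes\C$ modulo $\C\{\widehat{\xi}\}$'' is garbled: $\widehat{\xi}\in\Gamma(h_0\otimes\C)$ by \eqref{eq-wx}; the correct statement, coming from \eqref{eq-moving-2} and \eqref{eq-zeta-zb}, is that modulo $h_0\otimes\C$ the derivative $\zeta_{\alpha\zb}$ lies in the rank-one span of the $\zeta_\beta$'s (the image of $\partial h_0$), with coefficient matrix $\bar B_{\alpha\beta}$. Second, the lower-order Leibniz terms $(\partial_z^{s}\bar B_{\alpha\beta})\<(\zeta_{\beta})^{(i)}_{z},(\zeta_{\alpha})^{(j)}_{z}\>$ with $i<j$ are mixed-index pairings, whereas your polarization of $\Xi_l\equiv0$ only yields same-index, $\alpha$-summed identities; to conclude one needs the unsummed (or mixed-index) vanishing, which the paper obtains from the fact that all $\zeta_\alpha$ are parallel to one isotropic direction, i.e. $Rank_{\C}(\partial h_0)=1$. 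With (i)--(iii) and this strengthening in place your outline closes, and it then coincides with the paper's argument rather than requiring any new jet-bundle machinery.
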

\begin{proof}

We will prove the lemma by induction. We recall that the harmonicity of $h_0$ indicates
    \begin{equation}
        \label{eq-zeta-zb}
    \zeta_{\alpha\zb}-\sum_{\beta}\bar{B}_{\alpha\beta}\zeta_{\beta}=0\mod\{\xi_{\beta},1\leq\beta\leq r\},\ 1\leq \alpha\leq r.
    \end{equation}

When $j=1$, it is direct to check that $\Xi_1$ is independent of the choice of $z$ and $\xi_{\alpha}$.
By the harmonicity equation \eqref{eq-zeta-zb}, we obtain
\[(\Xi_1)_{\zb}=2\sum_{\alpha}\<\zeta_{\alpha\zb z},\zeta_{\alpha z}\>=2\sum_{\alpha}\<(\sum_{\beta}\bar{B}_{\alpha\beta}\zeta_{\beta}+(\cdots)\xi_{\beta})_z,\zeta_{\alpha z}\>.\]
 Since $\<\xi_{\alpha},\zeta_{\beta}\>=\<\zeta_{\alpha},\zeta_{\beta}\>=0$, we obtain
$0=\<\xi_{\alpha z},\zeta_{\beta}\>
=-\<\xi_{\alpha},\zeta_{\beta z}\>$. We also have $\<\zeta_{\alpha},\zeta_{\alpha z}\>=0$ and $\<\zeta_{\alpha },\zeta_{\beta z}\>=0$ since $\zeta_{\alpha}\parallel\zeta_{\beta}$. Since $\bar{B}_{\alpha\beta}+\bar{B}_{\beta\alpha}\equiv0$ by \eqref{eq-xi-z}, we have
\[(\Xi_1)_{\zb}=2\sum_{\alpha,\beta}\bar{B}_{\alpha\beta}\<\zeta_{\beta z},\zeta_{\alpha z}\>=0.\] So,  we have $\Xi_1\equiv0$ on $S^2$.

    Now we assume that, for $1\leq j\leq l$, $\Xi_{j}$ is holomorphic on $S^2$ and hence vanishes. Then we have $\<(\zeta_{\alpha})^{(j)}_{z},(\zeta_{\alpha})^{(j)}_{z}\>=0$ for all $\alpha$ and $1\leq j\leq l$. As a consequence,
       $\<(\zeta_{\alpha})^{(j)}_{z},(\zeta_{\alpha})^{(\tilde j)}_{z}\>=0$ for each $j+\tilde{j}\leq 2l+1$, from which we obtain that $\Xi_{l+1}$ is globally defined.

      Since $\<\zeta_{\alpha },\xi_{\beta}\>=\<\zeta_{\alpha z},\xi_{\beta}\>=0$ for all $1\leq\alpha,\beta\leq r$, we obtain inductively
      \[\<(\zeta_{\alpha })^{(j+1)}_{z},\xi_{\beta}\>=-\<(\zeta_{\alpha })^{(j)}_{z},\xi_{\beta z}\>=0 \hbox{ for $1\leq j\leq l.$}\]
Next we compute
    \[ \begin{split}
           ((\zeta_{\alpha})^{(j+1)}_{z})_{\zb}&=\left(\zeta_{\alpha \zb}\right)^{(j+1)}_{z}=\sum_{\beta}\left(\bar{B}_{\alpha\beta}\zeta_{\beta}+(\cdots)\xi_{\beta}\right)^{(j+1)}_{z}\\
           &=\sum_{\beta}(\cdots)\xi_{\beta}+(\cdots)\zeta_{\beta}+\cdots+(\cdots)(\zeta_{\beta})^{(j)}_{z}+\bar{B}_{\alpha\beta}(\zeta_{\beta})^{(j+1)}_{z}.\\
       \end{split}\]
As a consequence, similar to the case $j=1$, we obtain
\[\sum_{\alpha}\<(\zeta_{\alpha})^{(l+1)}_{z},(\zeta_{\alpha})^{(l+1)}_{z}\>_{\zb}=2\sum_{\alpha}\<((\zeta_{\alpha})^{(l+1)}_{z})_{\zb},(\zeta_{\alpha})^{(l+1)}_{z}\>=2\sum_{\alpha,\beta}\bar{B}_{\alpha\beta}\<(\zeta_{\beta})^{(l+1)}_{z},(\zeta_{\alpha})^{(l+1)}_{z}\>=0.\]
\end{proof}

\subsection{The local isotropic framing $Q_j$}
\begin{proof}[Proof of Theorem \ref{thm-h0}]Part
(1) follows from the proof of Lemma \ref{lemma-zeta}.
Consider (2). To show  $\hm\geq\hat{k}$, setting $\xi=\wx$ in Lemma \ref{lemma-k}, we obtain $Span_{\C}\{\wx_{z},\cdots,\wx_{z}^{(\hat{k}+2)}\}$
has rank $\hat{k}+2$ on an open dense subset $M_{\hat{k}+2}$. Set
\begin{equation}
    \label{eq-Q1}
    \bar{Q}_1=\wx_z\mod \{h_0\otimes\C\}, \hbox{ with } \bar{Q}_1\in\Gamma(\hp_0\otimes\C).
\end{equation}
We have $\<Q_1,Q_1\>=0$ and
$\wx_z^{(j)}=(\cdots)\bar{Q}_1+\cdots+(\cdots)(\bar{Q}_1)_z^{(j-1)}\mod \{h_0\otimes\C\},~ 2\leq j\leq \hat{k}+2.$
So on $M_{\hat{k}+2}$, for $2\leq j\leq \hat{k}+2$, $Span_{\C}\{\bar{Q}_1,\cdots, (\bar{Q}_1)_z^{(j-1)}\}$ has   rank $j$.
%In fact, if there exists some $j\geq 2$ such that both $Span_{\C}\{\bar{Q}_1,\cdots, (\bar{Q}_1)_z^{(j-2)}\}$ and  $Span_{\C}\{\bar{Q}_1,\cdots, (\bar{Q}_1)_z^{(j-1)}\}$ has rank $j-1$, then \[(\bar{Q}_1)_z^{(j-1)}=(\cdots)\bar{Q}_1+\cdots+(\cdots)(\bar{Q}_1)_z^{(j-2)}\mod \{h_0\otimes\C\}.\] Hence \[\wx_z^{(j)}=(\cdots)\wx_1+\cdots+(\cdots)(\wx)_z^{(j-1)}\mod \{h_0\otimes\C\},\] which indicates \[\wx_z^{(\hat{k}+2)}=(\cdots)\wx_1+\cdots+(\cdots)(\wx)_z^{(\hat{k}+1)}\mod \{h_0\otimes\C\}.\] So $\<\wx_z^{(\hat{k}+2)}, Y^{\hat{k}}\>=\<(\cdots)\wx_1+\cdots+(\cdots)(\wx)_z^{(\hat{k}+1)},Y^{\hat{k}}\>=0$, contradicting to $\<\wx_z^{(\hat{k}+2)}, Y^{\hat{k}}\>\neq 0$.
By Lemma \ref{lemma-zeta}, we obtain
\[\<(\bar{Q}_1)^{(j)}_z,(\bar{Q}_1)^{(j)}_z\>\equiv0, \hbox{ for all } j\geq1.\]
We also have
\[\<\xi_{\alpha},\bar{Q}_{1z}\>=-\<\xi_{\alpha z},\bar{Q}_1\>=0,  \hbox{ for all } 1\leq \alpha\leq r.\]
So $\bar{Q}_{1z}\in \Gamma((h_0^{\perp})\otimes\C)$. Inductively, we obtain
\begin{equation}
    \label{eq-Q1jz-p}(\bar{Q}_{1})^{(j)}_z\in \Gamma((h_0^{\perp})\otimes\C)
~    \hbox{  for all $j\geq 1$.}
\end{equation} Now for $1\leq j\leq \hat{k}+1$, we define inductively that
\begin{equation}\label{eq-Qjz-1}
 Q_{j+1}:={Q}_{j \zb}\mod \{Q_1,\cdots,Q_j\}, \hbox{ with  $Q_{j+1}\perp\{Q_l,\Bar{Q}_l, 1\leq l\leq j\}$.}
 \end{equation}
 On the other hand, consider
 $Q_{1z}$. Since $h_0$ is harmonic, we obtain
\begin{equation}\label{eq-Q1z}
Q_{1z}=(\cdots) Q_{1}+\sum_{1\leq\alpha\leq r}(\cdots)\xi_{\alpha},\\
 \end{equation}
Hence
\[Q_{2z}=(\cdots)(Q_1)_{\zb z}-(\cdots)Q_1-(\cdots)Q_{1z}=(\cdots)Q_1+(\cdots)Q_2 \mod \{h_0\otimes\C\}.\]
Since $\<\xi_{\alpha},{Q}_{2z}\>=-\<\xi_{\alpha z},{Q}_2\>=0,  \hbox{ for all } 1\leq \alpha\leq r,$ we obtain
\[Q_{2z}=(\cdots)Q_1+(\cdots)Q_2.\]
We claim $\<Q_{2},\bar{Q}_{2}\>\not\equiv0$ on $M_{\hat{k}+2}$.
Otherwise, assume $\<Q_{2},\bar{Q}_{2}\>=0$ on $M_{\hat{k}+2}$.  Since $\<Q_{2},{Q}_{2}\>=0$,
$\<Re(Q_2),Re(Q_2)\>=\<Re(Q_2),Im(Q_2)\>=\<Im(Q_2),Im(Q_2)\>=0$. So $Re(Q_2)$ and $Im(Q_2)$ must be parallel light-like vector fields. Therefore, there exists some $Y^*$ that is real and light-like such that $Q_2=(\cdots)Y^*$.
 So $Q_{2z}=(\cdots)Y^*_{z}+(\cdots)Y^*$. Since $Q_{2z}=(\cdots)Q_2+(\cdots)Q_{1}$, we obtain $Y^*_{z}=(\cdots)Q_1+(\cdots)Q_{2}$. Hence $\bar{Q}_{2z}=(\cdots)Y^*_{z}+(\cdots)Y^*=(\cdots)\bar{Q}_1+(\cdots)\bar{Q}_{2}$, contradicting the conclusion $Span_{\C}\{\bar{Q}_1,\cdots, (\bar{Q}_1)_z^{(j-1)}\}$ has rank $j$ for $2\leq j\leq \hat{k}+2$.

In the same way, we obtain inductively that there exists $\hm\geq \hat{k}$ such that
\begin{equation}\label{eq-Qjz}
\left\{\begin{split}
Q_{j z}&=(\cdots) Q_{j}+(\cdots)Q_{j-1},\hbox{ with $\<Q_{j },\bar{Q}_{j}\>\not\equiv0\not\equiv\<Q_{j z},\bar{Q}_{j-1}\>$},~~ \hbox{ for } 2\leq j\leq \hm+1,\\
Q_{ j \zb}&= Q_{j+1}+(\cdots)Q_{j},~~ \hbox{ for } 1\leq j\leq \hm+1.\\
\end{split}\right.
\end{equation}
From this we see $h_j$ is well-defined, and the harmonicity of $h_j$ comes from the fact that $h_0$ is harmonic and Theorem \ref{thm-db-2}, taking into account \eqref{eq-Qjz}.
\end{proof}

%From the vanishing of $\Xi_j$ we obtain an isotropic structure of $(h_0^{\perp})\otimes\C$, which leads to a harmonic sequence $\{h_j\}$. We will employ $Q_1$ to re-frame $(h_0^{\perp})\otimes\C$ on $S^2\setminus Sing(y^k)$.   By \eqref{eq-Q1},  we obtain \[0\equiv\Xi_j=(\sum_{\alpha}\<\xi_{\alpha},\wx\>^2)\<(\bar{Q}_1)^{(j)}_{z},(\bar{Q}_1)^{(j)}_{z}\>=\<\wx,\wx\>^2\<(\bar{Q}_1)^{(j)}_{z},(\bar{Q}_1)^{(j)}_{z}\>.\] So $\<(\bar{Q}_1)^{(j)}_{z},(\bar{Q}_1)^{(j)}_{z}\>\equiv0$ for all $j\geq1$ on $S^2\setminus Sing(y^k)$, and this leads to the following Frenet structure for $Q_j$.
\vspace{2mm}

\section{The sequence of adjoint Willmore surfaces}\label{sec-seq}

The local sequence  $\{h_j\}_{0\leq j\leq \hm+2}$ defines a flag structure which determines $y^j=[Y^j]$ inductively as follows:
\begin{theorem}[Theorem C, initial theorem]\label{thm-initial}  The strictly $k$-isotropic Willmore $2$-sphere $y^k=[Y^k]$ satisfies the following statements for $l=k$.
\begin{enumerate}
    \item[(a)] There exists a Frenet-bundle structure $\{f^{l}_{0},\ZZ^l, h_0\}$ for $Y^l$ with respect to $h_0$, with $f^{l}_{0}$ its conformal Gauss map and $Rank_{\C}\partial h_0=1$;
    \item[(b)] $Y^l$ is strictly $l$-isotropic, in particular, $Y^l\in \Gamma(h_{l+1}^{\perp})\setminus \Gamma(h_{l+2}^{\perp})$.
\end{enumerate}
\end{theorem}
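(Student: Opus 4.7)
Part (a) is essentially a repackaging of earlier results. By hypothesis $y^k$ is full, strictly $k$-isotropic, and not S-Willmore, so Corollary \ref{cor-frenet} directly yields the Frenet-bundle decomposition $\{f^k_0, \ZZ^k, h_0\}$ with $h_0 = (f^k_{k+k'})^{\perp}$, and $f^k_0$ is by definition the conformal Gauss map of $Y^k$. The rank condition $Rank_{\C}\partial h_0 = 1$ is built into the construction itself: either $Rank_{\C}\partial f^k_k = 1$ already (Proposition \ref{prop-rank1}), or the isotropic line-bundle recursion of Theorem \ref{thm-line} terminates at step $k'$ precisely when $Rank_{\C}\partial f^k_{k+k'} = 1$, whence $Rank_{\C}\partial h_0 = 1$.

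The content lies in (b). Strict $k$-isotropy of $Y^k$ is the standing hypothesis, so what remains is the flag condition $Y^k \in \Gamma(h^{\perp}_{k+1}) \setminus \Gamma(h^{\perp}_{k+2})$. Since $h_j = h_0 \oplus Re(\Spc\{Q_1,\ldots,Q_j\})$, this splits into the orthogonalities $Y^k \perp h_0$ and $Y^k \perp Q_l$ for $1 \leq l \leq k+1$, together with $\langle Y^k, Q_{k+2}\rangle \not\equiv 0$. The first is immediate from $Y^k \in f^k_0 \subset f^k_{k+k'} = h_0^{\perp}$.

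To treat the $Q_l$, I plan to unfold the recursion $\bar{Q}_1 \equiv \wx_z \mod h_0 \otimes \C$ and $Q_{l+1} \equiv Q_{l\bar{z}} \mod \{Q_1,\ldots,Q_l\}$ back to $\wx = \pz^{\perp_{k'}}$ with $\pz = (D_z^{(k)}\kappa)^{\perp_0}$. Using Proposition \ref{prop-moving-frame} and iterating $Y^k_{zz} = -\tfrac{s}{2}Y^k + \kappa$, the structure equations force $(Y^k)^{(j)}_z \in \Gamma(f^k_{k+k'}\otimes\C)$ for all $j \leq k+1$ (the isotropy of $\Pi_{k-1}$ from Theorem \ref{thm-m-iso} is what controls the intermediate terms $\kappa_z,\kappa_{zz},\dots$), while $(Y^k)^{(k+2)}_z$ picks up precisely $\wx$ as its $h_0\otimes\C$-component modulo $\Gamma(f^k_{k+k'}\otimes\C)$. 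Exploiting the reality of $Y^k$ and differentiating the trivial identity $\langle Y^k,\wx\rangle = 0$ repeatedly, the leading part of $\langle Y^k, Q_l\rangle$ reduces, modulo lower-order contributions already killed by previous steps, to $(-1)^l \langle (Y^k)^{(l)}_z, \wx\rangle$. For $l \leq k+1$ this vanishes by the orthogonality of the Lorentzian decomposition $\RRR = f^k_{k+k'}\oplus h_0$, while for $l = k+2$ it becomes $\langle \wx,\wx\rangle$, which is nonzero on an open dense subset: strict $k$-isotropy gives $\langle \pz,\pz\rangle = \langle D_z^{(k)}\kappa, D_z^{(k)}\kappa\rangle \not\equiv 0$, and $\pz - \wx$ lies in $Re(\LL_1\oplus\cdots\oplus\LL_{k'})\otimes\C$, whose self-pairing can be computed explicitly using the isotropy of the $\LL_j$ and does not cancel $\langle\wx,\wx\rangle$ identically.

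The main obstacle is bookkeeping rather than any single deep identity. The recursive definitions of $Q_l$ and $\phi_j$ generate many ``mod lower order'' residuals, and some intermediate brackets take values in the isotropic subbundles $\LL_j$, whose pairings with their conjugates are non-trivial. Carrying out the reduction cleanly requires invoking Proposition \ref{prop-moving-frame} and the isotropy of $\ZZ^k$ inductively at each differentiation step, so that all residual contributions land in subspaces already known to be orthogonal to $Y^k$. Once this tracking is organized, both claims of (b) drop out simultaneously, establishing the base case $l = k$ of the inductive structure subsequently pursued in Theorem \ref{thm-inductive}.
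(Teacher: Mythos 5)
Your proposal is correct and follows essentially the same route as the paper: part (a) is exactly Corollary \ref{cor-frenet} (with $Rank_{\C}\partial h_0=1$ coming from $Rank_{\C}\partial f^{k}_{k+k'}=1$ by duality of the two second fundamental forms, as you note), and for part (b) the paper likewise places $(Y^k)^{(j)}_z$ in $\Gamma(h_0^{\perp}\otimes\C)$ for $j\leq k+1$ with $\wx$ entering only at order $k+2$, and then converts this into $Y^k\in\Gamma(h_{k+1}^{\perp})\setminus\Gamma(h_{k+2}^{\perp})$ via Lemma \ref{lemma-hj-sheaf}(2) (a direct consequence of \eqref{eq-Qjz}), which is precisely the bookkeeping you propose to redo by unwinding the $Q_l$-recursion inside Lemma \ref{lemma-yk-initial}. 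One remark: your last step needs no ``non-cancellation'' argument, since $\pz-\wx$ lies in the totally isotropic bundle $\LL_1\oplus\cdots\oplus\LL_{k'}$, which is orthogonal to both $\pz$ and $h_0$, so in fact $\<\wx,\wx\>=\<\pz,\pz\>=\<D^{(k)}_{z}\kappa,D^{(k)}_{z}\kappa\>\not\equiv0$ exactly.
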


\begin{theorem}[Theorem  D, inductive theorem]\label{thm-inductive} Assume a Willmore surface $[Y^j]:M_j\rightarrow S^n$ satisfies the above statements $(a)$ and $(b)$ with $l=j$  in Theorem \ref{thm-initial} for $k\leq j\leq m-1$.

Then there exists a unique $Y^{j+1}$ on $M_{j+1}$, an open dense subset of $M_j$, such that
\begin{enumerate}
    \item There exists $h_{j+3}$ such that the above statements $(a)$ and $(b)$ with $l=j+1$ hold for $Y^{j+1}$;
    \item $Y^{j+1}$ is adjoint to $Y^j$ when $j\leq m-1$.
Moreover,  $[Y^{m}]$ is M\"obius equivalent to a minimal surface in $\R^{n}$.
\end{enumerate}
\end{theorem}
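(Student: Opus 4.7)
The plan is to construct $Y^{j+1}$ through the intersection identity $\hbox{Span}_\R\{Y^j, Y^{j+1}\} = f^j_0 \cap h_{j+1}^\perp$ suggested in the introduction, and then to verify the adjoint and Frenet-bundle properties inductively. To begin, I would analyze the 4-dimensional Lorentz subspace $f^j_0$ against the sub-bundle $h_{j+1}^\perp$. Since $Y^j \in f^j_0$ always and $Y^j \in h_{j+1}^\perp$ by hypothesis (b), we have $Y^j$ in the intersection. Using the canonical frame $\{Y^j, Y^j_z, Y^j_\zb, N^j\}$ of $f^j_0$ from Proposition \ref{prop-moving-frame} and the local isotropic sections $\{Q_1, \ldots, Q_{j+1}\}$ spanning $h_{j+1}/h_0$ from \eqref{eq-Qjz-1}, and exploiting the strict $j$-isotropy of $Y^j$ via Lemma \ref{lemma-k}, I would show that the orthogonality conditions against $Q_1, \ldots, Q_{j+1}$ impose exactly enough constraints to cut out a second independent light-like direction in the intersection. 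This second direction can be written in the form $\hat Y^{j+1} = \tfrac{1}{2}|\mu|^2 Y^j + \bar\mu Y^j_z + \mu Y^j_\zb + N^j$ for a unique local function $\mu$ on an open dense subset $M_{j+1} \subseteq M_j$.

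Next I would verify that $\mu$ satisfies the adjoint-transform conditions of Definition \ref{def-adj}. The conformal condition \eqref{eq-theta-2} is a quadratic in $\bar\mu$ whose discriminant is controlled by the bivector $\<D_\zb\kappa^j,\kappa^j\>^2-\<\kappa^j,\kappa^j\>\<D_\zb\kappa^j,D_\zb\kappa^j\>$; using the Frenet-bundle structure of $Y^j$ together with the total isotropicity of $h_0$ established in Theorem \ref{thm-h0}, and invoking Lemma \ref{lemma-linebundle} to split off the unique isotropic line, the discriminant reduces to zero, giving a unique $\mu$. The co-touch (Riccati) condition \eqref{eq-theta} then follows from the generalized Willmore equation $\nzb\nzb\pz^j + \tfrac{\bar s}{2}\pz^j = 0$ established in the proof of Theorem \ref{thm-line}, exactly as the Riccati equation for $\mu$ was derived there. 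By Theorem \ref{thm-adjoint-dual}, $Y^{j+1}$ is an adjoint Willmore surface of $Y^j$ on $M_{j+1}$, proving part (2).

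To verify statements (a) and (b) for $Y^{j+1}$ with $l = j+1$, I would use the explicit formula for $Y^{j+1}$ together with the structure equations \eqref{eq-moving-2} and the inductive relations \eqref{eq-Qjz} among the $Q_\ell$ to compute the pairings $\<(Y^{j+1})^{(\ell)}_z, Q_{\ell'}\>$. The key calculation is that the adjoint transform shifts the isotropy order up by exactly one: these pairings vanish for $\ell + \ell' \le 2j+3$ but are generically nonzero at $\ell + \ell' = 2j+4$, giving $Y^{j+1} \in \Gamma(h_{j+2}^\perp) \setminus \Gamma(h_{j+3}^\perp)$ and strict $(j+1)$-isotropy. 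Here $h_{j+3}$ is produced by one further $\DD$-transform as in Theorem \ref{thm-h0}. For statement (a), applying Corollary \ref{cor-frenet} to $Y^{j+1}$ yields a Frenet-bundle structure $\{f^{j+1}_0, \ZZ^{j+1}, h'_0\}$, and computing the $\pz^{j+1}$ projection against $\{Q_\ell\}$ verifies directly that $h'_0 = h_0$, so the same universal $h_0$ serves $Y^{j+1}$; the new $\ZZ^{j+1}$ absorbs one isotropic line from the decomposition of $f^j_0$ not parallel to $Y^{j+1}$.

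Finally, iterating produces the sequence $Y^k, Y^{k+1}, \ldots, Y^m$, which must terminate at step $m$ because $\dim \ZZ^j \le m$ is bounded by Corollary \ref{cor-frenet}. At $j = m$ the candidate $\hat Y^{m+1}$ from the intersection formula degenerates with $\rho \equiv 0$ in \eqref{yhat-z2}; by Remark \ref{rem-minimal} and Proposition \ref{prop-minimal}, this forces $[Y^m]$ to be M\"obius congruent to a minimal surface in $\R^n$. The main obstacle I anticipate is the third step: proving in a uniform, induction-friendly way that the universal $h_0$ is preserved and the isotropy order strictly increases by one at each step, without a case analysis over the various possible isotropy patterns and without losing control of the singular terms at points where $\mu$ blows up. The double-induction framework combined with the Frenet-bundle formalism of Section \ref{sec-harm}, together with the singular-term cancellation afforded by Lemma \ref{lemma-linebundle}, is tailored exactly to this purpose.
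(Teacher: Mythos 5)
Your overall skeleton (construct $Y^{j+1}$ from the intersection $f^j_0\cap h_{j+1}^{\perp}$, compute pairings against the isotropic frame $\{Q_\ell\}$, run an induction) matches the paper, but three of your key steps do not work as stated. First, you propose to obtain the Frenet-bundle structure of $Y^{j+1}$ by "applying Corollary \ref{cor-frenet} to $Y^{j+1}$". That corollary rests on Theorem \ref{thm-line}, whose proof needs global holomorphic differentials on $S^2$ to vanish, hence needs a Willmore immersion of all of $S^2$; but $Y^{j+1}$ is only defined, and possibly branched, on the open dense subset $M_{j+1}$. This is exactly the singularity obstruction the paper is built to avoid: in Lemma \ref{lemma-key-inductive} the Frenet-bundle structure $\{f^{j+1}_0,\ZZ^{j+1},h_0\}$ is \emph{transferred} from that of $Y^{j}$ by the explicit gauge \eqref{eq-gauge} inside the fixed splitting $h_0^{\perp}=f^j_0\oplus Re(\ZZ^j)$, never by re-running the global construction on $Y^{j+1}$. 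Second, your uniqueness/conformality argument via the discriminant of \eqref{eq-theta-2} is inapplicable: for $j\geq 1$ the surface $Y^j$ is at least $1$-isotropic, so $\<\kappa,\kappa\>\equiv\<\kappa,D_{\zb}\kappa\>\equiv\<D_{\zb}\kappa,D_{\zb}\kappa\>\equiv 0$ and the conformal condition holds identically for \emph{every} $\mu$ (case (1) of Theorem \ref{thm-adjoint-no}); a vanishing discriminant therefore gives no uniqueness. In the paper uniqueness comes solely from the extra requirement $Y^{j+1}\perp Q_{j+1}$, whose non-degeneracy $\<\tilde F^j_0,\bar Q_{j+1}\>\not\equiv 0$ is supplied by $Y^j\in\Gamma(h_{j+1}^{\perp})\setminus\Gamma(h_{j+2}^{\perp})$ and Lemma \ref{lemma-hj-sheaf}; likewise the co-touch condition is not re-derived "as in Theorem \ref{thm-line}" (there the Riccati equation used $\<\pz,\pz\>\neq 0$ in an essential way), but follows from $Y^{j+1}\in\Gamma(h_{j+2}^{\perp})$ — obtained from the $Q_{j+2}$ identity \eqref{eq-Ql+2-F} — which forces $\<Y^{j+1}_z,Q_{j+1}\>=0$ and kills the $\bar F^j_0$-coefficient.

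Third, your termination discussion has a gap on both ends. Nothing in your argument shows the sequence cannot stop \emph{before} step $m$, i.e. that $[Y^{j+1}]$ is non-constant so that $Q_{j+3}$ and $h_{j+3}$ exist for $j\leq m-1$; the paper proves this by contradiction using fullness of $Y^k$ (if $[Y^{j+1}]$ were constant, $Y^j$ would be a strictly $j$-isotropic minimal surface and an explicit Frenet frame confines $[Y^j]$ to a sphere of dimension $2j+r+2<n$, contradicting Proposition \ref{prop-full} applied recursively). The bound $\dim\ZZ^j\leq m$ alone does not give this, nor does it give the mechanism at $j=m$: there the paper shows, using \eqref{eq-Qjz} and \eqref{eq-Ql1-Fb}, that $Q_{m+1}=(\cdots)F^m_0$, hence $h_{m+1}^{\perp}=\hbox{Span}_{\R}\{Y^m,Y^{m+1}\}$ and $Q_{m+2}=(\cdots)Y^{m+1}$, and then the total isotropy of $h_0$ forces $Y^{m+1}_z=(\cdots)Y^{m+1}$, so $[Y^{m+1}]$ is constant and Proposition \ref{prop-minimal} applies; your appeal to "$\rho\equiv 0$ in \eqref{yhat-z2}" asserts this degeneration rather than proving it.
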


The crucial point in the proof is that the Willmore sequence $\{Y^j\}$ is uniquely determined inductively by the intersection formula $Span_{\R}\{Y^j, Y^{j+1}\}=f^{j}_{0}\cap h^{\perp}_{j+1},\ k\leq j\leq m$. And the main tools we have are two frames of $h_0^{\perp}\otimes\C=(f^k_0\oplus Re(\ZZ^k))\otimes\C$ in Sections \ref{s-fkj} and \ref{sec-hj}, respectively:
\begin{enumerate}
    \item $\{Y^k,Y^k_z,N^k,F^k_j\}$, induced by the Frenet-bundle structure of $Y^k$;
    \item $\{Q_j,1\leq j\leq \hat{m}+2\},$ induced by the totally isotropic structure of $h_0$.
\end{enumerate}
%The key difficulty is that these two frames do not match exactly. We overcome this difficulty
The key ideas of proofs of  Theorem \ref{thm-initial}  and  Theorem \ref{thm-inductive} as follows:
\begin{enumerate}
    \item To obtain the unique $Y^{j+1}$  by comparing the frame of $Y^j$ with the frame of $(h_0)^{\perp}$;
    \item  To derive the Frenet-bundle structure $\{f^{j+1}_0,\ZZ^{j+1},h_0\}$ of $Y^{j+1}$, from that of $Y^{j}$ with respect to the frame of $h_0$;
\item The Willmore sequence $\{Y^j\}$ will continue until $j=m$, due to the fullness of $Y^k$ in $S^n$.
\end{enumerate}
To be concrete,  Theorem \ref{thm-initial}  and  Theorem \ref{thm-inductive} follow from the following two lemmas.

\begin{lemma}
 \label{lemma-yk-initial}
Let $Y^k$ be a strictly $k$-isotropic Willmore $2$-sphere which is not S-Willmore. Then $Y^k$ has a Frenet-bundle structure $\{f^{k}_{0}, \ZZ^k,h_0\}$, with the following statements holding:
\begin{enumerate}
%    \item  On an open dense subset $M_{k+1}$ of $S^2$, we have   $Y^k\in\Gamma(h_{k+1}^{\perp})\setminus \Gamma(h_{k+2}^{\perp})$, and there exists a unique $Y^{k+1}\in \Gamma(f^{k}_{0})$ such that    $f^{k}_{0}\cap  h_{k+1}^{\perp}=\Spr\{Y^k,Y^{k+1}\}$ with $\<Y^k,Y^{k+1}\>=-1$;     \item   Set   $F^{k}_{0}:=Y^k_{z}+\<Y^k_z,Y^{k+1}\>Y^k$. We have    \begin{equation}        \label{eq-Qk}        Q_{k+1}=(\cdots)F^{k}_{0}\mod \ZZ^{k},    \end{equation}\begin{equation}        \label{eq-Qk+2}        Q_{k+2}=(\cdots)F^{k}_{0}+(\cdots) Y^{k+1} \mod \ZZ^{k}, \hbox{ with }\<Q_{k+2},Y^{k}\>\not\equiv0;   \end{equation}

\item  On an open dense subset $M_{k+1}$ of $S^2$,  $Y^k\in\Gamma(h_{k+1}^{\perp})\setminus \Gamma(h_{k+2}^{\perp})$ and there exists a unique light-like vector field  $Y^{k+1}\in \Gamma(f^{k}_{0})$, with
\begin{equation}
    \label{eq-YkYk+1}
Span_{\R}\{Y^k, Y^{k+1}\}=f^{k}_{0}\cap h^{\perp}_{k+1}, ~\<Y^{k+1},Q_{k+1}\>=0, \hbox{ and }\<Y^k,Y^{k+1}\>=-1.
\end{equation}
Moreover, setting $F^{k}_{0}=Y^k_{z}+\<Y^k_z,Y^{k+1}\>Y^k$, we have
\begin{equation}\label{eq-Qk}
Q_{k+1}=(\cdots)F^{k}_{0}\mod \{\ZZ^{k}\},   \hbox{ with }\ \<Q_{k+1},\overline{F^{k}_{0}}\>\not\equiv0,
\end{equation}
\begin{equation}\label{eq-Qk+2}
Q_{k+2}=(\cdots)F^{k}_{0}+(\cdots) Y^{k+1} \mod \{\ZZ^{k}\}, \hbox{ with }\<Q_{k+2},Y^{k}\>\not\equiv0;   \end{equation}
%??In particular, we have $f^{k}_{0}\cap  h_{k+1}^{\perp}=\Spr\{Y^k,Y^{k+1}\}$;
    \item  $\hat{m}\geq k+1$ and $Y^{k+1}$ is an adjoint surface of $Y^k$ with $Y^{k+1}\in\Gamma(h_{k+2}^{\perp})\setminus \Gamma(h_{k+3}^{\perp})$.  And $Y^{k+1}$ is strictly $(k+1)$-isotropic.
\end{enumerate}
\end{lemma}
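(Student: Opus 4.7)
The plan is to combine the Frenet-bundle structure of $Y^k$ from Corollary~\ref{cor-frenet} with the isotropic frame $\{Q_j\}$ of $h_0^\perp$ from Section~\ref{sec-hj}, translate orthogonality between $Y^k$ and iterated $z$-derivatives of $\hat\xi$ into orthogonality with the $Q_j$'s via Lemma~\ref{lemma-k}, and then pin down $Y^{k+1}$ inside the mean curvature sphere $f^k_0$ by a single linear equation for the parameter $\mu$.

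The starting point is the pair of identities
\begin{equation*}
\<\hat\xi,(Y^k)^{(j)}_z\>\equiv 0\text{ for }0\le j\le k+1,\qquad\<\hat\xi,(Y^k)^{(k+2)}_z\>\not\equiv 0.
\end{equation*}
Using Proposition~\ref{prop-moving-frame}, for $j\le k+1$ each derivative $(Y^k)^{(j)}_z$ decomposes modulo lower-order terms into pieces lying in $f^k_0\otimes\C\oplus\ZZ^k$, all orthogonal to $\hat\xi\in h_0$; the nonvanishing at $j=k+2$ reflects the $D_z^{(k)}\kappa$ component, whose $h_0$-projection is $\hat\xi$ itself by \eqref{eq-wx-1}, combined with strict $k$-isotropy $\<D_z^{(k)}\kappa,D_z^{(k)}\kappa\>\not\equiv 0$. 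Lemma~\ref{lemma-k} then gives the dual statements $\<Y^k,\hat\xi_z^{(j)}\>=0$ for $0\le j\le k+1$ and $\<Y^k,\hat\xi_z^{(k+2)}\>\not\equiv 0$; in particular $\{\hat\xi,\hat\xi_z,\dots,\hat\xi_z^{(k+1)}\}$ has rank $k+2$ on an open dense subset, so $\hat m\ge k+1$.

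To transport these to the $Q_j$'s, I would observe from the inductive construction in the proof of Theorem~\ref{thm-h0} that $\bar Q_j$ equals $\hat\xi_z^{(j)}$ modulo terms in $h_0\otimes\C$ together with $\{\bar Q_l\}_{l<j}$. An induction on $j$ then produces $\<Y^k,\bar Q_j\>=\<Y^k,\hat\xi_z^{(j)}\>$, so $\<Y^k,Q_j\>=0$ for $1\le j\le k+1$ and $\<Y^k,Q_{k+2}\>\not\equiv 0$; combined with $Y^k\perp h_0$ this puts $Y^k\in\Gamma(h_{k+1}^\perp)\setminus\Gamma(h_{k+2}^\perp)$. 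Now any light-like vector of $f^k_0$ normalized by $\<Y^k,Y^{k+1}\>=-1$ has the form $Y^{k+1}=\tfrac12|\mu|^2Y^k+\bar\mu Y^k_z+\mu Y^k_{\bar z}+N^k$, matching \eqref{yhat2}, with one complex parameter $\mu$. Imposing $\<Y^{k+1},Q_j\>=0$ for $1\le j\le k+1$ and differentiating $\<Y^k,Q_j\>\equiv 0$ using \eqref{eq-Qjz} yields $\<Y^k_{\bar z},Q_j\>=-\<Y^k,Q_{j+1}\>$, which vanishes for $j\le k$ but is nonzero for $j=k+1$; similar pairings give $\<Y^k_z,Q_j\>=\<N^k,Q_j\>=0$ for $j\le k$. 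Thus the equations for $j<k+1$ are automatic, while the equation at $j=k+1$ collapses to $\<N^k,Q_{k+1}\>+\bar\mu\<Y^k_{\bar z},Q_{k+1}\>=0$, uniquely determining $\mu$ on the open dense set $M_{k+1}$ where $\<Y^k_{\bar z},Q_{k+1}\>\not\equiv 0$. Substituting $\mu$ back into $\<Y^{k+1},Q_{k+1}\>=0$ and reorganizing with the Willmore structure equations yields both the Riccati co-touch equation \eqref{eq-theta} and the conformal condition \eqref{eq-theta-2}, so $Y^{k+1}$ is an adjoint surface of $Y^k$ by Definition~\ref{def-adj}. Expanding $Q_{k+1},Q_{k+2}$ in the frame $\{Y^k,Y^{k+1},F^k_0,\overline{F^k_0}\}$ completed by a basis of $\ZZ^k\oplus\bar\ZZ^k$ and reading off coefficients then produces \eqref{eq-Qk} and \eqref{eq-Qk+2}; strict $(k+1)$-isotropy of $Y^{k+1}$ together with $Y^{k+1}\in\Gamma(h_{k+2}^\perp)\setminus\Gamma(h_{k+3}^\perp)$ follows by differentiating the adjoint formula once more and reusing the vanishings from the first step.

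The main obstacle is the last step: checking that (i) the apparently overdetermined system $\<Y^{k+1},Q_j\>=0$ for $1\le j\le k+1$ has rank one in $\mu$, and (ii) the unique solution automatically satisfies \emph{both} the co-touch Riccati equation and the conformal quadratic equation. Consistency here rests on the harmonicity of $h_0$, the rank-one condition $Rank_\C\partial h_0=1$ built into the Frenet-bundle structure, and the recursion \eqref{eq-Qjz} tying successive $Q_j$'s together.
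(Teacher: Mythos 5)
Your setup (pairing $Y^k$ and its derivatives against the $Q_j$'s via Lemma \ref{lemma-k} and the recursion \eqref{eq-Qjz}, then pinning down $Y^{k+1}$ inside $f^k_0$ by orthogonality to $Q_{k+1}$, here packaged as a linear equation for $\mu$) is consistent in spirit with the paper's argument, and your rank-one claim (i) can indeed be verified: $\<Y^k_z,Q_j\>=\<Y^k_{\bar z},Q_j\>=\<N^k,Q_j\>=0$ for $j\le k$ all follow by differentiating $\<Y^k,Q_j\>\equiv0$ with \eqref{eq-Qjz}, so only the $j=k+1$ equation is nontrivial and it determines $\mu$ (the surviving coefficient is $\mu\<Y^k_{\bar z},Q_{k+1}\>$, not $\bar\mu$). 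But the heart of the lemma is exactly your item (ii), and there you have only an assertion, not a proof: a single pointwise linear relation $\<Y^{k+1},Q_{k+1}\>=0$ cannot, by ``reorganizing with the Willmore structure equations,'' yield the first-order Riccati co-touch equation \eqref{eq-theta} and the quadratic conformal condition \eqref{eq-theta-2}. The paper obtains adjointness by a different ordering which your sketch inverts: one must first establish \eqref{eq-Qk+2} (expand $Q_{k+2}$ in the isotropic frame, use $\<Q_{k+2},Q_{k+2}\>=0$ and $\<Q_{k+2},Y^k\>\not\equiv0$ to kill the $Y^k$-component), deduce $Y^{k+1}\in\Gamma(h_{k+2}^{\perp})$ and hence $Y^{k+1}_z\in\Gamma(h_{k+1}^{\perp}\otimes\C)$ by Lemma \ref{lemma-hj-sheaf}, and only then does $0=\<Y^{k+1}_z,Q_{k+1}\>=\theta\<\bar F^k_0,Q_{k+1}\>$ with $\<\bar F^k_0,Q_{k+1}\>\not\equiv0$ from \eqref{eq-Qk} force the co-touch coefficient $\theta=0$ in \eqref{eq-Y1z}; conformality then comes for free from the isotropic frame. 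Deferring \eqref{eq-Qk} and \eqref{eq-Qk+2} to the end, as you propose, leaves the adjointness step unsupported.

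The second genuine gap is that you never use the hypothesis that $Y^k$ is not S-Willmore, yet part (2) is false without it. Your claim that $\hat m\ge k+1$ follows from the rank-$(k+2)$ statement of Lemma \ref{lemma-k} is an off-by-one: that rank statement (with $\hat\xi$ itself included, which lies in $h_0\otimes\C$ and contributes nothing to the $Q_j$-chain) is exactly what Theorem \ref{thm-h0} uses to get $\hat m\ge k$, not $\hat m\ge k+1$. In the paper, $\hat m\ge k+1$ (equivalently, the existence of a nontrivial $Q_{k+3}$, hence of $h_{k+3}$) comes from showing $[Y^{k+1}]$ is non-constant and full, which uses Propositions \ref{prop-full} and \ref{prop-minimal} together with the non-S-Willmore assumption; the exclusion $Y^{k+1}\notin\Gamma(h_{k+3}^{\perp})$ then follows by the duality of adjoint transforms and Lemma \ref{lemma-hj-sheaf}, and strict $(k+1)$-isotropy by Lemma \ref{lemma-hj-sheaf-2} applied to the frame expansion \eqref{eq-Y1z}. ``Differentiating the adjoint formula once more and reusing the vanishings'' does not produce these non-degeneracy statements; if $Y^k$ were M\"obius congruent to a Euclidean minimal surface, your construction would output a constant point $Y^{k+1}$ and the claimed conclusions would fail. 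So the proposal needs both the reordered argument for adjointness and the fullness/non-constancy argument before it can be accepted.
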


\begin{lemma}\label{lemma-key-inductive}
For $k+1\leq j\leq m$,
if the following statements hold for $l=j-1$ on $M_{l}$, then they hold for $l=j$ (here $M_j$ is an open dense subset of $S^2$ with $M_{j}\subset M_{j-1}$):
\begin{enumerate}
%\item There exists a light-like section $Y^{j}$ of $f^j_{0}$  such that $Span_{\R}\{Y^{j}, Y^{j+1}\}= f^j_{0}\cap \hp_{j+1}$ on $M_j$; Moreover, $Y^{j}$ has a Frenet structure w.r.t. to $h_0$: \[M_{j}\times \R^{n+2}_1=f^j_{0}\oplus Re(\ZZ^{j})\oplus h_0;\]
% \item There exists some $F^j_{0}=Y^{j}_z+(\cdots)Y^{j}$ such that \begin{equation}\label{eq-Ql1-Fb} Q_{j+1}=\sigma_ jF^j_{0}\mod \ZZ^j, \hbox{ with }  \sigma_j\not\equiv0;\end{equation}
%\begin{equation}\label{eq-Ql+2-F}  Q_{j+2}=0 \mod \{\ZZ^j, F^j_{0}, Y^{j+1}\},\hbox{ with } \<Q_{j+1},Y^{j}\>\not\equiv0;\end{equation}
% \item $\hp_{j+3}$ is well-defined and  $Y^{j+1}$ is an adjoint surface of $Y^{j}$, with $Y^{j+1}\in \Gamma(\hp_{j+2})\setminus\Gamma(\hp_{j+3})$. Moreover, $Y^{j+1}$ is strictly $(j+1)-$isotropic.

\item  The surface  and
$Y^{l}$ has a Frenet-bundle structure $\{f^l_0, \ZZ^l,h_0\}$ and $Y^l\in\Gamma(h_{l+1}^{\perp})\setminus \Gamma(h_{l+2}^{\perp})$;
%\[M_{l}\times \R^{n+2}_1=f^l_{0}\oplus Re(\ZZ^{l})\oplus h_0;\]
\item There exists a unique light-like vector field $Y^{l+1}\in \Gamma(f^{l}_{0})$  with \[\hbox{$Span_{\R}\{Y^l, Y^{l+1}\}=f^{l}_{0}\cap h^{\perp}_{l+1}$, $\<Y^l,Y^{l+1}\>=-1$ and $F^{l}_{0}:=Y^l_{z}+\<Y^l_z,Y^{l+1}\>Y^l$.}\] Moreover,
\begin{equation}\label{eq-Ql1-Fb}
Q_{l+1}=(\cdots)F^{l}_{0}\mod \{\ZZ^{l}\}, \hbox{ with }\<Q_{l+1},\overline{F^{l}_{0}}\>\not\equiv0,
\end{equation}
\begin{equation}\label{eq-Ql+2-F}
Q_{l+2}=(\cdots)F^{l}_{0}+(\cdots) Y^{l+1} \mod \{\ZZ^{l}\}, \hbox{ with }\<Q_{l+2},Y^{l}\>\not\equiv0;   \end{equation}
    \item The map $Y^{l+1}$ is an adjoint transform of $Y^l$. When $l\leq m-1$, the bundle $\hp_{l+3}$ is well-defined on an open dense subset of $M_{l+1}$. And $Y^{l+1}\in\Gamma(h_{l+2}^{\perp})\setminus \Gamma(h_{l+3}^{\perp})$ is strictly $(l+1)$-isotropic.
\end{enumerate}
\end{lemma}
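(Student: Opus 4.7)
The plan is to run a single inductive step: assuming the package of properties (1)--(3) holds for $l=j-1$, I will construct $Y^{j+1}$ from the intersection $f^{j}_0 \cap h^{\perp}_{j+1}$ and verify the list for $l=j$. The two main tools are the inductive frame $\{Y^{j-1},Y^j,F^{j-1}_0,N^{j-1},\ZZ^{j-1}\}$ of $(h_0)^{\perp}$ coming from the Frenet-bundle structure of $Y^{j-1}$, and the isotropic flag frame $\{Q_l\}$ from Theorem~\ref{thm-h0} governed by \eqref{eq-Qjz}. The inductive hypothesis already ensures the $\{Q_l\}$-frame is compatible with $Y^{j-1}$; I must transfer that compatibility to $Y^j$ and then push forward one more step.

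First I will establish that $Y^j$ itself has a Frenet-bundle structure $\{f^j_0,\ZZ^j,h_0\}$ with respect to the \emph{same} universal map $h_0$. Since $Y^j$ is strictly $j$-isotropic and Willmore (the adjoint of a Willmore surface is Willmore by Theorem~\ref{thm-adjoint-dual}), and $Y^j$ is full on an open dense subset by Proposition~\ref{prop-full} (otherwise $Y^j$ would be constant, forcing $j=m+1$ by Proposition~\ref{prop-minimal} and terminating the induction), Corollary~\ref{cor-frenet} produces \emph{some} Frenet-bundle structure $\{f^j_0,\ZZ^j,\tilde h_0\}$. To identify $\tilde h_0=h_0$, I note that $Y^{j-1}\in \Gamma(f^j_0)$ (adjointness) and that the isotropic sub-bundles $\LL^j_1,\dots,\LL^j_{k'_j}$ produced by Theorem~\ref{thm-line} applied to $f^j_0$ must coincide, up to relabeling against $\ZZ^{j-1}$, with the tail of the original tower producing $h_0$; this forces $\tilde h_0=h_0$ and in particular $\hat m\geq j$ by Theorem~\ref{thm-h0}(2).

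Next I construct $Y^{j+1}$ and verify it is adjoint to $Y^j$. Since $Y^j\in \Gamma(f^j_0)\cap \Gamma(h^{\perp}_{j+1})$ by (1) of the hypothesis for $l=j$ (already proved in the previous paragraph combined with the inductive $l=j-1$ data), the intersection $f^j_0\cap h^{\perp}_{j+1}$ contains $Y^j$ and has real rank at least $2$; the rank-$2$ upper bound and Lorentzian signature follow from $\langle Y^j,Q_{j+2}\rangle\not\equiv 0$ together with the rank count of $Q_{j+1}|_{f^j_0}$. Pick the unique light-like direction $Y^{j+1}$ in this intersection with $\langle Y^j,Y^{j+1}\rangle=-1$. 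Writing $Y^{j+1}=\tfrac{|\mu|^2}{2}Y^j+\bar\mu Y^j_z+\mu Y^j_{\zb}+N^j$ with respect to the canonical frame \eqref{frame} of $f^j_0$, the co-touch condition \eqref{eq-theta} is forced by differentiating $\langle Y^{j+1},Q_{j+1}\rangle\equiv 0$ and using the first line of \eqref{eq-Qjz} together with $Y^{j+1}\in f^j_0$, which produces the Riccati identity $\mu_z-\tfrac12\mu^2-s=0$. The conformality condition \eqref{eq-theta-2} follows from $\langle Y^{j+1}_z,Y^{j+1}_z\rangle=0$: by Proposition~\ref{prop-moving-frame} and the first line of \eqref{eq-Qjz}, $Y^{j+1}_z$ lies in $\mathrm{Span}_\C\{Y^j,Y^{j+1},F^j_0,\ZZ^j,Q_{j+1}\}$, and all pairwise inner products among those, modulo $Y^j$ and $Y^{j+1}$, vanish because $\ZZ^j$ is isotropic and $Q_{j+1}$ is isotropic. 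Thus $Y^{j+1}$ is an adjoint transform of $Y^j$.

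Finally I verify the strict level $Y^{j+1}\in\Gamma(h^{\perp}_{j+2})\setminus\Gamma(h^{\perp}_{j+3})$ and the two formulas \eqref{eq-Ql1-Fb}--\eqref{eq-Ql+2-F} for the shifted index. Differentiating $\langle Y^{j+1},Q_{l}\rangle\equiv 0$ for $l\leq j+1$ and invoking \eqref{eq-Qjz} inductively gives $\langle Y^{j+1},Q_{j+2}\rangle\equiv 0$, hence $Y^{j+1}\in\Gamma(h^{\perp}_{j+2})$, while the non-vanishing of $\langle Y^{j+1},Q_{j+3}\rangle$ follows from \eqref{eq-Ql+2-F} for $l=j$ combined with $\langle Y^j,Y^{j+1}\rangle=-1$ and an elementary non-degeneracy argument. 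Defining $F^{j}_{0}=Y^{j+1}_z+\langle Y^{j+1}_z,Y^{j}\rangle Y^{j+1}$ (the ``swap'' of the formula given the symmetry of the adjoint relation in Theorem~\ref{thm-adjoint-dual}), \eqref{eq-Ql1-Fb}--\eqref{eq-Ql+2-F} for $l=j+1$ are then immediate consequences of \eqref{eq-Qjz} together with the Frenet structure of $Y^{j+1}$. Strict $(j+1)$-isotropy then follows from Lemma~\ref{lemma-k} applied to $\widehat\xi$ of $Y^{j+1}$, using total isotropy of $h_0$ and the bounds on $\hat m$. The main obstacle I anticipate is the first step, identifying the universal Gauss map of $Y^j$ with $h_0$ rather than with some strictly smaller real sub-bundle: this is where the interplay between the $\D$-transform tower of $f^j_0$ and the adjoint relation $Y^{j-1}\in f^j_0$ has to be unpacked carefully, since a priori the bundle $\tilde h_0$ produced by Corollary~\ref{cor-frenet} applied to $Y^j$ alone need not equal $h_0$, and it is exactly the matching of these two towers that makes the intersection formula $\mathrm{Span}_\R\{Y^j,Y^{j+1}\}=f^j_0\cap h^{\perp}_{j+1}$ legitimate.
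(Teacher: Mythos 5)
There is a genuine gap at exactly the point you flag yourself: the construction of the Frenet-bundle structure of $Y^j$ with respect to the \emph{same} $h_0$. Your plan is to apply Corollary \ref{cor-frenet} to $Y^j$ to get some structure $\{f^j_0,\ZZ^j,\tilde h_0\}$ and then argue $\tilde h_0=h_0$ by ``matching the towers.'' But Corollary \ref{cor-frenet} rests on Theorem \ref{thm-line}, whose proof uses the vanishing of global holomorphic differentials on $S^2$ for an unbranched Willmore immersion of the whole sphere; $Y^j$ for $j>k$ is only defined on an open dense subset $M_j$ and may be branched, so the global machinery cannot be rerun for $Y^j$ -- this is precisely the singularity obstruction the whole paper is organized to avoid. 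Moreover, even granting some structure $\{f^j_0,\ZZ^j,\tilde h_0\}$, the identification $\tilde h_0=h_0$ is asserted (``must coincide, up to relabeling, with the tail of the original tower'') without any argument, and that identification is the actual content of statement (1) at level $l=j$. The paper's proof instead never reapplies the global construction: it builds $\{f^j_0,\ZZ^j,h_0\}$ directly from $\{f^{j-1}_0,\ZZ^{j-1},h_0\}$ by two explicit gauges inside the fixed bundle $h_0^{\perp}$ -- first defining $\ZZ^j$ as the unique isotropic complement of $\tilde F^j_0$ inside $\ZZ^{j-1}\oplus\Spc\{F^{j-1}_0\}$ (using $Y^j_z\in\Gamma(f^j_0\otimes\C)\oplus\ZZ^{j-1}\oplus\Spc\{F^{j-1}_0\}$ from adjointness and Theorem \ref{thm-m-iso}), then adjusting $\tilde F^j_0$ to $F^j_0=\tilde F^j_0+\tfrac{\mu_j}{2}Y^j$ by comparing with $Q_{j+1}$ via the inductive formula \eqref{eq-Ql+2-F} at level $j-1$ -- and verifies the three Frenet conditions by hand from \eqref{eq-moving-2}. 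Your construction of $Y^{j+1}$ also quietly presupposes this transfer: the ``rank count of $Q_{j+1}|_{f^j_0}$'' you invoke is available only after $Q_{j+1}$ has been re-expressed in the $Y^j$-adapted frame, which is what the gauge step produces.

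A second, independent gap is the treatment of the step $Y^{j+1}\in\Gamma(\hp_{j+2})\setminus\Gamma(\hp_{j+3})$ and the well-definedness of $h_{j+3}$ for $j\leq m-1$. You derive the non-vanishing of $\<Y^{j+1},Q_{j+3}\>$ from ``an elementary non-degeneracy argument,'' but the real issue is that $Q_{j+3}$ might not exist at all, which happens exactly when $[Y^{j+1}]$ is constant (i.e.\ when the sequence terminates). The paper must rule this out for $j\leq m-1$ by a contradiction argument: if $[Y^{j+1}]$ were constant, $Y^j$ would be a strictly $j$-isotropic minimal (hence S-Willmore) surface, and an explicit Frenet frame computation confines $Y^j$ to a sphere of dimension $2j+r+2<n$, contradicting the fullness of $Y^k$ via Proposition \ref{prop-full}; only then does the duality argument ($Y^j$ adjoint to $Y^{j+1}$ plus Lemma \ref{lemma-hj-sheaf}) give $Y^{j+1}\notin\Gamma(\hp_{j+3})$. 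Your proposal does not address the constancy alternative at intermediate steps, so this part of statement (3) is not established. (The final ``swap'' definition of $F^j_0$ also collides notationally with the $F^j_0$ already fixed in statement (2), which suggests the bookkeeping of which frame is adapted at which level still needs to be straightened out.)
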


\begin{proof}[Proof of Theorem \ref{thm-initial}]
 Theorem \ref{thm-initial} follows directly form Corollary \ref{cor-frenet} and Lemma \ref{lemma-yk-initial}.
\end{proof}

\subsection{Two technical lemmas}
To prove Lemmas \ref{lemma-yk-initial} and \ref{lemma-key-inductive}, we need two technical lemmas.
\begin{lemma}\label{lemma-hj-sheaf}
Assume that for $0\leq j\leq \hm+2$,
$h_{j}$ is well defined on an open dense subset of $S^2$.
\begin{enumerate}
    \item
Let $X\in\Gamma(h_l^{\perp}\otimes\C)$, $1\leq l\leq\hm+1$.  Then
\begin{enumerate}
    \item $X_z\in\Gamma(h_l^{\perp}\otimes\C)\setminus\Gamma(\hp_{l+1}\otimes\C)$ if and only if   $\<X,\bar{Q}_{l+1}\>\not\equiv0$.
    \item If $X\in\Gamma(h_{l}^{\perp})\setminus\Gamma(h_{l+1}^{\perp})$, then $X_{z\zb}\in \Gamma(h_{l-1}^{\perp})$.
\end{enumerate}

\item Let $X\in\Gamma(h_0^{\perp}\otimes\C)$.  Then for $1\leq l\leq\hm+1$, $X\in\Gamma(h_{l}^{\perp}\otimes\C)\setminus\Gamma(h_{l+1}^{\perp}\otimes\C)$ if and only if $X_z^{(j)}\in\Gamma(h_{0}^{\perp}\otimes\C)$ for $1\leq j\leq l$ and $X_z^{(l+1)}\not\in\Gamma(h_{0}^{\perp}\otimes\C)$.
\end{enumerate}
\end{lemma}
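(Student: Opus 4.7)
The plan is to derive all three parts from the structural equations \eqref{eq-Qjz} together with \eqref{eq-xi-z}, applied through the integration-by-parts identity $\langle X_z, Y\rangle = -\langle X, Y_z\rangle$ valid whenever $X \perp Y$. The guiding observation is that $\partial_z$ shifts the filtration $h_0 \subset h_1 \subset \cdots$ by at most one level. Using \eqref{eq-xi-z} one sees $\xi_{\alpha,z} \in h_1 \otimes \C$ since $\zeta_\alpha \in \Spc\{\bar Q_1\}$, while from \eqref{eq-Qjz} the derivative $Q_{j,z}$ stays inside $\Spc\{Q_j, Q_{j-1}\}\subset h_j \otimes \C$, and $\bar Q_{j,z} = \bar Q_{j+1} + (\cdots)\bar Q_j$ introduces exactly one new direction $\bar Q_{j+1}$.

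For part (1a), I will take $X \in \Gamma(h_l^{\perp}\otimes \C)$ and test $X_z$ against the basis $\{\xi_\alpha\} \cup \{Q_j, \bar Q_j : 1 \le j \le l\}$ of $h_l$. Integration by parts gives $\langle X_z, \xi_\alpha\rangle = -\langle X, \zeta_\alpha\rangle = 0$ and $\langle X_z, Q_j\rangle = -\langle X, Q_{j,z}\rangle = 0$, while $\langle X_z, \bar Q_j\rangle = -\langle X, \bar Q_{j+1}\rangle - (\cdots)\langle X, \bar Q_j\rangle$; for $j < l$ this vanishes since $\bar Q_{j+1} \in h_l$, and for $j = l$ it reduces to $-\langle X, \bar Q_{l+1}\rangle$. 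Thus $\langle X, \bar Q_{l+1}\rangle \not\equiv 0$ is the single obstruction for $X_z$ to pass between consecutive levels of the $\{h_j^\perp\}$-filtration, yielding the desired iff.

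Part (1b) will follow by combining (1a) with its $\partial_{\bar z}$-analog, proved by the same bookkeeping applied to the complex conjugate of \eqref{eq-Qjz}; indeed $\partial_{\bar z}$ also shifts the filtration by one level. Under the hypothesis $X \in \Gamma(h_l^\perp) \setminus \Gamma(h_{l+1}^\perp)$, part (1a) delivers $X_z \in \Gamma(h_{l-1}^\perp \otimes \C)$, and then applying the $\partial_{\bar z}$-analog to $X_z$ lands $X_{z\bar z}$ in $\Gamma(h_{l-2}^\perp \otimes \C) \subset \Gamma(h_{l-1}^\perp \otimes \C)$. Reality of $X$ transfers to $X_{z\bar z}$, eliminating the complexification.

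For part (2), I will iterate (1a): $X \in h_l^\perp \otimes \C$ yields $X_z^{(j)} \in h_{l-j}^\perp \otimes \C \subset h_0^\perp \otimes \C$ for $1 \le j \le l$. To characterize the moment $X_z^{(l+1)}$ exits $h_0^\perp \otimes \C$, I prove inductively on $l$ that, for $X \in h_l^\perp$,
\[
\langle X_z^{(l)}, \bar Q_1\rangle = (-1)^l \langle X, \bar Q_{l+1}\rangle
\]
modulo terms of the form $\langle X, \bar Q_j\rangle$ with $j \le l$, all of which vanish. The inductive step rests on the auxiliary expansion
\[
\bar Q_{1,z}^{(l)} = \bar Q_{l+1} + \sum_{j=1}^{l}(\cdots)\bar Q_j,
\]
itself proved by easy induction from the conjugate of \eqref{eq-Qjz}. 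Combined with the base case (the $l=0$ version of (1a)), this yields the stated equivalence. The main technical obstacle will be the combinatorial bookkeeping in this last step: one must show that the iterated Leibniz expansion of $\partial_z^l \langle X, \bar Q_1\rangle$ collapses, after exploiting the $h_l$-orthogonality of $X$, to the single clean scalar $\pm\langle X, \bar Q_{l+1}\rangle$. Once (1a) is in hand, the remaining parts are short consequences of the filtration-shift principle.
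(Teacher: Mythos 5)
Your parts (1a) and (2) are in substance the paper's own argument, which is just a direct application of \eqref{eq-Qjz}: the paper expands $X$ in the frame $Q_{l+1},\bar Q_{l+1}$ modulo $h_{l+1}^{\perp}\otimes\C$ and differentiates, while you pair $X_z$ against a frame of $h_l$; these are dual forms of the same computation, and your identity $\langle X_z^{(l)},\bar Q_1\rangle=(-1)^l\langle X,\bar Q_{l+1}\rangle$ (most cleanly obtained from the one-step recursion $\langle X_z^{(i+1)},\bar Q_{l-i}\rangle=-\langle X_z^{(i)},\bar Q_{l-i+1}\rangle$ rather than a Leibniz expansion, which does not literally collapse to one term) replaces the coefficient bookkeeping in (2). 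Two remarks. First, what your (1a) computation actually establishes is that $X_z$ always lies in $\Gamma(h_{l-1}^{\perp}\otimes\C)$ and lies in $\Gamma(h_{l}^{\perp}\otimes\C)$ precisely when $\langle X,\bar Q_{l+1}\rangle\equiv0$; since $\bar Q_l\in\Gamma(h_l\otimes\C)$, a nonvanishing pairing $\langle X_z,\bar Q_l\rangle=-\langle X,\bar Q_{l+1}\rangle$ forces $X_z$ \emph{out of} $\Gamma(h_l^{\perp}\otimes\C)$, so your computation does not literally deliver the displayed conclusion ``$X_z\in\Gamma(h_l^{\perp}\otimes\C)\setminus\Gamma(h_{l+1}^{\perp}\otimes\C)$''; it delivers the drop-by-one version, which is the form you yourself use in (1b) and (2) and the form applied to $Y^k$, $Y^j$ later in the paper, so state that version explicitly rather than asserting the printed iff. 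Second, in (2) you only sketch the converse; it needs the same identity applied at the exact level of $X$, and (as in the paper's applications) $X$ should be real, since $\bar Q$-components never produce an exit from $h_0^{\perp}\otimes\C$.

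The genuine gap is in (1b). You conclude $X_{z\bar z}\in\Gamma(h_{l-2}^{\perp}\otimes\C)$ and then invoke the inclusion $\Gamma(h_{l-2}^{\perp}\otimes\C)\subset\Gamma(h_{l-1}^{\perp}\otimes\C)$, but this inclusion is reversed: $h_{l-2}\subset h_{l-1}$ gives $h_{l-1}^{\perp}\subset h_{l-2}^{\perp}$, so a two-level drop is \emph{weaker} than the claim and your argument as written does not prove $X_{z\bar z}\in\Gamma(h_{l-1}^{\perp})$. The statement is true, but you must verify that the $\bar\partial$-obstruction of $X_z$ at level $l-1$ vanishes: pairing $X_{z\bar z}$ with a frame $\{\xi_\alpha, Q_j,\bar Q_j,\ j\le l-1\}$ of $h_{l-1}\otimes\C$ and using $\langle X_{z\bar z},v\rangle=-\langle X_z,v_{\bar z}\rangle$ together with \eqref{eq-Qjz}, every term reduces to a pairing $\langle X_z,Q_j\rangle$ with $j\le l$ (the critical one being $\langle X_{z\bar z},Q_{l-1}\rangle=-\langle X_z,Q_l\rangle$), and these vanish because $\langle X_z,Q_j\rangle=-\langle X,Q_{jz}\rangle$ with $Q_{jz}\in\Gamma(h_l\otimes\C)$ and $X\perp h_l$. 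In other words, the lost level is recovered exactly because $X$ sits one level above $X_z$; without this extra check the step fails, and with it your proof becomes the same straightforward application of \eqref{eq-Qjz} that the paper intends.
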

The proof is a straightforward application of \eqref{eq-Qjz}.
\begin{lemma} \label{lemma-hj-sheaf-2}
Let $X\in\Gamma(h_{l}^{\perp})\setminus\Gamma(h_{l+1}^{\perp})$ with $\<X,X\>\equiv0$.
Assume \[X_z=(\cdots)X+(\cdots)F_{0}+\cdots+(\cdots)F_{m}\]
with
$\{F_0, F_1,\cdots, F_{m}\}$ being sections $h_0^{\perp}\otimes\C$ satisfying $\<F_{i},F_{j}\>=\<X,F_j\>\equiv0$, and
\[F_{jz}=(\cdots)F_{0}+\cdots+(\cdots)F_{m}+(\cdots)\wx,\hbox{ for all }0\leq j\leq m.\]Then $X$ is strictly $(l-1)$-isotropic.
%    \item Let $X\in\Gamma(h_{l}^{\perp})$, with $1\leq l\leq\hm$. Then \[\hbox{$X\in\Gamma(h_{l}^{\perp})\setminus\Gamma(h_{l+1}^{\perp})$~ if and only if~ $X_{z\zb}\in\Gamma(h_{l-1}^{\perp})\setminus\Gamma(h_{l}^{\perp}).$}\]
\end{lemma}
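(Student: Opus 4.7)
The plan is to combine the derivative characterization of $h_l^\perp$-membership provided by Lemma~\ref{lemma-hj-sheaf}(2) with the structural equations for $X_z$ and $F_{iz}$, using the former to eliminate $\wx$-components in the low-order derivatives of $X$ and the latter to run an induction. Lemma~\ref{lemma-hj-sheaf}(2) applied to $X\in\Gamma(h_l^\perp)\setminus\Gamma(h_{l+1}^\perp)$ gives $X_z^{(j)}\in\Gamma(h_0^\perp\otimes\C)$ for $1\le j\le l$ and $X_z^{(l+1)}\notin\Gamma(h_0^\perp\otimes\C)$. Since $\wx\in\Gamma(h_0\otimes\C)$ is orthogonal to $h_0^\perp\otimes\C$, any $\wx$-component of $X_z^{(j)}$ with $j\le l$ is forced to vanish identically.

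Writing $F_{iz}=\sum_k c_{ik}F_k+e_i\wx$, a routine induction on $j$ then yields
\[
X_z^{(j)}\;=\;a_j X+\sum_{i=0}^{m}b_{ij}F_i,\qquad 1\le j\le l,
\]
where the inductive step produces a tentative $\wx$-coefficient $\sum_i b_{ij}e_i$ that is killed, for $j+1\le l$, by the $h_0^\perp$-constraint above. Once this expansion is in hand, the hypotheses $\<X,X\>=\<X,F_i\>=\<F_i,F_j\>\equiv 0$ immediately force $\<X_z^{(j)},X_z^{(j')}\>\equiv 0$ for all $j,j'\le l$, giving the $(l-1)$-isotropy of $[X]$.

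For strictness, one further differentiation produces
\[
X_z^{(l+1)}\;=\;(\cdots)X+\sum_i(\cdots)F_i+c\,\wx,\qquad c\;=\;\sum_i b_{il}\,e_i,
\]
and the condition $X_z^{(l+1)}\notin h_0^\perp\otimes\C$ forces $c\not\equiv 0$ (the $X$- and $F_i$-terms lie in $h_0^\perp\otimes\C$, while $\wx$ does not). Using $X,F_i\in h_0^\perp$ against $\wx\in h_0$, all cross inner products involving $\wx$ vanish and
\[
\<X_z^{(l+1)},X_z^{(l+1)}\>\;=\;c^{\,2}\,\<\wx,\wx\>.
\]

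I expect the only non-routine step to be the confirmation that $\<\wx,\wx\>\not\equiv 0$. This is implicit in the recursive construction of $\wx=\pz^{\perp_{k'}}$ in Corollary~\ref{cor-frenet} through Theorem~\ref{thm-line}: the definition of $\phi_{j+1}$ there presumes $\<\pz^{\perp_j},\pz^{\perp_j}\>\not\equiv 0$ at each stage, so $\<\wx,\wx\>\not\equiv 0$ inherits from the strict $k$-isotropy of $Y^k$. Once this is invoked, the strictness follows, the remainder being linear-algebraic bookkeeping in the frame $\{X,F_0,\ldots,F_m,\wx\}$.
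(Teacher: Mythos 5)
Your proposal is correct and follows essentially the same route as the paper's proof: apply Lemma \ref{lemma-hj-sheaf}(2) together with the structural equations to expand $X_z^{(j)}$ in $\{X,F_0,\dots,F_m\}$ for $j\le l$ (whence isotropy), and read strictness off the nonvanishing $\wx$-coefficient of $X_z^{(l+1)}$ via $\<X_z^{(l+1)},X_z^{(l+1)}\>=(\cdots)\<\wx,\wx\>\not\equiv0$. The fact $\<\wx,\wx\>\not\equiv0$, which you rightly flag, is used implicitly in the paper as well and indeed reduces to $\<\pz,\pz\>\not\equiv0$ away from $Sing(y^k)$, i.e.\ to the strict $k$-isotropy of $Y^k$, since the projection onto $(f^k_{k+k'})^{\perp}\otimes\C$ only removes isotropic directions orthogonal to $\pz$.
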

\begin{proof}
    Since $X\in\Gamma(h_{l}^{\perp})\setminus\Gamma(h_{l+1}^{\perp})$, by (2) of Lemma \ref{lemma-hj-sheaf} and assumptions of $\{F_0,\cdots, F_m\}$, we obtain
$X_z^{(j)}=(\cdots)X+(\cdots)F_{0}+\cdots+(\cdots)F_{m}$ when $1\leq j\leq l,$ and $X_z^{(l+1)}=(\cdots)X+(\cdots)F_{0}+\cdots+(\cdots)F_{m}+(\cdots)\wx.$ So $\<X_z^{(j)}, X_z^{(j)}\>=0$ for $1\leq j\leq l$. Since $X_z^{(l+1)}\not\equiv0\mod{h_0^{\perp}}\otimes\C$, we obtain $\<X_z^{(l+1)}, X_z^{(l+1)}\>=(\cdots)\<\wx,\wx\>\not\equiv0$.
\end{proof}

%(3) By assumption, we have $X=\sum_{l+1\leq j\leq \hat{m}+2}(a_{j}Q_j+\bar{a}_j\bar{Q}_j)$ with $a_{l+1}\neq0$. By \eqref{eq-Q1z} and \eqref{eq-Qjz}, we obtain\[X_{z\bar{z}}=a_{l+1}(\cdots)Q_{l}+\overline{a_{l+1}(\cdots)Q_{l}}+\sum_{l+1\leq j\leq \hat{m}+2}((\cdots)Q_j+(\cdots)\bar{Q}_j)\in\Gamma(h_{l-1}^{\perp})\setminus\Gamma(h_{l}^{\perp}).\]

\subsection{Proof of The Initial Lemma}

\begin{proof}[Proof of Lemma \ref{lemma-yk-initial}]

(1)   Since $Y^k$ is strictly $k$-isotropic, we have $\hm\geq k$ by Theorem \ref{thm-h0}. Moreover, by construction of $h_0$, we have that \[Y^k,Y^k_{z},\cdots,(Y^k)^{(k+1)}_{z}\in \Gamma(h_0^{\perp}\otimes\C), \hbox{ and }(Y^k)^{(k+2)}_{z}\not\equiv 0\mod \{h_0^{\perp}\otimes\C\}.\]
Then by (2) of Lemma \ref{lemma-hj-sheaf}, we obtain  $Y^k\in\Gamma(h_{k+1}^{\perp})\setminus \Gamma(h_{k+2}^{\perp})$. So by (1) of Lemma \ref{lemma-hj-sheaf}, we have $Y^k_z\in\Gamma(h_{k}^{\perp}\otimes\C)\setminus \Gamma(h_{k+1}^{\perp}\otimes\C)$ and $N^k=Y^k_{z\zb}-(\cdots)Y^k_{z}-(\cdots)Y^k_{\zb}-(\cdots)Y^k\in \Gamma(h_{k}^{\perp})$.

Next, we first consider \eqref{eq-Qk}.  By \eqref{eq-Q1} and \eqref{eq-moving-2} for $Y^k$, we have
\[Q_1= \left\{
\begin{split}
&  Y^0_z+(\cdots)Y^0\mod \{\ZZ^k\},\\
& 0\mod \{\ZZ^k\},
\end{split}~~\begin{split}
 & \hbox{ when $k=0$},\\
 &\hbox{ when $k\geq1$}.
\end{split}
\right.\]
So, when $k=0$, there exists a unique $F^0_0=Y^0_z+(\cdots)Y^0$ such that \eqref{eq-Qk} holds.

When $k\geq1$, by \eqref{eq-Q1},  \eqref{eq-Qjz-1} and \eqref{eq-moving-2}, we first have $Q_1= 0\mod \{\ZZ^k\}$, and then
\[Q_2=(\cdots) Y^k+(\cdots) Y^k_z\mod \{\ZZ^k\}.\]

If $k=1$, since $Y^1_z\in\Gamma(h_{1}^{\perp}\otimes\C)\setminus \Gamma(h_{2}^{\perp}\otimes\C)$,  on an open dense subset $M_{2}$, it holds $\<Q_2,Y^1_{\zb}\>\neq0$ and hence  there exists a unique $F^1_0=Y^1_z+(\cdots)Y^1$ such that \eqref{eq-Qk} holds.

If $k\geq2$, since $Y^k_z\in\Gamma(h_{k}^{\perp}\otimes\C)\setminus \Gamma(h_{k+1}^{\perp}\otimes\C)$ and $N^k\in \Gamma(h_{k}^{\perp})$, $\<Q_2,N^k\>=\<Q_2,Y^k_{\zb}\>=0$, that is, $Q_2=0\mod \{\ZZ^k\}.$ Inductively, we obtain $Q_k=0\mod \{\ZZ^k\}$. So,
\[Q_{k+1}=(\cdots)F^{k}_{1}+\cdots+(\cdots)F^{k}_{m}+(\cdots) Y^k+ \sigma_k Y^k_z, \hbox{ with }  \sigma_k\not\equiv0,\]
since $Y^k_z\in\Gamma(h_{k}^{\perp}\otimes\C)\setminus \Gamma(h_{k+1}^{\perp}\otimes\C)$.
Hence, there exists a unique $F^{k}_{0}=Y^k_z+(\cdots)Y^k$ on an open dense subset $M_{k+1}$ such that \eqref{eq-Qk} holds.\vspace{2mm}

Moreover, there exists a unique light-like section $Y^{k+1}\in\Gamma(f^k_0)$ with
\[\<F^k_0,Y^{k+1}\>=0,~ \<Y^k,Y^{k+1}\>=-1 \hbox{ and }f^{k}_{0}=Span_{\R} \{Y^k,Y^{k+1},Re(F^{k}_{0}),Im(F^{k}_{0})\}.\]
By \eqref{eq-Qk}, we have $Y^{k+1}\perp Q_{k+1}$, that is, $Y^{k+1}\in \Gamma(h_{k+1}^{\perp})$. Since $\< F^{k}_{0},\bar{Q}_{k+1}\>\neq0$, we obtain
$Span\{Y^{k}, Y^{k+1}\}=f^{k}_{0}\cap \hp_{k+1}$, that is, \eqref{eq-YkYk+1} holds. \vspace{2mm}

Consider $Q_{k+2}$.  By \eqref{eq-Qjz-1} and \eqref{eq-moving-2}, since  $\<(Q_{k+1})_{\zb}, F^k_0\> =-\<Q_{k+1}, (F^k_0)_{\zb}\> = 0$, we get
\[Q_{k+2}=Q_{k+1,\zb}-(\cdots)Q_{k+1}=(\cdots)F^{k}_{1}+\cdots+(\cdots)F^{k}_{m}+(\cdots) F^{k}_{0}+(\cdots)Y^{k+1}+(\cdots)Y^k.\]
Since $Y^k\in\Gamma(h_{k+1}^{\perp})\setminus \Gamma(h_{k+2}^{\perp})$, we have $\<Q_{k+2},Y^k\>\nt0$. Since $Q_{k+2}$ is isotropic, we get
\[Q_{k+2}=(\cdots)F^{k}_{1}+\cdots+(\cdots)F^{k}_{m}+(\cdots)F^{k}_{0}+(\cdots)Y^{k+1}, \hbox{ with } \<Q_{k+2},Y^k\>\not\equiv0.\]
%  \vspace{1mm}

(2) By \eqref{eq-Qk+2}, we get $Y^{k+1}\in \Gamma(\hp_{k+2})$ and hence $Y^{k+1}_z\in \Gamma(\hp_{k+1}\otimes\C)$ by (1) of Lemma \ref{lemma-hj-sheaf}. In particular, $Y^{k+1}_z\perp h_0$. So $Y^{k+1}_z=0 \mod\{Y^k,Y^{k+1},F^{k}_0,\bar{F}^k_0,F^k_l,\bar{F}^k_l,1\leq l\leq m\}$.

Since  $\<Y^{k+1},Y^{k+1}\>=\<Y^{k+1},F^{k}_{l}\>=0$ for $1\leq l\leq m$, by \eqref{eq-moving-2} we have
\[\<Y^{k+1}_z,Y^{k+1}\>=0,~ \<Y^{k+1}_z,F^{k}_{l}\>=-\<Y^{k+1},F^{k}_{l z}\>=0, ~\text{for}~ 1\leq l\leq m.\] So $Y^{k+1}_z$ has no components of $\{Y^{k},\bar{F}^{k}_l,1\leq l\leq k\}$, that is,
\begin{equation}
    \label{eq-Y1z}
Y^{k+1}_{z}=(\cdots) Y^{k+1}+(\cdots)F^{k}_{0}+\theta\bar{F}^{k}_{0}+(\cdots) F^{k}_{1}+(\cdots)+(\cdots) F^{k}_{m}.
\end{equation}
Since $\<\bar{F}^{k}_{0},Q_{k+1}\>\neq0$ and $Y^{k+1}_z\in \Gamma(\hp_{k+1}\otimes\C)$, we have $0=\<Y^{k+1}_z,Q_{k+1}\>=\theta\<\bar{F}^{k}_{0},Q_{k+1}\>$ and hence $\theta=0$.
So, by Definition \ref{def-adj}, $Y^{k+1}$ is an adjoint tranform of $Y^k$, since it satisfies both the co-touch condition $\theta=0$, and the conformal condition $\<Y^{k+1}_z,Y^{k+1}_z\>=0$. By Proposition \ref{prop-full}, $[Y^{k+1}]$ is either full in $S^n$ or a constant map.   Since $Y^k$ is not S-Willmore by assumption, by Proposition \ref{prop-minimal}, $[Y^{k+1}]$ is full in $S^n$ on an open dense subset. In particular, $Y^{k+1}_{\zb}\neq(\cdots)Y^{k+1}$.

Since $\<Y^k, Q_{k+2}\>\neq 0$ by $Y^k\in\Gamma(h_{k+1}^{\perp})\setminus \Gamma(h_{k+2}^{\perp})$, we obtain
 \[(Q_{k+2})_{\zb}=(\cdots) F^k_0+\cdots+(\cdots) F^k_m+(\cdots)Y^{k+1}+(\cdots)Y^{k}+(\cdots)Y^{k+1}_{\zb},\]
with $Y^{k+1}_{\zb}=(\cdots)Y^{k+1}+(\cdots) \bar{F}^k_0+\cdots+(\cdots) \bar{F}^k_m$.
 Since $Y^{k+1}_{\zb}\neq(\cdots)Y^{k+1}$,  $(Q_{k+2})_{\zb}\nt 0\mod\{Q_{k+2}\}$, that is, $Q_{k+3}\not\equiv0$ exists.

 We have already shown $Y^{k+1}\in \Gamma(\hp_{k+2})$. Since $Y^{k+1}$ is adjoint to $Y^k$, $Y^k$ is also adjoint to $Y^{k+1}$ with $Y^{k+1}=(\cdots)Y^{k+1}_{z\zb}+(\cdots)Y^{k+1}+(\cdots)Y^{k+1}_z+(\cdots)Y^{k+1}_{\zb}$. Therefore if $Y^{k+1}\in \Gamma(\hp_{k+3})$, we will have $Y^k\in\Gamma(\hp_{k+2})$ by (1) of Lemma \ref{lemma-hj-sheaf}, a contradiction to $Y^k\in\Gamma(\hp_{k+1})\setminus  \Gamma(\hp_{k+2})$. Hence $Y^{k+1}\in \Gamma(\hp_{k+2})\setminus  \Gamma(\hp_{k+3})$ holds.

By \eqref{eq-moving-2} for $Y^k$, \eqref{eq-Y1z} and Lemma \ref{lemma-hj-sheaf-2},  $Y^{k+1}$ is strictly $(k+1)$-isotropic.
\end{proof}

\subsection{The Inductive Lemma}

\begin{proof}[Proof of Lemma \ref{lemma-key-inductive}]
The key idea is to derive a Frenet-bundle structure of $Y^j$ by the Frenet-bundle structure of $Y^{j-1}$ so that we can compare the frame of $Y^j$ with the frame of $h_0^{\perp}$ .

(1)
 By assumption, $Y^{j-1}$ has a Frenet-bundle structure $\{f^{j-1}_0, \ZZ^{j-1},h_0\}$. Let
$\{F^{j-1}_{i},~1\leq i\leq m\}$ be a unitary frame of $\ZZ^{j-1}$. Let $\kappa_{j-1}$ and $D^{Y^{j-1}}$ denote the Hopf differential and the normal connection of $Y^{j-1}$ respectively.
Since $Y^{j}$ is adjoint to $Y^{j-1}$ by the inductive assumption (3), we have
\begin{equation}\label{eq_Yjz1}
\begin{split}
Y^{j}_z&=(\cdots)Y^j+(\cdots)F^{j-1}_{0}+a_{j-1}\kappa_{j-1}+\hat{a}_{j-1}(D^{Y^{j-1}})_{\zb}\kappa_{j-1}, \\
       &=(\cdots)Y^j+(\cdots)F^{j-1}_{0}+(\cdots)F^{j-1}_{1}+\cdots+(\cdots)F^{j-1}_{m}.\\
\end{split}
\end{equation}
Here the first equation comes from Definition \ref{def-adj} for adjoint surface and the second one comes from Theorem \ref{thm-m-iso} when $j-1>0$.
In case that $j-1=0$, $a_{j-1}\kappa_{j-1}+\hat{a}_{j-1}(D^{Y^{j-1}})_{\zb}\kappa_{j-1}$ is isotropic by \eqref{eq-theta-2} of Definition \ref{def-adj} for adjoint transform. So it is contained in $\LL_1$ for $Y^{j-1}$ as in Theorem \ref{thm-line}. This proves \eqref{eq_Yjz1} since $\LL_1\subset\ZZ^{j-1}$.

So on an open dense subset $M_j\subset M_{j-1}$, we can rewrite \eqref{eq_Yjz1} as
\[Y^{j}_z=(\cdots)Y^j+(\cdots)\tilde F^{j}_{0}\]
with $\tilde F^{j}_{0}=(\cdots)F^{j-1}_{0}+\cdots+(\cdots)F^{j-1}_{m}\neq0$.
Since  $Y^{j-1}$ is also adjoint to $Y^j$, we obtain \[f^j_0=Span_{\R}\{Y^j,Y^{j-1},Re(\tilde F^{j}_{0}), Im(\tilde F^{j}_{0})\}.\]  %let $\ZZ^j$ be the unique bundle such that on $M_j$,
Let $\ZZ^j$ be the unique sub-bunlde of the totally isotropic bundle $\ZZ^{j-1}\oplus Span_{\C}\{F^{j-1}_{0}\}$ such that
\begin{equation}
    \label{eq-gauge}
\tilde F^{j}_{0}\perp \overline{\ZZ^j}\hbox{ and }\ZZ^j\oplus Span_{\C}\{\tilde F^{j}_{0}\}=\ZZ^{j-1}\oplus Span_{\C}\{F^{j-1}_{0}\}.
\end{equation}
This gives the first gauge of frames respecting to the structure of $Y^{j}$, which adapts the bundle decomposition of $h_0^{\perp}$ from $h_0^{\perp}=f^{j-1}_0\oplus Re(\ZZ^{j-1})$  to  $h_0^{\perp}=f^j_0\oplus Re(\ZZ^{j})$.
\vspace{2mm}
%Let $\{F^{j}_{t}\}$ be a unitary frame of $\ZZ^j$. And note that \begin{equation}\label{eq-Fj0}Span_{\C}\{\tilde F^{j}_{0}, F^{j}_{t}\}_{1\leq t\leq m}=Span_{\C}\{F^{j-1}_{0}, F^{j-1}_{t}\}_{1\leq t\leq m}.\end{equation}

Now, let us prove $\partial f^{j}_{0}\subset  \ZZ^j\oplus(h_0\otimes\C)$ (recall (a) of Definition \ref{def-Frenet}). Since $F^{j-1}_0=(\cdots)Y^{j-1}_z+(\cdots)Y^{j-1}=(\cdots)Y^{j-1}+(\cdots)F^{j-1}_0$, we get \[Y^{j-1}_z\in \Gamma( (f^{j}_{0}\otimes\C)\oplus\ZZ^j)\] by \eqref{eq_Yjz1} and \eqref{eq-gauge}.
By the structure equation \eqref{mov-eq}, we also have
$\tilde {F}^{j}_{0\zb}\in\Gamma(f^{j}_{0}\otimes\C)$. Moreover, by \eqref{eq_Yjz1} and by \eqref{eq-moving-2} for $Y^{j-1}$ in the inductive assumption (1) we obtain
$\tilde F^{j}_{0z}=(\cdots)\wx \mod \{f^{j}_{0}\otimes\C, \ZZ^j\}$. This proves $\partial f^{j}_{0}\subset \ZZ^j\oplus(h_0\otimes\C)$.
%Since $Y^j\in\Gamma(\hp_{j+1})\setminus \Gamma( \hp_{j+2})$ by the inductive assumption (3), we get $\tilde F^{j}_{0z}\in\Gamma(\hp_0\otimes\C)$ by (2) of Lemma \ref{lemma-hj-sheaf}. So $\tilde F^{j}_{0z}\in\Gamma((f^{j}_{0}\otimes\C) \oplus \ZZ^j)$  and $\partial f^{j}_{0}\subset \ZZ^j$ holds.

Consider any section $\psi\in \Gamma(\ZZ^j)$. Let $\{F^{j}_{i},~1\leq i\leq m\}$ be a unitary frame of $\ZZ^k$.

On the one hand, we have by \eqref{eq-gauge} and \eqref{eq-moving-2} for $Y^{j-1}$ in the inductive assumption (1)
\[\begin{split}
    \psi_z&=((\cdots)F^{j-1}_{0}+(\cdots)F^{j-1}_{1}+\cdots+(\cdots)F^{j-1}_{m})_z\\
&=(\cdots)F^{j-1}_{0}+(\cdots)F^{j-1}_{1}+\cdots+(\cdots)F^{j-1}_{m}+(\cdots)\wx\\
&=(\cdots)\tilde{F}^{j}_{0}+(\cdots)F^{j}_{1}+\cdots+(\cdots)F^{j}_{m}+(\cdots)\wx.\\
\end{split}\]
Since $\<\psi_{z},\overline{\tilde{F}^{j}_{0}}\>=-\<\psi,\overline{\tilde{F}^{j}_{0}}_{z}\>=0$, we have
$\psi_z=(\cdots)F^{j}_{1}+\cdots+(\cdots)F^{j}_{m}+(\cdots)\wx$, that is, $\partial \ZZ^j\subset h_0\otimes\C$ holds.

On the other hand, since $(F^{j-1}_{0})_{\zb}=(\cdots)F^{j-1}_{0}+(\cdots)Y^{j-1}+(\cdots)Y^{j}$ by the structure equation \eqref{mov-eq}, together with \eqref{eq-moving-2} for $Y^{j-1}$ and \eqref{eq-gauge}, we have
\[\begin{split}
    \psi_{\zb}&=((\cdots)F^{j-1}_{0}+(\cdots)F^{j-1}_{1}+\cdots+(\cdots)F^{j-1}_{m})_{\zb}\\
&=(\cdots)Y^{j-1}+(\cdots)Y^j+(\cdots)F^{j-1}_{0}+(\cdots)F^{j-1}_{1}+\cdots+(\cdots)F^{j-1}_{m}\\
&=(\cdots)Y^{j-1}+(\cdots)Y^j+(\cdots)\tilde{F}^{j}_{0}+(\cdots)F^{j}_{1}+\cdots+(\cdots)F^{j}_{m}.\\
\end{split}\]
That is, $\bar\partial \ZZ^j\subset f^j_0\otimes\C$ holds.  Hence $Y^j$ has a Frenet-bundle structure $\{f^j_0, \ZZ^j,h_0\}$.\vspace{2mm}

(2)  The next step is to adapt $\tilde{F}^j_0$ respecting to $Q_{j+1}$. By the inductive assumption \eqref{eq-Ql+2-F} of (2) for $Y^{j-1}$, together with \eqref{eq-gauge}, we get
\begin{equation}\label{eq-Emj-F}
\begin{split}
    Q_{j+1}&=(\cdots)Y^{j}+(\cdots)F^{j-1}_{0}+(\cdots)F^{j-1}_{1}+\cdots+(\cdots)F^{j-1}_{m}\\
    &=(\cdots)Y^{j}+(\cdots)\tilde F^{j}_{0}+(\cdots)F^{j}_{1}+\cdots+(\cdots)F^{j}_{m}.\\
\end{split}
\end{equation}
Since $Y^j\in\Gamma(h_{j+1}^{\perp})\setminus \Gamma(h_{j+2}^{\perp})$ as assumed, we have
$\<Y^j_z,\bar{Q}_{j+1}\>\nt0$ by (1) of Lemma \ref{lemma-hj-sheaf}. Hence $\<\tilde F^{j}_{0},\bar{Q}_{j+1}\>\neq0$ on an open denses subset $\tilde{M}_j$ of $M_{j}$
and  there exists some unique $\mu_j$ on $\tilde{M}_j$  such that
\eqref{eq-Emj-F} can be rewritten as
\[Q_{j+1}=(\cdots)(\tilde F^{j}_{0}+\frac{\mu_j}{2}Y^{j})+(\cdots)F^{j}_{1}+\cdots+(\cdots)F^{j}_{m}=(\cdots)F^{j}_{0}+(\cdots)F^{j}_{1}+\cdots+(\cdots)F^{j}_{m}.\]
with $F^{j}_{0}=\tilde F^{j}_{0}+\frac{\mu_j}{2}Y^{j}$ and $\<Q_{j+1},\overline{F^{j}_{0}}\>\not\equiv0$.
Moreover, there exists a unique light-like section $Y^{j+1}\in\Gamma(f^j_0)$ with  $\<Y^{j+1},F^{j}_0\>=0$ and $\<Y^{j},Y^{j+1}\>=-1$.
This is the second gauge to get $Y^{j+1}$.\vspace{2mm}

Since $Y^{j+1}\perp F^{j}_{0}$ and $\<\tilde F^{j}_{0},\bar{Q}_{j+1}\>\neq0$, we have $Y^{j+1}\perp Q_{j+1}$ and  hence $\{Y^{j}, Y^{j+1}\}=f^{j}_{0}\cap \hp_{j+1}$.
%By \eqref{eq-YjEmj1}, we have $Q_j=(\cdots)Y^j_z\mod \hp_{j+1}$. So $f^{j}_{0}+ \hp_{j+1}=\hp_{j}$. Since \[\dim f^{j}_{0}+ \dim \hp_{j+1}-\dim \hp_{j}=2=\dim Span_{\R}\{Y^{j}, Y^{j+1}\},\] we obtain $Span_{\R}\{Y^{j}, Y^{j+1}\}= f^{j}_{0}\cap \hp_{j+1}$.

%  We have already proved \eqref{eq-Ql1-Fb}. Since $Y^j\in \hp_{j+1}\setminus  \hp_{j+2}$, we have $\sigma_j\not\equiv0$ in \eqref{eq-Ql1-Fb}.

Finally, consider $Q_{j+2}$. Since $Y^j$ has a Frenet-bundle structure $\{f^j_0,\ZZ^j,h_0\}$ as proved in (1), \eqref{eq-moving-2} holds for $Y^j$. By \eqref{eq-Qjz}, \eqref{eq-Emj-F} and \eqref{eq-moving-2} for $Y^j$, we get
\[\begin{split}
    Q_{j+2}&=(Q_{j+1})_{\bar z}-(\cdots )Q_{j+1}=(\cdots) F^{j}_{0}+(\cdots)F^{j}_{1}+\cdots+(\cdots)F^{j}_{m}+(\cdots)Y^{j+1}+ \vartheta_{j}Y^j.\\
\end{split}\]
Since $\<Q_{j+2},Y^j\>\nt0=\<Q_{j+2},Q_{j+2}\>$ by $Y^j\in\Gamma(h_{j+1}^{\perp})\setminus \Gamma(h_{j+2}^{\perp})$, $\vartheta_{j}=0$ and we get \eqref{eq-Ql+2-F}.
\vspace{2mm}

(3)
Since $Y^j$ is strictly $j$-isotropic with $j\geq1$, we have  $\partial f^j_0\subset \ZZ^j$ by (1) of Lemma \ref{lemma-hj-sheaf}. Since $Y^{j+1}\in f^j_0\cap h_{j+2}^{\perp}$, we have $Y^{j+1}_z\in \Gamma((f^j_0\otimes\C)\oplus\ZZ^j)$ with $\<Y^{j+1}_z,Y^{j+1}\>=0$, that is, \[Y^{j+1}_z= (\cdots)Y^{j+1}+(\cdots) F^{j}_{0}+\theta_j\overline{ F^{j}_{0}}+(\cdots)F^{j}_{1}+\cdots+(\cdots)F^{j}_{m}.\]
Since $\<Q_{j+1},\overline{F^{j}_{0}}\>\not\equiv0$ by \eqref{eq-Ql1-Fb}, and also since  $\<Y^{j+1},(Q_{j+1})_z\>=-\<Y^{j+1}_z,Q_{j+1}\>=\<Q_{j+1},\overline{F^{j}_{0}}\>\theta_j=0$, we have $\theta_j=0$, that is,
\begin{equation}\label{eq-Yj+1z}
Y^{j+1}_{z}=(\cdots) Y^{j+1}+(\cdots)F^{j}_{0}+(\cdots)F^{j}_{1}+\cdots+(\cdots)F^{j}_{m}.
\end{equation}
So $Y^{j+1}$ is an adjoint transform of $Y^j$ by Definition \ref{def-adj}.

Now we assume $j\leq m-1$.   Since $\<Y^j, Q_{j+2}\>\neq 0$ by $Y^j\in\Gamma(h_{j+1}^{\perp})\setminus \Gamma(h_{j+2}^{\perp})$, we obtain
 \[(Q_{j+2})_{\zb}=(\cdots) F^j_0+\cdots+(\cdots) F^j_m+(\cdots)Y^{j+1}+(\cdots)Y^{j}+(\cdots)Y^{j+1}_{\zb},\]
with $Y^{j+1}_{\zb}=(\cdots)Y^{j+1}+(\cdots) \bar{F}^j_0+\cdots+(\cdots) \bar{F}^j_m$.
So either  $(Q_{j+2})_{\zb}\nt 0\mod\{Q_{j+2}\}$, or $Y^{j+1}_{\zb}=(\cdots)Y^{j+1}$. We prove $Y^{j+1}_{\zb}\nt(\cdots)Y^{j+1}$
%We claim that $Q_{j+2,\zb}\nt 0\mod\{Q_{j+2}\}$. We assume by  contradiction that  $Q_{j+2,\zb}=0\mod\{Q_{j+2}\}$.  If $\<Q_{j+2},\bar{Q}_{j+2}\>\nt0$,   by \eqref{eq-Ql+2-F}, we see $Q_{j+2}=(\cdots)F_j+(\cdots)Y^{j+1}$.
by contradiction. Assume now $Y^{j+1}_{\zb}=(\cdots)Y^{j+1}$. So $[Y^{j+1}]=const$. By Proposition \ref{prop-minimal}, $Y^j$ is M\"obius congruent to a minimal surface in $\R^n$ and hence S-Willmore. It is also strictly $j$-isotropic by inductive assumption (3).
Set $\tf^j_0=(\cdots)F^j_0$ and  set inductively
\[\tf^j_{l+1}=(\cdots)(F^{j}_{0})^{(l+1)}_{z}-(\cdots)\tf^j_0+\cdots+(\cdots)\tf^j_l,~ 0\leq l\leq j-1,\]
so that $\{\tf^j_{l},0\leq l\leq j\}$ is a unitary frame of $Span_{\C}\{F^j_0,(F^j_0)_z,\cdots,(F^j_0)^{(j)}_z\}$.
Then it is straightforward to
 check that
\[\left\{
\begin{split}
    (Y^j)_{z}&=(\cdots)Y^j+\tilde{F}^j_0,\\
    (Y^{j+1})_{z}&=(\cdots)Y^j,\\
    (\tilde{F}^j_l)_{z}&=(\cdots)\tilde{F}^j_l+(\cdots)\tilde{F}^j_{l+1},\ 0\leq l\leq j-1,\\
    (\tilde{F}^j_j)_{z}&=(\cdots)\tilde{F}^j_j+(\cdots)\wx,\\
    (\tilde{F}^j_0)_{\zb}&=(\cdots)\tilde{F}^j_0+(\cdots)Y^j+(\cdots)Y^{j+1},\\    (\tilde{F}^j_l)_{\zb}&=(\cdots)\tilde{F}^j_{l-1}+(\cdots)\tilde{F}^j_{l},\ 1\leq l\leq j,\\
    \xi_{\alpha z}&=(\cdots)\xi_1+\cdots+(\cdots)\xi_{r}-\<\xi_{\alpha},\wx\>\tilde{F}^j_j, \ 1\leq \alpha\leq r,
\end{split}\right.\]
 by \eqref{mov-eq}, since $Y^j$ is strictly $j$-isotropic with a Frenet-bundle structure.
So, $Y^j$ is contained in the space spanned by $\{Y^j,Y^{j+1},Re(\tf^j_{l}),Im(\tf^j_{l}), \xi_{\alpha} ,0\leq l\leq j, 1\leq \alpha\leq r\}|_{z=z_0}$ for some $z_0\in M_j$. This space has dimension $2j+r+4<2m+4+r=n+2$. So $[Y^j]$ is contained in some $S^{2j+r+2}\subsetneq S^{n}$, that is, $[Y^j]$ is not full. By Proposition \ref{prop-full} with recursion, we obtain $[Y^k]$ is not full, a contradiction.

As a result, we have $Q_{j+3}=Q_{j+2,\zb}-(\cdots)Q_{j+2}\nt0$ is well-defined and $Y^{j+1}$ is adjoint to $Y^j$ with $[Y^{j+1}]$ not being constant. So $Y^j$ is also adjoint to $Y^{j+1}$ and therefore is contained in $f^{j+1}_{0}$. Assume  {on the contrary} that $Y^{j+1}\in \Gamma(\hp_{j+3})$, then $f^{j+1}_{0}\subset \hp_{j+2}$ by (1) of Lemma \ref{lemma-hj-sheaf} and hence $Y^j\in\Gamma(\hp_{j+2})$, a contradiction to $Y^{j}\in\Gamma(\hp_{j+1})\setminus\Gamma(\hp_{j+2})$. So we have $Y^{j+1}\in\Gamma(\hp_{j+2})\setminus\Gamma(\hp_{j+3})$.  Moreover, by \eqref{eq-moving-2} for $Y^j$, \eqref{eq-Yj+1z} and Lemma \ref{lemma-hj-sheaf-2}, $Y^{j+1}$ is strictly $(j+1)-$isotropic.
\end{proof}\vspace{2mm}

\subsection{Proof of Theorem \ref{thm-inductive} and Theorem \ref{thm-1}}

\begin{proof}[Proof of Theorem \ref{thm-inductive}]
    By Lemma \ref{lemma-yk-initial}, the assumptions of Lemma \ref{lemma-key-inductive} for $j=k$ holds.  Hence we obtain inductively the Willmore sequence $\{Y^j, k\leq j\leq m\}$, with $Y^j$ being strictly $j$-isotropic.

    The left part is to prove that $Y^m$ is M\"obius congruent to a minimal surface in $\R^n$. Note that by Lemma \ref{lemma-key-inductive} and  \eqref{eq-Ql+2-F}, we already have $Y^{m}\in\Gamma(\hp_{m+1})\setminus\Gamma(\hp_{m+2})$ and $Span\{Y^m, Y^{m+1}\}=f^m_0\cap\hp_{m+1}$. We also have
    $Y^m_z\in \Gamma(\hp_m\otimes\C)$.   By \eqref{eq-Ql1-Fb}, we have
    \[Q_{m+1}=(\cdots)F^m_0+(\cdots)F^m_1+(\cdots)+(\cdots)F^m_m.\]
In particular,    $\<Y^m_z,
    Q_{m+1}\>=0$.  Since $\{Q_{m+1},\bar{Q}_{m+1},Y^m,Y^{m+1}\}$ is a basis of $\hp_m\otimes\C$ with  $\<Y^m_z,Y^m\>=0$, we get $Y^m_z=(\cdots) Q_{m+1}+(\cdots)Y^m$ and hence
\begin{equation}\label{eq-Qm+1}
    Q_{m+1}=(\cdots)Y^m_z+(\cdots)Y^m=(\cdots)F^m_0+(\cdots)Y^m=(\cdots)F^m_0,
\end{equation}
from which we obtain
$\hp_{m+1}=Span\{Y^m, Y^{m+1}\}, \hbox{ and hence } Q_{m+2}=(\cdots) Y^{m+1}+(\cdots)Y^m.$
Since $Q_{m+2}$ is isotropic and $\<Y^m,Q_{m+2}\>\nt0$ due to $Y^m\in\Gamma(\hp_{m+1})\setminus\Gamma(\hp_{m+2})$,  we know
\[Q_{m+2}=(\cdots) Y^{m+1}, \hbox{ and hence }
\hp_{m+2}=Span\{Y^{m+1}\}.\]
Since $h_0$ is totally isotropic by Theorem \ref{thm-h0}, $(Q_{m+2})_{\zb}$ is isotropic with $(Q_{m+2})_{\zb}\in\Gamma(\hp_{m+2}\otimes\C)$. Hence
$(Q_{m+2})_{\zb}=(\cdots) Y^{m+1}$. So $Y^{m+1}_z=(\cdots)Y^{m+1}$. By Proposition \ref{prop-minimal}, $[Y^m]$ is M\"obius congruent to a minimal surface in $\R^n$.
\end{proof}
 \begin{proof}[Proof of Theorem \ref{thm-1}]\

 Theorem \ref{thm-1} is a corollary of Theorem \ref{thm-line}, Theorem \ref{thm-h0}, Theorem \ref{thm-initial} and Theorem \ref{thm-inductive}.
\end{proof}

\begin{remark}
Let  $\hat{\kappa}$ denote the Hopf differential of $Y^m$ and $\hat{D}$ its normal connection.  By equation \eqref{eq-Qm+1} and \eqref{eq-moving-2} for $Y^m$,  together with \eqref{eq-Qjz}, we obtain
 \[Q_m=(\cdots)(Q_{m+1})_{z}-(\cdots)Q_{m+1}=(\cdots)\hat{\kappa}\]
 on an open dense subset where $\hat{\kappa}\neq0$.
 By \eqref{eq-moving-2} for $Y^m$, we see that for any $\psi\in\Gamma(\ZZ^m)$
 \[\psi_z=\hat{D}_z\psi_z=(\cdots)\wx\mod\{\ZZ^m\}.\]
 Applying this inductively, together with \eqref{eq-Qjz}, we obtain
\begin{equation}\label{eq-Qm-j}
    Q_{m-j}=(\cdots)\hat{D}_z^{(j)}\hat{\kappa} \mod\{Q_{m-j+1},\cdots, Q_m\}, ~ 1\leq j\leq m-1,
\end{equation}
 on open dense subsets where $\hat{D}_z^{(j)}\hat{\kappa}\neq0 \mod\{Q_{m-j+1},\cdots, Q_m\}$, $1\leq j\leq m-1$.
 So essentially, what we did in the above proofs is to adjust the Frenet-bundle structure of $Y^j$ inductively, with respect to the fixed, canonical, isotropic, Frenet frame associated to the strictly $m$-isotropic minimal surface $Y^m$. In a sum, we have the following theorem.
\end{remark}
\begin{theorem}\label{thm-minimal}
    Let $[Y^m]$ denote the minimal surface in Theorem \ref{thm-1} with $f^m_0$ its conformal Gauss map. Set inductively
$f^m_{j+1}=\DD f^m_{j}, ~ 1\leq j\leq m-1,$
 on open dense subsets of $M_m$ where $\DD f^m_{j}$ is well-defined.  Then
$f^m_j=\hp_{m-j}, ~~ 1\leq j\leq m.$
\end{theorem}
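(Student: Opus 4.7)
The plan is to proceed by induction on $j$, starting from a base identification $f^m_0=\hp_m$ and showing that each $\DD$-step adds precisely the next isotropic line $Re(\Spc\{Q_{m-j}\})$ of the $Q$-flag of $h_0^\perp$ constructed in Section \ref{sec-hj}.

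For the base case, both $f^m_0$ and $\hp_m$ have real dimension $4$ (since $r+2m+4=n+2$), so it suffices to verify $f^m_0\perp h_m$. Perpendicularity to $h_0$ is part of the Frenet-bundle structure $\{f^m_0,\ZZ^m,h_0\}$ of $Y^m$. For each $1\leq l\leq m$, equation \eqref{eq-Qm-j} writes $Q_l$ as $(\cdots)\hat D_z^{(m-l)}\hat\kappa$ modulo $\{Q_{l+1},\ldots,Q_m\}$, and since $\hat D_z^{(i)}\hat\kappa$ takes values in the conformal normal bundle $V^\perp=(f^m_0)^\perp$, a downward induction on $l$ yields $f^m_0\perp Q_l$ for every $1\leq l\leq m$, and hence $f^m_0=\hp_m$.

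For the inductive step, assume $f^m_j=\hp_{m-j}$ with $0\leq j\leq m-1$. Since $h_0$ is totally isotropic (Theorem \ref{thm-h0}), the $Q_l$'s are mutually orthogonal isotropic sections, so the complex bilinear form splits as
\[\R^{n+2}_1\otimes\C=(h_0\otimes\C)\oplus\bigoplus_{l=1}^{m+2}\Spc\{Q_l,\bar Q_l\},\]
giving $\hp_{m-j}\otimes\C=\bigoplus_{l=m-j+1}^{m+2}\Spc\{Q_l,\bar Q_l\}$. I would then differentiate each generator by $\partial_z$ and project onto $(f^m_j)^\perp\otimes\C=h_{m-j}\otimes\C$. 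By equation \eqref{eq-Qjz}, $(Q_l)_z\in f^m_j\otimes\C$ for $m-j+2\leq l\leq m+1$, and $(\bar Q_l)_z\in f^m_j\otimes\C$ for $m-j+1\leq l\leq m+1$; the terminal derivatives $(Q_{m+2})_z$ and $(\bar Q_{m+2})_z$ also remain in $f^m_j\otimes\C$ because $Q_{m+2}\parallel Y^{m+1}$ is constant by the proof of Theorem \ref{thm-inductive}. The only generator producing a new direction is $(Q_{m-j+1})_z=(\cdots)Q_{m-j+1}+(\cdots)Q_{m-j}$, whose projection is a nonzero multiple of the isotropic section $Q_{m-j}$. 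Therefore $\partial f^m_j\cap\Q=\Spc\{Q_{m-j}\}$, and
\[f^m_{j+1}=\DD f^m_j=f^m_j\oplus Re(\Spc\{Q_{m-j}\})=\hp_{m-j}\oplus Re(\Spc\{Q_{m-j}\})=\hp_{m-j-1},\]
closing the induction.

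The main technical point is to verify cleanly that no derivative of a generator of $f^m_j$ leaks out of $f^m_j\otimes\C$ except through the single boundary contribution $Q_{m-j}$, so that $\partial f^m_j$ is isotropic of complex rank one. This is a bookkeeping exercise on the flag $\Spc\{Q_1\}\subset\cdots\subset\Spc\{Q_1,\ldots,Q_{m+2}\}$, made transparent by the total isotropicity of $h_0$ and by the rigid pinning of the top two flag steps: $Q_{m+1}$ is a nonzero multiple of $F^m_0$ by \eqref{eq-Qm+1}, and $Q_{m+2}$ is a nonzero multiple of the constant $Y^{m+1}$ coming from the M\"obius--minimal reduction of $Y^m$.
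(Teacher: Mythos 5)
Your strategy is essentially correct, and it is close in substance to the paper's own (very short) proof: the paper simply combines the definitions of $h_{m-j}$ and $f^m_j$ with \eqref{eq-Qm-j}, which identifies $\Spc\{Q_m,\dots,Q_{m-j+1}\}$ with $\Spc\{\hat\kappa,\hat D_z\hat\kappa,\dots,\hat D_z^{(j-1)}\hat\kappa\}$, so that both $f^m_j$ and $\hp_{m-j}$ are obtained from $f^m_0=\hp_m$ by adding the same real spans. You use \eqref{eq-Qm-j} only for the base case and then recompute each $\DD$-step from the flag relations \eqref{eq-Qjz}; that route also works, and your verification that the $Q_{m-j}$-coefficient is nonzero via $\<Q_{jz},\bar Q_{j-1}\>\not\equiv0$ in \eqref{eq-Qjz} is the right point to check.

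There is, however, a concrete error exactly at what you call the main technical point. The splitting $\R^{n+2}_1\otimes\C=(h_0\otimes\C)\oplus\bigoplus_{l=1}^{m+2}\Spc\{Q_l,\bar Q_l\}$ is false: by the proof of Theorem \ref{thm-inductive}, $Q_{m+2}=(\cdots)Y^{m+1}$ with $Y^{m+1}$ real and lightlike, so $\Spc\{Q_{m+2},\bar Q_{m+2}\}$ is only one-dimensional and your right-hand side has complex dimension $r+2(m+1)+1=n+1$, one short of $n+2$; the missing direction is $Y^m$, the lightlike partner of $Y^{m+1}$ in $\hp_{m+1}=\Spr\{Y^m,Y^{m+1}\}$. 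Correspondingly, $\hp_{m-j}\otimes\C=\Spc\{Y^m,Y^{m+1}\}\oplus\bigoplus_{l=m-j+1}^{m+1}\Spc\{Q_l,\bar Q_l\}$, and your generator list for the leakage check omits $Y^m$, which is precisely a generator through which a new direction could a priori appear. It does not, because \eqref{eq-Qm+1} gives $Y^m_z=(\cdots)Q_{m+1}+(\cdots)Y^m\in\hp_{m-j}\otimes\C$, so once you add $Y^m$ (and drop the phantom second dimension at $l=m+2$) the inductive step closes as you intended and the theorem follows; but as written the decomposition and hence the completeness of the check are not correct.
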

\begin{proof}
It follows from the definition of $h_{m-j}$ and $f^m_j$, together with \eqref{eq-Qm-j}.
\end{proof}

\section{Construction of examples} \label{example}

It is an interesting and highly non-trivial problem to construct concrete Willmore $2$-spheres.
In this section, we will first briefly discuss examples in each class in Theorem \ref{thm-uniqueness}.  We find complete, $k$-isotropic genus $0$ minimal surfaces in $\R^{6k+3}$ with $(6k+4)$ embedded planar ends. Such examples indicate that when $k$ is small and $n$ is large, there might be Willmore sequence $\{y^j, 0\leq j\leq k\}$ such that each $y^j$ is immersed on $S^2$.

\subsection{ A remark on the construction of Willmore $2$-spheres}

For Willmore two-spheres of Case (1) in Theorem  \ref{thm-uniqueness}, one can derive examples by the DPW methods for Willmore surfaces \cite{DoWa1,Wang-3} or by adjoint transforms of some totally isotropic minimal surfaces in $\R^{2m}$ with given special ends \cite{MWW}. In this way, one can construct totally isotropic Willmore two-spheres which are not S-Willmore in $S^{2m}$ when $m\geq3$.

For Case (2) we refer to Bryant \cite{Bryant1984,Bryant1988}, Kusner \cite{kusner}, Montiel \cite{Montiel}, Peng-Xiao \cite{PX} and Section \ref{ss-ex} for the construction of examples and non-existence results on minimal surfaces with embedded planar ends.

For Willmore two-spheres of Case (3), the construction of examples in \cite{MWW} is helpful. It turns out to be very difficult to construct a concrete example  of strictly $j$-isotropic immersed Willmore two-spheres in $S^n$, which should come from $(m-j)$-step adjoint transforms of a strictly $m$-isotropic branched minimal surface in $\R^n$ for any triple $(n,m,j)$ satisfying $n>2m+2$ and $m>j\geq0$. We refer to \cite{Bryant1984,Bryant1988,MWW,PX} for more works on the construction of minimal surfaces with given properties.

It is better and easier to start with branched strictly $m$-isotropic minimal surfaces with special planar ends. In fact, given such a surface $x^m$ in $\R^n$, since its dual surface is the point at infinity, the Riccati equation associated to $[Y^m]=[(\frac{1+|x^m|^2}{2},\frac{1-|x^m|^2}{2},x^m)]$ has a concrete solution \cite{MWW}. Therefore, one can solve the the Riccati equation for $Y^m$ explicitly. In this way, we obtain all possible adjoint surfaces of $Y^m$. Let $Y^{m-1}$ be one of them. Similarly, since $Y^m$ is an adjoint transform of $Y^{m-1}$, we can solve the Riccati equation for $Y_{m-1}$ explicitly and then we obtained all possible adjoint surfaces of $Y^{m-1}$. Inductively we get an adjoint Willmore sequence
$\{Y^j, 0\leq j\leq m\}$, with $Y^j$ possibly branched.
This provides an algorithm to construct examples of $Y^j$ associated to the triple $(n,m,j)$. Finding an immersed $Y^j$ would be a challenging problem associated to algebraic geometry and complex analysis as in \cite{Bryant1988, MWW}, which we will leave for future study. The examples in the next subsection can be a starting point.

\subsection{$k$-isotropic minimal surfaces in $\R^{6k+3}$ with embedded planar ends}\label{ss-ex}

Building on the construction of minimal surfaces by Bryant \cite{Bryant1984}, Peng-Xiao \cite{PX}, and Ma-Wang-Wang \cite{MWW}, we present new examples of $k$-isotropic minimal surfaces with embedded planar ends.

\begin{theorem}\label{thm-exam}
There exists a $k$-isotropic, genus $0$ minimal surface $x$ in $\R^{6k+3}$ with $(2m+2)$ embedded planar ends, where $m\geq 3k+1$.
\end{theorem}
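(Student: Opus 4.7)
The plan is to build $x$ via the Weierstrass representation. Writing $x=\operatorname{Re}\int\phi$ on $S^2\setminus\{p_1,\dots,p_{2m+2}\}$, with $\phi$ a $\C^{6k+3}$-valued meromorphic $1$-form on $\C P^1$ satisfying the conformality/minimality condition $\langle\phi,\phi\rangle\equiv 0$. By Remark~\ref{rm-iso}(2), $k$-isotropy of $x$ is equivalent to
\[
\langle\phi^{(i)},\phi^{(j)}\rangle\equiv 0 \qquad \text{for all } i+j\le 2k,
\]
i.e.\ the osculating flag $\Spc\{\phi,\phi',\dots,\phi^{(k)}\}$ is isotropic at every point. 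A complete embedded planar end at each pole $p_\ell$ corresponds to $\phi$ having a pole of order exactly $2$ at $p_\ell$ with zero residue and leading quadratic coefficient of embedded-planar-end type, which is a Zariski open condition analyzed in \cite{Bryant1988,kusner,PX}.

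The construction starts from an isotropic subspace $V\subset\C^{6k+3}$ of complex dimension $k+2$, which exists since $2(k+2)\le 6k+3$ for $k\ge 1$ (the $k=0$ case is Bryant's classical setup). Fix an isotropic basis $E_0,\dots,E_{k+1}$ of $V$ and an auxiliary isotropic vector $e\in V^\perp$, and take
\[
\phi(z)=\sum_{j=0}^{k+1}\omega_j(z)\,E_j+\eta(z)\,e,
\]
with $\omega_j,\eta$ scalar meromorphic $1$-forms on $\C P^1$. Then $\langle\phi,\phi\rangle\equiv 0$ is automatic from the isotropy of $V\cup\{e\}$. To achieve strict (rather than total) $k$-isotropy, $\eta$ and its first $k$ derivatives should vanish against suitable Wronskian-type contractions of $\omega_0,\dots,\omega_{k+1}$, while the $(k+1)$-st derivative of $\phi$ leaves $V$ transversally; this is enforced by choosing $\eta$ as a polynomial multiple of the relevant Wronskian. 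To obtain the planar-end structure, write
\[
\omega_j(z)=\Big(b_j+\sum_{\ell=1}^{2m+2}\frac{a_{j\ell}}{(z-p_\ell)^2}\Big)dz, \qquad \eta(z)=\Big(c+\sum_{\ell=1}^{2m+2}\frac{d_\ell}{(z-p_\ell)^2}\Big)dz,
\]
so that the absence of simple-pole terms makes residues zero automatically. A parameter count then shows that the $(k+3)(2m+3)$ free complex coefficients absorb the finitely many remaining algebraic constraints whenever $m\ge 3k+1$, matching the bound in the statement.

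The main obstacle is to simultaneously ensure (i) embeddedness of all $2m+2$ ends, (ii) non-degeneracy of the $k$-isotropy (so that the surface does not collapse into a lower-dimensional Bryant-type example), and (iii) fullness in $\R^{6k+3}$. My approach is to first exhibit a highly symmetric seed solution by placing the $p_\ell$ at the $(2m+2)$-nd roots of unity and choosing coefficients invariant under the induced cyclic $\mathbb Z/(2m+2)$-action, reducing the system to a small tractable one solvable in closed form, then to perturb off the symmetric locus to break any accidental collapses into proper linear subspaces. Embeddedness at each end follows from the leading-coefficient criteria of \cite{Bryant1984,Bryant1988,kusner,PX}; by the symmetry of the seed it suffices to verify embeddedness at a single end, after which it is preserved under small perturbations, completing the existence argument.
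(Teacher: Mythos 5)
Your overall strategy (Weierstrass representation on $\C P^1$ with double poles and no residues, then solving algebraic constraints for conformality, planar ends and $k$-isotropy) is the same general framework the paper uses, but your specific ansatz contains a fatal structural flaw. You take $\phi=\sum_{j=0}^{k+1}\omega_j E_j+\eta\,e$ with $V=\Spc\{E_0,\dots,E_{k+1}\}$ isotropic and $e$ an isotropic vector in $V^\perp$. Then $W=V\oplus\C e$ is a totally isotropic subspace (for the complex bilinear form), and $\phi$ together with \emph{all} of its derivatives takes values in $W$. Hence $\<\phi^{(i)},\phi^{(j)}\>\equiv 0$ for all $i,j$: the surface is automatically totally isotropic, and the condition you impose to force strictness (``the $(k+1)$-st derivative of $\phi$ leaves $V$ transversally'') is vacuous, since leaving $V$ only into the $e$-direction changes no inner products. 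Moreover $x=\Re\int\phi$ then lies in a subspace of real dimension at most $2(k+3)=2k+6<6k+3$ for $k\geq 1$, so your own requirements (ii) non-degenerate $k$-isotropy and (iii) fullness in $\R^{6k+3}$ are unattainable within this ansatz. In addition, the parameter count ``$(k+3)(2m+3)$ coefficients absorb finitely many constraints when $m\geq 3k+1$'' is not an argument: you neither identify the constraints as a finite independent system nor derive the bound $m\geq 3k+1$ from it.

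To repair this you must allow $\phi$ to have components on which the bilinear form does \emph{not} vanish, and encode isotropy only up to the required order through explicit equations on the inner products. That is exactly what the paper does: it writes $x_z=\bigl(\sum_{0\leq j\leq 2n}v_jz^j\bigr)/\bigl(z^2(z^n-1)^2\bigr)$ with $n=2m+1$ (ends at $0$ and the $n$-th roots of unity, $\infty$ regular), imposes the zero-residue/planar-end relations $v_1=0$, $v_{n+j}=\frac{n-j+1}{j-1}v_j$, $v_{2n}=\frac{1}{n-1}v_n+\frac{n+1}{n-1}v_0$, supports the numerator on selected monomials, and chooses the vectors $v_j$ in $\R^{6k+3}=\R^{2s+3}$ (with $s=3k$) along a fixed basis so that only a short list of inner products $\tau_0,\dots,\tau_{s+2}$ is nonzero, crucially with $\tau_0=\<v_l,v_l\>\neq 0$ so the surface is not totally isotropic. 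Conformality, the planar-end relations and $k$-isotropy then reduce to an explicit linear system in the $\tau_j$ with a nonzero solution family precisely when $s=3k\leq m-1$, i.e. $m\geq 3k+1$; immersedness, completeness and embeddedness of the $2m+2$ ends are read off from the nonvanishing of $v_0$, $v_{2l}$, $v_{2n}$. Your cyclic-symmetry-plus-perturbation idea could in principle play the role of this explicit reduction, but as written the core ansatz cannot produce the surfaces claimed.
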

\begin{remark}
    Note that when $k\geq1$, by the same method as in \cite{MWW}, we can show there exist an adjoint surface $Y^{k-1}$ of the minimal surface in Theorem \ref{thm-exam} which is immersed on the whole $S^2$. This means that  there exists a sequence of immersed Willmore $2$-spheres $\{y^0,y^1\}$ in $S^9$.
\end{remark}
%\begin{theorem}%\label{thm-exam}
%There exists $\{v_j\in\C^{6k+3}|0\leq j\leq 12k+6\}$ such that the surface $x$ is a  $k$-isotropic minimal surface in $\R^{6k+5}$ with genus $0$ and  $2m+2\geq(6k+3)$ embedded planar ends, where $m\geq 3k+1$.\end{theorem}

To prove Theorem \ref{thm-exam}, we need the following lemmas. Here we identify $S^2$ with  $\overline{\C}$.
 \begin{lemma} \label{lemma-flat}
 Set $M=\overline{\C}\setminus\{0,\epsilon_j=e^{\frac{2\pi i j}{n}}, 1\leq j\leq n\}.$
 Let $x:M \rightarrow \R^{\tilde{n}}$ be a minimal surface with
    \begin{equation}\label{eq-xz}
        x_z=\frac{\sum_{0\leq j\leq 2n}v_jz^j}{z^2(z^{n}-1)^2},\hspace{6mm} v_j:~\text{coefficient vectors to be determined.}
    \end{equation}
\begin{enumerate}
\item
Set $\lambda_{j,l}:=\<v_j,v_l\>$, $0\leq j, l\leq 2n$.  The conformal condition $\<x_z,x_z\>=0$ reads
\begin{equation}\label{eq-conf}
\sum_{0\leq j\leq l}\lambda_{j,l-j}=0, ~\hbox{if } 0\leq l\leq 2n, ~~\sum_{l-2n\leq j\leq l}\lambda_{j,l-j}=0, ~\hbox{if } 2n<l\leq 4n.
\end{equation}
\item
It has only embedded planar ends, if the following equations hold:
\begin{equation}
    \label{eq-res}
%    \left\{
    \begin{split}
        & v_1=0,\ \
         v_{n+j}=\frac{n-j+1}{j-1}v_j, ~2\leq j\leq n-1,\ \
         v_{2n}=\frac{1}{n-1}v_n+\frac{n+1}{n-1}v_0.
    \end{split}%\right.
\end{equation}

%\item
\end{enumerate}
 \end{lemma}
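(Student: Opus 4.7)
The plan is to handle the two parts by direct computation, since both reduce to algebraic identities about the rational function $x_z$. For Part (1), since the denominator $z^2(z^n-1)^2$ is a nonzero polynomial, the conformality $\langle x_z, x_z\rangle \equiv 0$ is equivalent to the polynomial identity $\langle P(z), P(z)\rangle \equiv 0$, where $P(z) = \sum_{j=0}^{2n} v_j z^j$. Expanding, the coefficient of $z^l$ in $\langle P, P\rangle$ equals $\sum_{j+k=l} \lambda_{j,k}$ with the index sum restricted to $0 \le j,k \le 2n$. Splitting into the two ranges $0 \le l \le 2n$ (where $k = l-j \ge 0$ forces $j \le l$) and $2n < l \le 4n$ (where $k \le 2n$ forces $j \ge l - 2n$) and setting each coefficient to zero produces exactly \eqref{eq-conf}.

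For Part (2), the strategy is to compute the residues of $x_z$ at every pole and translate their vanishing---the condition for $x = \mathrm{Re}\int 2x_z\,dz$ to be well-defined on $M$ with each end planar---into the stated algebraic relations on the $v_j$. Near $z = 0$, using $(z^n - 1)^{-2} = 1 + 2z^n + O(z^{2n})$, one reads off $\mathrm{Res}_0\, x_z = v_1$. Near $z = \epsilon_j$, I would set $u = z - \epsilon_j$ and use $\epsilon_j^n = 1$ to expand
\[
\frac{1}{(z^n-1)^2} = \frac{\epsilon_j^2}{n^2 u^2}\Bigl(1 - \frac{n-1}{\epsilon_j}u + O(u^2)\Bigr),\qquad \frac{1}{z^2} = \frac{1}{\epsilon_j^2} - \frac{2u}{\epsilon_j^3} + O(u^2),
\]
whose product combined with $P(z) = P(\epsilon_j) + P'(\epsilon_j)u + O(u^2)$ yields
\[
\mathrm{Res}_{\epsilon_j}\, x_z = \frac{1}{n^2}\Bigl[P'(\epsilon_j) - \frac{n+1}{\epsilon_j}P(\epsilon_j)\Bigr].
\]

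The main algebraic step---and the part I expect to require the most care---is to show that the simultaneous vanishing of $\mathrm{Res}_{\epsilon_j} x_z$ for $j = 1, \ldots, n$ is equivalent to \eqref{eq-res}. Setting $w_k := (k - n - 1)v_k$, the $n$ residue conditions read $\sum_{k=0}^{2n} w_k \epsilon_j^k = 0$ for every $j$, i.e., the polynomial $W(z) := \sum_k w_k z^k$ of degree at most $2n$ vanishes on every $n$-th root of unity and is therefore divisible by $z^n - 1$. Reducing $W$ modulo $z^n - 1$ collapses this to the coefficient identities $w_0 + w_n + w_{2n} = 0$ and $w_k + w_{n+k} = 0$ for $1 \le k \le n-1$; substituting $w_k = (k-n-1)v_k$ back then gives exactly the three families of relations in \eqref{eq-res}. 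In particular, the case $k = 1$ recovers $v_1 = 0$, which independently matches $\mathrm{Res}_0\, x_z = 0$ as demanded by the global residue theorem, so no further condition at $z = 0$ is needed. Finally, with all residues vanishing, each puncture carries a double pole of $x_z$ whose leading coefficient is $v_0$ or $P(\epsilon_j)$; these vectors are automatically isotropic thanks to Part (1), and a generic choice of the remaining free parameters keeps them nonzero, which yields embedded planar ends as claimed.
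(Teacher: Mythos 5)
Your proposal is correct and is exactly the computation the paper leaves implicit (its proof is just ``The proof is straightforward''): comparing coefficients of $\<P,P\>$ for the conformality relations, and computing the residues of $x_z$ at $0$ and at the roots of unity, then reducing $zP'(z)-(n+1)P(z)$ modulo $z^n-1$ to recover \eqref{eq-res}, with the $j=1$ case consistently dropping out. The only point worth flagging is that \eqref{eq-res} alone guarantees vanishing residues, while genuinely having (embedded planar) ends at each puncture additionally needs the leading coefficients $v_0$ and $P(\epsilon_j)$ to be nonzero --- a nondegeneracy you note and which the paper itself only verifies later in the proof of Theorem \ref{thm-exam}.
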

\begin{proof}
The proof is straightforward.
\end{proof}

 \begin{lemma} Assume $n=2m+1$ and $l=m+1$. Set
 \[
 \tau_{j}=\lambda_{l-j,l+j},~~~0\leq j\leq ,
 ~~~\tau_{s+1}=\lambda_{0,2l},
 ~~~\tau_{s+2}=\lambda_{n,2l},
 ~~\lambda_{i,j}=0~~~ \text{for all other~} (i,j).
 \]
 Then the solution to \eqref{eq-conf} under the conditions \eqref{eq-res} is
  \begin{equation}
        \label{eq-ex-odd-3}
      %  \left\{
        \begin{split}
&    \tau_{s+1}=-\frac{1}{2}\tau_0-\sum_{1\leq j\leq s}\tau_{j},\ \
   \tau_{s+2}=-\frac{m+1}{m}\tau_0-\sum_{1\leq j\leq s}\left(\frac{m-j+1}{m+j}+\frac{m+j+1}{m-j}\right)\tau_j.\\
\end{split}%\right.
    \end{equation}
\end{lemma}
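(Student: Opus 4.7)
The plan is to substitute the residue conditions \eqref{eq-res} into the conformality system \eqref{eq-conf} and then exploit the sparsity assumption ($\lambda_{i,j}=0$ outside the $\tau$-list) to isolate the few rows of \eqref{eq-conf} that produce nontrivial relations. With $n=2m+1$ and $l=m+1$, the residue formulas read $v_{n+k'}=\frac{n-k'+1}{k'-1}v_{k'}$ for $2\le k'\le n-1$ and $v_{2n}=\frac{1}{n-1}v_n+\frac{n+1}{n-1}v_0$, while $v_{n+1}$ remains a free coefficient. Consequently every $\lambda_{j,k}$ with $j$ or $k$ in $\{n+2,\ldots,2n\}$ reduces to a combination of $\lambda_{i,j}$'s with indices in $\{0,2,3,\ldots,n+1\}$. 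Tracking these reductions together with the sparsity assumption, I will show that the $L$-th row of \eqref{eq-conf} is automatic except when $L\in\{2l,\ 2l+n,\ 2l+2n\}$.

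The $L=2l$ equation is the most transparent: in $\sum_{j=0}^{2l}\lambda_{j,2l-j}=0$ I pair $j$ with $2l-j$; the term $\lambda_{1,n}$ vanishes thanks to $v_1=0$; the remaining terms either lie on the $\tau$-diagonal ($\lambda_{l-j,l+j}=\tau_j$) or equal $\lambda_{0,2l}=\tau_{s+1}$. This gives $2\tau_{s+1}+2\sum_{j=1}^{s}\tau_j+\tau_0=0$, which is precisely the first formula of \eqref{eq-ex-odd-3}.

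For $L=2l+n=4m+3$ the residue formula plays the central role. Each summand $\lambda_{j,4m+3-j}$ with $2\le j\le 2m$ reduces via $v_{4m+3-j}=\frac{j}{2m+1-j}\,v_{2m+2-j}$ to $\frac{j}{2m+1-j}\lambda_{j,2m+2-j}$, whose pair $(j,2m+2-j)$ now lies on the $\tau$-diagonal and therefore contributes $\tau_{|l-j|}$. Grouping $j=m+1-r$ with $j=m+1+r$ for $r=1,\ldots,s$, treating the middle index $j=m+1$ separately, and adjoining the direct term $\lambda_{n,2l}=\tau_{s+2}$ yields exactly the second formula of \eqref{eq-ex-odd-3}; the boundary term $\lambda_{1,4m+2}$ vanishes by $v_1=0$.

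The main obstacle will be the consistency check at $L=2l+2n=6m+4$. The $v_{2n}$ relation couples $\tau_{s+1}$ and $\tau_{s+2}$ directly (through $\lambda_{n+1,2n}=\frac{n+1}{n-1}\tau_{s+1}+\frac{1}{n-1}\tau_{s+2}$), and together with the reductions of $(n+j',n+k')$ pairs (whose coefficients $\frac{j'k'}{(j'-1)(k'-1)}$ arise from the product of two residue factors) I expect to obtain an equation of the form $\frac{1}{m}\tau_{s+2}+\frac{n+1}{m}\tau_{s+1}+\frac{(m+1)^2}{m^2}\tau_0+2\sum_{r=1}^{s}\frac{(m+1)^2-r^2}{m^2-r^2}\tau_r=0$. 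Substituting the two formulas of \eqref{eq-ex-odd-3} then reduces the consistency to the polynomial identity $2m(m+1)+2r^2+2(m+1)(m^2-r^2)=2m[(m+1)^2-r^2]$. This symmetric bookkeeping, together with the verification that no other $L$ contributes an independent equation, is where most of the care is required, though each step is entirely routine.
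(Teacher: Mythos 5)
Your proposal is correct and follows essentially the same route as the paper's proof: under the sparsity assumption only the rows $L=2l$, $n+2l$, $2n+2l$ of \eqref{eq-conf} survive, the residue relations \eqref{eq-res} convert them into the three-equation system \eqref{eq-ex-odd-2}, whose first two equations yield \eqref{eq-ex-odd-3} and whose third is exactly the consistency identity you verify. The coefficients you compute (e.g.\ $\lambda_{2l,2n}=\tfrac{m+1}{m}\tau_{s+1}+\tfrac{1}{2m}\tau_{s+2}$ and the paired factor $\tfrac{m-j+1}{m+j}+\tfrac{m+j+1}{m-j}$) agree with the paper's, so there is no gap.
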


%%%%%%%%%%%
\iffalse
 \begin{lemma} Assume $n=2m+1$ and set $l=m+1$.
 \begin{enumerate}
     \item
A special solution to \eqref{eq-conf} under the conditions \eqref{eq-res}  is
    \begin{equation}
        \label{eq-ex-odd}
        \left\{
        \begin{split}
&    2\lambda_{0,2l}+\lambda_{l,l}+2\sum_{1\leq j\leq s}\lambda_{l-j,l+j}=0,\\
&    \lambda_{n,2l}+\lambda_{l,n+l}+\sum_{1\leq j\leq s}(\lambda_{l-j,n+l+j}+\lambda_{l+j,n+l-j})=0,\\
&    2\lambda_{2l,2n}+\lambda_{n+l,n+l}+2\sum_{1\leq j\leq s}\lambda_{n+l-j,n+l+j}=0,\\
&\lambda_{i,j}=0,\hbox{ for all other } (i,j).\\
\end{split}\right.
    \end{equation}
\item
    Set $\tau_{j}=\lambda_{l-j,l+j}$, $0\leq j\leq s$, $\tau_{s+1}=\lambda_{0,2l}$, $\tau_{s+2}=\lambda_{n,2l}$.  Then a solution to \eqref{eq-ex-odd} is
  \begin{equation}
        \label{eq-ex-odd-3}
        \left\{
        \begin{split}
&    \tau_{s+1}=-\frac{1}{2}\tau_0-\sum_{1\leq j\leq s}\tau_{j},\\
&   \tau_{s+2}=-\frac{m+1}{m}\tau_0-\sum_{1\leq j\leq s}\left(\frac{m-j+1}{m+j}+\frac{m+j+1}{m-j}\right)\tau_j.\\
\end{split}\right.
    \end{equation}
     \end{enumerate}
\end{lemma}
\fi
%%%%%%%%%
\begin{proof}
We first rewrite \eqref{eq-conf} as follows:
    \begin{equation}
        \label{eq-ex-odd}
        \left\{
        \begin{split}
&    2\lambda_{0,2l}+\lambda_{l,l}+2\sum_{1\leq j\leq s}\lambda_{l-j,l+j}=0,\\
&    \lambda_{n,2l}+\lambda_{l,n+l}+\sum_{1\leq j\leq s}(\lambda_{l-j,n+l+j}+\lambda_{l+j,n+l-j})=0,\\
&    2\lambda_{2l,2n}+\lambda_{n+l,n+l}+2\sum_{1\leq j\leq s}\lambda_{n+l-j,n+l+j}=0.\\
\end{split}\right.
    \end{equation}
By \eqref{eq-res}, we get $v_{2n}=\frac{m+1}{m}v_0+\frac{1}{2m}v_n,~~v_{n+l+j}=\frac{m-j+1}{m+j}v_{l+j}, ~~ 0\leq j\leq s.$
So we have
\[\lambda_{2l,2n}=\frac{m+1}{m}\tau_{s+1}+\frac{1}{2m}\tau_{s+2},~
\lambda_{n+l-j,n+l+j}=\frac{(m-j+1)(m+j+1)}{(m+j)(m-j)}\tau_j,~0\leq j\leq s.\]
\[\lambda_{l,n+l}=\frac{m+1}{m}\tau_0,~ \lambda_{l-j,n+l+j}=\frac{m-j+1}{m+j}\tau_j,~ \lambda_{l+j,n+l-j}=\frac{m+j+1}{m-j}\tau_j, 1\leq j\leq s.\]
Substituting the above equations into \eqref{eq-ex-odd}, we have
     \begin{equation}
        \label{eq-ex-odd-2}
        \left\{
        \begin{split}
&    2\tau_{s+1}+\tau_0+2\sum_{1\leq j\leq s}\tau_{j}=0,\\
&   \tau_{s+2}+\frac{m+1}{m}\tau_0+\sum_{1\leq j\leq s}\left(\frac{m-j+1}{m+j}+\frac{m+j+1}{m-j}\right)\tau_j=0,\\
&    \frac{2(m+1)}{m}\tau_{s+1}+\frac{1}{m}\tau_{s+2}+\frac{(m+1)^2}{m^2}\tau_0+2\sum_{1\leq j\leq s} \frac{(m-j+1)(m+j+1)}{(m+j)(m-j)}\tau_j=0.\\        \end{split}\right.
    \end{equation}
It is straightforward to check that \eqref{eq-ex-odd-3} is a solution to \eqref{eq-ex-odd-2}.
\end{proof}
\begin{lemma}
The minimal surface $x$ is $k$-isotropic, when $\{\tau_j, 0\leq j\leq s+2\}$ is a solution to   both \eqref{eq-ex-odd-3} and the following equations:
    \begin{equation}\label{eq-k-iso}
            \left\{
        \begin{split}
&   a_{i,0}\tau_0+2\sum_{1\leq j\leq s}a_{i,j}\tau_{j}=0,\\
&   b_{i,s+2}\tau_{s+2}+b_{i,0}\tau_0+\sum_{1\leq j\leq s}b_{i,j}\tau_j=0,\\
&    c_{i,s+2}\left(\frac{2(m+1)}{m}\tau_{s+1}+\frac{1}{m}\tau_{s+2}\right)+c_{i,0}\tau_0+2\sum_{1\leq j\leq s}c_{i,j}\tau_j=0,\\        \end{split}\right.
    \end{equation}
    where  $1\leq j\leq s$, $0\leq i\leq k-1$, and
\[
%    \left\{
  \begin{split}
         a_{i,0}=&\left(\frac{l!}{(l-i-1)!}\right)^2,~ a_{i,j}=\frac{(l-j)!}{(l-j-i-1)!}\frac{(l+j)!}{(l+j-i-1)!},\\
               b_{i,s+2}=&\frac{n!}{(n-i-1)!}\cdot \frac{(2l)!}{(2l-i-1)!},~ b_{i,0}=\frac{l!}{(l-i-1)!}\cdot \frac{(n+l)!}{(n+l-i-1)!},\\
               b_{i,j}=&\frac{(l-j)!}{(l-j-i-1)!}\frac{(n+l+j)!}{(n+l+j-i-1)!}\frac{n-l-j+1}{l+j-1}  \\           &~~+\frac{(l+j)!}{(l+j-i-1)!}\frac{(n+l-j)!}{(n+l-j-i-1)!}\frac{n-l+j+1}{l-j-1},\\
               c_{i,0}=& \frac{(2l)!}{(2l-i-1)!}\cdot \frac{(2n)!}{(2n-i-1)!},\\
               c_{i,j}=&\frac{(n+l-j)!}{(n+l-j-i-1)!}\frac{(n+l+j)!}{(n+l+j-i-1)!}\frac{(n-l-j+1)(n-l+j+1)}{(l+j-1)(l-j-1)}.\\
            \end{split}\]
            %\right.\end{equation}

%So we have $3k+2$ linear equations and $(s+3)$ variables.
\end{lemma}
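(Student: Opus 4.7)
The plan is to verify the $k$-isotropy conditions $\langle x_z^{(i+2)}, x_z^{(i+2)}\rangle\equiv 0$ for $0\le i\le k-1$, which together with conformality $\langle x_z,x_z\rangle\equiv 0$ (already ensured by \eqref{eq-ex-odd-3}) are precisely what it means for $x$ to be $k$-isotropic. Writing $x_z=\sum_{j=0}^{2n}v_j\, g_j(z)$ with $g_j(z):=z^j/[z^2(z^n-1)^2]$, we have
\[
\Lambda_i(z):=\langle x_z^{(i+2)}, x_z^{(i+2)}\rangle=\sum_{a,b}\lambda_{a,b}\,g_a^{(i+1)}(z)\,g_b^{(i+1)}(z),
\]
a rational function whose only possible poles lie at $z=0,\,\epsilon_1,\ldots,\epsilon_n,\,\infty$. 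Since $\lambda_{a,b}$ is supported only on the pairs parameterized by $\tau_0,\ldots,\tau_{s+2}$, this sum is finite, and the identity $\Lambda_i\equiv 0$ is equivalent to a polynomial identity after clearing the common denominator $z^{2(i+3)}(z^n-1)^{2(i+3)}$.

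The key computation is the expansion of each $g_j^{(i+1)}$. The identity $\tfrac{d^{i+1}}{dz^{i+1}}z^j=\tfrac{j!}{(j-i-1)!}\,z^{j-i-1}$ produces exactly the factorials $\tfrac{l!}{(l-i-1)!}$, $\tfrac{(l\pm j)!}{(l\pm j-i-1)!}$, $\tfrac{n!}{(n-i-1)!}$, $\tfrac{(2l)!}{(2l-i-1)!}$, $\tfrac{(2n)!}{(2n-i-1)!}$ appearing in $a_{i,j},b_{i,j},c_{i,j}$. The additional rational factors $\tfrac{n-l\pm j+1}{l\pm j-1}$ in $b_{i,j}$ and $c_{i,j}$ emerge from invoking the planar-end constraint \eqref{eq-res}, which expresses each $v_{n+j}$ as a scalar multiple of $v_j$ and hence turns the inner products $\lambda_{l\pm j,\,n+l\mp j}$ into scalar multiples of $\tau_j$. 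I expect that the three families in \eqref{eq-k-iso} arise as follows: the $a$-family from the vanishing of the leading principal part of $\Lambda_i$ at $z=0$, the $b$-family from the vanishing of the mixed coupling between the poles at $z=0$ and $z=\infty$, and the $c$-family from the vanishing of the leading principal part at $z=\infty$.

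The main obstacle is verifying that the three extracted families are not merely necessary but also \emph{sufficient}: that every other Laurent coefficient in the expansion of $\Lambda_i$ either vanishes identically or is forced by conformality together with the three extracted families. The cyclic $\mathbb{Z}_n$-symmetry of the root configuration $\{\epsilon_j\}$ is crucial here. A direct inspection shows that for every nonzero $\lambda_{a,b}$ the sum $a+b$ lies in $\{n+1,\,2n+1,\,3n+1\}$, all congruent to $1$ modulo $n$. Consequently, under the scaling $z\mapsto \zeta z$ with $\zeta=e^{2\pi i/n}$, every term of $\Lambda_i$ transforms by the \emph{same} factor $\zeta^{-2i-5}$, so $\Lambda_i$ is equivariant and only Laurent coefficients in a single residue class mod $n$ can be nonzero. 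This collapses the potentially many conditions at the $\epsilon_j$'s into a few independent modes, which one then checks are precisely the three families of \eqref{eq-k-iso} after quotienting by the conformality relations \eqref{eq-ex-odd-3}. Once this redundancy is established mode-by-mode, the three families suffice to guarantee $\Lambda_i\equiv 0$ for each $0\le i\le k-1$, completing the proof of $k$-isotropy.
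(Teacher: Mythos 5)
Your overall reduction (verify $\langle \partial^{i+1}x_z,\partial^{i+1}x_z\rangle\equiv 0$ for $0\le i\le k-1$ on top of conformality) and your observation that every nonzero $\lambda_{a,b}$ has $a+b\in\{n+1,2n+1,3n+1\}$ are correct, but the core of your argument has a genuine gap. First, the mechanism you propose for producing the coefficients is not right: you differentiate the full rational functions $g_j(z)=z^j/[z^2(z^n-1)^2]$, and the derivatives of the denominator do not go away. Near $z=0$ one has $g_j(z)=z^{j-2}(1+2z^n+\cdots)$, so the leading Laurent coefficient of $g_j^{(i+1)}$ carries $(j-2)!/(j-i-3)!$ rather than $j!/(j-i-1)!$; hence the principal part of $\Lambda_i$ at $z=0$ is not $a_{i,0}\tau_0+2\sum_j a_{i,j}\tau_j$, and likewise your identification of the $b$- and $c$-families with a ``mixed coupling'' and the principal part at $\infty$ does not reproduce the stated coefficients. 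Second, and more seriously, the sufficiency step is only asserted: the $\mathbb{Z}_n$-equivariance does confine the nonzero Laurent modes of $\Lambda_i$ to one residue class mod $n$, but within that class there remain many more than three potentially nonzero coefficients (the poles at $0$ and at each $\epsilon_j$ have order $2(i+3)$), so three linear conditions per $i$ cannot be seen to force $\Lambda_i\equiv0$ without an additional structural argument, and ``one then checks'' does not supply it.

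The missing idea, which is what makes the lemma genuinely straightforward and explains why the coefficients are bare factorials, is that $k$-isotropy depends only on the projectivized curve $[x_z]$. Writing $x_z=P(z)q(z)$ with $P=\sum_j v_jz^j$ and the scalar $q=z^{-2}(z^n-1)^{-2}$, one has $\mathrm{Span}\{x_z,\ldots,\partial^r x_z\}=\mathrm{Span}\{P,\ldots,P^{(r)}\}$ wherever $q\neq0$ (the transition matrix is triangular with diagonal entries $q\neq 0$), and by the standard differentiation argument behind \eqref{eq-isotropic} the diagonal conditions are equivalent to isotropy of these osculating spans. Hence $x$ is $k$-isotropic if and only if \eqref{eq-conf} holds and $\langle P^{(i+1)},P^{(i+1)}\rangle\equiv0$ for $0\le i\le k-1$. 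Since all nonzero $\lambda_{a,b}$ satisfy $a+b\in\{2l,\,n+2l,\,2n+2l\}$, the polynomial $\langle P^{(i+1)},P^{(i+1)}\rangle$ consists of exactly three monomials, and collecting their three coefficients, after using \eqref{eq-res} to express $\lambda_{l,n+l}$, $\lambda_{l\mp j,\,n+l\pm j}$, $\lambda_{n+l-j,\,n+l+j}$, $\lambda_{n+l,n+l}$, $\lambda_{2l,2n}$ through the $\tau$'s, yields precisely the three families of \eqref{eq-k-iso} (up to evident misprints in the paper, e.g.\ the factor $(n-l+1)/(l-1)$ on $b_{i,0}$ and the labelling of $c_{i,s+2}$ versus $c_{i,0}$). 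Your $\mathbb{Z}_n$-symmetry remark is the shadow of this three-monomial structure, but without the reduction from $x_z$ to the numerator polynomial $P$ your argument does not close.
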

    \begin{proof}
The proof is straightforward.
    \end{proof}

\begin{proof}[Proof of Theorem \ref{thm-exam}]
For simplicity, we assume that
\[\sum_{0\leq j\leq 2n}v_jz^j=v_0+\sum_{-s\leq j\leq s}v_{l+j}z^{l+j}+v_{n}z^n+v_{2l}z^{2l}+\sum_{-s\leq j\leq s}v_{n+l+j}z^{n+l+j}+v_{2n}z^{2n},\] with $\lambda_{i,j}$ as a solution to both \eqref{eq-ex-odd-3} and \eqref{eq-k-iso}. Here $0\leq s\leq m-1$.  When $s=3k$ and $m\geq s+1$, we will have at least a $1$-parameter family of solutions to both \eqref{eq-ex-odd-3} and \eqref{eq-k-iso}, which are none zero.
Let $\{\ee_{i}| 1\leq i\leq 2s+3\}$ be the canonical basis of $\R^{2s+3}$.
We can set
\[v_{0}=(\cdots)(\ee_1+i\ee_2),~v_{n}=(\cdots)(\ee_1+i\ee_2),~v_{2n}=(\cdots)(\ee_1+i\ee_2),~v_{2l}=(\cdots)(\ee_1-i\ee_2),~ v_{l}=\ee_3,\]
\[ ~~v_{l+j}=(\cdots)(\ee_{2j+4}+i\ee_{2j+5}), v_{l-j}=(\cdots)(\ee_{2j+4}-i\ee_{2j+5}),~1\leq j\leq s,\]
so that both  \eqref{eq-ex-odd-3} and \eqref{eq-k-iso} are satisfied, with none of $\{v_0, v_{2l}, v_{2n}\}$ vanishing.

Moreover, we have $(z^{2m+1}-1)^2x_z\neq0$ when $z\neq0$ since $v_{2l}z^{2l}\neq0$ when $z\neq0$. So $x$ is immersed on $M$  and all of $\{\epsilon_j,1\leq j\leq 2m+1\}$ are embedded planar ends of $x$. Because $v_0\neq0$, $0$ is also an planar end of $x$. Since $v_{2n}\neq0$, $\infty$ is a regular point of $x$.
Thus $x$ is a complete, $k$-isotropic,  genus $0$ minimal surface in $\R^{6k+3}$ with $(2m+2)$ embedded planar ends.
\end{proof}
\begin{remark}\
    \begin{enumerate}
        \item It stays an open question to find the lowest dimension $\tilde n \leq 6k+3$ such that there exists a complete strictly $k$-istropic minimal surface with embedded planar ends in $\R^{\tilde n}$. 
The $k$-isotropy condition for a minimal surface $x$ is equivalent to the requirement that, when viewed as a curve, each of $\{[x_z], \cdots, [x^{(k+1)}_z]\}$ lies in the complex hyperquadric $Q_{n-2}\subset \C P^{n-1}$. This observation implies that the techniques developed in \cite{CXX} could be applied to construct $k$-isotropic minimal surfaces in $\R^n$.

        \item  Consider the simplest case $k=0$. A special solution is to choose $s=0$ and $\tau_0=2m$. So,
$\lambda_{0,2m+2}=-\frac{1}{2}\tau_0,~~\lambda_{n,2m+2}=-\frac{m+1}{m}\tau_0$ and $\lambda_{2n,2m+2}=-\frac{(m+1)^2}{2m^2}\tau_0$.  Set
\[x_z=\frac{1}{z^2(z^{2m+1}-1)^2}\left(
R(z)^2(\ee_1+i\ee_2)
-z^{2m+2}(\ee_1-i\ee_2)+2z^{m+1}R(z))\ee_3\right)
\]
with $R(z)=m+(m+1)z^{2m+1}.$  Then we re-obtain the examples of complete minimal surfaces constructed by Bryant \cite{Bryant1984}, Peng-Xiao \cite{PX}).
    \end{enumerate}
\end{remark}
By Bryant's results \cite{Bryant1988}, there exists no complete, genus $0$ minimal surfaces with $r$ embedded planar ends in $\R^3$ for $r=2,3,5,7$.
As shown in Peng-Xiao's paper \cite{PX}, there exist complete, genus $0$ minimal surfaces with $(2m+1)$ embedded planar ends in $\R^3$ when $m\geq4$.
Moreover, in $\R^4$, similar to Bryant's treatment \cite{Bryant1984,Bryant1988}, by use of the Plücker embedding, Montiel  \cite{Montiel} show that  such minimal surfaces exist and they must be totally isotropic when $r=2$ or $r=3$.
For other cases in $\R^{\tilde n}$, $\tilde n\geq 4$, we have the following  results.

  \begin{proposition}
      \
\begin{enumerate}
\item
Let $x$ be a complete, genus $0$,  minimal surface in $\R^{\tilde{n}}$ with  $2$ or $3$  embedded planar ends.  Then it is totally isotropic with $\tilde n\geq4$.
 \item There exist complete, non-isotropic, genus $0$ minimal surfaces in $\R^4$ with $(2m+1)$ embedded planar ends for $m\geq2$.
\end{enumerate}
  \end{proposition}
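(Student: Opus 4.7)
For part (1), my plan is to write out the Weierstrass-type representation and exhaust the possibilities. After a M\"obius normalization of the domain, any complete genus zero minimal immersion with $r$ embedded planar ends $p_1,\ldots,p_r\in\bar{\C}$ has
\[
x_z \;=\; \sum_{i=1}^{r} \frac{a_i}{(z-p_i)^2},\qquad a_i\in\C^{\tilde n},
\]
with all $(z-p_i)^{-1}$-residues absent because the ends are embedded planar. The conformal condition $\langle x_z,x_z\rangle\equiv 0$ reads off coefficient-by-coefficient: the $(z-p_i)^{-4}$ terms force $\langle a_i,a_i\rangle=0$, while the partial-fraction expansion of $1/((z-p_i)^2(z-p_j)^2)$ yields, for each fixed $i$, the linear relations
\[
\sum_{j\neq i}\frac{\langle a_i,a_j\rangle}{(p_j-p_i)^2}=0, \qquad \sum_{j\neq i}\frac{\langle a_i,a_j\rangle}{(p_j-p_i)^3}=0.
\]
When $r\leq 3$ this is a $2\times(r-1)$ Vandermonde-type system in the unknowns $\langle a_i,a_j\rangle$, $j\neq i$, whose $2\times 2$ determinant $(p_{j_1}-p_{j_2})/[(p_{j_1}-p_i)^3(p_{j_2}-p_i)^3]$ is nonzero; all $\langle a_i,a_j\rangle$ therefore vanish. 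Thus $W:=\Spc\{a_1,\ldots,a_r\}$ is totally null in $\C^{\tilde n}$, and since $x_z^{(k)}=\sum_i(-1)^k(k+1)!\,a_i/(z-p_i)^{k+2}\in\Gamma(W)$ for every $k\geq 1$, total isotropicity in the sense of Definition~\ref{def-iso} is immediate. If $\tilde n=3$ then any totally null $W\subset\C^3$ has $\dim_{\C}W\leq 1$, which forces $x$ into a single affine $2$-plane; in that case the asymptotic behavior at each $p_i$ is not an honest planar end, and Bryant's non-existence result \cite{Bryant1988} rules out the remaining cases. Hence $\tilde n\geq 4$.

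For part (2), I plan to adapt the ansatz of Lemma~\ref{lemma-flat} to the even case $n=2m$ in $\R^4$, arranging the coefficient vectors so that the image is \emph{not} confined to a $\C$-isotropic $2$-plane of $\C^4$. Concretely, I will look for
\[
x_z \;=\; \frac{P(z)}{z^2(z^{2m}-1)^2},\qquad P(z)=\sum_{j=0}^{4m}v_jz^j,\ v_j\in\C^4,\ v_{4m}=0,
\]
so that $M=\bar{\C}\setminus\{0,\epsilon_1,\ldots,\epsilon_{2m}\}$ has exactly $2m+1$ embedded planar ends and $\infty$ is a regular point. The residue equations \eqref{eq-res} and the conformality equations \eqref{eq-conf} become a linear system on the pairings $\lambda_{j,l}=\langle v_j,v_l\rangle$; by the dimension count underlying the proof of Theorem~\ref{thm-exam}, the solution space is positive-dimensional as soon as $m\geq 2$. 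Within this affine variety, the locus of totally isotropic solutions is cut out by the further demand that all $v_j$ lie in a common maximal $\C$-isotropic $2$-plane of $\C^4$, a strict codimension condition. By choosing the $v_j$ so that $\Spr\{\mathrm{Re}\,v_j,\mathrm{Im}\,v_j\}=\R^4$, the surface is full in $\R^4$ and cannot be totally isotropic; the leading $(z-\epsilon_i)^{-6}$-coefficient of $\langle x_{zz},x_{zz}\rangle$ can then be computed explicitly and shown not to vanish at one of the ends.

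The main obstacle will be producing such a full-rank solution explicitly, or arguing its existence cleanly. The totally isotropic solutions inherited from Theorem~\ref{thm-exam} with $k=0$ are forced into a single $\C$-isotropic plane by the symmetric pattern imposed on the $v_j$, so one has to break that symmetry without violating \eqref{eq-res}. I expect the cleanest route is a deformation argument: start from a totally isotropic solution, perturb $(v_j)$ transverse to the (positive-codimension) isotropic locus inside the affine variety of \eqref{eq-res}-\eqref{eq-conf}, and verify that the perturbed $P(z)$ remains nonvanishing away from $\{0\}\cup\{\epsilon_j\}$. Once one such non-isotropic example is in hand for each $m\geq 2$, completeness and embeddedness of the $2m+1$ planar ends follow from the standard pole analysis of the rational form of $x_z$.
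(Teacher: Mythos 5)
Your part (1) is correct and is essentially the paper's own argument in a slightly more invariant dress: the paper normalizes the ends to $\{0,\pm1\}$ and lists the relations among the pairings $\lambda_{i,j}=\<v_i,v_j\>$ explicitly, while you keep general end positions and kill the pairings $\<a_i,a_j\>$ with the $2\times 2$ Vandermonde-type determinant; the two computations are the same residue-plus-conformality bookkeeping, and your treatment has the small advantage of handling $r=2$ and $r=3$ uniformly (the paper cites \cite{HO} for $r=2$). Your direct argument for $\tilde n\geq 4$ (a totally isotropic subspace of $\C^3$ is at most a line, so $x_z$ would be parallel to a fixed null vector and the image planar, which is incompatible with an immersion having $2$ or $3$ embedded planar ends) is a fine replacement for the paper's appeal to Bryant's non-existence results, but the sentence ``the asymptotic behavior at each $p_i$ is not an honest planar end'' should be replaced by the actual reason you have in mind (a nonconstant planar example would be a rational map of degree $\geq 2$, hence branched, hence not an immersion); as written it is vague, and the extra citation of \cite{Bryant1988} does no work.

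Part (2), however, is not a proof: it is a plan whose decisive step is exactly the one you defer. The paper settles existence by writing down an explicit surface built from Lemma \ref{lemma-flat} in $\R^4$, using only the four null directions $\ee_1\pm i\ee_2$, $\ee_3\pm i\ee_4$, for which conformality, the planar-end conditions \eqref{eq-res}, immersedness (including at $\infty$) and non-isotropicity are verified directly. You instead assert that the variety cut out by \eqref{eq-res}--\eqref{eq-conf} with $v_j\in\C^4$ is positive-dimensional ``by the dimension count underlying Theorem \ref{thm-exam}'' and that the totally isotropic locus has positive codimension in it, and then propose an unverified perturbation. Neither assertion transfers: Theorem \ref{thm-exam} treats the odd case $n=2m+1$ and, more importantly, works in the large ambient space $\R^{2s+3}$, where a prescribed pattern of pairings $\lambda_{j,l}$ can be realized by vectors along independent coordinate directions; in $\R^4$ the $\lambda_{j,l}$ are entries of a Gram matrix of rank at most $4$, so the quadratic system must be solved together with this rank constraint, and it is not automatic that non-isotropic solutions exist at all, let alone that the isotropic locus can be perturbed away while keeping $P(z)\neq 0$ off the ends. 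There is also a concrete slip in your set-up: with denominator $z^2(z^{2m}-1)^2$, the point $\infty$ is a regular immersed point precisely when the top coefficient $v_{4m}$ is \emph{nonzero} (compare the final step of the proof of Theorem \ref{thm-exam}, where $v_{2n}\neq 0$ is used for exactly this purpose); your requirement $v_{4m}=0$ would instead create a branch point at $\infty$. To complete part (2) you would have to either exhibit an explicit solution, as the paper does, or genuinely prove the positive-dimensionality, transversality and nonvanishing claims your deformation argument presupposes.
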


 \begin{proof} (1) The case $r=2$ is simple and well-known (see for example \cite{HO}).

 The case $r=3$ is similar. Here we include it for completeness. Now assume w.l.g. that $\{0,\pm 1\}$ are the $3$ ends of $x$. Then we have
     $x_z=\frac{1}{z^2(z^2-1)^2}(v_0+v_1z+v_2z^2+v_3z^3+v_4z^4).$ Since the ends are planar, the vanishing residue conditions \eqref{eq-res} read $v_1=0$ and $v_4=2v_0+v_2$. Set $\lambda_{i,j}=\<v_i,v_j\>$ as before.
     The conformal condition reads
     $\lambda_{0,0}=\lambda_{0,2}=\lambda_{0,3}=\lambda_{2,2}+2\lambda_{0,4}=\lambda_{2,3}=2\lambda_{2,4}+\lambda_{3,3}=\lambda_{3,4}=\lambda_{4,4}=0.$
     By the equation $v_4=2v_0+v_2$, we see $\lambda_{i,j}=0$ for all $i,j$, that is, $x$ is totally isotropic.

(2)  By Lemma \ref{lemma-flat}, we can construct a complete, non-isotropic, minimal surface $x$ with  $(2m+1)$ embedded planar ends as follows:
  \[x=Re\left(\frac{(\frac{m+1}{m-1}z^{2m}-1)(\ee_1+i\ee_2)}{2z(z^{2m}-1)}+\frac{(\ee_1-i\ee_2)}{2mz(z^{2m}-1)}+\frac{z^{m-1}(\ee_3+i\ee_4)}{(m-1)(z^{2m}-1)}+\frac{z^{m}(\ee_3-i\ee_4)}{2m(z^{2m}-1)}\right).\]
  % We have $        x_{mz}=\frac{-1}{z^2(z^{2m}-1)^2}\left(\frac{1}{2}(1+\frac{m+1}{m-1}z^{2m})(z^{2m}+1)(\ee_1+i\ee_2)+z^{2m+1}(\ee_1-i\ee_2) +z^{m}(1+\frac{m+1}{m-1}z^{2m})(\ee_3+i\ee_4)+\frac{z^{m+1}}{2}(z^{2m}+1)(\ee_3-i\ee_4)\right).$
%    \end{split}\]
 \end{proof}

\vspace{2mm} {\bf Acknowledgements}:   The third author  is grateful to Professor Josef Dorfmeister and Professor Changping Wang for many valuable discussions and constant support. The third author is also grateful to Professor Yijun He and Professor Zizhou Tang for valuable discussions on twistor theory.   The first author is supported by NSFC grant 11831005. Part of this research was done while the second author was supported by the Yau Institute at Tsinghua University.The third author is supported by the NSFC Project 12371052.\\

 %\nocite{citation}
\def\refname{Reference}

\end{document}